\DeclareMathOperator{\Spec}{\mathsf{Spec}}
\DeclareMathOperator{\pr}{\mathsf{pr}}
\DeclareMathOperator{\id}{\mathsf{id}}
\DeclareMathOperator{\Hom}{\mathsf{Hom}}
\DeclareMathOperator{\sHom}{\mathcal{H}\textit{om}}
\DeclareMathOperator{\sEnd}{\mathcal{E}\textit{nd}}
\DeclareMathOperator{\Ext}{\mathsf{Ext}}
\DeclareMathOperator{\End}{End}
\DeclareMathOperator{\ext}{\mathsf{ext}}
\DeclareMathOperator{\Coh}{\mathsf{Coh}}
\DeclareMathOperator{\sExt}{\mathcal{E}\textit{xt}}
\DeclareMathOperator{\Ho}{\mathsf H}
\DeclareMathOperator{\ho}{\mathsf h}
\DeclareMathOperator{\gr}{\mathsf{gr}}
\DeclareMathOperator{\supp}{\mathsf{supp}}
\let\deg\relax
\DeclareMathOperator{\deg}{\mathsf{deg}}
\let\mod\relax
\DeclareMathOperator{\mod}{\mathsf{mod}}
\let\det\relax
\DeclareMathOperator{\det}{\mathsf{det}}
\newcommand{\red}{\mathsf{red}}
\newcommand{\sing}{\mathsf{sing}}
\newcommand{\Sing}{\operatorname{Sing}}
\DeclareMathOperator{\Pic}{\mathsf{Pic}}
\DeclareMathOperator{\APic}{\mathsf{APic}}
\newcommand{\cI}{{\mathcal I}}
\newcommand{\ko}{{\mathcal O}}
\newcommand{\IC}{\mathbb{C}}
\newcommand{\IN}{\mathbb{N}}
\newcommand{\IP}{\mathbb{P}}
\newcommand{\IQ}{\mathbb{Q}}
\newcommand{\IZ}{\mathbb{Z}}
\newcommand{\IG}{\mathbb{G}}
\newcommand{\wJ}{\widetilde J}
\DeclareMathOperator{\Jac}{\mathsf{Jac}}
\DeclareMathOperator{\pure}{\mathsf{pure}}
\DeclareMathOperator{\Alb}{\mathsf{Alb}}
\newcommand{\leqnomode}{\tagsleft@true}
\newcommand{\reqnomode}{\tagsleft@false}
\let\ker\relax
\DeclareMathOperator{\ker}{\mathsf{ker}}
\let\dim\relax
\DeclareMathOperator{\dim}{\mathsf{dim}}
\newcommand{\cF}{\mathcal F}
\newcommand{\cE}{\mathcal E}
\newcommand{\cG}{\mathcal G}
\newcommand{\cM}{\mathcal M}
\newcommand{\cN}{\mathcal N}
\newcommand{\cD}{\mathcal D}
\newcommand{\cP}{\mathcal P}
\newcommand{\cQ}{\mathcal Q}
\newcommand{\cL}{\mathcal L}
\newcommand{\fm}{\mathfrak m}
\newcommand{\reg}{\mathcal O}
\newcommand{\eps}{\varepsilon}
\renewcommand{\theta}{\vartheta}
\renewcommand{\rho}{\varrho}
\renewcommand{\phi}{\varphi}
\renewcommand{\_}{\underline{\,\,\,\,}}
\setlist[enumerate,1]{label={(\roman*)}}
\setlist[enumerate,2]{label={(\alph*)},ref={(\alph*)}}
\newtheorem{theorem}{Theorem}[section]
  \newaliascnt{conjecture}{theorem}
\newtheorem{conjecture}[conjecture]{Conjecture}
  \newaliascnt{proposition}{theorem}
  \newtheorem{prop}[proposition]{Proposition}
  \newaliascnt{lemma}{theorem}
  \newtheorem{lemma}[lemma]{Lemma}
  \newaliascnt{corollary}{theorem}
  \newtheorem{cor}[corollary]{Corollary}
    \newaliascnt{assumption}{theorem}
  \newtheorem{assumption}[assumption]{Assumption}
\theoremstyle{definition}
  \newaliascnt{notation}{theorem}
  \newaliascnt{definition}{theorem}
  \newtheorem{definition}[definition]{Definition}
  \newaliascnt{remark}{theorem}
  \newtheorem{remark}[remark]{Remark}
  \newaliascnt{condition}{theorem}
  \newaliascnt{question}{theorem}
  \newaliascnt{example}{theorem}
  \newtheorem{example}[example]{Example}
\newcommand{\lra}{\longrightarrow}
\newlist{prooflist}{description}{1}
\setlist[prooflist]{font=\normalfont \itshape, labelindent = \parindent, leftmargin = 0pt}
\newcommand{\isom}{\cong}
\newcommand{\tensor}{\otimes}
\begin{document}

\title{Compactified Jacobians of Extended ADE curves and Lagrangian Fibrations}
\author{Adam Czapli\'nski, Andreas Krug, Manfred Lehn, and S\"onke Rollenske}

\maketitle

\begin{abstract}
We observe that general reducible curves  in sufficiently positive linear systems on K3 surfaces are of a form that generalises Kodaira's classification of singular elliptic fibres and thus call them extended ADE curves.

On such a curve $C$, we describe a compactified Jacobian and show that its components reflect the intersection graph of $C$. This extends known results when $C$ is reduced, but new  difficulties arise when $C$ is non-reduced.  As an application, we get an  explicit description of  general singular fibres of certain Lagrangian fibrations of Beauville-Mukai type. 
\end{abstract}

\section{Introduction}
In this article we want to discuss the geometry of compactified Jacobian varieties 
associated to certain reducible or even non-reduced projective curves over a field
of characteristic $0$. Compactified Jacobian varieties have a long history. If 
$C$ is a smooth projective curve, the Picard group $\Pic^0(C)$ parameterising 
invertible sheaves of degree $0$ is an abelian variety of dimension equal to the 
genus $g(C)$, and Jacobian variety is used as a synonym for any of the connected 
components of $\Pic(C)$. If $C$ is an integral curve with singularities, the 
Picard group $\Pic(C)$ is still a group scheme of finite type, but no longer projective,
as line bundles can degenerate into torsion free sheaves that fail to be locally
free at one or more of the singular points of $C$. As the name suggests, a compactified 
Jacobian is an appropriate compactification of a component of $\Pic(C)$. 
For a historical overview and more details we refer to the paper by Altman and Kleiman \cite{AlKl1980} and references therein.

The problem becomes more difficult if one allows $C$ to be a non-integral curve, in particular if the curve has a  non-reduced scheme structure.
The Picard scheme of such curves has been described in full generality by Bosch, L\"utkebohmert 
and Raynaud \cite{Neron}. A compactified Jacobian for such curves can be
defined as, and this is the view adopted in this article, the moduli space of
pure sheaves on $C$ that are semi-stable with respect to an appropriate polarisation and have the same length as $\ko_C$ at each generic point of $C$. 
For non-integral curves, it can happen that (components of) the Picard scheme form an open part of the moduli space of these sheaves, but that this part is no longer dense. We will see that this happens in our context. 

In this paper we assume $C$ to be a projective curve in some ambient smooth surface
$X$ of the form $C=\sum_{i} m_iC_i$ with smooth components $C_i$ and certain
 multiplicities $m_i$. The dual graph $\Gamma$ is assumed to be an affine
simply-laced Dynkin graph, see  \autoref{fig: dynkin}, and the multiplicity vector $m=(m_i)\in \IZ^{\ell}$, $\ell
=|\Gamma|$, is to be the minimal positive integral null-vector of the semi-negative Cartan 
matrix $S$ associated to $\Gamma$. 
We assume that all multiple components are rational with self-intersection $-2$. We call a curve $C$ meeting these requirements an \emph{extended ADE-curve of type $\Gamma$}.

\begin{figure}[ht]
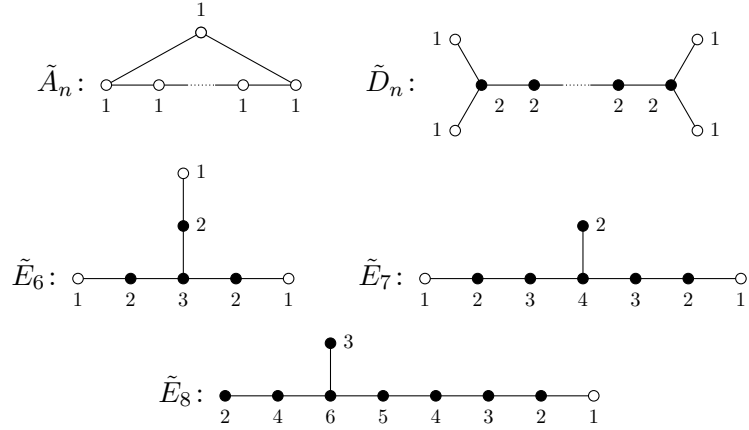

 \caption{The extended Dynkin graphs }\label{fig: dynkin}
 \begin{gather*}
   \tilde A_n\colon\dynkin[labels = {1,1,1,1,1}] A[1]{oo.oo}
   \qquad
\tilde D_n\colon \dynkin[labels = {1, 1, 2, 2, 2, 2, 1, 1}] D[1]{o**.**oo}
\\
\tilde E_6\colon
\dynkin[labels ={ 1,1,2, 2, 3, 2, 1}] E[1]{o****o}
\qquad 
\tilde E_7\colon
\dynkin[labels = {1,2, 2,3,4,3,2,1}] E[1]{******o}
\\
\tilde E_8\colon
\dynkin[labels = {1,2,3,  4, 6, 5, 4, 3, 2}] E[1]8
 \end{gather*}
\end{figure}

If $\Gamma$ is of type $\tilde A_{n}$, all 
multiplicities are $1$, hence the curve $C$ is reduced.
In \cite{LM--Simpson}, L\'{o}pez-Mart\'{\i}n classifies stable sheaves on reduced singular fibres of elliptic fibrations, in particular on $\tilde A_n$-configurations of smooth rational curves; for a comparison to our results see \autoref{subsect:LM}.
In the study of the non-reduced cases $\tilde D_n$ and $\tilde E_n$ several new difficulties arise.

Let $\cM_\chi(C)$ denote the Simpson moduli space of semistable sheaves $F$ with
Euler characteristic $\chi$ on $C$ such that $F$ has length $m_i$ at the generic 
points $\eta_i\in C_i$ (with appropriately chosen polarisation and conditions on 
$\chi$ to be discussed in the main text, where a sheaf satisfying our requirement on the lengths at the generic points is called a sheaf \emph{of type $m$}). 

The following is a concise reformulation of a large part of the results obtained in \autoref{sect:ADE}.
\begin{theorem}\label{thm: easy classification}
Let $H$ be a polarisation as in \autoref{ass:H}.
 Then every semi-stable sheaf in the compactified Jacobian $\cM_\chi(C)$ is already stable and of the form 
 \[L(x):= \sHom(\cI, L),\]
 where $\cI\subset \ko_C$ is the ideal sheaf of a closed point $x$ and $L$ is a line bundle in a fixed component of the Picard group, the component depending on $H$ and $\chi$.

 The sheaf $L(x)$ is a line bundle if and only if $x$ is a smooth point of $C$. Otherwise, $x$ is the unique point where $L(x)$  fails to be locally free.
 \end{theorem}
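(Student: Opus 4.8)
The plan is to build on the single structural fact that $C$, as an effective divisor in the smooth surface $X$, is Gorenstein, so that $\omega_C$ is a line bundle and a coherent sheaf on $C$ is pure if and only if it is maximal Cohen--Macaulay, hence reflexive. Every $F\in\cM_\chi(C)$ is therefore reflexive, $F\isom\sHom(\sHom(F,\ko_C),\ko_C)$, and fails to be locally free only along a finite set of closed points. The target sheaf $L(x)=\sHom(\cI_x,L)$ is itself reflexive and generically a line bundle, so the whole statement reduces to two assertions: that a semistable $F$ is generically isomorphic to $\ko_C$ and non-locally-free at a single point, and that this local degeneration is the mildest possible one. I would organise the proof as a local-to-global analysis of this defect.

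First I would record the local picture. At a closed point $p$ the ring $\ko_{C,p}$ is a one-dimensional Gorenstein hypersurface singularity---a discrete valuation ring at a smooth point, $k[[x,y]]/(x^{a}y^{b})$ at an intersection of components, or $k[[x,y]]/(y^{m})$ along a multiple component---and I would classify the rank-one MCM $\ko_{C,p}$-modules. The free module $\ko_{C,p}$ is the locally free case; the mildest non-free module is $\sHom(\fm_p,\ko_{C,p})$, which fits into $0\to\ko_{C,p}\to\sHom(\fm_p,\ko_{C,p})\to k(p)\to 0$ (apply $\sHom(-,\ko_{C,p})$ to $0\to\fm_p\to\ko_{C,p}\to k(p)\to 0$ and use that $\ko_{C,p}$ is Gorenstein) and so is a length-one upper modification of the free module, while every remaining indecomposable non-free module requires a modification of length at least two. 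For the double line $k[[x,y]]/(y^{2})$, for instance, these are the modules $k[[x]]^{2}$ with nilpotent action $\left(\begin{smallmatrix}0 & x^{a}\\ 0 & 0\end{smallmatrix}\right)$, $a\geq 1$, of defect $a$. I would also record the decomposable possibilities, such as $\ko_{aC_i}\oplus\ko_{(m_i-a)C_i}$, which are MCM of type $m$ but not generically cyclic.

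The heart of the argument, and the step I expect to be the main obstacle, is to show that semistability with respect to a polarisation as in \autoref{ass:H} eliminates every possibility except the mildest one, concentrated at a single point. Here the defining feature of an extended ADE curve enters: since $m$ is a null-vector of the Cartan matrix one has $C\cdot C_i=0$ for every $i$, which rigidifies the $H$-slopes of sheaves supported on proper sub-configurations. Concretely, I would choose a line bundle $L\mono F$ of maximal Euler characteristic and set $T:=F/L$, a finite-length sheaf whose local lengths are exactly the local defects of $F$; the goal is to prove that $T$ has length one. If $F$ were generically non-cyclic, a partial-thickening summand such as $\ko_{aC_i}$ would have $H$-slope different from $\mu(F)$ and destabilise; likewise, any point of defect $\geq 2$, or any second non-free point, yields a subsheaf supported on a proper sub-configuration whose slope exceeds $\mu(F)$. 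The delicate part---absent in the reduced $\tilde A_n$ case treated in \cite{LM--Simpson}---is that the fat components carry an entire tower of deeper modules, so the slope estimates must be made uniformly across the affine Dynkin combinatorics; the same estimates show that no proper subsheaf attains $\mu(F)$, so semistable already forces stable.

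Granting that $F$ is a length-one upper modification of a line bundle $L$ at a single point $x$, the reconstruction is immediate. Tensoring the sequence $0\to\ko_C\to\sHom(\cI_x,\ko_C)\to k(x)\to 0$ by $L$ exhibits $L(x)=\sHom(\cI_x,L)$ as the unique reflexive such modification, and comparing it with $F$ on the dense complement $C\setminus\{x\}$ gives an isomorphism there; since $\sHom(F,L(x))$ is reflexive this isomorphism extends across $x$, and as both sheaves have the same length and isomorphic stalks at $x$ the extension is an isomorphism, yielding $F\isom L(x)$. When $F$ is already a line bundle one simply writes $F=\sHom(\cI_x,F(-x))$ for any smooth point $x$. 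Finally, because $L$ is invertible, $L(x)$ is locally free exactly where $\cI_x$ is, and $\cI_x$ is invertible if and only if $\fm_x\subset\ko_{C,x}$ is principal, i.e.\ if and only if $\ko_{C,x}$ is regular; for our curves this means precisely that $x$ is a smooth point, while away from $x$ one has $\cI_x=\ko_C$, so $x$ is the unique point of non-local-freeness.
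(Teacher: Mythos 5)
Your framing (Gorenstein, hence pure $=$ reflexive; local ``defect'' at each point; reconstruction of $F\cong L(x)$ once the defect is known to be of total length one) is compatible with the statement, and your final reconstruction step and the ``line bundle iff $x$ smooth'' argument are essentially sound. But there is a genuine gap exactly where you flag ``the main obstacle'': you never prove that semistability under \autoref{ass:H} forces (a) generic cyclicity, (b) total defect at most $1$, and (c) the precise multidegrees $\chi(L_{\mid C_v})=b_v$, nor that semistable implies stable. The mechanisms you invoke do not work as stated. A generically non-cyclic sheaf need not have a ``partial-thickening summand'' $\ko_{aC_i}$ -- non-cyclicity of the generic stalk produces no global direct summand -- and a point of defect $\geq 2$, or a second non-free point, does not by itself yield a destabilising subsheaf supported on a subcurve: the sub- and quotient sheaves one can actually construct (restrictions $\pure(F_{\mid Z})$ to subcurves, graded pieces of the saturated filtration by $\reg_X(-kC_j)$) destabilise or not according to their Euler characteristics, which are \emph{not} determined by the local defect data. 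What closes the argument in the paper is a global count: the canonical sequence $0\to F\to\bigoplus_v F_v\to\bigoplus_x T_x\to 0$ of \autoref{lem:Fses}, the bound on $\chi(T_x)$ of \autoref{lem:chiTx}, the stability lower bounds on quotients combined with \autoref{lem:ei}, and crucially the fact that the affine Cartan matrix $S$ is negative semi-definite with kernel generated by $\vec m$, so that the resulting inequality $0\le 1+\tfrac12\sum_r \vec n_r^{\,t}S\vec n_r$ forces $\vec n_1=\vec m$, $\vec n_r=0$ for $r>1$, and exactly one of the inequalities to fail, by an excess of exactly $1$ (\autoref{prop:chi>0class}). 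This single computation simultaneously yields (a), (b), (c) and the equivalence of semistability and stability; none of it follows from your local classification plus the informal slope remarks, and ``made uniformly across the affine Dynkin combinatorics'' is precisely the content that is missing.

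Two further points. First, your opening move ``choose a line bundle $L\mono F$ of maximal Euler characteristic with $F/L$ of finite length'' already presupposes (a), so the logical order must be: stability $\Rightarrow$ generic cyclicity first, and that is again the missing estimate. Second, at the end you write a stable line bundle as $F=\sHom(\cI_x,F(-x))$ ``for any smooth point $x$''; but the theorem places $L$ in a \emph{fixed} component of $\Pic(C)$, so $x$ must lie on the unique component $C_o$ with $\chi(F_{\mid C_o})=b_o+1$ -- that every stable line bundle has this multidegree is part of the missing step (c). Note also that for components of multiplicity $m_i\geq 3$ the local rings $\IC[[u,v]]/(v^{m_i})$ have a wild module theory; the paper deliberately avoids any local classification and works only with the filtration and its rank sequences, which is a structural reason why a local-to-global strategy of your kind is hard to push through.
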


Our second main result describes the geometry of the compactified Jacobian $\cM_\chi(C)$. The following is deduced from a more detailed and technical description in  \autoref{thm: full moduli}. 
\begin{theorem}\label{thm: easy moduli}
Let $C=\sum_im_iC_i$ be an extended ADE curve of arbitrary type $\Gamma\in \{\tilde A_n,\tilde D_n, \tilde E_6,\tilde E_7,\tilde E_8\}$, and let $J:=\prod_i\Jac(C_i)$ denote the product of the Jacobians of the components of $C$. Then $\cM_\chi(C)_{\red}$ consists of components $\{Y_v\}_{v\in V}$ which are $\IP^1$-bundles over $J$. They intersect transversally in sections, and their intersection graph is again $\Gamma$.
\end{theorem}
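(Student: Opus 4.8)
The plan is to bootstrap from the classification in \autoref{thm: easy classification}: since every point of $\cM_\chi(C)_{\red}$ is a stable sheaf of the form $L(x)=\sHom(\cI_x,L)$, I would organise the moduli space by the two invariants this presentation carries, namely the multidegree of $L$ and the position of the non-locally-free point $x$. First I would produce a morphism $\sigma\colon \cM_\chi(C)_{\red}\to J$ sending a sheaf $F$ to the abelian part of its Picard class (the degree-$0$ information of the restrictions $F|_{C_i}$), extended over the non-locally-free locus by the universal version of the $\sHom(\cI,\cL)$ construction; this is well defined even though the presentation $F=L(x)$ is not unique for line bundles, because $\sigma$ only records the intrinsic class of $F$. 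Simultaneously I would show that the $H$-stable multidegrees realised by sheaves of type $m$ with Euler characteristic $\chi$ form a finite set, canonically indexed by the vertex set $V$ of $\Gamma$; here \autoref{ass:H} and the fact that $m$ is the minimal null-vector of the Cartan matrix $S$ control which multidegrees are stable and how they are linked by moving degree across an edge. I then define $Y_v$ to be the closure of the multidegree-$\underline d_v$ stratum.

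Next I would establish the $\IP^1$-bundle structure of each $Y_v\to J$ by a fibrewise analysis. Over a point $\xi\in J$, the fibre $\sigma^{-1}(\xi)$ is the compactified Jacobian of the rational model of $C$ (all $\Pic^0(C_i)$-data frozen), and the claim is that its multidegree-$\underline d_v$ piece is a single smooth $\IP^1$. In the reduced case this $\IP^1$ is the compactification of the gluing torsor $\IG_m$ at the relevant node, with the two non-locally-free sheaves playing the role of $0$ and $\infty$; in the non-reduced case it arises instead from the $\IP^1$ of local modifications of $L(x)$ supported on the multiple $(-2)$-component $C_v$. I would extract local triviality from the explicit family $\sHom(\cI_\Delta,\cL)$ over $J\times C_v$, realising $Y_v$ as the projectivisation of a rank-$2$ sheaf over $J$ (equivalently, by exhibiting two disjoint boundary sections).

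Finally I would compute the incidence of the $Y_v$. A sheaf lies in $Y_v\cap Y_w$ exactly when it fails to be locally free at a point of $C_v\cap C_w$, so the intersection is non-empty iff $v$ and $w$ are adjacent in $\Gamma$; along such a locus the non-locally-free point is forced, cutting out a section $\cong J$ of both bundles. Transversality and the assertion ``in sections'' I would read off from deformation theory: computing $\Ext^1_C(F,F)$ at a boundary sheaf $F=L(x)$ should show that the Zariski tangent space of $\cM_\chi(C)_{\red}$ there has dimension $\dim J+2$, that the two smooth sheets $T_FY_v$ and $T_FY_w$ span it, and that they meet in the $\dim J$-dimensional tangent to the section. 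Matching these adjacencies with the edges of the Dynkin graph then yields that the intersection graph is again $\Gamma$.

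The hard part is the non-reduced cases $\tilde D_n$ and $\tilde E_n$: the whole argument hinges on understanding $L(x)=\sHom(\cI_x,L)$ locally at a point $x$ on a multiple component, where $C$ is non-reduced. Both the identification of the fibre as a reduced smooth $\IP^1$ (rather than a fatter, possibly non-reduced scheme) and the transversality computation require explicit control of the local $\Ext$-groups on the non-reduced curve; indeed it is precisely here that $\cM_\chi(C)$ can acquire embedded or non-reduced structure, which is why the statement is phrased for $\cM_\chi(C)_{\red}$. I expect this local analysis --- absent from the classical reduced theory recalled for $\tilde A_n$ --- to be the main obstacle, with the global $\IP^1$-bundle and incidence statements following formally once it is in place.
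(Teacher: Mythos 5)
Your overall organisation is the paper's: stratify $\cM_\red$ by which of the sheaves $F_v$, $T_x$ in the canonical sequence is special, take $Y_i=J\times C_i$ for the multiple components (this is \autoref{prop:Csingred}), and read the intersection pattern off from the position of the non-locally-free point. But two of your steps have genuine gaps. The first concerns the $\IP^1$-bundle structure on the closures $Y_o$, $o\in O$, of the line-bundle loci, which is the heart of the paper's argument. The paper's key observation is that \emph{every} sheaf in $Y_o$ --- line bundle or boundary sheaf --- is an extension $0\to K\to F\to M\to 0$ with $K\in\Pic_{b_o-1}(C_o)$ and $M\in\Pic_b(C^o)$; since $\hom(M,K)=0$ and $\ext^1(M,K)=2$ (\autoref{lem:ext1=2}), and the middle term determines the extension class up to scalar (\autoref{lem:extensionunique}), Lange's universal extension bundle (\autoref{thm:Lange}) produces $Y_o=\IP\bigl(\sExt^1_{f_o}(\cQ^o,\cQ_o)^\vee\bigr)$ together with a closed embedding into $\cM$ (\autoref{prop:Yo}). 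Your proposed substitutes do not deliver this. The family $\sHom(\cI_\Delta,\cdot)$ lives over $\wJ\times C_o$, not over $J\times C_o$: it cannot descend, because twisting by $G$ changes $L(x)$ whenever $x$ is a smooth point. Moreover the induced map $\wJ\times C_o\to Y_o$ has one-dimensional fibres at every point (\autoref{lem:dphi}), so it is an overparametrisation, not a chart from which local triviality or a rank-two sheaf can be read off. Finally, ``exhibiting two disjoint boundary sections'' is available only for $\tilde A_n$: for $\tilde D_n,\tilde E_n$ each $Y_o$ has a \emph{single} boundary section (it compactifies a $\IG_a$-torsor, see \autoref{thm: full moduli}), which is exactly why a genuine relative-extension construction is needed. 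Note that this gap concerns the \emph{reduced} components $C_o$, so, contrary to your closing assessment, it does not follow formally once the non-reduced local analysis is in place; granting \autoref{thm: easy classification} as you do, that local analysis is essentially already done, and the remaining difficulty is this global construction.

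The second gap is your transversality argument, which measures the wrong tangent space. $\Ext^1_C(F,F)$ is the Zariski tangent space of $\cM$, not of $\cM_\red$. In the non-reduced cases the paper shows (right after \autoref{prop:Csingred}) that this space has dimension at least $\dim J+2$ already at \emph{generic} points of $Y_i$ --- this is precisely how one sees that $\cM$ is non-reduced along $Y_i$ --- so at a point $F\in Y_o\cap Y_i$ the two smooth sheets cannot span $\Ext^1_C(F,F)$, and your criterion fails exactly in the cases $\tilde D_n,\tilde E_n$ (while $T\cM_\red$ admits no direct deformation-theoretic description). The paper instead derives transversality from the differential of the overparametrisation: writing $T_{\wJ\times C}(L,x)\cong K\oplus T_J(\pi(L))\oplus T_{C_o}(x)\oplus T_{C_i}(x)$ with $K=\ker(d\phi_{(L,x)})$ as in \autoref{lem:dphi}, one has $T Y_o=d\phi(T_J\oplus T_{C_o})$ and $TY_i=d\phi(T_J\oplus T_{C_i})$, and injectivity of $d\phi$ on the complement of $K$ gives $TY_o\cap TY_i=d\phi(T_J)$, the tangent space of the section. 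A smaller but real point: your fibre identifications and the injectivity of $J\times (C_\sing)_\red\to\cM$ require that a singular stable sheaf $L(x)$ determines both $x$ and the restrictions $L_{\mid C_v}$; the second half of this is \autoref{prop:unique}, which is not contained in \autoref{thm: easy classification} and itself needs the partial normalisation $\nu^x$ and \autoref{lem:PicCx}.
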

For $\Gamma=\tilde A_n$, the compactified Jacobian is reduced, so the above already gives a description of $\cM_\chi(C)$ itself. For $\Gamma\in \{\tilde D_n, \tilde E_6,\tilde E_7,\tilde E_8\}$, those components $Y_v$ which correspond to multiple components of the curve $C$ are again non-reduced, but we do not compute their multiplicities. Our expectation, however, is that the multiplicity of $Y_v$ is equal to the multiplicity of $C_v$ in $C$; compare \autoref{Conj:multi}.

The motivation to consider curves of this specific form comes from our interest 
in determining the structure of generic singular fibres in Lagrangian 
fibrations $\cM_\chi(K3)\to \IP^n$ of moduli of semistable sheaves on K3 surfaces. Indeed, extended ADE curves occur as generic non-integral members of linear systems on K3 surfaces; see \autoref{sect: linear systems} and \autoref{subsect:family} for some more details on this.
The point of departure was the construction of a K3 surface as a minimal resolution of the singularities of a double cover of the plane branched along a reduced but possibly reducible singular sextic (with at worst ADE singularities) and the explicit classification of the singular fibers of the associated Beauville-Mukai system $\cM_\chi(K3)\to \IP^5=|\reg(2)|$, see Czapli\'nski \cite{Cz2018}. 

Let us sketch the structure of the article.  In \autoref{sect:generalitiesonshonc} we discuss some generalities concerning sheaves on curves. The next two sections are the technical heart of the work. In \autoref{sect:ADE} we completely classify stable sheaves on an extended ADE curve $C$ and in the following \autoref{sect:modulispacedescr} we give a description of the moduli space $\cM_\chi(C)$. In the last \autoref{sect:furtherremarks} we discuss, among other things, some codimension one strata in linear systems on K3 surfaces, Beauville-Mukai systems, characteristic cycles, and work of other authors related to our results.

\subsection*{Acknowledgements}
We would like to thank Andreas Knutsen for some advice concerning linear systems on K3 surfaces.

\section{Generalities concerning sheaves on curves}\label{sect:generalitiesonshonc}

\subsection{Picard varieties of curves}

In this subsection, a \emph{curve} is a purely one-dimensional scheme of finite type over $\IC$. In the later subsections, we will make stronger assumptions on our curves.

\begin{prop}\label{prop:Picgeneral}
 Let $C$ be a connected and projective curve with irreducible components $C_1,\dots, C_\ell$ (here, we mean the reduction of the components). Then, the Picard functor of $C$ is represented by a scheme $\Pic(C)$. Its connected component $\Pic^0(C)$ containing the point corresponding to the trivial line bundle consists of those line bundles whose restriction to every $C_i$ is of degree $0$. Let $C_i'\to C_i$ be the normalisation. Then, pull-back along the morphism $\coprod_{i=1}^\ell C_i'\to C$ gives an exact sequence of algebraic groups
\[
1\to G\to \Pic^0(C)\to \prod_{i=1}^\ell \Pic^0(C_i')\to 1,
\]
where $G$ is a smooth connected algebraic group of dimension $\ho^1(\reg_C)-\sum_{i=1}^\ell \ho^1(\reg_{C_i'})$.
\end{prop}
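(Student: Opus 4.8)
The plan is to take representability as known — for a projective scheme over a field the Picard functor is represented by a separated group scheme, locally of finite type, by Grothendieck's theory (see \cite{Neron}) — and to extract the remaining assertions from a cohomological analysis of the normalisation. Write $\nu\colon \widetilde C:=\coprod_{i=1}^\ell C_i'\to C$ for the composite of the normalisation $\coprod_i C_i'\to C_\red$ with the closed immersion $C_\red\hookrightarrow C$. Since $\nu$ is finite, the Leray spectral sequence gives $\Ho^1(C,\nu_*\reg_{\widetilde C}^*)=\Pic(\widetilde C)=\prod_{i=1}^\ell\Pic(C_i')$. Because we are in characteristic $0$, every algebraic group is smooth (Cartier), so $\Pic^0(C)$ is reduced and $\dim\Pic^0(C)=\dim_\IC T_0\Pic(C)=\ho^1(\reg_C)$, and likewise $\dim\Pic^0(C_i')=\ho^1(\reg_{C_i'})$. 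Hence, once the exact sequence is established, the dimension of $G$ is forced, and it only remains to check that $G$ is connected. I would factor the pull-back $\nu^*$ as $\Pic(C)\to\Pic(C_\red)\to\prod_i\Pic(C_i')$ and treat the two maps separately: the first encodes the non-reduced structure, the second the singularities of $C_\red$.

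For the second map, the sequence of unit sheaves on $C_\red$,
\[1\to\reg_{C_\red}^*\to\nu_*\reg_{\widetilde C}^*\to\mathcal Q\to 1,\]
has skyscraper cokernel $\mathcal Q$ supported at the finitely many non-normal points; thus $\Ho^1(\mathcal Q)=0$, and the long exact sequence shows that $\Pic(C_\red)\to\prod_i\Pic(C_i')$ is surjective with kernel $\coker\big(\Ho^0(\nu_*\reg_{\widetilde C}^*)\to\Ho^0(\mathcal Q)\big)$, a connected commutative affine group (a product of the local factors $\widetilde{\reg}_x^*/\reg_x^*$). For the first map — the genuinely new point compared with the reduced case — let $\cN\subset\reg_C$ be the nilradical. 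Lifting units along the nilpotent thickening gives a short exact sequence $1\to 1+\cN\to\reg_C^*\to\reg_{C_\red}^*\to 1$, and in characteristic $0$ the truncated logarithm identifies $1+\cN\cong\cN$ as sheaves of abelian groups. As $\dim C=1$ forces $\Ho^2=0$, the long exact sequence shows that $\Pic(C)\to\Pic(C_\red)$ is surjective with kernel a quotient of $\Ho^1(\cN)$, hence a connected unipotent vector group.

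Combining the two, $\nu^*\colon\Pic(C)\to\prod_i\Pic(C_i')$ is surjective and its kernel $K$ is an extension of two connected affine groups, hence connected; in particular $K\subseteq\Pic^0(C)$. To pin down the identity component, note that the multidegree $\underline{\deg}\colon\Pic(C)\to\IZ^\ell$ factors through $\nu^*$, so its kernel is $(\nu^*)^{-1}\big(\prod_i\Pic^0(C_i')\big)$, which by the surjectivity just proved sits in an exact sequence $1\to K\to(\nu^*)^{-1}(\prod_i\Pic^0(C_i'))\to\prod_i\Pic^0(C_i')\to 1$ and is therefore connected, being an extension of connected groups. As multidegree is locally constant, $\Pic^0(C)$ lies in this multidegree-zero subgroup; connectedness gives the reverse inclusion, so the two coincide. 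Restricting $\nu^*$ to $\Pic^0(C)$ then yields precisely $1\to G\to\Pic^0(C)\to\prod_i\Pic^0(C_i')\to 1$ with $G=K$, and the dimension formula follows from the characteristic-$0$ counts above. The step demanding the most care is the non-reduced contribution: one must verify that the logarithm genuinely is a morphism of sheaves of groups (so that $1+\cN$ is a vector group) and that the resulting kernel has the expected dimension $\ho^1(\reg_C)-\ho^1(\reg_{C_\red})=\ho^1(\cN)$, which I would confirm from the additive sequence $0\to\cN\to\reg_C\to\reg_{C_\red}\to 0$.
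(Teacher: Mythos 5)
Your argument is correct in substance, but it takes a genuinely different route from the paper, which offers no argument at all: the paper's entire proof is the citation of Thm.~8.2.3 and Cor.~9.2.11 \& 9.2.13 of \cite{Neron}, where representability, the multidegree-zero description of $\Pic^0$, and the exact sequence for pull-back to the normalisation are established in much greater generality and with careful attention to scheme structure. Your reconstruction over $\IC$ --- factoring $\nu^*$ through $\Pic(C_\red)$, killing the nilpotents with the truncated logarithm plus Grothendieck vanishing $\Ho^2=0$ on a curve, handling the singularities of $C_\red$ with the units sequence $1\to\reg_{C_\red}^*\to\nu_*\reg_{\widetilde C}^*\to\mathcal Q\to 1$, and identifying $\Pic^0(C)$ with the multidegree-zero subgroup by a connectedness argument --- is the standard hands-on proof of exactly those corollaries, and it buys transparency and self-containedness in characteristic~$0$, at the price of some care that the citation makes unnecessary: (a) your cohomological identifications must be carried out functorially (on $R$-points), so that, e.g., $\Ho^1(C,\cN)\to\Pic(C)$ is a morphism of algebraic groups and connectedness of kernels is connectedness of group schemes rather than of abstract groups; (b) your Leray step silently uses $R^1\nu_*\IG_m=0$ in the Zariski topology for the finite map $\nu$, which is true (a line bundle on a smooth curve trivialises near any finite set) but deserves a word; (c) the kernel of $\Pic(C_\red)\to\prod_i\Pic(C_i')$ is the quotient of $\bigoplus_x\widetilde{\reg}_x^*/\reg_x^*$ by the image of the global units of $\widetilde C$, not that product itself --- still connected, so harmless. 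Finally, your closing consistency check $\ho^1(\reg_C)-\ho^1(\reg_{C_\red})=\ho^1(\cN)$ is off by $\Ho^0$-terms for non-reduced connected curves (where $\ho^0(\reg_C)$ can exceed $1$, as happens for the curves in this paper), but nothing in your proof depends on it: the dimension of $G$ follows, as you say, directly from Cartier smoothness and $\dim T_0\Pic(C)=\ho^1(\reg_C)$.
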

\begin{proof}
See Thm.\ 8.2.3 and Cor.\ 9.2.11 \& 9.2.13 of \cite{Neron}.
\end{proof}

\begin{cor}\label{cor:Piciso}
 If $\ho^1(\reg_C)=\sum_{i=1}^\ell \ho^1(\reg_{C_i'})$, then the natural map $\Pic^0(C)\to \prod_{i=1}^\ell \Pic^0(C_i')$ is an isomorphism.
\end{cor}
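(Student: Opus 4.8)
The plan is to deduce the statement directly from the exact sequence of algebraic groups produced by \autoref{prop:Picgeneral}, so that the only substantive point is to verify that the hypothesis forces the kernel $G$ to be trivial.

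First I would invoke \autoref{prop:Picgeneral} to obtain the short exact sequence
\[
1\to G\to \Pic^0(C)\to \prod_{i=1}^\ell \Pic^0(C_i')\to 1,
\]
in which the map is the pull-back along $\coprod_{i=1}^\ell C_i'\to C$ and $G$ is a \emph{smooth connected} algebraic group of dimension $\ho^1(\reg_C)-\sum_{i=1}^\ell \ho^1(\reg_{C_i'})$. Under the standing hypothesis $\ho^1(\reg_C)=\sum_{i=1}^\ell \ho^1(\reg_{C_i'})$ this dimension vanishes, so $\dim G=0$. Since $G$ is moreover smooth and connected, it reduces to the single reduced point given by its identity, i.e.\ $G=\Spec\IC$ is the trivial group. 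Feeding this back into the exact sequence, the natural map has trivial kernel and is surjective, hence is an isomorphism of algebraic groups.

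The only point that deserves care — and what I expect to be the one (minor) obstacle — is to ensure that ``trivial kernel plus surjectivity'' yields an isomorphism of \emph{schemes} and not merely a bijection on points. Over the base field $\IC$ of characteristic $0$ this is automatic: by Cartier's theorem every group scheme of finite type is smooth, so the homomorphism is a surjective morphism of smooth varieties all of whose fibres are the reduced one-point cosets of $G$; it is therefore \'etale and bijective, and hence an isomorphism. This is the step in which the characteristic-$0$ assumption is genuinely used, as in positive characteristic one would additionally have to exclude an infinitesimal (inseparable) kernel before concluding.
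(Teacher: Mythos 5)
Your proposal is correct and is exactly the argument the paper intends: the corollary is stated without proof precisely because it follows immediately from the exact sequence of \autoref{prop:Picgeneral}, whose kernel $G$ is smooth, connected, and of dimension $\ho^1(\reg_C)-\sum_{i=1}^\ell \ho^1(\reg_{C_i'})=0$, hence trivial. Your extra remark on why a bijective homomorphism of group schemes is an isomorphism in characteristic $0$ is a sound (if tacit in the paper) point of care, not a deviation in approach.
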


\subsection{Duals of ideal sheaves of singular points}

In this subsection, let $C$ be a (not necessarily projective) Gorenstein curve. 
By \cite[Prop.\ 1.6]{Har--gen}  the Gorenstein condition implies that a sheaf $F$ on $C$ is reflexive if and only if it is purely one-dimensional, so we do not lose information by taking duals.

For a point $x\in C$ we denote the dual of its ideal sheaf $\cI_x=\cI_{x\hookrightarrow C}$ by 
\[
 \reg_C(x):=\cI_{x}^\vee\,.
\]
Furthermore, given a line bundle $L\in \Pic(C)$, we write 
\[
L(x):=L\otimes \reg_C(x)\cong\sHom(\cI_{x}, L)\,.
\]

\begin{lemma}\label{lem:Oxseq}
\begin{enumerate}
\item For every $x\in C$, the sheaf $\reg_C(x)$ is purely one-dimensional, and there is a   
non-split short exact sequence 
\begin{equation}\label{eq:Cxses}
 0\to \reg_C\to \reg_C(x)\to \reg_x\to 0\,.
\end{equation} 
\item Conversely, every purely one-dimensional sheaf $F$ fitting into a short exact sequence
 \[
  0\to \reg_C\to F\to \reg_x\to 0
 \]
is already of the form $F\cong \reg_C(x)$. 
\end{enumerate}

\end{lemma}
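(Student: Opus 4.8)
The plan is to deduce both statements by feeding short exact sequences into the duality functor $(-)\dual=\sHom(-,\reg_C)$ and reading off the resulting $\sExt$-terms; the Gorenstein hypothesis is exactly what makes those terms computable. Two local inputs will be used throughout. First, the elementary identifications $\sHom(\reg_C,\reg_C)\cong\reg_C$, $\sExt^{>0}(\reg_C,\reg_C)=0$, and $\sHom(\reg_x,\reg_C)=0$ (a length-$1$ skyscraper admits no nonzero map into the pure sheaf $\reg_C$). Second, the Gorenstein duality computation $\sExt^1(\reg_x,\reg_C)\cong\reg_x$: since $\omega_C$ is a line bundle, $\sExt^1(\reg_x,\reg_C)\cong\sExt^1(\reg_x,\omega_C)\otimes\omega_C^{-1}$, and Grothendieck--Serre duality for the codimension-$1$ Cohen--Macaulay sheaf $\reg_x$ identifies $\sExt^1(\reg_x,\omega_C)$ with the length-preserving dual of $\reg_x$, hence with $\reg_x$ itself, while tensoring a length-$1$ skyscraper by a line bundle changes nothing.

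For part (i) I would apply $(-)\dual$ to the structure sequence $0\to\cI_x\to\reg_C\to\reg_x\to0$, obtaining
\[
0\to\sHom(\reg_x,\reg_C)\to\reg_C\to\reg_C(x)\to\sExt^1(\reg_x,\reg_C)\to\sExt^1(\reg_C,\reg_C).
\]
By the inputs above the two outer terms vanish and the middle cokernel is $\sExt^1(\reg_x,\reg_C)\cong\reg_x$, which is precisely the sequence \eqref{eq:Cxses}. Purity of $\reg_C(x)$ is then immediate, as it is a dual sheaf, hence reflexive, hence purely one-dimensional by the criterion recalled just before the lemma. Non-splitting is formal: a splitting would realise the nonzero torsion sheaf $\reg_x$ as a subsheaf of the pure sheaf $\reg_C(x)$, which is impossible.

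For part (ii) I would dualise the given sequence $0\to\reg_C\to F\to\reg_x\to0$. The extra ingredient here is that $F$, being purely one-dimensional on a Gorenstein (hence Cohen--Macaulay) curve, is maximal Cohen--Macaulay, so $\sExt^{>0}(F,\reg_C)=0$. Consequently the long exact sequence collapses to
\[
0\to F\dual\to\reg_C\xrightarrow{\ \partial\ }\sExt^1(\reg_x,\reg_C)\to0,
\]
with $\partial$ surjective. Since $\sExt^1(\reg_x,\reg_C)\cong\reg_x$ is a length-$1$ skyscraper at $x$, the surjection $\partial\colon\reg_C\to\reg_x$ is a scalar multiple of the canonical quotient, whose kernel is $\cI_x$; hence $F\dual\cong\cI_x$. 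Dualising once more and using that the pure sheaf $F$ is reflexive yields $F\cong F^{\vee\vee}\cong\cI_x\dual=\reg_C(x)$, as claimed.

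The main obstacle — and essentially the only place the Gorenstein hypothesis is genuinely used — is the identification $\sExt^1(\reg_x,\reg_C)\cong\reg_x$ together with the vanishing $\sExt^{>0}(F,\reg_C)=0$ for pure $F$; once these are established, both statements reduce to bookkeeping in the two long exact sequences. I would take care to confirm that the connecting maps are surjective (this follows each time from the vanishing of the subsequent $\sExt^1$-term) and that a length-$1$ quotient of $\reg_C$ supported at $x$ has no kernel other than $\cI_x$, so that the identification $F\dual\cong\cI_x$ is forced rather than merely determined up to ambiguity.
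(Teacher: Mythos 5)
Your proof is correct, but it takes a genuinely different route from the paper's, most notably in part (ii). For part (i) the paper simply cites Hartshorne's generalized-divisors paper (Prop.\ 2.8 \& 2.10), whereas you prove the statement directly by dualizing the structure sequence $0\to\cI_x\to\reg_C\to\reg_x\to 0$; the content is the same, just inlined. For part (ii) the two arguments diverge: the paper argues globally --- purity of $F$ forces the sequence to be non-split, Serre duality on the Gorenstein curve gives $\ext^1(\reg_x,\reg_C)=1$, so there is only one isomorphism class of middle terms of non-split extensions, and part (i) exhibits $\reg_C(x)$ as one of them --- while you argue locally, dualizing the given sequence and using the MCM vanishing $\sExt^{>0}(F,\reg_C)=0$ together with $\sExt^1(\reg_x,\reg_C)\cong\reg_x$ to identify $F\dual\cong\cI_x$, then concluding via reflexivity $F\cong F^{\vee\vee}\cong\reg_C(x)$. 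Your version is self-contained (part (ii) does not even use part (i) as an input) and isolates exactly where the Gorenstein hypothesis enters, at the cost of invoking local duality twice; the paper's version is shorter, trading those computations for the one-dimensionality of the global Ext group and the standard fact that proportional extension classes have isomorphic middle terms. One small point of phrasing: in part (i) you justify purity by ``it is a dual sheaf, hence reflexive, hence pure,'' which runs the paper's criterion (pure $\iff$ reflexive) in a direction that itself needs justification, since duals are not reflexive on arbitrary schemes; the clean statement is that $\sHom(\cI_x,\reg_C)$, being a sheaf of homomorphisms into the pure sheaf $\reg_C$, is itself pure, and reflexivity then follows from the criterion. This is harmless here but worth stating in the right order.
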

\begin{proof}
Part (i) is \cite[Prop.\ 2.8 \& 2.10]{Har--gen}.
By the purity of $F$, the sequence of part (ii) 
\[
  0\to \reg_C\to F\to \reg_x\to 0
 \]
is non-splitting. By Serre-duality on a Gorenstein curve, $\ext^1(\reg_x,\reg_C)=1$. Hence, there is only one isomorphism class of sheaves which fit into such a non-split exact sequence. Hence, by part (i), $F\cong \reg_C(x)$.
\end{proof}

\begin{lemma}\label{lem:Cxsing}
 The sheaf $\reg_C(x)$ is a line bundle if and only if $x$ is a smooth point of $C$.
\end{lemma}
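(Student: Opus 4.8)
The statement is local at $x$: away from $x$ one has $\cI_x=\reg_C$ and hence $\reg_C(x)=\reg_C$, which is a line bundle there, so only the behaviour at the stalk of $x$ matters. I would therefore pass to the local ring $A=\reg_{C,x}$, a one-dimensional Gorenstein local ring with maximal ideal $\fm$, and record that the stalk of the ideal sheaf is $\cI_{x,x}=\fm$, so that $\reg_C(x)_x=\Hom_A(\fm,A)$.

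The easy implication is direct: if $x$ is smooth then $A$ is a discrete valuation ring, $\fm=(t)$ is free of rank one, and therefore $\reg_C(x)_x=\Hom_A(\fm,A)\cong A$ is free of rank one. Combined with the first remark, $\reg_C(x)$ is then a line bundle on all of $C$.

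For the converse I would argue by dualising rather than by manipulating the sequence \eqref{eq:Cxses}. Suppose $\reg_C(x)=\cI_x^\vee$ is locally free near $x$; then so is its dual $\cI_x^{\vee\vee}$. Now $\cI_x$ is a nonzero subsheaf of the pure one-dimensional sheaf $\reg_C$, hence is itself purely one-dimensional, so by the Gorenstein reflexivity criterion \cite[Prop.\ 1.6]{Har--gen} recalled above it is reflexive, i.e. $\cI_x\cong\cI_x^{\vee\vee}$. Consequently $\cI_x$ is a line bundle near $x$, which means the stalk $\fm=\cI_{x,x}$ is a free $A$-module of rank one, and in particular $\fm$ is generated by a single element. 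In a Noetherian local ring of dimension one this forces $\dim_k(\fm/\fm^2)\le 1=\dim A$, hence equality, so $A$ is regular and $x$ is a smooth point of $C$. This establishes the contrapositive: at a singular point $\reg_C(x)$ cannot be locally free.

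The only genuine subtlety is the converse, and its load-bearing input is the reflexivity of $\cI_x$: one needs to know that dualising twice recovers $\cI_x$ itself rather than a strictly larger sheaf, and this is exactly where the Gorenstein hypothesis (through the cited identification of reflexive with purely one-dimensional sheaves) enters. The remaining ingredients---the reduction to the stalk, the DVR computation, and the implication ``principal maximal ideal in dimension one $\Rightarrow$ regular''---are routine. One could alternatively extract the converse from the short exact sequence \eqref{eq:Cxses} by a length count at $x$, but the duality argument is cleaner and makes the role of the Gorenstein hypothesis transparent.
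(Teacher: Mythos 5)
Your proof is correct, and for the nontrivial direction it takes a genuinely different route from the paper's. The paper argues cohomologically: at a singular point, $\ext^1(\reg_x,\reg_x)=\dim T_{C,x}>1$, and applying $\Hom(\_,\reg_x)$ to the sequence \eqref{eq:Cxses} gives the exact sequence $\IC=\Hom(\reg_C,\reg_x)\to\Ext^1(\reg_x,\reg_x)\to\Ext^1(\reg_C(x),\reg_x)\to 0$, forcing $\Ext^1(\reg_C(x),\reg_x)\neq 0$, which cannot happen for a line bundle. You argue instead by biduality: if $\reg_C(x)=\cI_x^\vee$ is a line bundle then so is $\cI_x^{\vee\vee}$, and since $\cI_x$ is purely one-dimensional (a subsheaf of $\reg_C$, which is pure because the Gorenstein curve $C$ is Cohen--Macaulay), the reflexivity criterion \cite[Prop.~1.6]{Har--gen} --- the same fact the paper invokes for $\cI_x$ in \autoref{rem:End} --- yields $\cI_x\cong\cI_x^{\vee\vee}$; hence $\fm$ is principal and $A$ is regular. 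Your version is homologically more elementary and makes the role of the Gorenstein hypothesis (reflexivity of pure sheaves) completely explicit, at the cost of importing that criterion; the paper's version needs only the sequence \eqref{eq:Cxses} it has just constructed plus the identification of $\ext^1(\reg_x,\reg_x)$ with the tangent space dimension, and it exhibits a concrete Ext-obstruction to local freeness. Both arguments are local and in fact show the sharper statement that at a singular point $x$ the sheaf $\reg_C(x)$ fails to be locally free exactly at $x$, which is what \autoref{thm: easy classification} asserts. The smooth direction differs only cosmetically: your DVR stalk computation versus the paper's observation that a smooth point is a Cartier divisor.
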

\begin{proof}
 A smooth point is a Cartier divisor. Hence, the associated sheaf is a line bundle.

 If $x$ is a singular point, we have $\ext^1(\reg_x,\reg_x)=\dim T_{C,x}>1$. Applying $\Hom(\_, \reg_x)$ to \eqref{eq:Cxses} gives an exact sequence 
 \[
  \IC=\Hom(\reg_C,\reg_x)\to \Ext^1(\reg_x,\reg_x)\to \Ext^1(\reg_C(x),\reg_x)\to 0\,.
 \]
This yields $\Ext^1(\reg_C(x),\reg_x)\neq 0$ which cannot happen for a line bundle. 
\end{proof}

\begin{lemma}\label{lem:lbcrit}
 Let $F$ be a purely one-dimensional sheaf on $C$. If there is a point $x\in C_{\mathsf{sing}}$ such that $F_{\mid C\setminus\{x\}}$ is a line bundle, and $F_{\mid U}\cong \reg_U(x)$ for some open neighbourhood $x\in U\subset C$, then there exists an $L\in \Pic(C)$ such that $F\cong L(x)$.  
\end{lemma}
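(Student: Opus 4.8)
The plan is to reduce everything to \autoref{lem:Oxseq}(ii) by exhibiting an explicit line bundle $L\subseteq F$ whose quotient is the skyscraper $\reg_x$, and then twisting the resulting extension back to the standard shape $0\to\reg_C\to(-)\to\reg_x\to 0$. So the real content is the construction of $L$, after which the identification of $F$ is formal.

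First I would build $L$ by gluing two local pieces over the open cover $C=U\cup V$, where $V:=C\setminus\{x\}$. On $V$ the sheaf $F$ is already a line bundle by hypothesis, so I set $L|_V:=F|_V$. On $U$ I fix an isomorphism $\phi\colon F|_U\xrightarrow{\sim}\reg_U(x)$ and let $L_U:=\phi^{-1}(\reg_U)\subseteq F|_U$ be the preimage of the canonical subsheaf $\reg_U\subset\reg_U(x)$ coming from \eqref{eq:Cxses}; thus $L_U\cong\reg_U$ is a line bundle. The crucial compatibility on the overlap $U\cap V=U\setminus\{x\}$ stems from the fact that $\cI_x=\reg_C$ away from $x$, so the canonical map $\reg_C\to\reg_C(x)$ is an isomorphism on $C\setminus\{x\}$; hence $L_U$ and $F$ agree as subsheaves of $F$ over $U\setminus\{x\}$. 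This lets me glue $L_U$ and $F|_V$ to a subsheaf $L\hookrightarrow F$ that is locally free of rank $1$ on each member of the cover, hence $L\in\Pic(C)$.

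By construction the cokernel $F/L$ vanishes on $V$ and equals $\reg_U(x)/\reg_U\cong\reg_x$ on $U$, giving a short exact sequence $0\to L\to F\to\reg_x\to 0$. Tensoring with the invertible sheaf $L^{-1}$ — which is exact, preserves purity, and is trivial near $x$ so that $\reg_x\otimes L^{-1}\cong\reg_x$ — yields $0\to\reg_C\to F\otimes L^{-1}\to\reg_x\to 0$ with $F\otimes L^{-1}$ still purely one-dimensional. Then \autoref{lem:Oxseq}(ii) forces $F\otimes L^{-1}\cong\reg_C(x)$, and tensoring back with $L$ gives $F\cong L\otimes\reg_C(x)=L(x)$, as desired.

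The one genuinely delicate point is the gluing step: I must verify that the two local descriptions of $L$ coincide on the overlap as subsheaves of $F$, which rests entirely on the canonical inclusion $\reg_C\hookrightarrow\reg_C(x)$ being an isomorphism away from $x$. Everything else — that $L$ is a line bundle, that the cokernel is exactly $\reg_x$, and that tensoring by a line bundle commutes with the whole diagram — is routine once this compatibility is in place.
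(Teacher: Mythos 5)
Your proof is correct, but it takes a genuinely different route from the paper's. The paper disposes of this lemma in one line by invoking Hartshorne's theory of generalized divisors: it cites \cite[Prop.\ 2.12]{Har--gen} together with the observation that $\APic(C)=\Pic(C)$ for a curve (on a one-dimensional scheme an almost Cartier divisor is Cartier, so the ``divisor class'' of $F$ minus the point $x$ is represented by an honest line bundle). You instead give a self-contained argument: you build the line bundle $L$ explicitly by gluing $F_{\mid C\setminus\{x\}}$ with the subsheaf $\phi^{-1}(\reg_U)\subset F_{\mid U}$, where $\phi\colon F_{\mid U}\isomor\reg_U(x)$ is a local trivialisation of the singularity type; the gluing is legitimate precisely because the canonical map $\reg_C\to\reg_C(x)$ is an isomorphism away from $x$, so both candidate subsheaves equal all of $F$ on the overlap $U\setminus\{x\}$. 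This yields $0\to L\to F\to\reg_x\to 0$, and twisting by $L^{-1}$ (which is exact, preserves purity, and fixes the skyscraper) reduces the claim to \autoref{lem:Oxseq}(ii). Each step checks out, including the cokernel computation ($F/L$ is supported at $x$ with one-dimensional stalk, hence is $\reg_x$) and the applicability of \autoref{lem:Oxseq}(ii) to $F\otimes L^{-1}$. What your approach buys is independence from the external machinery of $\APic$ and generalized divisors: everything rests on \autoref{lem:Oxseq}, which the paper proves directly. What the paper's approach buys is brevity and consistency with the framework it uses elsewhere (e.g.\ \cite[Prop.\ 2.8, 2.10]{Har--gen} in \autoref{lem:Oxseq} and the linear-equivalence arguments in \autoref{lem:dphi}); in effect your gluing construction is an unwinding, in the special case of a single singular point, of the proof that locally principal divisor classes on a curve come from $\Pic$.
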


\begin{proof}
This follows from \cite[Prop.\ 2.12]{Har--gen} noting that $\APic(C)=\Pic(C)$ for a curve.
\end{proof}

For two types of singularities $x\in C$, we need a more detailed description of $\reg_C(x)$, given by the following two lemmas.

\begin{lemma}\label{lem:End1}
Let $m\geq 2$ and $C=mD$ where $D$ is a smooth divisor in a smooth surface $X$, and let $x\in C$.
\begin{enumerate}
 \item The natural map $\sEnd_{\reg_C}(\cI_x)\to \reg_C(x)$, given by post-composition of endomorphisms with the embedding $\cI_x\to \reg_C$, is an isomorphism.
\item The $\reg_C$-algebra $\sEnd_{\reg_C}(\cI_x)$ is commutative and satisfies $\sEnd_{\reg_C}(\cI_x)_{\mathsf{red}}\cong \reg_{C_{\mathsf{red}}}$.
 \item Let $E\subset X$ be another smooth divisor intersecting $D$ transversally in $x$. Let $\zeta=mD\cap E$ be the scheme-theoretic intersection. Then, there is no surjective $\reg_C$-linear morphism $\reg_C(x)\to \reg_\zeta$.  
\end{enumerate}
\end{lemma}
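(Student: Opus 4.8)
The plan is to reduce all three assertions to an explicit computation in the completed local ring at $x$, since everything is local and, in (iii), the target sheaf is supported at $x$. Choose local coordinates $(s,t)$ on $X$ at $x$ so that $D=\{t=0\}$; then $C=mD$ has local ring $A:=\IC[[s,t]]/(t^m)$, the point $x$ is the origin, and $\cI_x$ is locally the maximal ideal $I=(s,t)\subset A$. First I would write down a free presentation of $I$: analysing relations $as+bt=0$ in $A$ shows that $I$ is generated by $s,t$ subject precisely to the Koszul relation $t\cdot s-s\cdot t$ and the relation $t^{m-1}\cdot t=0$ coming from $t^m=0$, giving a presentation with relation matrix $\left(\begin{smallmatrix} t & 0 \\ -s & t^{m-1}\end{smallmatrix}\right)$. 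Dualising then yields
\[
\reg_C(x)=\sHom_A(I,A)=\{(p,q)\in A^2 : tp=sq,\ t^{m-1}q=0\},
\]
where $(p,q)$ records $(\phi(s),\phi(t))$.

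For part (i), the post-composition map is the inclusion $\sEnd_A(I)=\Hom_A(I,I)\hookrightarrow\Hom_A(I,A)=\reg_C(x)$, which is automatically injective, so I only need surjectivity, i.e.\ that every such $\phi$ already lands in $I$. Reducing $tp=sq$ modulo $t$ forces $q\in(t)$, and writing $q=tq'$ and feeding this back into $tp=sq$ gives $p\in(s,t^{m-1})$; since $m\geq 2$, both $p$ and $q$ lie in $I$, as required. This same description makes part (ii) transparent: I would exhibit $\reg_C(x)$ as generated over $A$ by the identity $e=(s,t)$ and the single extra element $f=(t^{m-1},0)$, and record the structure constants $f^2=0$, $tf=0$, together with the relation $t^{m-1}e=sf$. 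Since $\reg_C$ is central in any endomorphism algebra and the algebra is generated by $\reg_C$ and the one element $f$, it is commutative; its nilradical is $(t,f)$, so $\sEnd_{\reg_C}(\cI_x)_{\red}=A/(t)=\reg_{C_{\red}}$ locally, and these local identifications glue.

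Part (iii) is where the real content lies, and I expect it to be the main obstacle. By transversality I may take $E=\{s=0\}$, so $\reg_\zeta=A/(s)\cong\IC[t]/(t^m)$ with $s$ acting as zero. Any $\reg_C$-linear map $\varphi\colon\reg_C(x)\to\reg_\zeta$ is determined by $u:=\varphi(e)$ and $w:=\varphi(f)$, and its image is the submodule they generate. The decisive point is that $\varphi$ must respect the two relations above: from $tf=0$ I get $tw=0$, forcing $w\in\mathrm{ann}_{\reg_\zeta}(t)=\IC\cdot t^{m-1}$; and from $t^{m-1}e=sf$ together with $s\cdot\reg_\zeta=0$ I get $t^{m-1}u=0$, forcing $u\in(t)\reg_\zeta$. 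Since $m\geq 2$, both $t^{m-1}$ and $(t)\reg_\zeta$ lie in the maximal ideal $\fm\,\reg_\zeta=(t)\reg_\zeta$, so $u,w\in\fm\,\reg_\zeta$; by Nakayama's lemma they cannot generate the cyclic module $\reg_\zeta$, and hence $\varphi$ cannot be surjective.

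The one subtlety I would watch is the bookkeeping of the relation $t^{m-1}e=sf$: it is exactly the interaction between the generator $f$, which sees the nilpotent direction $t$ along $D$, and the fact that $\reg_\zeta$ lives in the transverse direction $E=\{s=0\}$, that kills surjectivity. Establishing this relation correctly, and confirming that $e$ and $f$ really generate $\reg_C(x)$ so that no additional generators can rescue surjectivity, is the crux; once the local module structure from part (i) is in hand, the Nakayama argument is short. I would also note that reducing to the local (complete) setting is legitimate because $\reg_\zeta$ is a skyscraper supported at $x$, so any sheaf surjection onto it is detected on the stalk at $x$.
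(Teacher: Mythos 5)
Your proposal is correct and follows essentially the same route as the paper: the same local model ($A=\IC[[s,t]]/(t^m)$ versus the paper's $\IC[u,v]/(v^m)$), the same two generators $e=\id$ and $f=\phi\colon s\mapsto t^{m-1},\ t\mapsto 0$ of $\reg_C(x)$, the same relations $tf=0$, $sf=t^{m-1}e$, $f^2=0$, and the same use of these relations in (iii) to trap the image of any map to $\reg_\zeta$ inside the maximal ideal. The only cosmetic differences are that you establish generation by an explicit free presentation and dualisation where the paper invokes the sequence $0\to\reg_C\to\reg_C(x)\to\reg_x\to 0$, and that you finish (iii) with Nakayama where the paper simply observes the image lies in $(v)$.
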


\begin{proof}
 As the assertions are local, we can assume that $C=\Spec A$, with $A=\frac{\IC[u,v]}{(v^m)}$, and $x=V(\fm)$ with $\fm=(u,v)$. Then $\reg_C(x)=\Hom_A(\fm, A)$ is spanned, as an $A$-module, by the inclusion $\id\colon \fm\to A$ together with the homomorphism
 \[
  \phi\colon\quad u\mapsto v^{m-1}\quad,\quad v\mapsto 0\,.
 \]
Indeed, one checks that $\phi$ is well defined, and the sequence $0\to \reg_C\to \reg_C(x)\to \reg_x\to 0$ 
shows that any homomorphism which is not a multiple of $\id$ suffices to generate $\reg_C(x)$. As both, $\id$ and $\phi$ map $\fm$ to itself, part (i) follows. The relations
\[
 v\phi=0\quad,\quad u\phi= v^{m-1}\id\quad,\quad \phi^2=0
\]
give an isomorphism of $A$-algebras
\[
 \End_A(\fm)\cong \frac{\IC[u,v,\phi]}{(v^m,v\phi, u\phi-v^{m-1},\phi^2)}\,,
\]
which proves part (ii). For part (iii), we may assume that $\zeta=\Spec(B)$ with $B=\frac{\IC[u,v]}{(u,v^m)}$. 
Let $\alpha\colon \End_A(\fm)\to B$ be $A$-linear. The relation $v\phi=0$ shows that $\alpha(\phi)\in (v^{m-1})$. Furthermore, the relation $u\phi= v^{m-1}\id$ shows that 
\[
v^{m-1}\alpha(\id)=u\alpha(\phi)=0\,, 
\]
hence $\alpha(\id)\in (v)$. In summary, the image of $\alpha$ is contained in $(v)$, which means that $\alpha$ is not surjective.
\end{proof}

\begin{lemma}\label{lem:End2}
Let $D, E$ be two smooth divisors in a smooth surface $X$ intersecting transversally in a point $x\in X$.
Let $C:=mD+nE$ for some $m,n\in \IN$. 
\begin{enumerate}
 \item The natural map $\sEnd_{\reg_C}(\cI_x)\to \reg_C(x)$ is an isomorphism.
 \item The $\reg_C$-algebra $\sEnd_{\reg_C}(\cI_x)$ is commutative. 
 \item If $(m,n)\neq(1,1)$, we have $\sEnd_{\reg_C}(\cI_x)_{\mathsf{red}}\cong \reg_{C_{\mathsf{red}}}$. 
 \item If $(m,n)=(1,1)$, we have $\sEnd_{\reg_C}(\cI_x)\cong \reg_{D}\times \reg_E$. 
\end{enumerate}
\end{lemma}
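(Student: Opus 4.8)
The plan is to reduce to a local computation and realise $\sEnd_{\reg_C}(\cI_x)$ as a subring of the total ring of fractions, exactly as in the proof of \autoref{lem:End1}. Since every assertion is local at $x$, I would choose coordinates so that $C=\Spec A$ with $A=\IC[u,v]/(u^m v^n)$, $D=V(u)$, $E=V(v)$, $x=V(\fm)$ and $\fm=(u,v)$. As $A$ is a hypersurface it is Cohen--Macaulay with minimal primes $(u),(v)$ and no embedded primes, so its total ring of fractions is the Artinian ring
\[
K=A_{(u)}\times A_{(v)},\qquad A_{(u)}\cong\IC(v)[u]/(u^m),\quad A_{(v)}\cong\IC(u)[v]/(v^n).
\]
Because $v$ is a unit in $A_{(u)}$ and $u$ is a unit in $A_{(v)}$, one has $\fm\otimes_A K\cong K$; and since $\fm=\cI_x$ is torsion free, the localisation map $\sEnd_A(\fm)\hookrightarrow\sEnd_K(K)\cong K$ is injective, which already gives the commutativity asserted in part (ii). Under this embedding, $\sEnd_A(\fm)=(\fm:\fm)$ and $\reg_C(x)=\Hom_A(\fm,A)=(A:\fm)$ are the obvious colon subrings of $K$.

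For part (i) I would argue just as in \autoref{lem:End1}. The inclusion $\cI_x\hookrightarrow\reg_C$ identifies the natural map with the inclusion $(\fm:\fm)\subseteq(A:\fm)$, and by \autoref{lem:Oxseq} the sequence \eqref{eq:Cxses} shows $\reg_C(x)/\reg_C\cong\reg_x$ has length one. Hence it suffices to produce a single $\phi\in\Hom_A(\fm,A)$ which is not a multiple of the inclusion $\id$ and which maps $\fm$ into $\fm$. I would take $\phi$ to be multiplication by $(u^{m-1}v^{n-1},0)\in K$, that is, the $A$-linear map with $\phi(u)=0$ and $\phi(v)=u^{m-1}v^n$. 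A direct check gives $\phi(\fm)\subseteq\fm$ and $\phi\notin A\cdot\id$, so $\{\id,\phi\}$ generates $\reg_C(x)$; as both generators land in $\fm$, so does every element, whence $(\fm:\fm)=(A:\fm)$.

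For parts (iii) and (iv) I would reduce the chain $A\subseteq R:=\sEnd_{\reg_C}(\cI_x)\subseteq K$ modulo nilpotents. Here $K_\red=\IC(v)\times\IC(u)$, the node $A_\red=\reg_{C_\red}=\IC[u,v]/(uv)$ embeds as $\{(g,h):g(0)=h(0)\}$, and its normalisation is $\reg_D\times\reg_E=\IC[v]\times\IC[u]$, a length-one extension of $A_\red$. Being module-finite over $A_\red$ and contained in the total quotient ring $K_\red=Q(A_\red)$, the ring $R_\red$ is integral over $A_\red$, so it equals either $A_\red$ or $\reg_D\times\reg_E$, and which one is decided by the image $\phi_\red$ of $\phi$ in $K_\red$. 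One computes $\phi_\red=(\overline{u^{m-1}v^{n-1}},0)$, whose second coordinate vanishes and whose first coordinate is $0$ if $m\geq 2$ and $v^{n-1}$ if $m=1$. When $(m,n)\neq(1,1)$ this is a polynomial in $v$ with vanishing constant term, hence lies in $A_\red$, giving $R_\red=A_\red=\reg_{C_\red}$ and part (iii). When $(m,n)=(1,1)$ instead $\phi_\red=(1,0)$ is a nontrivial idempotent, so $R_\red=R=\reg_D\times\reg_E$, which is part (iv).

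The main obstacle is part (iii): in contrast to \autoref{lem:End1}, the extra generator need not be nilpotent — precisely in the asymmetric case $m=1$, $n\geq 2$ one finds $\phi^2=v^{n-1}\phi$ rather than $\phi^2=0$ — so one cannot simply invoke nilpotence of $\phi$. What rescues the argument is that, although $\phi$ may survive in $R$, its image in $K_\red$ still lands in the node $A_\red$; tracking the two branches separately inside $K=A_{(u)}\times A_{(v)}$, rather than working with the single polynomial relation satisfied by $\phi$, is what lets me treat all of $\tilde D_n$ and $\tilde E_n$ uniformly.
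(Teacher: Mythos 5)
Your proof is correct, and while part (i) follows the same skeleton as the paper's argument (produce one extra homomorphism $\phi$ preserving $\fm$ and use the length-one quotient $\reg_C(x)/\reg_C\cong\reg_x$ from \autoref{lem:Oxseq} to conclude that $\id$ and $\phi$ generate), your treatment of (ii)--(iv) is genuinely different. The paper stays with explicit presentations: it assumes WLOG $m\ge n$ and chooses the extra generator supported on the branch of \emph{larger} multiplicity (in its coordinates, $\phi\colon u\mapsto u^nv^{m-1}$, $v\mapsto 0$); commutativity then follows from generation by $\id$ and $\phi$, part (iii) follows because this $\phi$ satisfies $\phi^2=0$ whenever $(m,n)\neq(1,1)$, and part (iv) is read off from the presentation $\End_A(\fm)\cong\IC[u,v,\phi]/(uv,\,v\phi,\,u\phi-u,\,\phi^2-\phi)=\IC[u,v,\phi]/\bigl((u,\phi)\cdot(v,\phi-1)\bigr)$. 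You instead embed everything into the total ring of fractions $K=A_{(u)}\times A_{(v)}$, which yields commutativity for free, and you settle (iii)/(iv) via the dichotomy $A_{\red}\subseteq R_{\red}\subseteq \reg_D\times\reg_E$ coming from integrality and the length-one normalisation of the node, deciding between the two ends by computing $\phi_{\red}$ branch-wise. In particular, your ``main obstacle'' --- the non-nilpotent generator with $\phi^2=v^{n-1}\phi$ when $m=1<n$ --- is an artifact of your fixed choice of $\phi$ (always supported on the $D$-branch): had you, like the paper, placed the generator on the branch of larger multiplicity, it would again be nilpotent and (iii) would be immediate. What your route buys is uniformity (no WLOG, no case-adapted generator) and a structural explanation of (iv), namely that the nontrivial idempotent $(1,0)\in K_{\red}$ survives in $\sEnd(\cI_x)$ exactly when $(m,n)=(1,1)$ and splits it into the two branches; the cost is invoking total quotient rings and integral closure where the paper only manipulates explicit relations.
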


\begin{proof}
We proceed analogously to the proof of \autoref{lem:End1}. We assume that $C=\Spec A$, with $A=\frac{\IC[u,v]}{(u^nv^m)}$, and $x=V(\fm)$ with $\fm=(u,v)$. Let us assume without loss of generality that $m\ge n$. Then $\reg_C(x)=\Hom_A(\fm, A)$ is spanned, as an $A$-module, by the inclusion $\id\colon \fm\to A$ together with the homomorphism
\[
  \phi\colon\quad u\mapsto u^nv^{m-1}\quad,\quad v\mapsto 0\,.
 \]
As both, $\id$ and $\phi$ map $\fm$ to itself, part (i) follows. As $\phi$ commutes with $\id$ and all its multiples, we get part (ii). 

If $(m,n)\neq (1,1)$, then $\phi$ is nilpotent. This gives part (iii). If $m=1=n$, then $\phi$ is not nilpotent, but $\phi^2=\phi$. We get an isomorphism 
\[
 \End_A(\fm)\cong \frac{\IC[u,v,\phi]}{(uv,v\phi, u\phi-u,\phi^2-\phi)}= \frac{\IC[u,v,\phi]}{(u,\phi)\cdot(v,\phi-1)} 
\]
which proves (iv).
\end{proof}

\begin{remark}\label{rem:End}
By \cite[Prop.\ 1.6]{Har--gen}, the sheaf $\cI_x$ is reflexive. It follows that $\sEnd(\cI_x)\cong \sEnd\bigl(\reg_C(x)\bigr)$.  
\end{remark}

\subsection{Pure sheaves on divisorial curves}\label{subsect:purediv}
In this subsection, on a smooth projective surface $X$ we consider possibly reducible and non-reduced curves. The curves are still assumed to be Gorenstein, hence without embedded points. We identify such a curve $C$ with the corresponding Weil divisor and write
\begin{equation*}
C = \sum_{i = 0}^ \ell m_i C_i \subset X, 
\end{equation*}
where  $C_0,\dots, C_\ell$ are the  irreducible components of $C$ and  $m_0,\dots, m_\ell$ their multiplicities. We assume that all the components $C_i$ are smooth.

Most of the results in this section are true in a more general context, but we did not feel the need to strive for maximal generality.

The following is an easy but useful observation.
\begin{lemma}\label{lem: decomposition sequence}
For any decomposition $C = A + B$ there are two short exact sequences
 \begin{align*}
  0 \to \ko_A(-B) \to \ko_C \to \ko_B\to 0\,,\\
  0 \to \ko_B(-A) \to \ko_C \to \ko_A\to 0\,.
 \end{align*}
The second map in each sequence is the restriction, and the two sequences are dual in the sense that they arise from each other by applying $\sHom_{\reg_C}(\_,\reg_C)$.
\end{lemma}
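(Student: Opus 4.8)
The plan is to construct the first sequence directly from the standard structure sequences on the ambient smooth surface $X$, obtain the second one by the evident symmetry $A\leftrightarrow B$, and then verify the duality by a local $\reg_C$-linear computation.

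First I would use that on a smooth surface every effective divisor is Cartier, so that $\reg_X(-C)=\reg_X(-A)\otimes\reg_X(-B)$ and in particular $\reg_X(-C)\subseteq\reg_X(-B)$. The induced map $\ko_C=\reg_X/\reg_X(-C)\to\reg_X/\reg_X(-B)=\ko_B$ is exactly the restriction, it is surjective, and its kernel is $\reg_X(-B)/\reg_X(-C)$. Tensoring the structure sequence $0\to\reg_X(-A)\to\reg_X\to\reg_A\to0$ with the (flat) line bundle $\reg_X(-B)$ identifies this kernel with $\reg_X(-B)\otimes\reg_A=\ko_A(-B)$, which yields the first sequence. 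Exchanging the roles of $A$ and $B$ gives the second one.

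For the duality I would apply $\sHom_{\reg_C}(\_,\reg_C)$ to the first sequence and check locally that the result is the second. On an affine patch I may write $\reg_C=\reg_X/(ab)$, where $a,b$ are local equations of $A,B$, so that $\ko_B=\reg_C/(b)$ and $\ko_A(-B)=\ko_A$ is (up to the global twist) the ideal $(b)/(ab)\cong\reg_C/(a)\reg_C$. Since $\reg_X$ is an integral domain one computes $\operatorname{Ann}_{\reg_C}(b)=(a)\reg_C$ and $\operatorname{Ann}_{\reg_C}(a)=(b)\reg_C$; this gives $\sHom_{\reg_C}(\ko_C,\reg_C)=\reg_C=\ko_C$, $\sHom_{\reg_C}(\ko_B,\reg_C)=(a)\reg_C=\ko_B(-A)$, and $\sHom_{\reg_C}(\ko_A(-B),\reg_C)=\ko_A$, recording the twists globally as above. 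Exactness on the right needs $\sExt^1_{\reg_C}(\ko_B,\reg_C)=0$, which I would read off the $2$-periodic free resolution
\[
\cdots\xrightarrow{\,\cdot a\,}\reg_C\xrightarrow{\,\cdot b\,}\reg_C\xrightarrow{\,\cdot a\,}\reg_C\xrightarrow{\,\cdot b\,}\reg_C\to\ko_B\to0
\]
produced by the two annihilator computations: dualizing this complex and taking cohomology gives the vanishing. Matching the (co)restriction maps under these identifications then shows the dualized sequence is precisely the second one, and since all four sheaves are reflexive for $\sHom_{\reg_C}(\_,\reg_C)$ the functor exchanges the two sequences.

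The main obstacle is that this is not Grothendieck--Serre duality (which would involve $\omega_C$ rather than $\reg_C$), but the naive $\reg_C$-linear dual. Consequently the crux is genuinely the local annihilator computation together with the $\sExt^1$-vanishing: because $b$ is a zero-divisor in the non-regular ring $\reg_C$, the divisor $B$ need not be Cartier in $C$, and one must rely on the hypersurface-type $2$-periodic resolution of $\ko_B$ rather than on a finite one. Care is also needed when $A$ and $B$ share components (for instance $C=2D$ with $A=B=D$), but the same local model $\reg_X/(ab)$ and the domain property of $\reg_X$ cover this case uniformly.
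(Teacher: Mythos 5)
Your proof is correct; note that the paper offers no proof of this lemma at all (it is introduced as ``an easy but useful observation''), so your write-up supplies an argument the authors left to the reader. The first half --- tensoring the structure sequence of $A$ on $X$ by the line bundle $\reg_X(-B)$ to identify $\ker(\ko_C\to\ko_B)=\reg_X(-B)/\reg_X(-C)\cong\ko_A(-B)$, then swapping $A$ and $B$ --- is the standard construction and certainly what the authors intended. For the duality claim your route is sound and self-contained: you correctly isolate the only non-formal point, namely that exactness on the right of the $\sHom_{\reg_C}(\_,\reg_C)$-dual of one sequence amounts to $\sExt^1_{\reg_C}(\ko_B,\reg_C)=0$, and you prove that vanishing from the annihilator computations $\operatorname{Ann}_{\reg_C}(b)=(a)\reg_C$ and $\operatorname{Ann}_{\reg_C}(a)=(b)\reg_C$ (which, as you observe, use only that $\reg_{X,x}$ is a domain and hence also cover the case where $A$ and $B$ share components) together with the resulting $2$-periodic free resolution of $\ko_B$; this is precisely the matrix-factorization argument for the hypersurface ring $\reg_{X,x}/(ab)$. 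The same annihilator identities show that the dual of the quotient $\ko_C\to\ko_B$ is the inclusion of the ideal $\cI_{A\subset C}\cong\ko_B(-A)$ and that the dual of the inclusion $\ko_A(-B)\cong\cI_{B\subset C}\hookrightarrow\ko_C$ is the restriction $\ko_C\to\ko_A$, so the maps match as claimed. A shortcut worth knowing, and the one closest in spirit to the paper's neighbouring arguments, is that $\sExt^{>0}_{\reg_C}(\ko_B,\reg_C)=0$ follows at once from the facts that $\ko_B$ is purely one-dimensional (hence maximal Cohen--Macaulay on the curve $C$) and that $C$ is Gorenstein with invertible dualizing sheaf, as in \cite{Har--gen}; your explicit resolution proves this special case from scratch. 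The only cosmetic blemish is the phrase ``$\ko_A(-B)=\ko_A$ is (up to the global twist) the ideal $(b)/(ab)$'', which should be read as: locally the twist is trivial and $\cI_{B\subset C}=(b)/(ab)\cong\ko_A$, while globally $\cI_{B\subset C}\cong\ko_A(-B)$.
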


\begin{definition}
 Let $F$ be a coherent sheaf on a curve with one-dimensional support. We write
 \[
 \pure(F):=F/\text{(zero-dimensional torsion)}
 \]
for the maximal purely one-dimensional quotient of $F$. 
\end{definition}

\begin{lemma}\label{lem:quotientsarerestrictions}
 Let $F$ be a purely one-dimensional sheaf on a curve $C$ which is generically a line bundle. Then, every purely one-dimensional quotient $F\twoheadrightarrow F''$ is of the form $F''=\pure(F_{\mid Z})$ for some subcurve $Z\subset C$. 
\end{lemma}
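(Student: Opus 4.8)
The plan is to read off the subcurve $Z$ from the behaviour of the surjection $q\colon F\twoheadrightarrow F''$ at the generic points $\eta_0,\dots,\eta_\ell$ of the components of $C$, and then to argue that a purely one-dimensional quotient of $F$ is already pinned down by this generic data. Since $X$ is a smooth surface and each $C_i$ is a smooth divisor, the local ring $R_i:=\reg_{C,\eta_i}$ equals $\reg_{X,\eta_i}/(v^{m_i})$, where $v$ is a local equation for $C_i$ (a uniformiser of the DVR $\reg_{X,\eta_i}$) and $C=m_iC_i$ near $\eta_i$. This $R_i$ is Artinian local, and its only ideals are the $(v^k)$ with $0\le k\le m_i$. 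The hypothesis that $F$ is generically a line bundle means $F_{\eta_i}\cong R_i$ is free of rank one, so the induced surjection $R_i\cong F_{\eta_i}\twoheadrightarrow F''_{\eta_i}$ identifies $F''_{\eta_i}$ with a cyclic module $R_i/(v^{k_i})$ for a unique $k_i\in\{0,\dots,m_i\}$; equivalently, the kernel $K:=\ker q$ satisfies $K_{\eta_i}=v^{k_i}F_{\eta_i}$.

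With these exponents in hand I would set $Z:=\sum_i k_iC_i$. As $0\le k_i\le m_i$, this is a genuine closed subscheme of $C$, and it is one-dimensional whenever $F''\neq0$. Writing $\cI_Z\subset\reg_C$ for its ideal sheaf, right-exactness of $\otimes$ gives $F_{\mid Z}=F/\cI_ZF$, so that $\pure(F_{\mid Z})=F/K'$ where $K'\subseteq F$ is the kernel of the composite $F\twoheadrightarrow F_{\mid Z}\twoheadrightarrow\pure(F_{\mid Z})$. By construction $(\cI_Z)_{\eta_i}=(v^{k_i})$, hence $(\cI_ZF)_{\eta_i}=v^{k_i}F_{\eta_i}$; and since passing to $\pure$ removes only zero-dimensional torsion, which vanishes after localising at a generic point, we obtain $K'_{\eta_i}=v^{k_i}F_{\eta_i}=K_{\eta_i}$ for every $i$.

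It then remains to upgrade this generic agreement to an honest equality $K=K'$, which is the heart of the matter. Both $F/K=F''$ and $F/K'=\pure(F_{\mid Z})$ are purely one-dimensional, so $K$ and $K'$ are saturated subsheaves of $F$. I would consider the sum $K+K'\subseteq F$: the subsheaf $(K+K')/K$ of the pure sheaf $F/K$ is supported away from every $\eta_i$, because $(K+K')_{\eta_i}=K_{\eta_i}+K'_{\eta_i}=K_{\eta_i}$; hence it has zero-dimensional support and therefore vanishes by purity of $F/K$, giving $K'\subseteq K$. The symmetric argument yields $K\subseteq K'$, whence $K=K'$ and $F''=F/K=F/K'=\pure(F_{\mid Z})$, as required.

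I expect the main obstacle to be exactly this last step, namely formalising that a pure (equivalently, saturated) quotient of $F$ is determined by its generic stalks, and doing so cleanly rather than through a circular appeal to saturation; the sum argument above is what makes it go through. By contrast, the local analysis at the $\eta_i$ and the identification $F_{\mid Z}=F/\cI_ZF$ are routine once the chain-ring description of $R_i$ is recorded.
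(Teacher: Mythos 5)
Your proof is correct and follows essentially the same route as the paper's: you read off $Z$ from the behaviour of the quotient at the generic points (the paper does this via a trivialisation $F_{\mid U}\cong\reg_U$ over a dense open $U$ rather than via the explicit chain-ring stalks), and then use purity of both $F''$ and $\pure(F_{\mid Z})$ to promote generic agreement to genuine equality. Your two inclusions $K'\subseteq K$ and $K\subseteq K'$ are exactly the paper's two steps --- killing the composition $\ker(F\twoheadrightarrow F'')\to\pure(F_{\mid Z})$ by purity of the target, and showing the induced surjection $F''\to\pure(F_{\mid Z})$ has zero-dimensional, hence zero, kernel --- merely rephrased as a comparison of kernels inside $F$ instead of a morphism of quotients.
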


Here, by a subcurve $Z\subset C$, we mean a subdivisor $Z=\sum_{i=0}^\ell n_iC_i$ with $0\le n_i\le m_i$.

\begin{proof}
 Let $U\subset C$ be a dense open subset with $F_{\mid U}\cong \reg_U$. Then, there is a subcurve $W\subset U$ such that $F_{\mid U}\twoheadrightarrow F''_{\mid U}$ is isomorphic to the restriction map 
$\reg_U\twoheadrightarrow \reg_W$. Let $Z\subset C$ be the unique subdivisor such that $Z\cap U=W$. Let $F':=\ker(F\twoheadrightarrow F'')$. The composition $F'\to F\to \pure(F_{\mid Z})$ is zero over the dense open set $U$. As $\pure(F_{\mid Z})$ is purely one-dimensional, we get that the composition is zero. Hence, we get a morphism of quotients 
\[
\begin{tikzcd}
& F \dar{\id}\rar & F'' \dar\rar &0\\
& F  \rar & \pure(F_{\mid Z}) \rar  &0\,.
\end{tikzcd}
\]
The right vertical map is surjective and an isomorphism over $U$. As $F''$ is purely one-dimensional, it is an isomorphism.
\end{proof}

From now on, we also assume that all components $C_i$ of $C$ are smooth, that at most two components of $C$ meet in one point, and that the components of $C$ intersect transversally. That is, at each intersection point $x\in C_i \cap C_j$ the intersection subscheme $\zeta_x:=m_iC_i\cap m_jC_j$ satisfies
\begin{equation}\label{eq:zeta}
\reg_{\zeta_x} \isom \IC[u,v]/(u^{m_i}, v^{m_j})\,.
\end{equation} 

We will study sheaves on $C$ by means of a canonically associated short exact sequence given as follows.

\begin{lemma}\label{lem:Fses}
Let $F$ be a sheaf on $C$ which is pure of dimension 1. For each component
$C_j$ define
\begin{equation*}
F_j:=\pure(F_{|m_jC_j})\,.
\end{equation*}
Then there is an associated exact sequence
\begin{equation*}
0\lra F\lra \bigoplus_{j} F_j\lra \bigoplus_x T_x\lra 0,
\end{equation*}
where the first direct sum runs through all components of $C$, the second direct sum runs through all intersection points $x$ of components of $C$ and $T_x$ is a zero-dimensional sheaf supported in $x$ such that all components $F_j\to T_x$ with $x\in C_j$ are surjective.
\end{lemma}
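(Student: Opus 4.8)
The plan is to realise the sequence as the canonical one built from the restriction-and-purify maps. For each component $C_j$ let $\phi_j\colon F\to F_j$ be the composite $F\twoheadrightarrow F_{|m_jC_j}\twoheadrightarrow \pure(F_{|m_jC_j})=F_j$; it is surjective because $m_jC_j\subset C$ is a closed subscheme (so the restriction is a quotient) and passage to the pure quotient is again surjective. I assemble these into $\phi=(\phi_j)_j\colon F\to\bigoplus_j F_j$ and set $K:=\ker\phi$ and $Q:=\coker\phi$. The whole statement then amounts to showing that $K=0$, that $Q$ is a direct sum of zero-dimensional sheaves $T_x$ supported at the intersection points, and that each component $F_j\to T_x$ with $x\in C_j$ is onto.

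First I would study $\phi$ over the dense open $U:=C\setminus\{\text{pairwise intersection points}\}$. At a point $p$ lying on a single component $C_k$ one has $C=m_kC_k$ locally, hence $\reg_{C,p}=\reg_{m_kC_k,p}$, so $F_{|m_kC_k}$ coincides with $F$ near $p$ and, $F$ being pure, $F_{k,p}=F_p$ while $F_{k',p}=0$ for $k'\neq k$. Thus $\phi$ is an isomorphism over $U$, and both $K$ and $Q$ are supported on the finite set of intersection points. Injectivity is then immediate: $K$ is a subsheaf of $F$ with zero-dimensional support, which is impossible for a nonzero subsheaf of the pure sheaf $F$, so $K=0$. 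Since $Q$ has finite support it decomposes canonically as $Q=\bigoplus_x T_x$, where $T_x$ is the summand supported at the intersection point $x$; this already produces the exact sequence in the asserted shape.

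The remaining point, the surjectivity of each component $F_j\to T_x$, is the step that looks like it should require the explicit local models $\reg_{C}=\IC[u,v]/(u^{m_j}v^{m_i})$, and I expect it to be the apparent main obstacle; in fact it dissolves formally. Fix $x\in C_i\cap C_j$ and work locally, where only $F_i$ and $F_j$ are nonzero (using that at most two components meet at $x$), so that $\phi=(\phi_i,\phi_j)$ and $Q=T_x$. Writing $q_i,q_j$ for the two components of the quotient map $F_i\oplus F_j\to Q$, take any class $\overline{(a,b)}\in Q$; surjectivity of $\phi_j$ yields $f\in F$ with $\phi_j(f)=b$, whence $(a,b)-\phi(f)=(a-\phi_i(f),0)$ and $\overline{(a,b)}=q_i(a-\phi_i(f))$. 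Thus $q_i$ is surjective, and the symmetric argument using $\phi_i$ shows $q_j$ is surjective; for components $C_k$ with $x\notin C_k$ the map $F_k\to T_x$ vanishes for support reasons. This establishes surjectivity of every component $F_j\to T_x$ with $x\in C_j$ and completes the proof.
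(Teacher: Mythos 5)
Your proof is correct and follows essentially the same route as the paper: the same canonical map assembled from the restriction-and-purification surjections, injectivity from purity of $F$ (the kernel would be a zero-dimensional subsheaf), decomposition of the cokernel by its support at the intersection points, and surjectivity of the components at each $x$ using that only two components meet there. The only cosmetic difference is the final step, where the paper observes that the images of $F_i\to T_x$ and $F_j\to T_x$ coincide because the square through $F$ commutes up to sign, while you carry out the equivalent element chase of modifying a representative $(a,b)$ by $\phi(f)$ to kill one coordinate.
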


\begin{proof}
 The map $F\to \bigoplus_{j} F_j$, whose components are the restriction maps, is an isomorphism over $U\subset C$, the complement of the intersection points. Hence, the map is injective, as $F$ is pure, and its cokernel $\bigoplus_x T_x$ is supported in the intersection points. 
 
 Let $x$ be one of the intersection points. By our assumption, only two components, say $C_i$ and $C_j$, meet in $x$.
Hence, locally near $x$, we have the following square
\begin{equation*}
\begin{tikzcd}
F_i \arrow[r] 
& T_x  \\
F \arrow[u,twoheadrightarrow] \arrow[r,twoheadrightarrow]
&  F_j\arrow[u]\,,
\end{tikzcd}
\end{equation*}
which is commutative up to a sign.
It follows that the images of the two maps $F_i\to T_x$ and $F_j\to T_x$ agree. As the sum of both maps is surjective, it follows that both maps are surjective individually.
\end{proof}

\begin{remark}\label{rem:Lj}
Let $L\in \Pic(C)$ be a line bundle, and $F$ a purely one-dimensional sheaf on~$C$. Then, for every component $C_j$ of $C$, we have
\[(L\otimes F)_j\cong L_j\otimes F_j\,. 
\]
Furthermore, the sheaves $T_x$ associated to $F$ and $L\otimes F$ are isomorphic. 
\end{remark}

Let $F$ be a purely one-dimensional sheaf on $C$.
We denote by $\mu_j(F)$ the multiplicity of $F$ along the component $C_j$. That means that $\mu_j(F)$ is the length of the stalk of $F$ at the generic point of $C_j$.

For every component $C_j$, there is a natural filtration 
\begin{equation*}
F_j=F_{j0}\supset F_{j1}\supset\ldots\supset F_{jm_j}=0\,,
\end{equation*}
where $F_{jk}$ is the saturation of the subsheaf $\reg_X(-kC_j)\cdot F_j=\cI_{kC_j\hookrightarrow m_jC_j}\cdot F_j$ in $F_j$. 
The factors $\gr_{k}F_j=F_{j(k-1)}/F_{jk}$ are pure and hence locally free sheaves 
on the (reduced) curve $C_j$ of a certain rank $r_{jk}$. By construction, there are 
natural maps 
\begin{equation*}
\gr_kF_j\otimes\reg_{C_j}(-C_j)\to \gr_{k+1}F_j
\end{equation*}
that are generically surjective. In particular, the ranks $r_{jk}$ satisfy 
\begin{equation*}
r_{j1}\geq r_{j2}\geq \ldots\,.
\end{equation*} 

\begin{lemma}\label{lem:chiTx}
Let $x\in C_i\cap C_j$. Then
\begin{equation*}
\chi(T_x)\leq \sum_{k,k'} \min\{r_{ik},r_{jk'}\}
\end{equation*}
where the sum runs over all pairs $(k,k')$ with $1\le k\le m_i$ and $1\le k'\le m_j$. 
\end{lemma}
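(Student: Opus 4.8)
The plan is to work locally at $x$ and to bound $\chi(T_x)=\dim_\IC T_x$ by a doubly-graded refinement of two filtrations on $T_x$. By \autoref{lem:Fses} the stalk $T_x$ is a common quotient of $F_i$ and $F_j$ via surjections $\pi_i\colon F_i\to T_x$ and $\pi_j\colon F_j\to T_x$. Since $F_i$ is supported on $m_iC_i$ and $F_j$ on $m_jC_j$, the sheaf $T_x$ is annihilated by $u^{m_i}$ and $v^{m_j}$, so it is a finite-length module over $\reg_{\zeta_x}\cong\IC[u,v]/(u^{m_i},v^{m_j})$ (notation as in \eqref{eq:zeta}, with $C_i=\{u=0\}$ and $C_j=\{v=0\}$ locally). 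I transport the saturated filtrations to $T_x$ by setting $G_k:=\pi_i(F_{ik})$ and $H_{k'}:=\pi_j(F_{jk'})$, obtaining finite filtrations $T_x=G_0\supseteq\dots\supseteq G_{m_i}=0$ and $T_x=H_0\supseteq\dots\supseteq H_{m_j}=0$. From $uF_{ik}\subseteq F_{i(k+1)}$ and the $v$-stability of $F_{ik}$ (and symmetrically for $F_j$) one gets $uG_k\subseteq G_{k+1}$, $vG_k\subseteq G_k$, $vH_{k'}\subseteq H_{k'+1}$ and $uH_{k'}\subseteq H_{k'}$. Moreover $\pi_i$ induces a surjection $\gr_{k+1}F_i\twoheadrightarrow G_k/G_{k+1}$, so $G_k/G_{k+1}$ is a quotient of a rank-$r_{i(k+1)}$ locally free sheaf on $C_i$; in particular it is generated by at most $r_{i(k+1)}$ elements over the discrete valuation ring $\reg_{C_i,x}$. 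Symmetrically $H_{k'}/H_{k'+1}$ is generated by at most $r_{j(k'+1)}$ elements over $\reg_{C_j,x}$.

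Next I apply the refinement lemma for two filtrations of a finite-length module to write
\[
\dim_\IC T_x=\sum_{k=0}^{m_i-1}\sum_{k'=0}^{m_j-1}\dim_\IC P_{k,k'},\qquad
P_{k,k'}:=\frac{G_k\cap H_{k'}}{(G_{k+1}\cap H_{k'})+(G_k\cap H_{k'+1})}.
\]
The key observation is that each $P_{k,k'}$ is annihilated by the maximal ideal $\fm=(u,v)$: indeed $u(G_k\cap H_{k'})\subseteq uG_k\cap uH_{k'}\subseteq G_{k+1}\cap H_{k'}$ and $v(G_k\cap H_{k'})\subseteq vG_k\cap vH_{k'}\subseteq G_k\cap H_{k'+1}$, and both of these lie in the denominator defining $P_{k,k'}$.

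Finally I bound each $P_{k,k'}$ by \emph{both} ranks at once. By Zassenhaus's butterfly lemma $P_{k,k'}$ is simultaneously a subquotient of $G_k/G_{k+1}$ and of $H_{k'}/H_{k'+1}$. Now if $A/B$ is a subquotient of a module $M$ generated by $r$ elements over the discrete valuation ring $\reg_{C_i,x}$ and $A/B$ is annihilated by the uniformiser $v$, then $vA\subseteq B$ and hence $\dim_\IC(A/B)\le\dim_\IC(A/vA)$, which equals the minimal number of generators of $A$; since a submodule of a module generated by $r$ elements over a principal ideal domain is again generated by at most $r$ elements, this is $\le r$. Applying this with $M=G_k/G_{k+1}$ (the annihilation by $v$ following from $\fm P_{k,k'}=0$) gives $\dim_\IC P_{k,k'}\le r_{i(k+1)}$, and the symmetric argument over $\reg_{C_j,x}$ with $M=H_{k'}/H_{k'+1}$ gives $\dim_\IC P_{k,k'}\le r_{j(k'+1)}$. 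Therefore $\dim_\IC P_{k,k'}\le\min\{r_{i(k+1)},r_{j(k'+1)}\}$, and summing and reindexing $k\mapsto k+1$, $k'\mapsto k'+1$ yields the asserted bound.

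I expect the subtle step to be exactly the passage from single-filtration bounds to the termwise minimum. A single filtration only yields the crude estimate $\chi(T_x)\le m_j\sum_k r_{ik}$ (or its symmetric counterpart), and a naive bigraded piece need \emph{not} satisfy a termwise $\min$-bound, since a subquotient of a rank-$r$ torsion $\reg_{C_i,x}$-module can have length far exceeding $r$. What rescues the argument is that after refining with \emph{both} filtrations the pieces $P_{k,k'}$ are killed by $\fm$, so their $\IC$-dimension is controlled by the number of generators — the rank $r_{i(k+1)}$ from one side and $r_{j(k'+1)}$ from the other — rather than by length; verifying the annihilation and the two generator counts is the heart of the proof.
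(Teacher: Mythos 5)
Your proof is correct, but it takes a genuinely different route from the paper's. The paper argues by induction on the lengths of the two filtrations: it sets $T'$ equal to the image of $F_{i1}$ in $T_x$ and $F_j'$ equal to its preimage in $F_j$, splits $T_x$ into $T'$ and $T_x/T'$, applies the induction hypothesis to the two pairs of surjections $F_{i1}\twoheadrightarrow T'\twoheadleftarrow F_j'$ and $\gr_1F_i\twoheadrightarrow T_x/T'\twoheadleftarrow F_j$, and adds the resulting estimates. You instead transport both saturated filtrations onto $T_x$, refine them simultaneously via the Zassenhaus lemma, observe that each bigraded piece $P_{k,k'}$ is killed by the maximal ideal (because $u$ shifts the $G$-filtration while stabilising the $H$-filtration, and symmetrically for $v$), and bound $\dim_{\IC} P_{k,k'}$ by generator counts over the two discrete valuation rings $\mathcal{O}_{C_i,x}$ and $\mathcal{O}_{C_j,x}$. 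All the steps check out: the inclusion $uF_{ik}\subseteq F_{i(k+1)}$ holds because multiplication by $u$ surjects the zero-dimensional sheaf $F_{ik}/u^kF_i$ onto $uF_{ik}/u^{k+1}F_i$ (this deserves one explicit line in a write-up, as it is the foundation of the whole transport); the Zassenhaus identification of $P_{k,k'}$ as a subquotient of both $G_k/G_{k+1}$ and $H_{k'}/H_{k'+1}$ is standard; and the principle that a subquotient killed by the uniformiser of a module with at most $r$ generators has dimension at most $r$ is correct over a discrete valuation ring, since submodules of $r$-generated modules over a principal ideal domain are again $r$-generated. Comparing the two approaches: the paper's induction is shorter and requires essentially no commutative-algebra bookkeeping, but the minimum only emerges after combining two separately obtained inequalities; your argument takes more setup but makes the estimate structurally transparent — each bigraded piece contributes at most $\min\{r_{i(k+1)},r_{j(k'+1)}\}$ — and yields this termwise refinement of the lemma as a by-product.
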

\begin{proof}
We prove this by induction on the lengths of the filtrations of $F_i$ and $F_j$. Note that we can reformulate the assertion in the following way, which is more useful for our proof; compare \autoref{lem:Fses}: \emph{Let $F_i$ and $F_j$ be purely one-dimensional sheaves on $m_iC_i$ and $m_jC_j$ and consider their natural filtrations and rank sequences as defined above. Let $T_x$ be a sheaf supported in $x$ such that there are surjections $F_i\twoheadrightarrow T_x$ and $F_j\twoheadrightarrow T_x$. Then $\chi(T_x)\leq \sum_{k,k'} \min\{r_{ik},r_{jk'}\}$} 

Assume first that the length
of the filtrations on $F_i$ and $F_j$ equal $1$,
so that $F_i$ and $F_j$ are scheme-theoretically supported on the reduced curves $C_i$
and $C_j$, respectively, and are locally free on these reduced curves.
As $T_x$ is a quotient both of $F_i$ and
$F_j$, it must be of the form ${\mathcal O}_x^{\oplus r}$ with rank
$r\leq\mbox{rk}(F_i)=r_{i1}$ and $r\leq\mbox{rk}(F_j)=r_{j1}$. So we have
$\chi(T_x)\leq\mbox{min}\{r_{i1},r_{j1}\}$ and the assertion
is clear in this case.

Now, we assume that the length
of the filtration on $F_i$ is $\geq 2$. So we are
in the situation that $F_{i1}\not=0$.
Let $T'\subset T_x$ denote the image of $F_{i1}$ in $T_x$,
and let $F_j'$ denote the preimage of $T'$ under the surjection $F_j\twoheadrightarrow T_x$.
We get the following maps
\[
 F_{i1}\twoheadrightarrow T' \twoheadleftarrow F_j'\qquad,\qquad \gr_{1}(F_i)\twoheadrightarrow T_x/T' \twoheadleftarrow F_j
\]
all of which are still surjective by construction. As the inclusion 
$F_j' \hookrightarrow F_j$ 
is generically an isomorphism, the sheaves
$F_j'$ and $F_j$ have the same rank sequences.
Hence, applying the induction hypotheses to the left pair of surjections,  we obtain
\begin{equation}\label{eq:Teq1}
\chi(T')\leq\sum_{k,k',\, k'\geq 2}\mbox{min}\{r_{jk},r_{ik'}\}.
\end{equation}
Furthermore, applying the induction hypotheses to the right pair of surjections, gives
\begin{equation}\label{eq:Teq2}
\chi(T_x/T')\leq\sum_{k}\mbox{min}\{r_{jk},r_{i1}\}.
\end{equation}
As $\chi(T_x)=\chi(T')+ \chi(T_x/T')$, the assertion follows from combining \eqref{eq:Teq1} and \eqref{eq:Teq2}.
\end{proof}

\section{Classification of stable sheaves on an extended ADE curve}\label{sect:ADE}

\subsection{Set-up and Notation}
 
We continue with the assumptions from \autoref{subsect:purediv} on our curve $C$: It is embedded as a divisor into a smooth projective surface, its components are smooth, and at most two components intersect transversally in one point.

Associated to $C$ we have the intersection graph $\Gamma=\bigl(V(\Gamma), E(\Gamma), m(\Gamma)\bigr)=(V,E,m)$. The vertex set $V$ consist of one vertex for each component of $C$. The label $m\colon V\to \IN$ sends the vertex corresponding to a component $C_i$ to its multiplicity $m_i$. There is an edge between the vertices corresponding to $C_i$ and $C_j$ for each intersection point of the two components.

\begin{definition}
The \emph{extended ADE graphs} are the labelled graphs displayed in \autoref{fig: dynkin} in the introduction. 
Given an extended ADE graph $\Gamma$, we decompose its vertex set as $V(\Gamma)=I(\Gamma)\sqcup O(\Gamma)$, where $O(\Gamma)$ is the set of vertices with label 1, and $I(\Gamma)$ is the set of vertices of label greater than 1. Later, we will often omit the $\Gamma$ from the notation and simply write
\begin{equation*}
V:=V(\Gamma)\quad,\quad I:=I(\Gamma)\quad,\quad O:=O(\Gamma).
\end{equation*}
\end{definition}

\begin{definition}\label{def: ADEcurve}
 We call $C$ as above an \emph{extended ADE-curve} if 
 \begin{enumerate}
  \item its labeled intersection graph $\Gamma$ is one of the extended ADE Dynkin graphs,
  \item all curves $C_i$ with $i\in I(\Gamma)$ are rational $(-2)$-curves. 
 \end{enumerate}
\end{definition}

From now on, $C$ will always denote an extended ADE curve.
A priori, the $C_o$ with $o\in O(\Gamma)$ (that means the reduced components of $C$) can be of arbitrary genus. However, the geometry of the ambient surface $X$ can impose strong restrictions on the genera. We will see this later in \autoref{sect: linear systems} in the case that $X$ is a K3 surface.

\begin{remark}\label{rem:Cartanmatrix}
Let $S=S_\Gamma$ be the negative of the extended Cartan matrix of type $\Gamma$. Concretely, this means that $S_{vw}=C_v\cdot C_w$ for $(v,w)\in V\times V$ with $v\neq w$, and $S_{v,v}=-2$ for every $v\in V$. In other words, $S$ is the intersection matrix of $C$ in the case that all $C_v$ (also for $v\in O$) are $(-2)$-curves.  

For us, the main property of the matrix $S$ is the following: The integral lattice $(\IZ^{V},
S)$ is even and negative semi-definite and its kernel is generated by 
the primitive vector $m=(m_v)_{v\in V}$ whose entries are the labels of the vertices of $\Gamma$; see e.g.\ \cite[Lem.\ I.2.12]{BHPV}.

Another key property of extended ADE diagrams that we will use multiple times in the text is that twice the label of a vertex is the sum of the labels of adjacent vertices:
\begin{equation}\label{eq:ADElabels}
2m_v=\sum_{w\in V\setminus\{v\}\,,\, C_v\cap C_w\neq \emptyset} m_w\,. 
\end{equation}
\end{remark}

 If all but one component $C_{o}$ are rational, then $E:=C-C_{o}$ is the fundamental cycle of an ADE singularity.

\begin{remark}\label{rem:elliptic}
Extended ADE curves where all components (including the reduced ones) are rational $(-2)$-curves occur as singular fibers of elliptic fibrations.
\end{remark}

\begin{example}
 Let $\pi \colon X \to \bar X$ be the minimal resolution of an ADE singularity $p\in \bar X$. If $p\in \bar C_0\subset \bar X$ is a general curve which is Cartier near $p$, then $\pi^*C_0$ is an extended ADE-curve. 
\end{example}

\begin{lemma}\label{lem:2con}
 Every extended ADE curve $C$ is numerically $2$-connected. This means that, for every decomposition of effective divisors $C=A+B$, we have $A.B\ge 2$.
\end{lemma}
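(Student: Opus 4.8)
The plan is to reduce the geometric intersection number $A.B$ to a purely lattice-theoretic quantity governed by the matrix $S$ of \autoref{rem:Cartanmatrix}, and then to exploit that $S$ is even, negative semi-definite, with kernel spanned by the primitive null-vector $m$. Throughout I assume $A,B$ are both nonzero, as is implicit in the notion of numerical $2$-connectedness.

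First I would observe that any effective decomposition $C=A+B$ must be a decomposition into subdivisors: since $C=\sum_v m_vC_v$, no prime divisor other than the $C_v$ can occur in $A$ or $B$, so $A=\sum_v a_vC_v$ and $B=\sum_v b_vC_v$ with $a_v,b_v\geq 0$ and $a_v+b_v=m_v$. Writing $a=(a_v)\in\IZ^V$, this says $b=m-a$ and $0\le a_v\le m_v$ for all $v$.

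The key observation, which I expect to carry the real content, is that the actual self-intersections $C_o^2$ of the reduced components $C_o$ (for $o\in O$) never enter the computation of $A.B$, even though they are a priori unconstrained because the $C_o$ may have arbitrary genus. Indeed, expanding $A.B=\sum_{v,w}a_vb_w\,(C_v.C_w)$, the diagonal contribution is $\sum_v a_vb_v\,C_v^2$. For $v\in I$ we have $C_v^2=-2=S_{vv}$ by the definition of an extended ADE-curve, while for $o\in O$ we have $m_o=1$, hence $\{a_o,b_o\}=\{0,1\}$ and $a_ob_o=0$. Thus the diagonal term is insensitive to the value of $C_o^2$, and replacing every diagonal entry $C_v^2$ by $S_{vv}=-2$ changes nothing, giving
\[
A.B=\sum_{v,w}a_vb_w\,S_{vw}=a^{\mathsf T}S\,b.
\]

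It then remains a short lattice computation. Since $Sm=0$ and $b=m-a$, we get
\[
A.B=a^{\mathsf T}S(m-a)=-\,a^{\mathsf T}S\,a.
\]
Because $S$ is negative semi-definite, $-a^{\mathsf T}Sa\ge 0$, and because the lattice $(\IZ^V,S)$ is even, this is an even integer. Finally, $a^{\mathsf T}Sa=0$ forces $a\in\ker S=\IQ\cdot m$; writing $a=\lambda m$ and using that $m$ is primitive shows $\lambda\in\IZ$, while $0\le a_v\le m_v$ forces $\lambda\in\{0,1\}$, i.e.\ $A=0$ or $A=C$, both excluded since $A,B\neq 0$. Hence $-a^{\mathsf T}Sa$ is a strictly positive even integer, so $A.B\ge 2$. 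The only genuinely delicate point is the self-intersection observation above; everything after it is formal lattice theory.
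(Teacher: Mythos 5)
Your proposal is correct and takes essentially the same route as the paper: both proofs rest on the observation that the self-intersections of the reduced components never enter $A.B$ (so the pairing may be replaced by the matrix $S$), followed by the lattice facts of \autoref{rem:Cartanmatrix} (even, negative semi-definite, kernel spanned by the primitive vector $m$). The only cosmetic difference is that the paper expands $0=C^2=(A+B)^2=A^2+2A.B+B^2$ and invokes $A^2,B^2\le -2$ for the two summands, whereas you eliminate $B=C-A$ to get $A.B=-a^{\mathsf T}Sa$ and then re-derive the bound from evenness, semi-definiteness and the kernel description---trivially equivalent manipulations of the same input.
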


\begin{proof}
 Since the self-intersections of reduced components of $C$ do not affect the intersection number $A.B$, we can pretend that all components (including the reduced ones) are rational $(-2)$-curves. In this case $C^2=0$; see \autoref{rem:Cartanmatrix} or \autoref{rem:elliptic}. By \autoref{rem:Cartanmatrix}, we also have $A^2,B^2\le -2$. Hence, 
 $0=C^2=(A+B)^2=A^2+2A.B+B^2$ implies $A.B\ge 2$.
\end{proof}

\subsection{Sheaves on multiple components}
In this subsection, we study sheaves on the non-reduced components of $C$. Let $B$ be a rational $(-2)$-curve on a smooth surface, and fix some $m\in \IN$. Let $G$ be purely one-dimensional sheaf on $mB$. As above in \autoref{subsect:purediv}, we consider the natural filtration of $G$. More concretely, 
$G_k\subset G$ denotes the saturation of the subsheaf $\reg_X(-kB)\cdot G$ for $0\le k\le m$, and  $\gr_kG=G_{k-1}/G_k$. As $G/G_k=\pure(G_{\mid kB})$, we can also describe the $\gr_k G$ as the kernels
\begin{equation}\label{eq:gradedpieceaskernel}
 0\to \gr_kG\to \pure(G_{\mid kB})\to \pure(G_{\mid(k-1)B})\to 0\,.
\end{equation}
 
\begin{lemma}\label{lem:Ograded}
Let $1\le a\le m$. If $\gr_k G\cong \reg_{B}\bigl(2(k-1)\bigr)$ for all $1\le k\le a$, then we have $G/G_a\cong \reg_{aB}$. 
\end{lemma}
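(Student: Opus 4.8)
The plan is to argue by induction on $a$, using the canonical filtration exact sequences \eqref{eq:gradedpieceaskernel} together with an $\Ext$-computation and a non-splitting argument. For $a=1$ there is nothing to do, since $G/G_1=\gr_1 G\cong \reg_B=\reg_{1\cdot B}$ by hypothesis. For the inductive step, assume $a\ge 2$ and $G/G_{a-1}\cong \reg_{(a-1)B}$. Sequence \eqref{eq:gradedpieceaskernel} for $k=a$ reads $0\to \gr_a G\to G/G_a\to G/G_{a-1}\to 0$, so by the hypothesis $\gr_a G\cong \reg_B(2(a-1))$ and the inductive hypothesis this exhibits $G/G_a$ as an extension
\[
0\to \reg_B\bigl(2(a-1)\bigr)\to G/G_a\to \reg_{(a-1)B}\to 0.
\]
On the other hand, applying \autoref{lem: decomposition sequence} to the decomposition $aB=B+(a-1)B$ gives $0\to \reg_B(-(a-1)B)\to \reg_{aB}\to \reg_{(a-1)B}\to 0$, and since $B$ is a $(-2)$-curve we have $\reg_B(-B)\cong \reg_{\IP^1}(2)$, hence $\reg_B(-(a-1)B)\cong \reg_B(2(a-1))$. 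Thus $\reg_{aB}$ is an extension of exactly the same shape. It therefore suffices to prove two things: that $\Ext^1_{\reg_X}\bigl(\reg_{(a-1)B},\reg_B(2(a-1))\bigr)$ is one-dimensional, and that both extensions above are non-split. Indeed, two non-split classes in a one-dimensional $\Ext$-group differ by a nonzero scalar, which is realised by an automorphism of the sub-line-bundle $\reg_B(2(a-1))$, giving $G/G_a\cong \reg_{aB}$.

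For the $\Ext$-computation I would resolve $\reg_{(a-1)B}$ by its structure sequence $0\to \reg_X(-(a-1)B)\to \reg_X\to \reg_{(a-1)B}\to 0$ and apply $\Hom(\_,\reg_B(2(a-1)))$. Writing $B\cong \IP^1$, the two outer $\Hom$-terms are $\Hom(\reg_X,\reg_B(2(a-1)))=\ho^0(\reg_{\IP^1}(2(a-1)))$ and $\Hom(\reg_X(-(a-1)B),\reg_B(2(a-1)))=\ho^0(\reg_{\IP^1})$, where the latter uses that $\reg_X((a-1)B)|_B\cong \reg_{\IP^1}(-2(a-1))$ cancels the twist. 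Since $\ho^1(\reg_{\IP^1}(2(a-1)))=0$, the long exact sequence identifies $\Ext^1(\reg_{(a-1)B},\reg_B(2(a-1)))$ with the cokernel of the connecting map $\ho^0(\reg_{\IP^1}(2(a-1)))\to \ho^0(\reg_{\IP^1})$. This map is multiplication by a local equation of $(a-1)B$, which restricts to $0$ on $B$ because $a\ge 2$; hence the map vanishes and $\Ext^1\cong \ho^0(\reg_{\IP^1})\cong \IC$ is one-dimensional.

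The crux, and the only place where the precise hypothesis on the graded pieces is used, is the non-splitting; this I expect to be the main obstacle. I would argue via multiplication by a local equation $t$ of $B$. The natural maps $\gr_k G\otimes \reg_B(-B)\to \gr_{k+1}G$, generically surjective by the discussion preceding \autoref{lem:chiTx}, become under the hypothesis generically surjective maps $\reg_{\IP^1}(2k)\to \reg_{\IP^1}(2k)$ for $1\le k\le a-1$, hence nonzero scalars, hence isomorphisms. Composing them shows that $t^{a-1}$ induces an isomorphism $\gr_1 G\otimes \reg_B(-(a-1)B)\xrightarrow{\sim}\gr_a G$, so multiplication by $t^{a-1}$ is nonzero on $G/G_a$. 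Were the extension split, $G/G_a$ would be $\reg_{(a-1)B}\oplus \reg_B(2(a-1))$ as $\reg_X$-modules, on which $t^{a-1}$ acts by $0$ (it annihilates $\reg_{(a-1)B}$, and $t$ already annihilates the $\reg_B$-summand), a contradiction. The identical $t^{a-1}\neq 0$ argument shows that $\reg_{aB}$ is likewise a non-split extension, which by the $\Ext$ computation completes the induction.
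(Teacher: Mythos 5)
Your proof is correct, and its skeleton is the same as the paper's: induct on $a$, present both $G/G_a$ and $\reg_{aB}$ as extensions of $\reg_{(a-1)B}$ by $\reg_B\bigl(2(a-1)\bigr)$, show the relevant $\Ext^1$ is one-dimensional, and check that both classes are non-zero. Where you diverge is in how the two technical steps are carried out. For the $\Ext$-computation, the paper stays on the curve: it applies $\Hom_{\reg_{aB}}\bigl(\_,\reg_B(2(a-1))\bigr)$ to the structure sequence $0\to\reg_B(2(a-1))\to\reg_{aB}\to\reg_{(a-1)B}\to 0$, so that the sequence being compared doubles as the resolution, and concludes $\ext^1_{\reg_{aB}}=1$ from $\Ho^1\bigl(\reg_B(2(a-1))\bigr)=0$; you instead resolve $\reg_{(a-1)B}$ by line bundles on the ambient surface and compute $\Ext^1_{\reg_X}\cong\IC$. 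Both are valid, and your version is marginally more robust: extensions of coherent sheaves on $X$ are a priori classified by the possibly larger group $\Ext^1_{\reg_X}\supseteq\Ext^1_{\reg_{aB}}$, and you show that even this group is one-dimensional, whereas the paper implicitly uses that both middle terms are $\reg_{aB}$-modules in order to work in the smaller group. For non-splitting, the paper has a one-line argument: a splitting would force $\gr_1 G$ to have rank $2$, contradicting $\gr_1 G\cong\reg_B$. Your multiplication-by-$t^{a-1}$ argument is longer (and requires the routine check that $t^{a-1}$ carries $G_1$ into $G_a$, so that it factors as $G/G_a\twoheadrightarrow\gr_1 G\to\gr_a G\hookrightarrow G/G_a$), but it is correct, and it has the small advantage of applying verbatim to both sequences, whereas the paper only makes the non-splitness of the $G/G_a$-sequence explicit and leaves that of the $\reg_{aB}$-sequence, which is equally easy, implicit.
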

\begin{proof}
 We proof this by induction on $a$. For $a=1$, this is immediate.
For $a>1$, we have the short exact sequence of $\reg_{aB}$-modules
\begin{equation*}
 0\to \gr_a G\to G/G_a\to G/G_{a-1}\to 0\,.
\end{equation*}
Applying the induction hypothesis to the right term, this sequence takes the form 
\begin{equation}\label{eq:Eases}
 0\to \reg_B(2(a-1))\to G/G_a\to \reg_{(a-1)B}\to 0\,.
\end{equation}
Applying $\Hom_{\reg_{aB}}(\_,\reg_B(2(a-1))$ to the structure sequence
\begin{equation}\label{eq:Eases2}
 0\to \reg_B(2(a-1))\to \reg_{aB}\to \reg_{(a-1)B}\to 0
\end{equation}
gives 
\begin{align*}
 0&\to \Hom\bigl(\reg_{(a-1)B},\reg_B(2(a-1))\bigr)\xrightarrow\cong \Hom\bigl(\reg_{aB},\reg_B(2(a-1))\bigr)
 \\&\to \Hom\bigl(\reg_B(2(a-1)),\reg_B(2(a-1))\bigr)\to \Ext^1\bigl(\reg_{(a-1)B},\reg_B(2(a-1))\bigr)
 \\ & 
 \to \Ho^1\bigl(\reg_B(2(a-1))\bigr)=0\,.
\end{align*}
Hence, $\ext^1(\reg_{(a-1)B},\reg_B(2(a-1)))=\hom\bigl(\reg_B(2(a-1)),\reg_B(2(a-1))\bigr)=1$. As \eqref{eq:Eases} is non-splitting (otherwise, we would have $\gr_1G\cong \reg_B^{\oplus 2}$), the two sequences \eqref{eq:Eases} and \eqref{eq:Eases2} agree up to scalar multiplication. In particular, the middle terms agree.
\end{proof}

\begin{lemma}\label{lem:Idualcompspecial}
 The following three conditions are equivalent:
\begin{enumerate}
\item $\gr_k G\cong \begin{cases} \reg_{B}\bigl(2(k-1)\bigr)\quad &\text{for }k=1,\dots, m-1\\
\reg_{B}\bigl(2(m-1)+1\bigr)\quad &\text{for }k=m\end{cases}$,
\item There is a non-splitting short exact sequence
\begin{equation*}
 0\to \reg_{B}\bigl(2(m-1)+1\bigr)\to G\to \reg_{(m-1)B}\to 0\,,
\end{equation*}
\item $G\cong \reg_{mB}(x)$ for some $x\in B$.
\end{enumerate}
\end{lemma}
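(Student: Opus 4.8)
The plan is to prove the three conditions equivalent cyclically, as $(iii)\Rightarrow(i)\Rightarrow(ii)\Rightarrow(iii)$, using \autoref{lem:Ograded} to translate between the graded pieces and the short exact sequence, and \autoref{lem:Oxseq} to recognise duals of ideal sheaves. For $(iii)\Rightarrow(i)$, I would compute the natural filtration of $G=\reg_{mB}(x)$ from the local model of \autoref{lem:End1}: writing $mB=\Spec A$ with $A=\IC[u,v]/(v^m)$ and $\fm=(u,v)$, the module $\reg_{mB}(x)=\Hom_A(\fm,A)$ is generated by $\id$ and $\phi$ with $u\phi=v^{m-1}\id$ and $v\phi=0$. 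The subsheaf $\id\cdot\reg_{mB}\cong\reg_{mB}$ reproduces, level by level, the standard graded pieces $\gr_k\reg_{mB}\cong\reg_B\bigl(2(k-1)\bigr)$ (using $\reg_B(-B)|_B\cong\reg_B(2)$ since $B$ is a $(-2)$-curve), while the relation $u\phi=v^{m-1}\id$ shows that the extra generator $\phi$ contributes exactly one further degree at the deepest level $k=m$, upgrading $\gr_mG$ from $\reg_B(2(m-1))$ to $\reg_B\bigl(2(m-1)+1\bigr)$. Equivalently one tracks the length-one jump in the sequence $0\to\reg_{mB}\to\reg_{mB}(x)\to\reg_x\to0$ of \autoref{lem:Oxseq} and checks through \eqref{eq:gradedpieceaskernel} that it lands in the socle $\gr_mG$; this is a routine but slightly delicate local computation.

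For $(i)\Rightarrow(ii)$, since the first $m-1$ graded pieces are the standard ones, \autoref{lem:Ograded} applied with $a=m-1$ gives $G/G_{m-1}\cong\reg_{(m-1)B}$. As $G_m=0$ we have $G_{m-1}=\gr_mG\cong\reg_B\bigl(2(m-1)+1\bigr)$, so the bottom filtration step is precisely the sequence of (ii). It remains to see it is non-splitting: if it split we would have $G\cong\reg_B\bigl(2(m-1)+1\bigr)\oplus\reg_{(m-1)B}$, and since $\cI_B$ annihilates the first summand, that summand is disjoint from $G_1=\mathrm{sat}(\cI_B\cdot G)$ and hence injects into $\gr_1G$; together with the contribution $\reg_B$ from $\reg_{(m-1)B}$ this forces $\gr_1G$ to have generic rank $2$, contradicting $\gr_1G\cong\reg_B$ from (i).

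The substantial direction is $(ii)\Rightarrow(iii)$, and here the plan is to realise $G$ as a pushout. Start from the non-split structure sequence $e\colon 0\to\reg_B(2(m-1))\to\reg_{mB}\to\reg_{(m-1)B}\to0$ of the form \eqref{eq:Eases2} (with $a=m$). For each $x\in B$, an inclusion $j_x\colon\reg_B(2(m-1))\hookrightarrow\reg_B\bigl(2(m-1)+1\bigr)$ with cokernel $\reg_x$ produces a pushout $P_x$ fitting simultaneously in $0\to\reg_B\bigl(2(m-1)+1\bigr)\to P_x\to\reg_{(m-1)B}\to0$ and, by the pushout square, in $0\to\reg_{mB}\to P_x\to\reg_x\to0$; the latter together with \autoref{lem:Oxseq}(ii) identifies $P_x\cong\reg_{mB}(x)$. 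It therefore suffices to show that every non-split extension $G$ as in (ii) is isomorphic to some $P_x$. I would do this by analysing $\Ext^1\bigl(\reg_{(m-1)B},\reg_B(2(m-1)+1)\bigr)$ (an $\Ext$-computation on $\IP^1$ of the same flavour as in the proof of \autoref{lem:Ograded}) together with the pushout map $j\mapsto j_{\ast}e$ from $\Hom\bigl(\reg_B(2(m-1)),\reg_B(2(m-1)+1)\bigr)=H^0(\reg_B(1))$, and showing that, modulo the automorphisms of $\reg_{(m-1)B}$, the non-split middle terms are parametrised bijectively by $x\in B\cong\IP^1$.

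The main obstacle is exactly this last matching: controlling the $\Ext^1$-group and the pushout/point correspondence, while keeping track of the automorphisms of $\reg_{(m-1)B}$ so that equality of extension classes really yields an isomorphism of the middle terms (and so that the point $x$ is intrinsically determined by $G$). A cleaner alternative I would try in parallel, which bypasses the dimension count, is to define $x$ intrinsically as the cokernel point of the deepest structure map $\gr_{m-1}G\otimes\reg_B(2)\to\gr_mG$, i.e.\ of the injection $\reg_B(2(m-1))\to\reg_B\bigl(2(m-1)+1\bigr)$, and then to exhibit directly a colength-one subsheaf $G'\subset G$ with $G'\cong\reg_{mB}$ and $G/G'\cong\reg_x$, concluding again by \autoref{lem:Oxseq}(ii); the dualised form of the decomposition sequences in \autoref{lem: decomposition sequence} should help produce this subsheaf. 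Either route reduces to the same comparison of the two extensions of $\reg_{(m-1)B}$, which is where the real work lies.
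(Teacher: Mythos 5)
Your proposal is correct, and its overall cyclic structure coincides with the paper's proof; the differences are in emphasis and in one implication. For (i)$\Rightarrow$(ii) both you and the paper apply \autoref{lem:Ograded} with $a=m-1$; your explicit non-splitting argument (a direct summand $\reg_B\bigl(2(m-1)+1\bigr)$ would be disjoint from $G_1$ and force $\gr_1G$ to have rank $2$) is a correct detail that the paper leaves implicit. For (ii)$\Rightarrow$(iii) your pushout construction is exactly the paper's argument in different clothing: the paper applies $\Hom\bigl(\_,\reg_B(2(m-1)+1)\bigr)$ to the structure sequence $0\to\reg_B\bigl(2(m-1)\bigr)\to\reg_{mB}\to\reg_{(m-1)B}\to 0$ and shows the connecting map $\Hom\bigl(\reg_B(2(m-1)),\reg_B(2(m-1)+1)\bigr)\to\Ext^1\bigl(\reg_{(m-1)B},\reg_B(2(m-1)+1)\bigr)$ is an isomorphism; since the connecting map sends $j$ to the pushout class $j_{*}e$, this is precisely the ``real work'' you isolate, and it yields the morphism of short exact sequences, the Snake Lemma sequence $0\to\reg_{mB}\to G\to\reg_x\to 0$, and then \autoref{lem:Oxseq}(ii). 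Note that your concern about automorphisms of $\reg_{(m-1)B}$ is unnecessary for this direction: you only need \emph{surjectivity} of $j\mapsto j_{*}e$ onto extension classes, because equal classes automatically have isomorphic middle terms; the injectivity of $x\mapsto [P_x]$ (equivalently, that $x$ is intrinsic to $G$) is extra information, which anyway follows from \autoref{lem:Cxsing}, but is not needed for the lemma. The one place you genuinely diverge is (iii)$\Rightarrow$(i): the paper argues abstractly, using $\gr_k\reg_{mB}\subset\gr_k\reg_{mB}(x)$ to write $\gr_k\reg_{mB}(x)\cong\reg_B\bigl(2(k-1)+\delta_k\bigr)$, the Euler characteristic to force $\sum_k\delta_k=1$, and the fact that degrees of the graded pieces must grow by at least $2$ to force $\delta_m=1$; you instead compute in the local model of \autoref{lem:End1}, using $v\phi=0$ and $u\phi=v^{m-1}\id$ to place the extra generator $\phi$ in the deepest saturation step $G_{m-1}=\gr_mG$. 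Both routes are valid; the paper's is shorter and coordinate-free, yours is more explicit, but be aware that your alternative remark that the length-one jump ``lands in the socle'' is not automatic -- making it rigorous requires exactly the paper's degree-growth argument (or your local computation), so it is a restatement of the goal rather than a shortcut.
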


\begin{proof}
We first note that the implication (i)$\implies$(ii) follows from \autoref{lem:Ograded} (with $a=m-1$). Also note that the equivalence of the three conditions is obvious for $m=1$, where $\reg_{(m-1)B}$ has to be interpreted as the zero sheaf. Thus, from now on, we assume $m\ge 2$.

For (ii)$\implies$ (iii) we first note that, by a computation analogous to the one in the proof of \autoref{lem:Ograded}, applying  
$\Hom_{\reg_{mB}}(\_,\reg_B(2(m-1)+1)$ to the structure sequence
\[
 0\to \reg_B(2(m-1))\to \reg_{mB}\to \reg_{(m-1)B}\to 0
\]
induces an isomorphism 
\[\Hom\bigr(\reg_B(2(m-1)),\reg_B(2(m-1)+1)\bigl)\cong \Ext^1\bigl(\reg_{(m-1)B},\reg_B(2(m-1)+1)\bigr)\,.\] 
This implies that every sequence as in (ii) admits a morphism of short exact sequences
\[
\begin{tikzcd}
0 \rar& \reg_{B}\bigl(2(m-1)\bigr) \dar\rar & \reg_{mB} \dar\rar & \reg_{(m-1)B} \dar\rar &0\\
0 \rar& \reg_{B}\bigl(2(m-1)+1\bigr) \rar & G  \rar & \reg_{(m-1)B} \rar  &0
\end{tikzcd}.
\]
The cokernel of the left vertical arrow is $\reg_x$ for some $x\in X$. Hence, the Snake Lemma gives a short exact sequence $0\to \reg_{mB}\to  G\to \reg_x \to 0$. By \autoref{lem:Oxseq}, this gives $G\cong \reg_{mB}(x)$. 

Let us now show (iii)$\implies$(i). In general, if $F\subset G$ is a purely one-dimensional subsheaf, we have that $\gr_kF\subset \gr_kG$ for all $k$. We can see this from \eqref{eq:gradedpieceaskernel}, as $\pure(F_{\mid kB})\subset \pure(G_{\mid kB})$ (note that a possible kernel of $F_{\mid kB}\to G_{\mid kB}$ zero-dimensional). We apply this to $F=\reg_{mB}\subset G=\reg_{mB}(x)$.
By \autoref{lem:Ograded}, we already know that $\gr_k\reg_{mB}\cong \reg_B(2(k-1))$. The $\gr_k\reg_{mB}(x)$ must be line bundles containing
$\gr_k\reg_{mB}$, hence $\gr_k\reg_{mB}(x)\cong \reg_B(2(k-1)+\delta_k)$ for some $\delta_k\ge 0$. By \autoref{lem:Oxseq}, we have $\chi(\reg_{mB}(x))=\chi(\reg_{mB})+1$. Hence, we must have $\delta_k=1$ for exactly one $k\in \{1,\dots, m\}$ and $\delta_{k'}=0$ for all $k'\neq k$. Since $\reg_B(-B)\cong \reg(2)$, we have non-zero multiplication maps
\[
\reg(2)\otimes \gr_k\reg_{mB}(x)\to \gr_{k+1}\reg_{mB}(x)
\]
which shows that the degrees of the graded pieces have to grow by at least 2 in every step. In other words, the series $\delta_1,\delta_2,\dots, \delta_m$ is non-decreasing. It follows that the unique $k$ with $\delta_k=1$ must be $k=m$.
\end{proof}

\subsection{Line bundles on extended ADE curves}

\begin{lemma}\label{lem:gC}
 We have $g(C):=\ho^1(\reg_C)=1+\sum_{o\in O} g(C_o)$. 
\end{lemma}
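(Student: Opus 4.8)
The plan is to identify $g(C)$ with the arithmetic genus of $C$ and then compute the latter by Riemann--Roch on the ambient surface $X$. First I would observe that, since $C$ is numerically $2$-connected by \autoref{lem:2con}, it is in particular $1$-connected, so that $\ho^0(\reg_C)=1$: this is the standard fact that a $1$-connected effective divisor on a smooth surface carries only constant global functions (see \cite{BHPV}). Consequently
\[
g(C)=\ho^1(\reg_C)=\ho^0(\reg_C)-\chi(\reg_C)=1-\chi(\reg_C),
\]
and the claim is reduced to the purely numerical assertion $\chi(\reg_C)=-\sum_{o\in O}g(C_o)$.

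Next I would compute $\chi(\reg_C)$ from the structure sequence $0\to \reg_X(-C)\to\reg_X\to\reg_C\to 0$ together with Riemann--Roch on $X$, which gives the usual formula
\[
\chi(\reg_C)=-\tfrac12\,C\cdot(C+K_X).
\]
So it remains to show $C\cdot(C+K_X)=2\sum_{o\in O}g(C_o)$, and I would do this by writing $C=\sum_{v}m_vC_v$ and evaluating the two summands $C^2$ and $C\cdot K_X$ separately.

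For $C^2$, note that the off-diagonal intersection numbers $C_v\cdot C_w$ ($v\neq w$) are just the edge data of $\Gamma$, so the kernel relation $Sm=0$ from \autoref{rem:Cartanmatrix}, that is \eqref{eq:ADElabels}, gives $\sum_{w\neq v}m_w(C_v\cdot C_w)=2m_v$ and hence $C_v\cdot C=m_vC_v^2+2m_v=m_v(C_v^2+2)$ for every $v$. Summing yields $C^2=\sum_v m_v^2(C_v^2+2)=\sum_{o\in O}(C_o^2+2)$, because $C_v^2+2=0$ for $v\in I$ (the multiple components are $(-2)$-curves) while $m_o=1$ for $o\in O$. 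For $C\cdot K_X$, adjunction on each smooth component gives $C_v\cdot K_X=2g(C_v)-2-C_v^2$, which vanishes for $v\in I$; hence $C\cdot K_X=\sum_{o\in O}\bigl(2g(C_o)-2-C_o^2\bigr)$. Adding the two expressions, the terms $C_o^2$ and the constants cancel, leaving $C\cdot(C+K_X)=2\sum_{o\in O}g(C_o)$, and substituting into the formula for $\chi(\reg_C)$ finishes the proof.

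The only genuinely non-formal step is the input $\ho^0(\reg_C)=1$: without it the non-reduced structure could a priori contribute nilpotent global sections and destroy the identification of $\ho^1(\reg_C)$ with $1-\chi(\reg_C)$. This is exactly where numerical $2$-connectedness (\autoref{lem:2con}) enters; everything after that is intersection theory on $X$ powered by the kernel relation $Sm=0$ and adjunction, and I expect no obstacle there.
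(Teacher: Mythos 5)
Your proof is correct, but it takes a genuinely different route from the paper's. The paper computes the cohomology of $\reg_C$ directly from the component restriction sequence $0\to\reg_C\to\bigoplus_{v\in V}\reg_{m_vC_v}\to\bigoplus_x\reg_{\zeta_x}\to 0$: the filtration of $\reg_{m_iC_i}$ with line bundle factors $\reg_{C_i}(2k)$ gives $\ho^0(\reg_{m_iC_i})=m_i^2$ and $\ho^1(\reg_{m_iC_i})=0$, the label relation \eqref{eq:ADElabels} shows that the middle and right terms of the sequence both have dimension $\sum_{v}m_v^2$, and then the long exact sequence together with $\ho^0(\reg_C)=1$ (numerical $1$-connectedness, via \cite[Thm.~3.3]{CFHR99}) yields the exact sequence $0\to\IC\to\Ho^1(\reg_C)\to\bigoplus_{o\in O}\Ho^1(\reg_{C_o})\to 0$. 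You instead reduce everything to an Euler characteristic computation using the same key input $\ho^0(\reg_C)=1$ (Ramanujam's lemma in \cite{BHPV} is a legitimate alternative citation), and then evaluate $\chi(\reg_C)=-\tfrac12\,C\cdot(C+K_X)$ by Riemann--Roch on the ambient surface, splitting $C^2$ and $C\cdot K_X$ via the kernel relation and adjunction; your intersection-theoretic bookkeeping is correct, including the crucial points that only the \emph{off-diagonal} entries of $S$ are actual intersection numbers and that the terms for $v\in I$ drop out. One small precision point: what you actually use is the kernel relation $\sum_{w\neq v}(C_v\cdot C_w)m_w=2m_v$ from \autoref{rem:Cartanmatrix}, which counts edges with multiplicity; equation \eqref{eq:ADElabels} as literally written sums labels over adjacent vertices and differs from this in the case $\tilde A_1$ (double edge), so the citation should be to the null-vector property rather than to \eqref{eq:ADElabels}. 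In terms of what each approach buys: yours is shorter and purely numerical, and it suffices for everything the paper needs later, since \autoref{cor:Pic} only requires the dimension equality $\ho^1(\reg_C)=1+\sum_o\ho^1(\reg_{C_o})$ via \autoref{cor:Piciso}; the paper's argument additionally exhibits the surjection $\Ho^1(\reg_C)\to\bigoplus_o\Ho^1(\reg_{C_o})$ as induced by restriction to the reduced components, which is structurally more informative even if not strictly needed.
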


\begin{proof}
Note that, for $i\in I$, the sheaf $\reg_{m_iC_i}$ has a filtration whose factors are the line bundles \[\reg_{C_i}, \reg_{C_i}(2),\dots, \reg_{C_i}\bigl(2(m-1)\bigr)\,;\] see \autoref{lem:Ograded}.
This gives $\ho^1(\reg_{m_iC_i})=0$ and 
\[
\ho^0(\reg_{m_iC_i})=\sum_{k=0}^{m_i-1}\ho^0\bigl(\reg(2k)\bigr)=\sum_{k=0}^{m_i-1}(2k+1)=m_i^2\,. 
\]
Also, $\ho^0(\reg_{C_o})=1$ for every $o\in O$. Hence, 
\begin{equation}\label{eq:h0Cv}\ho^0(\reg_{m_vC_v})=m_v^2 \quad\text{for all}\quad v\in V\,.
\end{equation}
We have a short exact sequence
\[
 0\to \reg_C\to \bigoplus_{v\in V}\reg_{m_vC_v}\to \bigoplus_x \reg_{\zeta_x}\to 0,
\]
where the direct sum on the right runs through all intersection points of components of $C$ (with $\zeta_x=(m_vC_v)\cap (m_wC_w)$ denoting the scheme-theoretic intersection when $x=C_v\cap C_w$), and the components of the maps are just the restriction maps; compare \autoref{lem:Fses}.   
Applying the global sections functor to this sequence gives
\[
 0\to \Ho^0(\reg_C)\to \bigoplus_{v\in V}\Ho^0(\reg_{m_vC_v})\to \bigoplus_x \Ho^0(\reg_{\zeta_x})\to \Ho^1(\reg_C)\to  \bigoplus_{o\in O}\Ho^1(\reg_{C_o})\to 0\,.  
\]
As $C$ is a numerically 1-connected Gorenstein curve, we have $\ho^0(\reg_C)=1$; see \cite[Thm.~3.3]{CFHR99}. By 
\eqref{eq:h0Cv}, the second term of the long exact sequence has dimension $\sum_{v\in V}m_v^2$. For $x=C_v\cap C_w$, we have $\ho^0(\reg_{\zeta_x})=m_vm_w$; compare \eqref{eq:zeta}. Hence, it follows from \eqref{eq:ADElabels} that the third term of the long exact sequence has the same dimension $\sum_{v\in V}m_v^2$ as the second term. Taking this information together, we see that the cokernel of the second map is one-dimensional. Hence, we have a short exact sequence
\[
0\to \IC \to \Ho^1(\reg_C)\to  \bigoplus_{o\in O}\Ho^1(\reg_{C_o})\to 0 \,. \qedhere
\]
\end{proof}
By \autoref{prop:Picgeneral}, we get
\begin{cor}\label{cor:Pic}
There is an exact sequence of algebraic groups
\[
1\to G\to \Pic^0(C)\to \prod_{o\in O} \Pic^0(C_o)\to 1,
\]
where $G$ is a one-dimensional smooth connected algebraic group.
\end{cor}

We will see later that $G=\IG_m$ if $\Gamma=A_n$, and $G=\IG_a$ if $\Gamma=D_n, E_6,E_7,E_8$. 

\subsection{Definition of Stability}

The version of stability that we use is \textit{Gieseker-stability} which uses the reduced Hilbert polynomial. Let us quickly review the relevant notions; see \cite{HuybrechtsLehn2010} for a general reference.

Let $X$ be a projective noetherian scheme, and fix a polarisation $H$ on $X$, i.e., an ample line bundle. The Hilbert polynomial $P(F)$ is given by
\begin{equation*}
P(F,n)=\chi(F(nH)).
\end{equation*}
Moreover, the Hilbert polynomial can be uniquely written
in the form
$$P(F,n)=\sum_{i=0}^{\dim(F)}\alpha_i(F)\frac{n^i}{i!},$$
where $\alpha_i(F)$ are rational coefficients for $i=0,\ldots,d=\dim(F)$. The \emph{reduced Hilbert polynomial} is the associated monic polynomial
\begin{equation*}
p(F,n):=\frac{P(F,n)}{\alpha_d(F)}.
\end{equation*}
For two polynomials $p$ and $q$, we write $p>q$ if $p(n)>q(n)$ for all $n\gg 0$.

\begin{definition}
A coherent sheaf $F$ of dimension $d$ is \emph{semi-stable} if $F$ is pure and for any proper purely $d$-dimensional quotient $F\twoheadrightarrow Q$ we have $p(F)\le p(Q)$. It is called \emph{stable}
if $F$ is semi-stable and the inequality is strict, i.e.\ $p(F)<p(Q)$ for
any proper purely $d$-dimensional quotient $F\twoheadrightarrow Q$.
\end{definition}

If $X=C$ is an extended ADE curve, then the reduced Hilbert polynomial is a monic polynomial of degree $1$, hence determined by its constant term. More concretely, by Riemann-Roch,
we have
\begin{equation}\label{eq:HP}
P_{H}(F,n)=\bigl(\sum_v \mu_v(F) e_v\bigr)n +\chi(F)\,.
\end{equation}
for every purely one-dimensional sheaf $F$ on $C$, and hence
\[
 p(F,n)= n +\frac{\chi(F)}{\sum_{v\in V} \mu_v(F) e_v}\,,
\]
where $\mu_v(F)$ denotes the multiplicity of $F$ along $m_vC_v$ and $e_v=\deg(H_{\mid C_v})$.

\subsection{Condition on the polarisation}\label{subsect:polar}

From now on, let us fix some positive integer $\chi>0$. We want to classify certain stable sheaves on $C$ with Euler characteristic $\chi$. In order to talk about stability, we need to fix some polarisation $H$ on $C$. The following assumption is necessary for our classification of stable sheaves of type $m$ and Euler characteristic $\chi$.

\begin{assumption}\label{ass:H}
Let $e_v:=\deg(H_{\mid C_v})$ and $e:=\deg(H)=\sum_{v\in V} m_ve_v$.
Setting $b_v:=\lceil \frac {e_v}e\chi \rceil$ for $v\in V$, we make the following assumption on our polarisation:
\begin{equation}\label{eq:Hassumption}
 \sum_{o\in O}b_o=\chi+|O|-1\,.
\end{equation}
\end{assumption}

This means that $\sum_{o\in O}b_o$ takes the maximal possible value. Indeed, by definition of the $b_o$ as the ceiling of $\frac {e_o}e \chi$, we have
\begin{equation}\label{eq:polar1}
\sum_{o\in O}b_o<|O|+\sum_{o\in O}\frac {e_o}e\chi\,.
\end{equation}
Furthermore, setting $e_I:=\sum_{i\in I}m_ie_i$, for any given polarisation, we have
\begin{equation}\label{eq:polar2}
\sum_{o\in O}\frac {e_o}e\chi\le\frac{e_I+\sum_{o\in O}e_o}e \chi=\frac ee\chi=\chi
\end{equation}
with an equality for $\Gamma=\tilde A_n$, where $I=\emptyset$ hence $e_I=0$, and a proper inequality otherwise. Combining \eqref{eq:polar1} and \eqref{eq:polar2}, we see that for every polarisation, we have $\le$ in
\eqref{eq:Hassumption}.

Under \autoref{ass:H}, we have $\frac{e_o}e\chi\notin \IN$ for $o\in O$. Indeed, otherwise the value of the left-hand of \eqref{eq:polar1} would drop. The condition $\frac{e_o}e\chi\notin \IN$ is necessary to prevent properly semi-stable sheaves.

Looking at \eqref{eq:polar2}, we also see that the $e_i$ need to be small relative to the $e_o$ for our assumption to hold.

The following lemma will be needed in the proof of our classification of stable sheaves in \autoref{prop:chi>0class}.

\begin{lemma}\label{lem:ei} Let $H$ be a polarisation satisfying \autoref{ass:H}
\begin{enumerate}
 \item For every $i\in I$, we have $b_i=1$.
\item
Let $J\subset O$, and $0\le f\le e_I$, and assume that at least one of $J\subsetneq O$ or $f< e_I$ holds.
Then 
\begin{equation*}
1-|J|+ \sum_{j\in J}b_j>\frac{f+\sum_{j\in J} e_j}{e}\chi\,. 
\end{equation*}
\end{enumerate}
\end{lemma}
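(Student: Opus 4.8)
The plan is to use only the two elementary bounds coming from the ceiling, namely $\frac{e_v}e\chi\le b_v<\frac{e_v}e\chi+1$ for every $v\in V$, and to play the upper one against the equality in \autoref{ass:H}. Throughout I abbreviate $E_O:=\sum_{o\in O}e_o$ and $E_J:=\sum_{j\in J}e_j$, and I recall $e=e_I+E_O$.

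For part (i) I would sum the upper bound over $O$. Since $\sum_{o\in O}b_o-|O|=\chi-1$ by \autoref{ass:H}, the bound $b_o<\frac{e_o}e\chi+1$ yields
\[
\chi-1=\sum_{o\in O}b_o-|O|<\frac\chi e\,E_O,
\]
which rearranges (using $e=e_I+E_O$) to $E_O>(\chi-1)e_I$. Hence $e=e_I+E_O>\chi e_I\ge \chi e_i$ for every $i\in I$, where the last step uses $e_i\le m_ie_i\le e_I$. Therefore $0<\frac{e_i}e\chi<1$, and so $b_i=\lceil\frac{e_i}e\chi\rceil=1$.

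For part (ii) the naive estimate $b_j\ge\frac{e_j}e\chi$ is useless, since it bounds the left-hand side from below in the wrong places. The decisive step is to eliminate the sum over $J$ in favour of its complement: substituting $\sum_{j\in J}b_j=\chi+|O|-1-\sum_{o\in O\setminus J}b_o$ (again \autoref{ass:H}) collapses the left-hand side to
\[
1-|J|+\sum_{j\in J}b_j=\chi-\sum_{o\in O\setminus J}(b_o-1).
\]
Now every remaining term is controlled by the favourable bound $b_o-1<\frac{e_o}e\chi$. When $O\setminus J\neq\emptyset$ this gives, strictly,
\[
1-|J|+\sum_{j\in J}b_j>\chi-\frac\chi e\,(E_O-E_J)=\frac\chi e\,(e_I+E_J)\ge\frac\chi e\,(f+E_J),
\]
the final inequality being $f\le e_I$; this is exactly the assertion. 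The remaining case $J=O$, where the complementary sum is empty, is precisely the one in which the hypothesis forces $f<e_I$: the collapse then reads $1-|J|+\sum_{j\in J}b_j=\chi$, and the claim becomes $\chi>\frac{f+E_O}e\chi$, i.e.\ $f<e_I$, which holds.

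The only real obstacle is this sign issue in part (ii): one cannot bound the $b_j$ from below term by term, and the trick is to trade the sum over $J$ for the sum over $O\setminus J$ via the global identity of \autoref{ass:H}, after which only the available upper bound $b_o-1<\frac{e_o}e\chi$ is needed. One must then isolate $J=O$, since that is exactly where $O\setminus J$ is empty and strictness has to be supplied instead by the hypothesis $f<e_I$.
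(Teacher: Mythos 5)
Your proof is correct and takes essentially the same route as the paper's: part (i) is exactly the paper's observation that \autoref{ass:H} combined with the summed ceiling bound forces $\frac{e_I}{e}\chi<1$, and part (ii) uses the same three ingredients (the equality of \autoref{ass:H}, the strict bound $b_o-1<\frac{e_o}{e}\chi$ applied over the complement $O\setminus J$, and $f\le e_I$), with the same case split deciding which inequality supplies strictness. The only cosmetic difference is that the paper phrases (ii) as a proof by contradiction, adding $\sum_{o\in O\setminus J}\frac{e_o}{e}\chi$ to both sides of the negated inequality, whereas you run the identical estimate directly after trading $\sum_{j\in J}b_j$ for the complementary sum.
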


\begin{proof}
Looking at \eqref{eq:polar1} and \eqref{eq:polar2}, we see that, for $\sum_{o\in O}b_o$ to have a chance to take the maximal value $\chi +|O|-1$, we must have $\sum_{o\in O}\frac {e_o}e\chi>\chi-1$. Hence, $\frac{e_I}e\chi<1$ which gives (i).

Regarding (ii), let us assume for a contradiction that
\begin{equation*}
1-|J|+ \sum_{j\in J}b_j\le\frac{f+\sum_{j\in J} e_j}{e}\chi\,. 
\end{equation*}
Noting that $b_j-1<\frac{e_j}e\chi$ for every $j\in O$, we can add $\sum_{o\in O\setminus J} \frac{e_j}e\chi$ to both sides to get
\begin{align*}
1-|O|+\sum_{o\in O} b_o= 1-|J|+ \sum_{j\in J}b_j +\sum_{o\in O\setminus J}(b_j-1)&\le\frac{f+\sum_{j\in J} e_j}{e}\chi+ \sum_{o\in O\setminus J}\frac{e_j}e\chi\\&= \frac{f+\sum_{o\in O} e_j}{e}\chi\\&\le  \frac{e_I+\sum_{o\in O} e_j}{e}\chi\\&=\chi\,. 
\end{align*}
Note that, by \autoref{ass:H}, not only the right-hand, but also the left-hand side of this chain of inequalities is $\chi$. If $J\subsetneq O$, the first inequality is proper. If $f<e_I$, the second inequality is proper. Either way, we arrive at the contradiction $\chi<\chi$.
\end{proof}

\begin{remark}
If $\chi=1$, we have $b_v=1$ for every $v\in V$. Hence, \autoref{ass:H} is automatically satisfied. Hence, after proving our classification \autoref{thm: easy classification} under \autoref{ass:H}, it follows in particular that stability of sheaves of type $m$ and Euler characteristic $\chi=1$ does not depend on the chosen polarisation.
For higher $\chi$, however, the stability condition depends on the polarisation; see \autoref{subsect:LM} for some further comments on this.
\end{remark}

Note that, given any $\chi>0$, we can find a polarisation $H$ on $C$
which satisfies \autoref{ass:H}. The reason is that we can choose the $e_v$ arbitrarily as long as they are sufficiently large.

A more subtle question is whether we can find a polarisation $H$ on the ambient surface $X$ which restricts to a polarisation of $C$ satisfying \autoref{ass:H}. This is a relevant question as, later in \autoref{subsect:family}, we want to regard the moduli space $\cM$ of sheaves of type $m$ and Euler characteristic $\chi$ as the fibre of $\cN\to |C|$ over $C$, where $\cN$ is the moduli space of $H$-stable sheaves on the surface $X$ with rank $0$, Euler characteristic $\chi$, and first Chern class $c_1=[C]$. In an important class of examples, this is always possible:

\begin{prop}\label{prop:Hone}
 Let $C$ be an extended ADE curve such that all but one component $C_o$ are rational $(-2)$-curves. Then, for every $\chi>0$ there exists a polarisation $H$ of the surface $X$ which satisfies \autoref{ass:H}.
\end{prop}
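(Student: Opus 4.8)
The plan is to isolate the distinguished component $C_o$ by contracting the rest of the curve. Since every component $C_v$ with $v\neq o$ is a rational $(-2)$-curve, the divisor $E := C - C_o = \sum_{v\neq o} m_v C_v$ is the fundamental cycle of an ADE singularity; in particular the intersection form on the classes $\{C_v\}_{v\neq o}$ is negative definite, so by the contractibility of ADE configurations there is a projective birational morphism $\pi\colon X\to \bar X$ onto a normal projective surface which contracts precisely the curves $C_v$, $v\neq o$, and is an isomorphism elsewhere. I would then fix an ample line bundle $\bar H$ on $\bar X$ together with a fixed ample line bundle $A$ on $X$, and set
\[
H_N := N\,\pi^*\bar H + A\qquad (N\in\IN).
\]
As $\pi^*\bar H$ is nef and $A$ is ample, $H_N$ is ample for every $N$, hence restricts to a genuine polarisation of $C$. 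The decisive feature is that $\pi^*\bar H$ sees only $C_o$: by the projection formula $\pi^*\bar H\cdot C_v=\bar H\cdot \pi_*C_v$, which vanishes for the contracted curves $v\neq o$ and is strictly positive for $v=o$.

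Next I would read off the degrees $e_v = H_N\cdot C_v$. For $v\neq o$ we get $e_v = A\cdot C_v$, a fixed positive constant independent of $N$; in particular $e_I = A\cdot\sum_{i\in I} m_i C_i$ and all $e_{o'}$ with $o'\in O\setminus\{o\}$ stay bounded. By contrast $e_o = N(\bar H\cdot\pi_*C_o) + A\cdot C_o$ grows linearly in $N$. Writing $e = e_o + e_E$ with $e_E = \sum_{v\neq o} m_v e_v = A\cdot E$ fixed, one sees that $e\to\infty$ and $\tfrac{e_o}{e}\to 1$ as $N\to\infty$, while $\tfrac{e_{o'}}{e}\to 0$ for each $o'\in O\setminus\{o\}$.

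Finally I would verify \autoref{ass:H} by an asymptotic estimate. For $N$ large enough, $\tfrac{e_{o'}}{e}\chi\in(0,1)$ for every $o'\in O\setminus\{o\}$, so $b_{o'}=\lceil\tfrac{e_{o'}}{e}\chi\rceil=1$, contributing $|O|-1$ to the sum. For the distinguished index, $\tfrac{e_o}{e}\chi$ tends to $\chi$ strictly from below, hence lies in $(\chi-1,\chi)$ as soon as $e_o > (\chi-1)e_E$, so $b_o=\chi$. Adding these gives $\sum_{o\in O}b_o=\chi+|O|-1$, which is exactly \eqref{eq:Hassumption} (and covers the degenerate case $|O|=1$ via the empty sum); the same estimate yields $\tfrac{e_I}{e}\chi\to 0<1$, matching \autoref{lem:ei}. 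I expect the only genuine obstacle to be the construction of the nef class separating $C_o$ from the remaining components, i.e.\ the input that the complementary configuration is negative definite and contractible; once the ample-plus-nef class $H_N$ is in hand, the verification reduces to the elementary limit above, and the non-integrality of $\tfrac{e_o}{e}\chi$ needed to rule out strictly semistable sheaves follows automatically from the strict inequalities.
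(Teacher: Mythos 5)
Your proof is correct, and it shares its skeleton with the paper's: contract the ADE configuration $E=C-C_o$ via $\pi\colon X\to\bar X$, then build a polarisation whose degree concentrates on $C_o$, forcing $b_o=\chi$ and $b_{o'}=1$ for the remaining reduced components. The difference lies in how the polarisation is constructed: the paper sets $H=\pi^*\bar H-\eps E$ for $0<\eps\ll 1$ (a $\IQ$-divisor perturbation downward by the exceptional cycle), whereas you take $H_N=N\pi^*\bar H+A$ with $A$ a fixed ample divisor on $X$ and $N\gg 0$. Your variant buys real advantages: ampleness is immediate (nef plus ample is ample), you stay with integral divisors so no rescaling remark is needed, and every exceptional degree $e_v=A\cdot C_v$ is positive by construction, so $b_{o'}=1$ comes for free once $N$ is large, leaving only your clean threshold $e_o>(\chi-1)e_E$ for $b_o=\chi$. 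In fact, your construction quietly sidesteps a genuine problem with the paper's: since every $C_v$ with $v\neq o$ is a $(-2)$-curve and $m$ spans the kernel of the intersection matrix, one has $C\cdot C_v=0$, hence
\[
\bigl(\pi^*\bar H-\eps E\bigr)\cdot C_v \;=\; -\eps\, E\cdot C_v \;=\; \eps\, C_o\cdot C_v\,,
\]
which vanishes for every exceptional component disjoint from $C_o$. So the paper's $H$ fails to be ample whenever some component of $E$ does not meet $C_o$ (i.e.\ in all cases except $\tilde A_1$ and $\tilde A_2$), and for a reduced such component one would even get $e_{o'}=0$, hence $b_{o'}=0$, violating \eqref{eq:Hassumption}. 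The paper's argument really requires replacing $E$ by an exceptional cycle $Z$ with $Z\cdot C_v<0$ for all $v\neq o$ --- such a cycle exists by negative definiteness, and this is exactly what the proof of \autoref{prop: global polarisation possible} uses --- whereas your additive construction avoids the issue entirely.
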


\begin{proof}
As $E:=C-C_o$ is a configuration of $(-2)$-curves, we can contract it. Let $\pi\colon X\to \bar X$ be the contraction. For $\bar H$ an ample divisor on $\bar X$ and $\eps>0$ sufficiently small, the divisor $H:=\pi^*\bar H-\eps E$ is again ample. It is a priori only a $\IQ$-divisor, but, as \autoref{ass:H} is stable under replacing $H$ by some multiple $mH$, this is not a problem.
By making $\eps$ even smaller, if necessary, we can achieve that $\frac{e_o}e\chi>\chi-1$. Then $b_o=\chi$ and $b_v=1$ for all $v\neq o$, which gives \eqref{eq:Hassumption}.
\end{proof}

However, there are examples of extended ADE curves $C\hookrightarrow X$ and numbers $\chi>0$, where it is impossible to find a polarisation on the surface $X$ satisfying \autoref{ass:H}:

\begin{example} \label{remark: globalisation of polarisation impossible}
Let $X = \IP^2$ and $C = Q+L$ the union of a smooth conic and a general line, which is an extended ADE curve with $\Gamma=\tilde A_1$. For $H = \ko_{\IP^2}(d)$, \autoref{ass:H} becomes
 \[ \left\lceil \frac{d}{3d} \chi\right\rceil + \left\lceil\frac{2d}{3d} \chi\right\rceil = \chi +1
 \]
 which is satisfied (for any positive $d$) if and only if $\chi$ is not divisible by $3$.
 We will see a similar example on a K3 surface later in \autoref{exam: no global polarisation}.
 \end{example}

At least, given a curve $C\hookrightarrow X$ there are always infinitely many values of $\chi>0$ for which we can find a polarisation $H$ on the surface $X$ satisfying \autoref{ass:H}. To see this, note that, as the $C_i$ are rational $(-2)$-curves, we can always contract $E=\sum_{i\in I} m_iC_i$, say via $\pi\colon X\to \bar X$. Then, choose any polarisation $\check H$ on $\bar X$, and set $H_0:=\pi^*\check H$.

This $H_0$ is not ample, hence not a polarisation. Anyway, let us note that, numerically, it satisfies \autoref{ass:H} for every $\chi$ with $\chi\equiv 1\,\,\,\mod \deg(H_0)$. Let us further note that \autoref{ass:H} is stable under small perturbations of $H$ as an $\IQ$-divisor, due to the fact, noted above, that \autoref{ass:H} implies $\frac{e_o}e\chi\notin \IN$. Hence, given some $\chi$ with $\chi\equiv 1\,\,\,\mod \deg(H_0)$, we can find a sufficiently small $\eps>0$ such that $H:=H_0-\eps E$ is a polarisation on $X$ which still satisfies \autoref{ass:H}.

\subsection{Classification of stable sheaves by their components}\label{subsect:chi>0}

\begin{definition}
A coherent sheaf $F$ on $C$ is \emph{of type $m$}, if it is purely one-dimensional, and $\mu_v(F)=m_v$ for every $v\in V$.  
\end{definition}

Every purely one-dimensional sheaf $F$ which is generically a line bundle is a sheaf of type $m$. But, if $C$ is non-reduced, i.e.\ if $\Gamma\in \{\tilde D_n, \tilde E_6 , \tilde E_7, \tilde E_8\}$, the converse is not true. For example, consider $F=\bigoplus_{v\in V} \reg_{C_v}^{\oplus m_i}$. This $F$ has scheme-theoretic support only on $C_{\red}$, but still satisfies $\mu_v(F)=m_v$ for all $v\in V$.

However, our classification of the stable sheaves of type $m$ with respect to a polarisation satisfying \autoref{ass:H} which follows below shows in particular that every \emph{stable} sheaf of type $m$ is generically a line bundle.

Looking at \eqref{eq:HP}, we see that the Hilbert polynomial of a sheaf $F$ of type $m$ is
\[
 P_H(F,n)= en+\chi(F)\,.
\]

Recall that by \autoref{lem:Fses}, for every sheaf $F$ of type $m$, we have an exact sequence
\begin{equation}\label{eq:exactsequenceofsheavesrelatingFandFj2}
0\lra F\lra \bigoplus_{v\in V} F_v\lra \bigoplus_x T_x\lra 0,
\end{equation}
where $F_v=\pure(F_{\mid m_vC_v})$, and $x$ runs through all intersection points  of components of $C$. Also recall that, for $x=C_u\cap C_w$, we write $\zeta_x:=(m_uC_u)\cap (m_wC_w)$, and that, by our transversality assumption, $\reg_{\zeta_x}\cong \IC[x,y]/(x^{m_u},y^{m_w})$. Hence, its socle $(x^{m_u-1} y^{m_w-1})$ is one-dimensional. Let $\zeta_x'\subset \zeta_x$ be the subscheme with $\reg_{\zeta_x'}=\reg_{\zeta_x}/\mathsf{socle}$.  
\begin{definition} Let $F$ be a sheaf of type $m$. 
\begin{enumerate}
 \item For $i\in I$, the sheaf $F_i$ on $m_iC_i$ is 
 \begin{enumerate}
\item \textit{ordinary} if $F_i\cong \reg_{m_iC_i}$.
\item \textit{special} if it satisfies the equivalent conditions of \autoref{lem:Idualcompspecial}, which means that $F_i\cong \reg_{m_iC_i}(x)$ for some $x\in C_i$.
 \end{enumerate}
 \item For $o\in O$, the sheaf $F_o$ on $C_o$ is 
 \begin{enumerate}
\item \textit{ordinary} if $F_o$ is a line bundle of Euler characteristic $b_o:=\lceil \frac{e_j}e \chi\rceil$, compare \autoref{subsect:polar}.
\item \textit{special} if $F_o$ is a line bundle of Euler characteristic $b_o+1$.
 \end{enumerate}
\item Let $x$ be an intersection point of two components of $C$, the sheaf $T_x$ is
\begin{enumerate}
\item \textit{ordinary} if $T_x\cong \reg_{\zeta_x}$.
\item \textit{special} if $T_x\cong \reg_{\zeta'_x}$ where $\reg_{\zeta'_x}=\reg_{\zeta_x}/\mathsf{socle}$.
\end{enumerate}
\end{enumerate}
\end{definition}

Note that there are many sheaves of type $m$ such that the $F_v$ and $T_x$ are neither special nor ordinary. 

\begin{prop}\label{prop:chi>0class}
The following three statements are equivalent for $F$ a sheaf of type $m$:
\begin{enumerate}
 \item $F$ is semi-stable,
 \item $F$ is stable,
 \item exactly one of the sheaves $F_v$ and $T_x$ occurring in \eqref{eq:exactsequenceofsheavesrelatingFandFj2}  is special, and all the others are ordinary.
\end{enumerate}
\end{prop}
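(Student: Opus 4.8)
The plan is to turn the whole statement into bookkeeping of Euler characteristics along the canonical sequence \eqref{eq:exactsequenceofsheavesrelatingFandFj2}, recalling that by hypothesis $\chi(F)=\chi$. The implication (ii)$\Rightarrow$(i) is the definition, so the content is (iii)$\Rightarrow$(ii) and (i)$\Rightarrow$(iii). From \eqref{eq:exactsequenceofsheavesrelatingFandFj2} one has $\chi(F)=\sum_v\chi(F_v)-\sum_x\chi(T_x)$, and from the proof of \autoref{lem:gC} together with \eqref{eq:zeta} the \emph{ordinary} values are $\chi(\reg_{m_iC_i})=m_i^2$, $\chi(F_o)=b_o$, and $\chi(\reg_{\zeta_x})=m_um_w$ for $x=C_u\cap C_w$; write $\mathrm{ord}_v$ and $\mathrm{ord}_x$ for these. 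The first thing I would record is the normalisation that the \emph{all-ordinary} Euler characteristic equals $\chi-1$: combining \autoref{ass:H} in the form $\sum_{o\in O}b_o=\chi+|O|-1$ with the identity $\sum_{\text{edges }(u,w)}m_um_w=\tfrac12\sum_{v}m_v\!\!\sum_{w\,:\,C_v\cap C_w\neq\emptyset}\!\!m_w=\sum_v m_v^2=|O|+\sum_{i\in I}m_i^2$, which is just \eqref{eq:ADElabels} and $m_o=1$, gives $\sum_o b_o+\sum_i m_i^2-\sum_x m_um_w=\chi-1$. Since turning one component special raises its $\chi(F_v)$ by $1$ while turning one $T_x$ special lowers $\chi(T_x)$ by $1$, each special piece raises $\chi(F)$ by exactly $1$; thus, among sheaves all of whose pieces are ordinary-or-special, $\chi(F)=\chi$ holds iff exactly one piece is special.

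For (iii)$\Rightarrow$(ii) I would verify stability directly. By \autoref{lem:quotientsarerestrictions} every proper purely one-dimensional quotient of $F$ is $\pure(F|_Z)$ for a subcurve $\emptyset\neq Z=\sum_v n_vC_v\subsetneq C$; as $F$ is generically a line bundle, $\mu_v(\pure(F|_Z))=n_v$ and the comparison of reduced Hilbert polynomials collapses to the single inequality $\chi/e<\chi(\pure(F|_Z))/\sum_v n_ve_v$. Expanding $\chi(\pure(F|_Z))$ through its own instance of \eqref{eq:exactsequenceofsheavesrelatingFandFj2} and discarding the (non-negative) contribution of the unique special piece, this reduces to \autoref{lem:ei}(ii), with \autoref{lem:ei}(i) controlling the terms indexed by $I$; the strictness in \autoref{lem:ei}(ii), valid because $Z\subsetneq C$, is exactly what upgrades semistability to stability.

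The implication (i)$\Rightarrow$(iii) carries the real weight. Assuming $F$ semi-stable, I would first bound the pieces. Testing stability against the quotient $F\twoheadrightarrow F_o=\pure(F|_{C_o})$, a line bundle on the smooth curve $C_o$, and using that $\tfrac{e_o}e\chi\notin\IN$ under \autoref{ass:H}, gives $\chi(F_o)\ge b_o$. Testing against $F\twoheadrightarrow\pure(F|_{C_i})\cong\gr_1F_i$ forces the top graded piece of $F_i$ to have non-negative degree; feeding this into the filtration of \autoref{subsect:purediv}, whose consecutive graded degrees increase by at least $2$ because $\reg_{C_i}(-C_i)\cong\reg_{\IP^1}(2)$, yields $\chi(F_i)\ge m_i^2$. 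For the intersection sheaves, \autoref{lem:chiTx} gives the dual bound $\chi(T_x)\le m_um_w$. Substituting these into $\chi=\chi(F)=\sum_v\chi(F_v)-\sum_x\chi(T_x)$ and recalling the all-ordinary value $\chi-1$, the non-negative integers $\chi(F_v)-\mathrm{ord}_v$ and $\mathrm{ord}_x-\chi(T_x)$ sum to exactly $1$, so precisely one piece lies one notch off ordinary and all others are ordinary.

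It remains to identify the off-ordinary piece as special, which is a rigidity check. A colength-one quotient of $\reg_{\zeta_x}$ must annihilate the one-dimensional socle, so $\chi(T_x)=m_um_w-1$ forces $T_x\cong\reg_{\zeta'_x}$. For a component, writing $e_k:=\deg(\gr_kF_i)-2(k-1)$ rewrites the two constraints (non-negative top degree, degrees jumping by $\ge 2$) as $0\le e_1\le\cdots\le e_{m_i}$ with $\sum_k e_k\in\{0,1\}$; the value $0$ forces $\gr_kF_i\cong\reg_{C_i}(2(k-1))$ and hence $F_i\cong\reg_{m_iC_i}$ by \autoref{lem:Ograded}, while the value $1$ forces $e_k=\delta_{k,m_i}$, which is condition (i) of \autoref{lem:Idualcompspecial} and gives $F_i\cong\reg_{m_iC_i}(x)$. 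This produces the configuration of (iii), and (iii)$\Rightarrow$(ii) closes the cycle. I expect the genuine obstacle to be hidden in the bounding step: one must know that a semi-stable sheaf of type $m$ is generically a line bundle, i.e.\ that all graded ranks $r_{ik}$ equal $1$, since only then does \autoref{lem:chiTx} give the clean bound $\chi(T_x)\le m_um_w$ and does the degree-growth estimate apply. Establishing this generic local freeness — playing the subcurve inequalities, \autoref{lem:chiTx}, and the label relation \eqref{eq:ADElabels} against each other — is, I expect, the hardest part of the argument.
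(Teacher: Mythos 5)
Your outline agrees with the paper's proof in its overall architecture: the same normalisation (all-ordinary pieces give $\chi-1$, each special piece adds $1$), the same route for (iii)$\Rightarrow$(ii) via \autoref{lem:quotientsarerestrictions} and \autoref{lem:ei}, and the same rigidity identification of the off-ordinary piece (the socle argument for $T_x$, the degree-growth argument plus \autoref{lem:Ograded} and \autoref{lem:Idualcompspecial} for $F_i$). But there is a genuine gap exactly where you suspect it, and it sits at the heart of (i)$\Rightarrow$(iii): your bounds $\chi(F_i)\ge m_i^2$ and the degree-growth estimate are only valid once you know that every graded piece $\gr_kF_i$ has rank one, i.e.\ that the semi-stable sheaf $F$ is generically a line bundle along the multiple components. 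For a sheaf of type $m$ this is genuinely not automatic -- the paper's own example $F=\bigoplus_v\reg_{C_v}^{\oplus m_v}$ is pure of type $m$ but scheme-theoretically supported on $C_{\red}$ -- and semi-stability, tested against the quotients you use, only yields $\chi(F_i)\ge\sum_k(2k-1)r_{ik}$, which degenerates to $m_i$ (not $m_i^2$) when the filtration of $F_i$ has length one and rank $m_i$. You name this as ``the hardest part'' and offer no argument for it, so the central implication remains unproved; everything after that point in your ledger (``the non-negative defects sum to exactly $1$'') cannot even be set up, because the reference values $m_i^2$ are not yet available as lower bounds.

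The paper's resolution is worth internalising because it does \emph{not} establish generic local freeness as a separate preliminary step: it proves it simultaneously with the Euler-characteristic count via the lattice structure of the extended Cartan matrix. One keeps the ranks in all estimates: with $(n_{ir})_r$ the partition dual to $(r_{ik})_k$, one has $\chi(F_i)\ge\sum_k(2k-1)r_{ik}=\sum_r n_{ir}^2$ and, by \autoref{lem:chiTx} together with the identity $\sum_{k,k'}\min\{r_{uk},r_{wk'}\}=\sum_r n_{ur}n_{wr}$, also $\chi(T_x)\le\sum_r n_{ur}n_{wr}$. Substituting into $0=\chi(F)-\sum_v\chi(F_v)+\sum_x\chi(T_x)$ and using \autoref{ass:H} gives $0\le 1+\tfrac12\sum_r\vec n_r^{t}S\vec n_r$. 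Since $(\IZ^V,S)$ is negative semi-definite with kernel generated by $\vec m$ (\autoref{rem:Cartanmatrix}), every $\vec n_r$ that is not a multiple of $\vec m$ contributes at most $-2$; combined with $\sum_r\vec n_r=\vec m$, this forces $\vec n_1=\vec m$ and $\vec n_r=0$ for $r>1$ -- that is, all ranks equal $1$ -- and at the same time shows that the total excess in the inequalities is exactly $1$. This quadratic-form step is the missing idea in your proposal. Note also that the same lattice fact, applied to the multiplicity vector $\vec m''$ of a proper subcurve, is what produces the ``$+1$'' needed before \autoref{lem:ei}(ii) can be invoked in (iii)$\Rightarrow$(ii); your sketch elides this as well, though there it is a routine insertion since (iii) makes all the pieces of the quotient explicit.
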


\begin{proof}
We start with the (i)$\implies$(iii) part.
For the following notation related to the natural filtration of $F_i$, see \autoref{subsect:purediv}.
For $i\in I$, all the vector bundles $\gr_kF_i$ on $C_i\cong \IP^1$
decompose into line bundles, say
\begin{equation*}
\gr_kF_i=\reg_{C_i}(a_{k1})\oplus\ldots\oplus \reg_{C_i}(a_{kr_{ik}}).
\end{equation*}
Every line bundle $L$ appearing in the decomposition of $\gr_1F_i$ is a quotient of $F$. 
As \begin{equation*}
P(L,n)=e_in+\chi(L), \quad P(F,n)=en+\chi\,,
\end{equation*} 
semi-stability implies $\chi(L)\ge \frac{e_i}e\chi$.  
Applying \autoref{lem:ei}(i) gives $\chi(L)\ge b_i= 1$.
Since $\reg_{C_i}(-C_i)\cong \reg_{C_i}(2)$, and the natural map $\gr_kF_i\otimes\reg_{C_i}(-C_i)\to \gr_{k+1}F_i$ is generically surjective,    
\[\min\{a_{k\alpha}\mid \alpha=1,\dots r_{ik}\}+2\leq \min\{a_{(k+1)\alpha}\mid \alpha=1,\dots,r_{i(k+1)}\}\,.\] 
It follows that $\chi(L)\geq 2k-1$ for every line bundle 
$L$ appearing as a direct summand of $\gr_kF_i$. This yields $\chi(\gr_kF_i)
\geq (2k-1) r_{ik}$. 

For $r\in \IN$, we set $n_{ir}:=\max\{k\;|\; r_{ik}\geq r\}$. Note that the non-increasing sequences $(r_{ik})_k$ and $(n_{ik})_k$ are dual partitions (i.e.\ they have transposed Young diagrams) of $\mu_i(F)=m_i$. In particular,  $\sum_r n_{ik}=m_i=\sum_k r_{ik}$. We get
\begin{equation}\label{eq:SecondInequality}
\chi(F_i)=\sum_k\chi(\gr_kF_i)\ge\sum_k r_{ik}(2k-1)=\sum_r n_{ir}^2
\end{equation}
where the last equality is due to the general identity $\sum_{k=1}^n(2k-1)=n^2$.
Similarly, as $F_o$ is a quotient of $F$ for every $o\in O$, semi-stability gives $\chi(F_o)\ge b_o$.
Taking these inequalities for varying $o\in O$ together, and using \autoref{ass:H}, we get
\begin{equation}\label{eq:Oineq}
 \chi-\sum_{o\in O}\chi(F_o)\le 1-|O|\,.
\end{equation}
Let $x=C_v\cap C_w$ be an intersection point of two components. By a general combinatorical consideration, for example using induction on the lengths of the partitions $(r_{vk})_k$ and $(r_{wk'})_{k'}$, we see that 
\[
 \sum_{k,k'}\min\{r_{vk}, r_{wk'} \}=\sum_{r} n_{wr} n_{vr}\,.
\]
Hence, \autoref{lem:chiTx} becomes 
\begin{equation}\label{eq:Txineq}
 \chi(T_x)\le\sum_r n_{wr} n_{vr}\,.
\end{equation}
Note that, for $o\in O$, we have $n_{o1}=1$ and $n_{or}=0$ for $r\ge 2$. Let $S=S_\Gamma$ be minus the extended Cartan matrix; compare \autoref{rem:Cartanmatrix}.
From the exact sequence \eqref{eq:exactsequenceofsheavesrelatingFandFj2} and the inequalities \eqref{eq:SecondInequality} and \eqref{eq:Oineq}, and \eqref{eq:Txineq}, we conclude that
\begin{eqnarray*}
0=\chi(F)-\sum_{v\in V} \chi(F_v)+\sum_x \chi(T_x)&\le& 1-|O|-\sum_{i\in I}\sum_r n_{ir}^2+\sum_{\{u,w\}\subset V,\, u\neq w}S_{uw} \sum_r n_{ur}n_{wr}\\
&\le& 1-\sum_{v\in V}\sum_r n_{vr}^2+\sum_{\{u,w\}\subset V,\, u\neq w}S_{uw} \sum_r n_{ur}n_{wr}
\\
&=&1+\frac12 \sum_r \sum_{(v,w)\in V\times V} n_{vr} S_{vw} n_{wr}\\&=& 1+\frac12\sum_{r} \vec n_r^t S\vec n_r,
\end{eqnarray*}
where $\vec n_r=(n_{vr})_{v\in V}\in \IZ^{V}$. The properties of $S$ mentioned in \autoref{rem:Cartanmatrix} give that  any non-trivial vector $0\neq v\in \IZ^V$ is either a multiple of $\vec m=(m_v)_{v\in V}$ or satisfies $v^t Sv\le -2$. Since $\vec n_1+\vec n_2+\ldots=\vec m$, the inequality 
$-2\leq \sum_{r} \vec n_r^t S \vec n_r$ shows that we must have $\vec n_1=\vec m$ and $\vec n_r=0$ 
for all $r>1$. We can infer two facts: every subfactor $\gr_kF_i$, with $i\in I$, is a line 
bundle on $C_i$, and, second, as our inequality now reads $0\leq 1+\frac12 
\vec n_1^t S\vec n_1=1$, that exactly {\sl one} of the inequalities used in the 
deduction, namely
\begin{align}\label{eq:ineq1}\chi(\gr_{k}F_i)\ge 2k-1\qquad &\text{for $i\in I$, $k\le m_i$},
\\ \label{eq:ineq2} \chi(F_o)\ge b_o\qquad&\text{for $o\in O$,} \\  \chi(T_x)\leq m_um_w \qquad&\text{for $x=C_u\cap C_w$} \label{eq:ineq3}
\end{align}
fails to be an equality and by an excess of exactly 1.

If $\chi(\gr_{k}F_i)> 2k-1$ for some $k\le m_i$, we must already have $k=m_i$. Otherwise, as the Euler characteristic of the factors grows by at least 2 in every step, we would also have $\chi(\gr_{k+1}F_i)> 2(k+1)-1$ as a second proper inequality. Hence, in this case, \autoref{lem:Ograded} and \autoref{lem:Idualcompspecial} show that $F_i$ is special, and all other $F_v$ and all $T_x$ are ordinary. 

If \eqref{eq:ineq2} is a proper inequality, then $F_o$ is special, and all other $F_v$ and all $T_x$ are ordinary; see again \autoref{lem:Ograded}. 

Let $\chi(T_x)= m_um_w-1$ for $x=C_u\cap C_w$. Then the other two inequalities, \eqref{eq:ineq1} and \eqref{eq:ineq2},  must be equalities, which implies that all $F_v$ are ordinary, hence line bundles. As the maps $F_u\to T_x$ and $F_w\to T_x$ are surjections, we must have that $T_x$ is a quotient of $(F_u)_x\cong \reg_{m_uC_u,x}$ and of $(F_w)_x\cong \reg_{m_wC_w,x}$, hence of $\reg_{\zeta_x}$. The only quotient of $\reg_{\zeta_x}$ of dimension $m_um_w-1$ is $\reg_{\zeta_x'}$, which means that $T_x$ is special.       

We now come to the proof of the implication (iii)$\implies$(ii).
All purely one-dimensional quotients $F''$ of $F$ are of the form $F''=\pure(F_{|Z})$ where $Z=\sum_{v\in V} m_v''C_v$ for some $0\le m_v''\le m_v$; see \autoref{lem:quotientsarerestrictions}.
Hence, for any such proper quotient $F\twoheadrightarrow F''=\pure(F_{\mid Z})$, we need to show the inequality $p(F,n)<p(F'',n)$. 
We consider the short exact sequence 
\begin{equation}\label{eq:F''ses}
0\lra F''\lra \bigoplus_{v\in V} F''_v\lra \bigoplus_x T''_x\lra 0
\end{equation}
which is the analogue of \eqref{eq:exactsequenceofsheavesrelatingFandFj2} for $F''$; see \autoref{lem:Fses}. We set
  \begin{equation*}
   J:=\bigl\{j\in O\mid m_j''=1\bigr\}=\bigl\{j\in O\mid C_j\subset \supp(F'')\bigr\}=\bigl\{j\in O\mid F''_j\neq 0\bigr\}\,.
   \end{equation*}
For $j\in J$, we have $F''_j=F_j$. Hence, 
\begin{equation*}
\chi(F''_j)\ge b_j \qquad \text{ for $j\in J$.}
\end{equation*}
For $i\in I$, we have $\gr_kF_i''\cong \gr_kF_i \cong \reg_{C_i}(2k-2)$ for $1\le k\le m_i''$ with one possible exception: If $k=m_i''=m_i$ and $F_i$ is special, we have $\gr_kF_i'' \cong \reg_{C_i}(2k-1)$. 
Anyway, we get
\begin{equation*}
 \chi(F_i'')\ge \sum_{k=1}^{m_i''} (2k-1)=(m_i'')^2\,.
\end{equation*}
Furthermore, for $x=C_u\cap C_w$, we have $T''_x=\reg_{m_u''C_u\cap m_w''C_w}$, with one possible exception: If $m_u''=m_u$, $m_w''=m_w$ and $T_x$ is special, we have $T''_x=T_x=\reg_{\zeta_x'}$. Anyway,  $T''_x$ is a quotient of $\reg_{m_u''C_u\cap m_w''C_w}$ which gives
\begin{equation*}
 \chi(T''_x)\le m_u''m_w'' \,.
\end{equation*}
In summary, using \eqref{eq:F''ses}, we get
\begin{align*}
\chi(F'')&\ge\sum_{j\in J}b_j+ \sum_{i\in I} (m_i'')^2 -\sum_x m_u''m_w'' \\
&=\sum_{j\in J}b_j-|J|+\sum_{j\in O}(m_j'')^2+ \sum_{i\in I} (m_i'')^2 -\sum_x m_i''m_j'' \\
&= \sum_{j\in J}b_j-|J|- \frac 12 (\vec m'') S\vec m''\\
&\ge \sum_{j\in J}b_j-|J|+1, 
\end{align*}
where the last inequality is due to the fact that $\vec m''=(m''_v)_{v\in V}$ is not a multiple of $\vec m$ and \autoref{rem:Cartanmatrix}. Combining this with \autoref{lem:ei}(ii) gives
\[\chi(F'')>\frac{\sum_{i\in I}m_i''e_i+\sum_{j\in J} e_j}{e}\chi\,.\]
This is exactly what we need as, by \eqref{eq:HP}, 
we have 
\begin{equation*}
 P(F'',n)=\bigl(\sum_{i\in I}m_i''e_i+\sum_{j\in J} e_j\bigr) n+\chi(F'')\,.
\end{equation*}
As the implication (ii)$\implies$(i) is trivial, we finished the proof.
\end{proof}

\subsection{Presentation of stable sheaves as generalised divisors}

\begin{lemma}\label{lem:lb}
 Let $F$ be a purely one-dimensional sheaf on $C$, and let $x=C_v\cap C_w$ be an intersection point of two components. If $T_x\cong \reg_{\zeta_x}$ is ordinary, and $F_v$ and $F_w$ are line bundles in a neighbourhood of $x$, then $F$ is a line bundle in a neighbourhood of $x$.
\end{lemma}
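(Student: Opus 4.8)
\emph{The plan} is to reduce to a local statement at $x$ and to identify $F$ with $\reg_C$ there by comparing two short exact sequences. Since by the standing assumptions at most two components pass through $x$ and they meet transversally, I would first choose local coordinates $(s,t)$ on $X$ at $x$ with $C_v=\{t=0\}$ and $C_w=\{s=0\}$; then, in a neighbourhood of $x$,
\[
\reg_C=\reg_X/(t^{m_v}s^{m_w}),\quad \reg_{m_vC_v}=\reg_X/(t^{m_v}),\quad \reg_{m_wC_w}=\reg_X/(s^{m_w}),\quad \reg_{\zeta_x}=\reg_X/(t^{m_v},s^{m_w}).
\]
Restricting the sequence of \autoref{lem:Fses} to this neighbourhood, where only $C_v$ and $C_w$ are relevant, leaves
\[
0\to F\to F_v\oplus F_w\to T_x\to 0,
\]
and by hypothesis $F_v\cong\reg_{m_vC_v}$ and $F_w\cong\reg_{m_wC_w}$ are free of rank one, while $T_x\cong\reg_{\zeta_x}$.

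Next I would record the model sequence for the structure sheaf, the local Mayer--Vietoris sequence
\[
0\to\reg_C\to\reg_{m_vC_v}\oplus\reg_{m_wC_w}\xrightarrow{(a,b)\mapsto\bar a-\bar b}\reg_{\zeta_x}\to 0,
\]
the first map being the diagonal $f\mapsto(\bar f,\bar f)$. This is checked directly: injectivity on the left uses $(t^{m_v})\cap(s^{m_w})=(t^{m_v}s^{m_w})$ in $\reg_X$, and exactness in the middle follows from a one-line lifting of a representative. The aim is to prove that the previous sequence is isomorphic to this one.

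The only place the hypotheses are genuinely used is the identification of the maps to $T_x$. Since $T_x$ is annihilated by the ideal of $\zeta_x$, the surjection $F_v\to T_x$ factors through $F_v|_{\zeta_x}$, and under $F_v\cong\reg_{m_vC_v}$ this becomes a surjection $\reg_{\zeta_x}\to T_x\cong\reg_{\zeta_x}$ of $\IC$-vector spaces of the same finite dimension $m_vm_w$, hence an isomorphism; this is exactly where ordinariness $T_x\cong\reg_{\zeta_x}$ enters (for a special $T_x$ the target would be smaller and injectivity would fail). Therefore the induced map $\reg_{m_vC_v}\to\reg_{\zeta_x}$ is the canonical restriction followed by multiplication by a unit, and similarly for $w$. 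As units lift along the surjections $\reg_{m_vC_v}\to\reg_{\zeta_x}$ and $\reg_{m_wC_w}\to\reg_{\zeta_x}$, I can absorb these two units into the trivialisations of $F_v$ and $F_w$ together with a trivialisation of $T_x$, so that after the adjustment the maps $F_v\to T_x$ and $F_w\to T_x$ are precisely $a\mapsto\bar a$ and $b\mapsto\bar b$.

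At this point the two sequences have the same middle and right-hand terms and the same maps, so the five lemma yields an isomorphism $F\cong\reg_C$ near $x$; that is, $F$ is a line bundle in a neighbourhood of $x$. I expect the main obstacle to be purely organisational: keeping the trivialisations of $F_v$, $F_w$ and $T_x$ mutually consistent while absorbing the two units, and verifying that the resulting comparison square commutes on the nose (up to the harmless sign in the cokernel map of \autoref{lem:Fses}). Everything else is a direct local calculation.
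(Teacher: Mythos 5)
Your proposal is correct and takes essentially the same route as the paper: restrict the sequence of \autoref{lem:Fses} to a neighbourhood of $x$, use surjectivity of the maps $F_v,F_w\to T_x\cong\reg_{\zeta_x}$ to see that, after trivialising, these maps are the canonical restrictions composed with units, lift the units (the paper's phrase ``every invertible section of $\reg_{\zeta_x}$ can be lifted''), and conclude that $F$ is the kernel of the standard sequence, i.e.\ $F\cong\reg_C$ near $x$. Your added details (the local coordinates, the dimension count showing the induced endomorphism of $\reg_{\zeta_x}$ is multiplication by a unit, and the explicit verification of the Mayer--Vietoris sequence) merely spell out what the paper leaves implicit.
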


\begin{proof}
Over a sufficiently small open neighbourhood of $x$, the sequence \eqref{eq:exactsequenceofsheavesrelatingFandFj2} becomes
\begin{equation}\label{eq:Flocal}
 0\to F\to \reg_{m_vC_v}\oplus \reg_{m_wC_w}\to \reg_{\zeta_x}\to 0\,.
\end{equation}
As, on the affine neighbourhood, every invertible section of $\reg_{\zeta_x}$ can be lifted to one of $\reg_{m_vC_v}$ and to one of $\reg_{m_wC_w}$, the sequence \eqref{eq:Flocal} is isomorphic to the sequence where both maps $\reg_{m_vC_v}\to \reg_{\zeta_x}$ and $\reg_{m_wC_w}\to \reg_{\zeta_x}$ are just the restriction maps. This means that $F\cong \reg_C$ on our open affine neighbourhood of $x$.  
\end{proof}

\begin{prop}\label{prop:classOx}
 Let $F\in \Coh(X)$ be a stable sheaf of type $m$. 
 \begin{enumerate}
\item
Let $x$ be an intersection point of two components of $C$, and let $L\in \Pic(C)$ with $\chi(L_{\mid C_v})=b_v$ for all $v\in V$. Then \[F=L(x)\] is a stable sheaf with $T_x$ special. Conversely, every stable sheaf of type $m$ with $T_x$ special is of this form. 
 \item Let $i\in I$, $t\in C_i$ a point which is not an intersection point with another component of $C$, and let $L\in \Pic(C)$ with $\chi(L_{\mid C_v})=b_v$ for all $v\in V$. Then   
  \[
   F= L(t)
  \]
  is a stable sheaf with $F_i$ special. Conversely, every stable sheaf of type $m$ with $F_i$ special is of this form. 
\item Let $o\in O$. Every $F=M\in \Pic(C)$ with $\chi(M_{\mid C_v})=b_v$ for all $v\in V\setminus \{o\}$ and $\chi(M_{\mid C_o})=b_o+1$ is stable with $F_o$ special. Conversely, every stable sheaf of type $m$ with $F_o$ special is of this form.  
 \end{enumerate}
\end{prop}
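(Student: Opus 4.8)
The three parts share a common structure: the forward direction verifies stability by computing the components $F_v$ and $T_x$ of the candidate sheaf and invoking \autoref{prop:chi>0class}, while the converse reconstructs $F$ as a generalised divisor via \autoref{lem:lb} and \autoref{lem:lbcrit}. Throughout, \autoref{rem:Lj} reduces the computation of the components of $L\otimes\reg_C(y)$ to those of $\reg_C(y)$: away from $y$ one has $\reg_C(y)=\reg_C$, so only the component(s) meeting $y$, and the sheaf $T_y$ when $y$ is a node, can differ from the line-bundle case. Note also that $b_i=1$ for $i\in I$ (\autoref{lem:ei}) forces $L|_{C_i}$ to have degree $0$ on $C_i\cong\IP^1$, hence $L_i\cong\reg_{m_iC_i}$ by \autoref{lem:Ograded}; thus the hypothesis $\chi(L|_{C_v})=b_v$ is exactly what makes every component $L_v$ ordinary.

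Part (iii) is the most direct. For the forward direction $M$ is a line bundle, so every $T_x$ is ordinary and every $M_i$ ($i\in I$) equals $\reg_{m_iC_i}$, while the hypotheses on $\chi(M|_{C_v})$ make $M_o$ special and all other $M_{o'}$ ordinary; stability follows from \autoref{prop:chi>0class}. Conversely, if $F$ is stable with $F_o$ special, \autoref{prop:chi>0class} makes all $T_x$ ordinary and all $F_v$ line bundles near each node, so $F$ is a line bundle near every node by \autoref{lem:lb}, and elsewhere since there it agrees with one of its locally free components; hence $F=M\in\Pic(C)$, and the component Euler characteristics give the required $\chi(M|_{C_v})$. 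For part (ii) the point $t\in C_i$ lies on a single multiple component, so near $t$ one has $C=m_iC_i$, and \autoref{lem:End1} together with \autoref{lem:Idualcompspecial} identifies $(\reg_C(t))_i\cong\reg_{m_iC_i}(t)$ as the special sheaf, all other components being unaffected; this gives the forward direction via \autoref{prop:chi>0class}. For the converse, \autoref{prop:chi>0class} again makes $F$ a line bundle away from the unique point $t$ where $F_i\cong\reg_{m_iC_i}(t)$ fails to be locally free, with $F|_U\cong\reg_U(t)$ there, so \autoref{lem:lbcrit} yields $F\cong L(t)$. Here $t$ is automatically \emph{not} a node: by \autoref{lem:End1}(iii) there is no surjection $\reg_{m_iC_i}(t)\to\reg_{\zeta_t}$, which \autoref{lem:Fses} would require at a node $t$ where $T_t$ is ordinary.

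The crux is the forward direction of part (i), where $x=C_v\cap C_w$ is a node. The plan is to compare the canonical sequences of \autoref{lem:Fses} for the inclusion $\reg_C\hookrightarrow\reg_C(x)$ of \eqref{eq:Cxses}, whose cokernel is $\reg_x$. The decisive local input is that the induced maps on components are isomorphisms, $(\reg_C(x))_v\cong\reg_{m_vC_v}$ and $(\reg_C(x))_w\cong\reg_{m_wC_w}$. Granting this, the snake lemma forces the comparison map $\bigoplus_y\reg_{\zeta_y}\to\bigoplus_y T_y$ to be surjective with kernel $\reg_x$; as this map is an isomorphism for $y\neq x$, the kernel is a length-one submodule of $\reg_{\zeta_x}$, necessarily its socle, whence $T_x\cong\reg_{\zeta_x}/\mathsf{socle}=\reg_{\zeta_x'}$ is special while all other components are ordinary. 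Passing to $L(x)$ via \autoref{rem:Lj} and applying \autoref{prop:chi>0class} then gives stability with $T_x$ special. For the converse, \autoref{lem:lb} shows $F$ is a line bundle away from $x$, while near $x$ it is the kernel of a surjection $\reg_{m_vC_v}\oplus\reg_{m_wC_w}\to\reg_{\zeta_x'}$ that is unique up to automorphism, hence $F|_U\cong\reg_U(x)$ and $F\cong L(x)$ by \autoref{lem:lbcrit}.

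I expect the isomorphism $(\reg_C(x))_v\cong\reg_{m_vC_v}$ to be the main obstacle, precisely because Euler-characteristic bookkeeping cannot distinguish it from the competing scenario in which some $F_v$ is special and $T_x$ is ordinary; at a node joining two multiple components, which occurs for $\tilde D_n$ and $\tilde E_n$, both are a priori possible. Establishing it requires the explicit model $\reg_C(x)=\Hom_A(\fm,A)$ with $A=\IC[u,v]/(u^{m_w}v^{m_v})$ from \autoref{lem:End2}: one checks that the extra generator $\phi$, upon restriction to $m_vC_v$ and removal of zero-dimensional torsion, lands in the subsheaf generated by $\id$, so that $(\reg_C(x))_v$ gains no extra degree and remains $\cong\reg_{m_vC_v}$ by \autoref{lem:Ograded}.
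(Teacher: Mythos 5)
Your proof is correct, and for parts (ii), (iii) and all three converse directions it coincides with the paper's argument: stability via \autoref{prop:chi>0class}, reconstruction via \autoref{lem:lb} and \autoref{lem:lbcrit}, and the exclusion of nodes in part (ii) via \autoref{lem:End1}(iii). The one place where you genuinely diverge is the forward direction of part (i), and the comparison is instructive. You first compute the components $(\reg_C(x))_v$ from the local model of \autoref{lem:End2} --- checking that in $\pure\bigl(\reg_C(x)_{\mid m_vC_v}\bigr)$ the extra generator $\phi$ falls into the submodule generated by $\id$, because $\phi-u^{n-1}v^{m-1}\id$ is zero-dimensional torsion in the restriction --- and then deduce $T_x\cong\reg_{\zeta_x}/\mathsf{socle}=\reg_{\zeta'_x}$ by a snake-lemma comparison of the canonical sequences of $\reg_C$ and $\reg_C(x)$; your observation that the resulting length-one kernel must be the socle is right, since the socle of $\reg_{\zeta_x}$ is its unique simple submodule. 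The paper runs the snake lemma in the opposite direction: it \emph{defines} $K$ as the kernel of the restriction maps $\bigoplus_v\reg_{m_vC_v}\to\reg_{\zeta'_x}\oplus\bigoplus_{y\neq x}\reg_{\zeta_y}$, compares with the structure sequence of $\reg_C$ to obtain $0\to\reg_C\to K\to\reg_x\to 0$, and concludes $K\cong\reg_C(x)$ from purity and \autoref{lem:Oxseq}(ii). This buys exactly what you flagged as the main obstacle: no computation of $\pure\bigl(\reg_C(x)_{\mid m_vC_v}\bigr)$ is ever needed, the uniqueness statement of \autoref{lem:Oxseq} doing that work instead, and it also sidesteps the worry that Euler-characteristic bookkeeping alone cannot separate ``$T_x$ special'' from ``some $F_v$ special''. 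Your route costs the explicit torsion computation (which does go through for all pairs of multiplicities, via the correction element above), but in exchange it makes the identification of the sequence of \autoref{lem:Fses} for $\reg_C(x)$ fully explicit --- a point the paper's proof quietly glosses over when it treats the defining sequence of $K$ as that canonical sequence.
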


\begin{proof}
Let $x=C_i\cap C_j$ be an intersection point of two components of $C$.
We first claim that the sequence of \autoref{lem:Fses} for $\reg_C(x)$ takes the form 
\begin{equation}\label{eq:OCxses}
 0\to \reg_C(x)\to \bigoplus_{v\in V} \reg_{m_vC_v}\to \reg_{\zeta'_x}\oplus \bigoplus_{y\neq x} \reg_{\zeta_y}\to 0, 
\end{equation}
where all components of the second map are the natural restriction maps. Indeed, let us denote the kernel of 
\eqref{eq:OCxses} by $K$ with the goal to show $K\cong \reg_C(x)$. There is a morphism of short exact sequences
\[
\begin{tikzcd}
0 \rar& \reg_{C} \dar\rar & \bigoplus_{v\in V} \reg_{m_vC_v} \dar{\id}\rar & \reg_{\zeta_x}\oplus \bigoplus_{y\neq x} \reg_{\zeta_y}\to 0  \dar\rar &0\\
0 \rar& K \rar & \bigoplus_{v\in V} \reg_{m_vC_v} \rar & \reg_{\zeta'_x}\oplus \bigoplus_{y\neq x} \reg_{\zeta_y} \rar  &0
\end{tikzcd}.
\]
As the kernel of the right vertical map is $\reg_x$, Snake Lemma gives a short exact sequence
$0\to\reg_C\to K\to \reg_x\to 0$. Hence, by \autoref{lem:Oxseq}, $K\cong \reg_C(x)$. 
Now, by \autoref{rem:Lj}, it follows that $L(x)=L\otimes \reg_C(x)$, satisfies \autoref{prop:chi>0class}(iii) with $T_x$ special. Hence, it is stable.  

Conversely, let $F$ be a stable sheaf with $T_x$ special. Then, by \autoref{prop:chi>0class}, all $F_v$ are ordinary, hence line bundles on $m_vC_v$. Analogously to the proof of \autoref{lem:lb}, we can find an open neighbourhood $x\in U\subset C$ over which  
\eqref{eq:exactsequenceofsheavesrelatingFandFj2} is isomorphic to
\[ 0\to F\to \reg_{m_iC_i}\oplus \reg_{m_jC_j}\xrightarrow{(r,r)} \reg_{\zeta'_x}\to 0,
\]
where $r$ denotes the appropriate restriction maps. Hence, over $U$, we get a morphism of short exact sequences
\[
\begin{tikzcd}
0 \rar& \reg_{C} \dar\rar & \reg_{m_iC_i}\oplus \reg_{m_jC_j} \dar{\id}\rar & \reg_{\zeta_x} \dar{r}\rar &0\\
0 \rar& F \rar & \reg_{m_iC_i}\oplus \reg_{m_jC_j}  \rar & \reg_{\zeta'_x} \rar  &0
\end{tikzcd}.
\]
As the kernel of the restriction map $r\colon \reg_{\zeta_x}\to \reg_{\zeta'_x}$ is $\reg_x$, Snake Lemma gives a short exact sequence
\[
 0\to \reg_U\to F_{\mid U}\to \reg_x\to 0\,.
\]
By \autoref{lem:Oxseq}, this gives $F_{\mid U}\cong \reg_U(x)$. 
Furthermore, by \autoref{prop:chi>0class}, all sheaves $T_{x'}$ for intersection points $x'\neq x$ are ordinary. Hence, by \autoref{lem:lb}, $F_{\mid C\setminus \{x\}}$ is a line bundle. The assertion now follows by \autoref{lem:lbcrit}.

For part (ii), note that $F=L(t)$ satisfies the assumptions of \autoref{prop:chi>0class}(iii) with $F_i$ special. Hence, it is stable. 

Conversely, let $F$ be a stable sheaf with $F_i$ special. Then, by definition, $F_i\cong \reg_{m_iC_i}(t)$ for some $t\in C_i$. By \autoref{lem:End1}(iii), we get that $t$ cannot be an intersection point of $C_i$ with another component. The assertion that $F$ is of the desired form $L(t)$ follows by \autoref{lem:lb} together with \autoref{lem:lbcrit}. 

Part (iii) follows directly from \autoref{lem:lb}.  
\end{proof}

 Let $L$ be a line bundle on $C$ with $\chi(L_{\mid C_v})=b_v$ for all $v\in V$. Then, for $o\in O$, and $x\in C_o$ a point which is not an intersection point with another component of $C$, the line bundle $M=L(x)$ satisfies the properties of \autoref{prop:classOx}(iii). Hence, we can summarise \autoref{prop:classOx} as follows, which is 
 
 \begin{cor}\label{cor:class}
  The stable sheaves of type $m$ are exactly the sheaves of the form $F=L(x)$ for some line bundle $L$ on $C$ with $\chi(L_{\mid C_v})=b_v$ for all $v\in V$ and some $x\in C$.
 \end{cor}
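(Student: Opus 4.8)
The plan is to read Corollary \ref{cor:class} off as a repackaging of the three cases of \autoref{prop:classOx}, the only additional input being the observation made in the paragraph immediately preceding the statement: twisting a line bundle at a smooth point of a reduced component produces again a line bundle of the shape occurring in \autoref{prop:classOx}(iii). So the substantive work is already done, and what remains is matching up two trichotomies — the three special-component cases of \autoref{prop:chi>0class} on the one hand, and the three types of closed points of $C$ on the other.

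For the direction asserting that every $F = L(x)$ with $\chi(L_{\mid C_v}) = b_v$ for all $v$ and $x \in C$ is stable, I would first partition the closed points of $C$ according to the local structure of the scheme $C$: intersection points of two components; non-intersection points lying on a multiple component $C_i$ with $i \in I$; and smooth points of $C$, which are exactly the non-intersection points of the reduced components $C_o$ with $o \in O$ (every point on a multiple component or an intersection point is singular, since locally $C$ looks like $\Spec\IC[u,v]/(v^{m_i})$ resp.\ $\Spec\IC[u,v]/(u^{m_u}v^{m_w})$). For a point of the first type $L(x)$ is stable by \autoref{prop:classOx}(i), and for a point of the second type by \autoref{prop:classOx}(ii). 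For a smooth point $x \in C_o$, \autoref{lem:Cxsing} shows $L(x)$ is again a line bundle, and restricting the sequence \eqref{eq:Cxses} tensored with $L$ to each component gives $\chi(L(x)_{\mid C_o}) = b_o + 1$ and $\chi(L(x)_{\mid C_v}) = b_v$ for $v \neq o$, which is precisely the situation of \autoref{prop:classOx}(iii); hence $L(x)$ is stable.

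Conversely, to show every stable sheaf of type $m$ has this form, I would invoke \autoref{prop:chi>0class}: exactly one of the $F_v$, $T_x$ is special. If some $T_x$ or some $F_i$ with $i \in I$ is special, then \autoref{prop:classOx}(i) resp.\ (ii) already writes $F = L(x)$ with a line bundle $L$ satisfying $\chi(L_{\mid C_v}) = b_v$ for all $v$. If instead some $F_o$ with $o \in O$ is special, \autoref{prop:classOx}(iii) presents $F$ as a line bundle $M$ with $\chi(M_{\mid C_v}) = b_v$ for $v \neq o$ and $\chi(M_{\mid C_o}) = b_o + 1$; choosing any smooth point $x \in C_o$ and setting $L := M \otimes \cI_x$ (a line bundle, as $x$ is Cartier) yields $L(x) = L \otimes \reg_C(x) = M \otimes (\cI_x \otimes \cI_x^\vee) \cong M$ with $\chi(L_{\mid C_v}) = b_v$ for all $v$, as required. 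In all three cases the distinguished point $x$ lies on $C$, so the two directions fit together.

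Since the heavy lifting is carried by \autoref{prop:chi>0class} and \autoref{prop:classOx}, I do not expect a genuine obstacle here. The one point that needs care is the Euler-characteristic bookkeeping in the case (iii) rewriting, together with verifying that $M \mapsto M \otimes \cI_x$ and $L \mapsto L(x)$ are mutually inverse on line bundles at a smooth point; this follows from \autoref{lem:Oxseq} and the fact that a smooth point is a Cartier divisor, so $\cI_x \otimes \cI_x^\vee \cong \reg_C$. The main thing to get right is simply that the three structural types of points of $C$ correspond bijectively to the three special-component cases.
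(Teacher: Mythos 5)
Your proposal is correct and follows essentially the same route as the paper: the corollary is obtained there by repackaging \autoref{prop:classOx}, with the only additional ingredient being the observation (made in the paragraph immediately preceding the statement) that for a smooth point $x\in C_o$ the twist $L(x)$ is exactly a line bundle of the type in \autoref{prop:classOx}(iii). Your explicit inverse construction $L:=M\otimes\cI_x$ in the converse of case (iii) is left implicit in the paper (it amounts to the fact that $x$ is Cartier, so $\reg_C(x)$ is invertible), but this is the same argument rather than a different approach.
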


 Note, however, that $F$ does not determine $L$ and $x$, which means that we can have $L(x)\cong L'(x')$ for $L\not\cong L'$ and $x\neq x'$. For singular stable sheaves, however, we will see a result in the direction of uniqueness of the presentation $L(x)$ in the next subsection.

Note that, combining \autoref{cor:class} with the equivalence of the first two points in \autoref{prop:chi>0class}, we have now proved the classification \autoref{thm: easy classification} from the introduction.

\subsection{Uniqueness for singular stable sheaves}

Our goal for this subsection is to prove

\begin{prop}\label{prop:unique}
Let $x,x'\in C_{\mathsf{sing}}$ and $L,L'\in \Pic(C)$. Then
\[
 L(x)\cong L'(x')\quad\iff\quad x=x' \text{ and } L_{\mid C_v}\cong L'_{\mid C_v} \text{ for all } v\in V\,.
\]
\end{prop}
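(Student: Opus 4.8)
For the forward implication I would first locate the singular point. Since $L$ and $L'$ are line bundles, \autoref{lem:Cxsing} shows that $L(x)$ fails to be locally free exactly at $x$ and $L'(x')$ exactly at $x'$; as both points are singular, an isomorphism $L(x)\cong L'(x')$ forces $x=x'$. Writing $F:=L(x)\cong L'(x)=:F'$, the sheaves $F_v=\pure(F_{\mid m_vC_v})$ are intrinsic to $F$, so $F_v\cong F'_v$ for every $v$, and by \autoref{rem:Lj} we have $F_v\cong L_{\mid m_vC_v}\otimes\reg_C(x)_v$, likewise for $F'$. Whenever $x\notin C_v$, and also whenever $x$ is an intersection point (where \eqref{eq:OCxses} gives $\reg_C(x)_v\cong\reg_{m_vC_v}$ for \emph{all} $v$), this already yields $L_{\mid m_vC_v}\cong L'_{\mid m_vC_v}$, hence $L_{\mid C_v}\cong L'_{\mid C_v}$ upon restricting to the reduced curve. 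The only remaining case is $x\in C_i$ with $i\in I$ a non-intersection point, where $\reg_C(x)_i\cong\reg_{m_iC_i}(x)$ is special; since tensoring by the line bundle $L_{\mid m_iC_i}$ commutes with the natural filtration, comparing first graded pieces (which are $L_{\mid C_i}$ and $L'_{\mid C_i}$ by \autoref{lem:Idualcompspecial}) gives $L_{\mid C_i}\cong L'_{\mid C_i}$. This settles $\Rightarrow$.

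The backward implication is the substantial one, and I would open with two reductions. First, since $L_{\mid C_v}\cong L'_{\mid C_v}$ forces equal Euler characteristics on each component, I can tensor both by a single $L_0\in\Pic(C)$ with $\chi\bigl((L_0L)_{\mid C_v}\bigr)=\chi\bigl((L_0L')_{\mid C_v}\bigr)=b_v$ for all $v$; tensoring by the line bundle $L_0$ preserves isomorphism type, and by \autoref{prop:classOx} both $(L_0L)(x)$ and $(L_0L')(x)$ are then stable, hence simple. So I may assume $L(x),L'(x)$ are stable. Second, $N:=L'\otimes L^{-1}$ restricts trivially to every reduced $C_v$, so $N\in\Pic^0(C)$ and, by \autoref{cor:Pic}, $N\in G$; as $N$ is a line bundle, $L'(x)=L(x)\otimes N$, and after tensoring by $L^{-1}$ it suffices to prove $\reg_C(x)\otimes N\cong\reg_C(x)$ for all $N\in G$.

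The key idea is to realize $\reg_C(x)$ as a pushforward. By \autoref{lem:End1}, \autoref{lem:End2} and \autoref{rem:End}, $\reg_C(x)\cong\sEnd_{\reg_C}(\cI_x)$ is a finite, commutative $\reg_C$-algebra which is isomorphic to $\reg_C$ away from $x$. Setting $\widetilde C:=\underline{\Spec}_C\bigl(\sEnd_{\reg_C}(\cI_x)\bigr)$ with finite structure map $\pi\colon\widetilde C\to C$, we get $\reg_C(x)\cong\pi_*\reg_{\widetilde C}$, so the projection formula gives $\reg_C(x)\otimes N\cong\pi_*(\pi^*N)$, and it is enough to show $\pi^*N\cong\reg_{\widetilde C}$. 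The modification $\widetilde C$ is projective, and its reduced components of positive genus are exactly the $C_o$, $o\in O$, onto which $\pi^*N$ restricts trivially; hence $\pi^*N$ lies in the kernel of $\Pic^0(\widetilde C)\to\prod_{o}\Pic^0(C_o)$, which by \autoref{prop:Picgeneral} is a smooth connected group of dimension $\ho^1(\reg_{\widetilde C})-\sum_{o}g(C_o)$. I would show this kernel is trivial by computing $\ho^1(\reg_{\widetilde C})=\sum_{o}g(C_o)$: the structure sequence $0\to\reg_C\to\pi_*\reg_{\widetilde C}\to\reg_x\to0$ of \autoref{lem:Oxseq} yields $\ho^1(\reg_{\widetilde C})=\ho^1(\reg_C)-1=\sum_{o}g(C_o)$ (via \autoref{lem:gC}) precisely when $\ho^0(\reg_{\widetilde C})=1$, and this last equality holds because $\Ho^0(\reg_{\widetilde C})=\End_{\reg_C}(\reg_C(x))=\End\bigl(L(x)\bigr)=\IC$ by the simplicity of the stable sheaf $L(x)$.

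I expect the main obstacle to be precisely this final genus computation inside $\Leftarrow$: one must verify that modifying $C$ at the single point $x$ annihilates exactly the extra one-dimensional summand $G$ of $\Pic^0(C)$ and nothing else. The reason the step becomes tractable is the identification $\reg_C(x)\cong\pi_*\reg_{\widetilde C}$, which converts the statement that $G$ ``fixes'' $L(x)$ into the concrete vanishing $\pi^*N\cong\reg_{\widetilde C}$; the stability of $L(x)$ then feeds in through $\ho^0(\reg_{\widetilde C})=1$, forcing the genus of $\widetilde C$ to drop by exactly one and killing $G$.
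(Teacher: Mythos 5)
Your proof is correct, and its core mechanism is the same as the paper's: your $\widetilde C$ with $\pi_*\reg_{\widetilde C}\cong\reg_C(x)$ is exactly the paper's modification $\nu^x\colon C^x\to C$, and your combination of the projection formula with the triviality of the kernel of $\Pic^0(\widetilde C)\to\prod_{o\in O}\Pic^0(C_o)$ is precisely what \autoref{lem:nu} and \autoref{lem:PicCx} accomplish (the paper proves the stronger statement that $\Pic(C^x)\to\prod_{v}\Pic(C_v)$ is an isomorphism via \autoref{cor:Piciso}, resting on the same count $\ho^1(\reg_C(x))=\sum_{o}g(C_o)$). The one genuinely different ingredient is how the key input $\ho^0(\reg_{\widetilde C})=1$ is obtained. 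The paper derives it intrinsically: numerical $1$-connectedness of $C^v=C-C_v$ (\autoref{lem:1con}) plus \cite[Thm.~3.3]{CFHR99}, fed through the exact sequence of \autoref{lem:sesCv}. You instead import it from moduli theory: $\Ho^0(\reg_{\widetilde C})=\End(\reg_C(x))\cong\End(M(x))=\IC$ for a stable twist $M(x)$, via \autoref{prop:classOx} and simplicity of stable sheaves. This is legitimate (no circularity, since \autoref{prop:classOx} precedes and does not use \autoref{prop:unique}), but it makes the uniqueness statement depend on the classification and on a polarisation satisfying \autoref{ass:H}, which the paper's argument avoids. Your forward direction (intrinsic restrictions $F_v$ and graded pieces) also deviates from the paper, which reads the multidegree claim off \autoref{lem:nu} and \autoref{lem:PicCx}, but both arguments are sound.

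One assertion you should justify: the existence of $L_0$ with $\chi\bigl((L_0\otimes L)_{\mid C_v}\bigr)=b_v$ for all $v\in V$. If $\deg(L_{\mid C_i})\neq 0$ for some $i\in I$, this requires a line bundle of nonzero degree on a non-reduced component and degree zero on all others; such a bundle cannot be produced by twisting by smooth points (there are none on $C_i$), though it does exist because $\Pic(C)\to\Pic(C_{\red})$ is surjective (the obstruction lives in $\Ho^2$ of a coherent sheaf on a curve, hence vanishes) and $\Pic(C_{\red})$ surjects onto multidegrees. Simpler: you never need $L(x)$ itself to be stable. Since $\End(L(x))\cong\End(\reg_C(x))$ for every line bundle $L$, it suffices to exhibit a single stable sheaf of the form $M(x)$, and $M=\reg_C\bigl(\sum_{o\in O}d_o p_o\bigr)$ with $p_o\in C_o$ smooth points of $C$ and $d_o=b_o-\chi(\reg_{C_o})\ge 0$ does the job; note that $b_i=1$ asks for degree $0$ on each $C_i$, so no twisting on non-reduced components is ever needed.
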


We first need some notation and a few lemmas.
For $v\in V$, we write $C^v:=C-C_v$. By \eqref{eq:ADElabels}, we have
\begin{equation}\label{eq:intnumber}
 C^v.C_v=2\,.
\end{equation}

\begin{lemma}\label{lem:1con}
 The curve $C^v$ is numerically $1$-connected. This means that, for every decomposition of effective divisors $C^v=A+B$, we have $A.B\ge 1$.
\end{lemma}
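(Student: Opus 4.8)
The plan is to reduce numerical $1$-connectedness of $C^v$ to the numerical $2$-connectedness of the full curve $C$, which is already available from \autoref{lem:2con}, by exploiting the identity $C^v.C_v=2$ from \eqref{eq:intnumber}. The key idea is that any splitting of $C^v$ can be promoted to a splitting of the whole curve $C$ in two different ways, namely by attaching the missing component $C_v$ to one side or to the other.

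Concretely, I would start from an arbitrary decomposition $C^v=A+B$ into nonzero effective divisors and form the two induced decompositions $C=(C_v+A)+B$ and $C=(C_v+B)+A$. In each of these, both summands are nonzero and effective (since $A$ and $B$ are nonzero in a nontrivial decomposition and $C_v$ is a nonzero component), so \autoref{lem:2con} applies and yields the two inequalities $(C_v+A).B\ge 2$ and $(C_v+B).A\ge 2$. Adding them and using bilinearity of the intersection form, the left-hand side becomes $C_v.A+C_v.B+2A.B$, whose first two terms combine to $C_v.(A+B)=C_v.C^v=2$ by \eqref{eq:intnumber}. Thus $2+2A.B\ge 4$, which gives $A.B\ge 1$, exactly as required.

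The computation is short and I do not expect a genuine obstacle: the only point requiring a moment of care is the bookkeeping that all four pieces $C_v+A$, $B$, $C_v+B$, $A$ are nonzero so that \autoref{lem:2con} genuinely applies, and this is immediate. The whole content of the statement is the observation that $1$-connectedness of $C^v$ is forced by the $2$-connectedness of $C$ together with the fact, recorded in \eqref{eq:intnumber}, that $C_v$ meets the remainder $C^v$ in intersection number exactly $2$.
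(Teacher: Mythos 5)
Your proof is correct and takes essentially the same route as the paper: both arguments promote the splitting $C^v=A+B$ to a splitting of $C$ by attaching $C_v$ to one part, then invoke the $2$-connectedness of $C$ (\autoref{lem:2con}) together with $C^v.C_v=2$ from \eqref{eq:intnumber}. The only difference is cosmetic: the paper notes that $A.C_v<2$ or $B.C_v<2$ and applies $2$-connectedness once to the corresponding decomposition, whereas you apply it to both decompositions symmetrically and add the two inequalities, thereby avoiding the WLOG step.
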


\begin{proof}
 Let $C^v=A+B$. By \eqref{eq:intnumber}, we must have $A.C_v<2$ or $B.C_v<2$. Let us assume without loss of generality that $A.C_v<2$. We have $C=A+(B+C_v)$. As, by \autoref{lem:2con}, the curve $C$ is numerically $2$-connected, 
 \[
  2\le A.(B+C_v)=A.B+A.C_v\,.
 \]
Our assumption $A.C_v<2$ now gives $1\le A.B$, as asserted.
\end{proof}

\begin{lemma}\label{lem:sesCv}
Let $x\in C_{\mathsf{sing}}\cap C_v$ for some $v\in V$ (if $v\in I$, then $C_{\mathsf{sing}}\cap C_v=C_v$, and if $v\in O$, then $C_{\mathsf{sing}}\cap C_v$ is the set of intersection points of $C_v$ with other components). Then, there is an exact sequence
 \[
  0\to \reg_{C_v}(-C^v +x)\to \reg_C(x)\to \reg_{C^v}\to 0\,.
 \]
Note that $\deg\reg_{C_v}(-C^v+x)=-1$. 
\end{lemma}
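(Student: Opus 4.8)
The plan is to realise the asserted sequence as the $\reg_C$-dual, i.e.\ the image under $\sHom_{\reg_C}(\_,\reg_C)$, of an elementary sequence attached to the fact that $x$ is a point of $C_v$. Since $x\in C_v$, the ideal sheaves satisfy $\cI_{C_v\hookrightarrow C}\subseteq\cI_x$ inside $\reg_C$, and the image of $\cI_x$ under the restriction $\reg_C\twoheadrightarrow\reg_C/\cI_{C_v\hookrightarrow C}=\reg_{C_v}$ is the ideal of the point $x$ on the smooth curve $C_v$, namely $\reg_{C_v}(-x)$. Identifying $\cI_{C_v\hookrightarrow C}=\reg_{C^v}(-C_v)$ by the decomposition sequence of \autoref{lem: decomposition sequence} applied to $C=C^v+C_v$, I would first establish
\begin{equation*}
0\to \reg_{C^v}(-C_v)\to \cI_x\to \reg_{C_v}(-x)\to 0. \tag{$\star$}
\end{equation*}

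Next I would apply $\sHom_{\reg_C}(\_,\reg_C)$ to $(\star)$. By definition $\cI_x\dual=\reg_C(x)$, so the long exact sequence begins
\begin{equation*}
0\to \reg_{C_v}(-x)\dual\to \reg_C(x)\to \bigl(\reg_{C^v}(-C_v)\bigr)\dual\to \sExt^1_{\reg_C}\bigl(\reg_{C_v}(-x),\reg_C\bigr)\to\cdots
\end{equation*}
It then remains to compute the two outer dual sheaves and to see that the $\sExt^1$-term vanishes. The duality of the two decomposition sequences for $C=C_v+C^v$ in \autoref{lem: decomposition sequence} gives $\sHom_{\reg_C}(\reg_{C_v},\reg_C)\cong\reg_{C_v}(-C^v)$ together with $\bigl(\reg_{C^v}(-C_v)\bigr)\dual\cong\reg_{C^v}$; twisting the former by the line bundle $\reg_{C_v}(x)$ yields $\reg_{C_v}(-x)\dual\cong \reg_{C_v}(x)\otimes\reg_{C_v}(-C^v)=\reg_{C_v}(-C^v+x)$. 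These are exactly the sub- and quotient-terms appearing in the statement.

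The hard part — and the only step that is not formal — is the vanishing $\sExt^1_{\reg_C}(\reg_{C_v}(-x),\reg_C)=0$, which is precisely what forces the dualised sequence to be short exact (equivalently, what makes $\reg_C(x)\to\reg_{C^v}$ surjective rather than merely having image a subsheaf of $\reg_{C^v}$). I would deduce it from the fact that $\reg_{C_v}(-x)$, being purely one-dimensional on the Gorenstein curve $C$, is a maximal Cohen--Macaulay $\reg_C$-module (the smooth curve $C_v$ has depth $1=\dim\reg_C$ at every point), and over a Gorenstein ring such modules have no higher $\sExt$ into the ring; alternatively one reduces to $\sExt^1_{\reg_C}(\reg_{C_v},\reg_C)=0$ and twists. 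Finally, the degree assertion is immediate: on the smooth curve $C_v$ one has $\deg\reg_{C_v}(-C^v+x)=-C^v.C_v+1=-1$, using $C^v.C_v=2$ from \eqref{eq:intnumber}.
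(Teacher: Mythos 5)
Your proof is correct and follows essentially the same route as the paper: the paper likewise first produces the sequence $0\to \reg_{C^v}(-C_v)\to \cI_x\to \reg_{C_v}(-x)\to 0$ (via the snake lemma applied to the chain of inclusions $\cI_{C_v\hookrightarrow C}\subset \cI_x\subset \reg_C$) and then applies $\sHom_{\reg_C}(\_,\reg_C)$ to it. The only difference is that you spell out what the paper leaves implicit, namely that the dualised sequence stays exact on the right because $\sExt^1_{\reg_C}\bigl(\reg_{C_v}(-x),\reg_C\bigr)=0$, which holds since $\reg_{C_v}(-x)$ is maximal Cohen--Macaulay over the Gorenstein curve $C$.
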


\begin{proof}
 We have a chain of inclusions
 \[\cI_{C_v\hookrightarrow C}=\reg_{C^v}(-C_v)\subset \reg_{C}(-x)\subset\reg_{C}\,.\]
Snake lemma gives a commutative diagram with exact columns and rows
\[
\begin{tikzcd}
 & 0\dar & 0\dar & \\
& \reg_{C^v}(-C_v)\dar \rar[equal] & \reg_{C^v}(-C_v)\dar\\
0 \rar& \reg_{C}(-x)\dar\rar & \reg_{C} \dar\rar & \reg_x \dar[equal]\rar &0\\
0 \rar& \reg_{C_v}(-x) \dar\rar & \reg_{C_v} \dar \rar & \reg_x \rar  &0\\
& 0 & 0 &
\end{tikzcd}.
\]
 Taking the dual of the left column gives the desired short exact sequence; see \autoref{lem: decomposition sequence}.
\end{proof}

For $x\in C_{\mathsf{sing}}$, the natural map $\sEnd_{\reg_C}(\cI_x)\to \reg_C(x)$ is an isomorphism, which endows $\reg_C(x)$ with the structure of a commutative $\reg_C$-algebra; see \autoref{lem:End1} and \autoref{lem:End2}. We write
\[
 \nu^x\colon C^x:=\Spec_{\reg_C}\bigl(\reg_C(x)\bigr)\to C\,,
\]
which is an isomorphism away from $x$.
\begin{lemma}\label{lem:nu}
 Let $L,L'\in \Pic(C)$ and $x\in C_\sing$. Then
 \[
  L(x)\cong L'(x)\quad\iff\quad \nu^{x*}L\cong \nu^{x*}L'\,.
 \]
\end{lemma}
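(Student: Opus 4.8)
The plan is to translate the statement into a comparison of module structures over the commutative $\reg_C$-algebra $A:=\reg_C(x)\cong\sEnd_{\reg_C}(\cI_x)$, exploiting that $\nu^x$ is affine. Since $\nu^x_*\reg_{C^x}=A$ by definition of $C^x$, the projection formula gives $\nu^x_*(\nu^{x*}L)\cong L\otimes_{\reg_C}A=L(x)$, and likewise for $L'$. As $\nu^x$ is affine, $\nu^x_*$ identifies $\QCoh(C^x)$ with the category of $A$-modules, so $\nu^{x*}L\cong\nu^{x*}L'$ on $C^x$ holds if and only if $L(x)\cong L'(x)$ as $A$-modules. Thus the lemma reduces to the claim that an isomorphism $L(x)\cong L'(x)$ of $\reg_C$-modules can be upgraded to an isomorphism of $A$-modules; the direction ``$\Leftarrow$'' is then immediate by restriction of scalars along $\reg_C\to A$.

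For the non-trivial direction I would first record that $\sEnd_{\reg_C}(L(x))\cong A$, with $A$ acting by multiplication. Indeed, twisting by the line bundle $L$ gives $\sEnd_{\reg_C}(L(x))\cong\sEnd_{\reg_C}(\reg_C(x))$, and by \autoref{rem:End} together with \autoref{lem:End1}(i) and \autoref{lem:End2}(i) this sheaf is $\sEnd_{\reg_C}(\cI_x)\cong\reg_C(x)=A$; commutativity of $A$ (\autoref{lem:End1}(ii), \autoref{lem:End2}(ii)) ensures that every $\reg_C$-linear endomorphism is multiplication by a section of $A$. Now let $\psi\colon L(x)\to L'(x)$ be an $\reg_C$-linear isomorphism. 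Conjugation $a\mapsto\psi\circ a\circ\psi^{-1}$ carries $\sEnd_{\reg_C}(L(x))=A$ isomorphically onto $\sEnd_{\reg_C}(L'(x))=A$, hence defines an $\reg_C$-algebra automorphism $\sigma$ of $A$ (it fixes $\reg_C$ because $\psi$ is $\reg_C$-linear), and by construction $\psi(a\cdot s)=\sigma(a)\cdot\psi(s)$; that is, $\psi$ is $\sigma$-semilinear over $A$.

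Finally I would interpret the semilinearity geometrically. The automorphism $\sigma$ yields $\widetilde\sigma:=\Spec_{\reg_C}(\sigma)\colon C^x\to C^x$, an automorphism over $C$, i.e.\ $\nu^x\circ\widetilde\sigma=\nu^x$. A $\sigma$-semilinear $A$-module isomorphism is the same as an $A$-linear isomorphism $\nu^{x*}L\cong\widetilde\sigma_*(\nu^{x*}L')=(\widetilde\sigma^{-1})^*(\nu^{x*}L')$, so in $\Pic(C^x)$ we obtain $[\nu^{x*}L]=(\widetilde\sigma^{-1})^*[\nu^{x*}L']$. But $\nu^{x*}L'$ is pulled back from $C$ and $\widetilde\sigma$ lies over $C$, whence $(\widetilde\sigma^{-1})^*\nu^{x*}L'=(\nu^x\circ\widetilde\sigma^{-1})^*L'=\nu^{x*}L'$, giving $\nu^{x*}L\cong\nu^{x*}L'$ as desired. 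The one genuine subtlety --- and the step I expect to need the most care --- is precisely this last point: the $\reg_C$-linear map $\psi$ need not be $A$-linear, only $\sigma$-semilinear, and it is the fact that $\sigma$ is an automorphism \emph{over} $C$ (so that it fixes line bundles pulled back from $C$) that rescues the argument. Establishing the identification $\sEnd_{\reg_C}(L(x))\cong A$ and carefully tracking the semilinear conventions are the routine but essential ingredients.
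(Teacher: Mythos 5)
Your proof is correct, and it shares its two essential inputs with the paper's proof: the projection-formula identity $L(x)\cong\nu^x_*\nu^{x*}L$ and the computation $\sEnd_{\reg_C}\bigl(\reg_C(x)\bigr)\cong\reg_C(x)$ coming from \autoref{rem:End} together with parts (i) of \autoref{lem:End1} and \autoref{lem:End2}. The mechanics differ in the hard direction, however. The paper proves the stronger, Hom-level statement that $\Hom_{\reg_C}(L(x),L'(x))\cong\Hom_{\reg_{C^x}}(\nu^{x*}L,\nu^{x*}L')$, via the chain $\Hom_{\reg_C}(L(x),L'(x))\cong\Hom_{\reg_C}(L,L'(x))\cong\Hom_{\reg_C}(L,\nu^x_*\nu^{x*}L')\cong\Hom_{\reg_{C^x}}(\nu^{x*}L,\nu^{x*}L')$ (twist by the line bundle $L$ using the $\sEnd$ computation, then the projection formula, then adjunction). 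In your language this says that every $\reg_C$-linear map $L(x)\to L'(x)$ is automatically $A$-linear, i.e.\ your conjugation automorphism $\sigma$ is necessarily the identity. You establish only the weaker statement that a given isomorphism is $\sigma$-semilinear for \emph{some} $\reg_C$-algebra automorphism $\sigma$ of $A$, and then eliminate the twist geometrically, using that $\widetilde\sigma$ is an automorphism of $C^x$ \emph{over} $C$ and therefore fixes the isomorphism class of any sheaf pulled back from $C$. What the paper's route buys is brevity and the sharper Hom-level fact; what your route buys is that you never need to check that the paper's chain of natural isomorphisms matches isomorphisms with isomorphisms --- a point left implicit in the paper's closing ``which implies the assertion'', and which itself ultimately rests on $\nu^x$ being affine (so that $\nu^x_*$ is faithful and exact). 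Both arguments are complete modulo routine verifications, and you correctly flag your own delicate points: the identification of $\sEnd_{\reg_C}(L(x))$ with $A$ acting by multiplication (where commutativity of $A$ is exactly what is needed) and the bookkeeping of semilinear conventions.
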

\begin{proof}
  By definition, we have $\reg_C(x)\cong \nu^x_*\reg_{C^x}$. Hence, projection formula gives 
\[
 L(x)\cong \nu^x_*\nu^{x*}L  \quad,\quad L'(x)\cong \nu^x_*\nu^{x*}L'\,.
\]
By \autoref{lem:End1}, \autoref{lem:End2}, and \autoref{rem:End}, we have $\sEnd\bigl(\reg_C(x)\bigr)\cong \reg_C(x)$. Hence,
\[
 \Hom_{\reg_C}(L(x),L'(x))\cong \Hom_{\reg_C}(L,L'(x))\cong \Hom_{\reg_C}(L,\nu^x_*\nu^{x*}L')\cong \Hom_{\reg_{C^x}}(\nu^{x*}L,\nu^{x*}L')\,,
\]
which implies the assertion.
\end{proof}

By parts (ii) of \autoref{lem:End1} and \autoref{lem:End2}, we have 
$(C^x)_{\mathsf{red}}\cong C_{\mathsf{red}}$ except for one case: If $\Gamma=\tilde A_n$ and $x$ is an intersection point of two components, then $C_x$ is an $A_{n+1}$-chain of curves, and $\nu^x$ glues the two extremal curves of the chain at $x$; compare \autoref{lem:End2}(iv).
Anyway, $C$ and $C^x$ have the same irreducible components, which get identified by $\nu^x$.

\begin{lemma}\label{lem:PicCx}
 The morphism $\Pic(C^x)\to \prod_{v\in V} \Pic(C_v)$, given by restriction of line bundles to the irreducible components, is an isomorphism.
\end{lemma}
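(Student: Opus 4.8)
The plan is to apply \autoref{cor:Piciso} to the curve $C^x$ and then upgrade the resulting isomorphism on $\Pic^0$ to the full Picard groups. As observed just before the statement, $C^x$ and $C$ have the same irreducible components $C_v$, $v\in V$, identified by $\nu^x$; since each $C_v$ is already smooth, it is its own normalisation. Both sides of the restriction map sit in compatible short exact sequences $1\to\Pic^0\to\Pic\to\IZ^V\to 1$, where the right-hand map records the multidegree $L\mapsto(\deg L|_{C_v})_v$: on the product side this is clear, and for $C^x$ it holds by the description of $\Pic^0$ in \autoref{prop:Picgeneral}. The multidegree map for $C^x$ is surjective because $\reg_{C^x}(p)$, for a smooth point $p\in C_v$, realises the $v$-th standard basis vector. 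Hence, by the five lemma, it suffices to prove that $\Pic^0(C^x)\to\prod_v\Pic^0(C_v)$ is an isomorphism, and by \autoref{cor:Piciso} this reduces to the numerical identity $\ho^1(\reg_{C^x})=\sum_v\ho^1(\reg_{C_v})=\sum_{o\in O}g(C_o)$.

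Next I would compute $\chi(\reg_{C^x})$. Since $\nu^x$ is finite, $\Ho^i(\reg_{C^x})=\Ho^i(\nu^x_*\reg_{C^x})=\Ho^i(\reg_C(x))$, so applying $\chi$ to \eqref{eq:Cxses} gives $\chi(\reg_{C^x})=\chi(\reg_C)+1$. Combined with $\ho^0(\reg_C)=1$ and \autoref{lem:gC}, the sought identity $\ho^1(\reg_{C^x})=\sum_{o}g(C_o)$ becomes \emph{equivalent} to the single statement $\ho^0(\reg_{C^x})=1$. This is the crux: I must show that $C^x$ has no global functions beyond the constants, even in the non-reduced cases $\tilde D_n,\tilde E_n$, where $C^x$ carries nilpotents (note that the individual components $\reg_{m_vC_v}$ do have many global nilpotents, by \eqref{eq:h0Cv}, so the vanishing can only come from the gluing).

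The clean way to establish $\ho^0(\reg_{C^x})=1$ is to identify $\Ho^0(\reg_{C^x})$ with the endomorphism ring of a stable sheaf. By \autoref{rem:End} together with \autoref{lem:End1}(i)/\autoref{lem:End2}(i), the $\reg_C$-algebra $\nu^x_*\reg_{C^x}=\reg_C(x)$ is $\sEnd_{\reg_C}(\cI_x)\cong\reg_C(x)$, so $\Ho^0(\reg_{C^x})=\End_{\reg_C}(\cI_x)=\End_{\reg_C}(\reg_C(x))$. Choosing any $L\in\Pic(C)$ with $\chi(L|_{C_v})=b_v$ for all $v$ and tensoring by $L$ (an autoequivalence, so $\End$ is preserved) gives $\End_{\reg_C}(\reg_C(x))\cong\End_{\reg_C}(L(x))$. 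By \autoref{cor:class}, $L(x)$ is a stable sheaf of type $m$ — a singular one, since $x\in C_\sing$ — and stable sheaves are simple (\cite{HuybrechtsLehn2010}), whence $\End_{\reg_C}(L(x))=\IC$. Therefore $\ho^0(\reg_{C^x})=1$, and the proof concludes via the reductions above.

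The main point to verify carefully — and the step I would check first — is precisely this identification: that $\Ho^0(\reg_{C^x})$ really is the \emph{ring} $\End_{\reg_C}(\cI_x)$, and that $L(x)$ is genuinely stable (not merely semistable) at a singular point $x$; these are exactly the contents of \autoref{rem:End} and of \autoref{cor:class} (via \autoref{prop:classOx}(i)--(ii)), and there is no circularity since the classification is proved before the present subsection. I note that this argument is uniform in $\Gamma$: it treats the reduced case $\tilde A_n$, where $C^x$ is an $A_{n+1}$-chain, on the same footing as the non-reduced cases, the global nilpotents being killed precisely by simplicity of $L(x)$. A more hands-on alternative would run the cohomology of \eqref{eq:OCxses} and show that the socle class at $x$ does not lie in the image of the restriction map $\bigoplus_v\Ho^0(\reg_{m_vC_v})\to\bigoplus_y\Ho^0(\reg_{\zeta_y})$, via a degree estimate on the top graded pieces over the rational components together with \eqref{eq:ADElabels}, mirroring the proof of \autoref{prop:chi>0class}; but the endomorphism-ring route is shorter.
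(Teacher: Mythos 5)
Your core computation is correct, and it takes a genuinely different route from the paper's at the key step. Both arguments reduce the lemma, via $\chi(\reg_C(x))=\chi(\reg_C)+1$, $\ho^0(\reg_C)=1$ and \autoref{lem:gC}, to the single vanishing $\ho^0(\reg_{C^x})=\ho^0(\reg_C(x))=1$, and then invoke \autoref{cor:Piciso}. The paper proves this vanishing cohomologically and without any stability input: by \autoref{lem:1con} the curve $C^v=C-C_v$ is numerically $1$-connected, so $\ho^0(\reg_{C^v})=1$ by \cite[Thm.~3.3]{CFHR99}, and the exact sequence of \autoref{lem:sesCv}, whose subsheaf is a line bundle of degree $-1$ on $C_v$, then forces $\ho^0(\reg_C(x))=1$. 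You instead identify $\Ho^0(\reg_{C^x})=\End_{\reg_C}(\reg_C(x))\cong\End_{\reg_C}(L(x))$ and quote simplicity of the stable sheaf $L(x)$ via \autoref{cor:class}. Your circularity check is right: \autoref{prop:chi>0class}, \autoref{prop:classOx} and \autoref{cor:class} nowhere use \autoref{lem:PicCx}, and a polarisation satisfying \autoref{ass:H} always exists on $C$ (e.g.\ any polarisation when $\chi=1$), so the classification is legitimately available. What the paper's route buys is that the lemma stays a pure Picard-group statement, independent of the entire stability apparatus; what yours buys is that it bypasses \autoref{lem:1con}, \autoref{lem:sesCv} and the second appeal to \cite{CFHR99}.

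There is, however, one genuine gap, in your reduction from $\Pic$ to $\Pic^0$: the claim that the multidegree map of $C^x$ is surjective because $\reg_{C^x}(p)$, for a smooth point $p\in C_v$, realises the $v$-th standard basis vector of $\IZ^V$. This works in the $\tilde A_n$ case, but fails exactly in the non-reduced cases $\Gamma\in \{\tilde D_n,\tilde E_6,\tilde E_7,\tilde E_8\}$: for $v\in I$, every point of $C^x$ lying on the multiple component $C_v$ is a non-reduced (hence singular) point of $C^x$, so $p$ is not a Cartier divisor and $\reg_{C^x}(p)$ is not a line bundle (the local computation of \autoref{lem:End1} shows it needs two generators at $p$); it therefore defines no class in $\Pic(C^x)$. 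Your construction thus only produces the basis vectors indexed by $o\in O$, which span a proper sublattice of $\IZ^V$ whenever $I\neq\emptyset$. The step is fixable by a standard two-part argument: first, $\Pic((C^x)_{\red})\to \prod_{v\in V}\Pic(C_v)$ is surjective because $(C^x)_{\red}$ is a nodal union of the smooth curves $C_v$, so line bundles on the components can be glued at the nodes; second, $\Pic(C^x)\to \Pic((C^x)_{\red})$ is surjective because the obstruction to lifting a line bundle through each square-zero thickening of the nilpotent filtration lies in $\Ho^2$ of a coherent sheaf on a projective curve, which vanishes. To be fair, the paper is silent on the passage from $\Pic^0$ to $\Pic$ altogether, and you were right to address it; but the justification you gave breaks down precisely in the non-reduced cases that are the main concern of the paper.
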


\begin{proof}
As, by \autoref{lem:1con}, $C^v=C-C_v$ is a numerically $1$-connected Gorenstein curve, we have $\ho^0(\reg_{C^v})=1$; see \cite[Thm.~3.3]{CFHR99}. Hence, taking global sections of the short exact sequence of \autoref{lem:sesCv} gives \begin{equation}\label{eq:h0equal}\ho^0(\reg_C(x))=1=\ho^0(\reg_C)\,.\end{equation}
On the other hand, the short exact sequence $0\to \reg_C\to \reg_C(x)\to \reg_x\to 0$ yields \[\chi(\reg_C(x))=\chi(\reg_C)+1\,.\] Combining this with \eqref{eq:h0equal} gives $\ho^1(\reg_C(x))=\ho^0(\reg_C)-1$. Together with \autoref{lem:gC}, this implies
$\ho^1(\reg_C(x))=\sum_{o\in O}\ho^1(\reg_{C_o})$, which by \autoref{cor:Piciso} proves the assertion.
\end{proof}

\begin{proof}[Proof of \autoref{prop:unique}]
As $x\in C_{\sing}$ is the unique point where $L(x)$ fails to be a line bundle, $L(x)\cong L'(x')$ implies $x=x'$; see \autoref{lem:Cxsing}. Now, the assertion follows by combining \autoref{lem:nu} and \autoref{lem:PicCx}.
\end{proof}

\section{Description of the moduli space}\label{sect:modulispacedescr}

Fix some $\chi>0$. Let $\cM$ be the moduli space of stable sheaves of type $m$ and Euler characteristic $\chi$ on $C$. In this section, we describe $\cM_{\red}$. The main goal is to prove \autoref{thm: easy moduli} from the introduction,  respectively its more technical cousin \autoref{thm: full moduli}.

\subsection{An overparametrisation of the stable sheaves}\label{subsect:over}

Let $\wJ\subset \Pic(C)$ denote the connected component of the Picard scheme parametrising line bundles $L\in \Pic(C)$ with $\chi(L_{\mid C_v})=b_v$ for all $v\in V$.  

\autoref{prop:Picgeneral} and \autoref{cor:Pic} provide the following description of $\wJ$.
Restriction of a line bundle to the components of $C$ gives a morphism 
\[
\pi\colon \wJ\to J:=\prod_{o\in O} \Pic_{b_o}(C_o)\quad,\quad L\mapsto (L_{\mid C_o})_{o\in O},
\]
where $\Pic_{b_o}(C_o)$ is the moduli space of line bundles on $C_o$ of Euler characteristic $b_o$. 
Note that, for $L\in \wJ$, we have $L_{\mid C_i}\cong \reg_{C_i}$ as $b_i=1$ and $C_i\cong \IP^1$ for every $i\in I$.
Let $G$ be the one-dimensional connected algebraic group of line bundles on $C$ whose restriction to every component is trivial. Then $G$ acts freely on $\wJ$ by the tensor product, and $\pi$ is the quotient by this action. In other words, $J=\wJ/G$.

\begin{lemma}\label{lem:flat}
Let $\Delta\subset C\times C$ denote the diagonal. Then, the sheaf 
 \[
  \reg_{C\times C}(\Delta):=(\cI_{\Delta\hookrightarrow C\times C})^\vee
 \]
is flat over $C$ (via either of the projections).
\end{lemma}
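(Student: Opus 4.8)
The plan is to prove flatness over the second projection $\pr_2\colon C\times C\to C$; flatness over $\pr_1$ then follows by the symmetry $\sigma\colon (x,y)\mapsto(y,x)$, which is an automorphism of $C\times C$ fixing $\Delta$ (hence $\sigma^*\reg_{C\times C}(\Delta)\cong \reg_{C\times C}(\Delta)$) and satisfying $\pr_1=\pr_2\comp\sigma$. I would start from the structure sequence of the diagonal on $C\times C$,
\[
 0\to \cI_\Delta\to \reg_{C\times C}\to \reg_\Delta\to 0\,,
\]
and apply $\sHom_{\reg_{C\times C}}(\_,\reg_{C\times C})$. Since $C$ is a divisor in the smooth surface $X$, the product $C\times C$ is the complete intersection of the two divisors $C\times X$ and $X\times C$ in the smooth fourfold $X\times X$; hence $C\times C$ is Gorenstein and Cohen--Macaulay of pure dimension $2$, and $\reg_{C\times C}$ is a pure sheaf of dimension $2$. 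Consequently $\sHom(\reg_\Delta,\reg_{C\times C})=0$, because $\reg_\Delta$ is supported in dimension $1$ while $\reg_{C\times C}$ has no associated points of dimension $<2$, and $\sExt^1(\reg_{C\times C},\reg_{C\times C})=0$ since $\reg_{C\times C}$ is free over itself. The dualised sequence therefore collapses to
\[
 0\to \reg_{C\times C}\to \reg_{C\times C}(\Delta)\to \sExt^1_{\reg_{C\times C}}(\reg_\Delta,\reg_{C\times C})\to 0\,.
\]

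Next I would analyse the three terms for $\pr_2$-flatness. The morphism $\pr_2$ is the base change of the flat structure map $C\to\Spec\IC$ along $C\to\Spec\IC$, so $\pr_2$ is flat and the subsheaf $\reg_{C\times C}$ is $\pr_2$-flat. For the quotient term, the crucial observation is that $\Delta\cong C$ is Cohen--Macaulay of pure codimension $1$ in the Cohen--Macaulay scheme $C\times C$; by Grothendieck (local) duality for the closed immersion $\Delta\hookrightarrow C\times C$ one has $\sExt^i_{\reg_{C\times C}}(\reg_\Delta,\omega_{C\times C})=0$ for $i\neq 1$ and $\sExt^1_{\reg_{C\times C}}(\reg_\Delta,\omega_{C\times C})\cong\omega_\Delta$. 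As both $\omega_{C\times C}$ and $\omega_\Delta=\omega_C$ are line bundles ($C\times C$ and $C$ being Gorenstein), twisting back by $\omega_{C\times C}^{-1}$ gives $\sExt^1_{\reg_{C\times C}}(\reg_\Delta,\reg_{C\times C})\cong\omega_\Delta\otimes(\omega_{C\times C}^{-1}|_\Delta)$, a line bundle on $\Delta$.

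The conclusion is then formal. Since $\pr_2\comp\delta=\id_C$ for the diagonal embedding $\delta\colon C\to C\times C$, the projection restricts to an isomorphism $\pr_2|_\Delta\colon\Delta\isomor C$; hence any line bundle on $\Delta$, regarded as a sheaf on $C\times C$, is $\pr_2$-flat, being the pushforward of a locally free sheaf along an isomorphism onto $C$. Thus in the short exact sequence above both the sub and the quotient are $\pr_2$-flat, and an extension of $\pr_2$-flat sheaves is $\pr_2$-flat (in the long exact $\Tor$-sequence the relevant $\Tor_1$ of the middle term is squeezed between two vanishing $\Tor$ groups). Therefore $\reg_{C\times C}(\Delta)$ is flat over $C$ via $\pr_2$, and by the symmetry above also via $\pr_1$.

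I expect the one genuinely delicate point to be the identification of $\sExt^1_{\reg_{C\times C}}(\reg_\Delta,\reg_{C\times C})$ as a line bundle on $\Delta$: the diagonal is \emph{not} a Cartier divisor at the singular points of $C$, so one cannot simply resolve $\reg_\Delta$ by $\reg_{C\times C}(-\Delta)$, and it is precisely the Cohen--Macaulay/Gorenstein duality formalism that absorbs this difficulty with no local computation. Alternatively, one could verify the claim by hand on the explicit local models $\reg_C=\IC[u,v]/(v^m)$ and $\IC[u,v]/(u^nv^m)$ of \autoref{lem:End1} and \autoref{lem:End2}, computing $\cI_\Delta^\vee$ over $A\otimes_\IC A$ and checking freeness over the second factor directly; this is more laborious but self-contained. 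As a consistency check, restricting the displayed sequence to a fibre $C\times\{x\}$ recovers $0\to\reg_C\to\reg_C(x)\to\reg_x\to 0$ of \autoref{lem:Oxseq}, confirming that every fibre of $\reg_{C\times C}(\Delta)$ is $\reg_C(x)$, with constant Euler characteristic $\chi(\reg_C)+1$.
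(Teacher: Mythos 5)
Your proof is correct and follows essentially the same route as the paper: the paper's entire proof consists of invoking \cite[Prop.\ 2.10]{Har--gen} to obtain exactly your short exact sequence $0\to \reg_{C\times C}\to \reg_{C\times C}(\Delta)\to \cL_\Delta\to 0$, with $\cL_\Delta$ the push-forward of a line bundle on the diagonal, and then concluding flatness of the middle term from flatness of the two outer terms, just as you do. The only difference is that where the paper cites Hartshorne, you re-derive that sequence by dualising the structure sequence of $\Delta$ and identifying the quotient as $\omega_\Delta\otimes\omega_{C\times C}^{-1}|_\Delta$ via Cohen--Macaulay/Gorenstein duality, which amounts to a correct, self-contained proof of the cited input in this special case.
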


\begin{proof}
 By \cite[Prop.\ 2.10]{Har--gen}, we have a short exact sequence,
 \[
  0\to \reg_{C\times C}\to \reg_{C\times C}(\Delta) \to \cL_\Delta\to 0,
 \]
where $\cL_\Delta$ is the push-forward of some line bundle on $\Delta$ along the embedding $\Delta\hookrightarrow C\times C$. Since the first and the last term of the sequence are flat over $C$, the same holds for the middle term.
\end{proof}

Let $\cP$ be a universal line bundle on $\wJ\times C$.
On $\wJ\times C\times C$, we consider the coherent sheaf
\[
 \cF:=\pr_{13}^*\cP\bigl(\wJ\times \Delta\bigr):=\pr_{13}^*\cP\otimes \pr_{23}^*\reg_{C\times C}(\Delta)\,.
\]
By \autoref{lem:flat}, $\cF$ is a flat family of sheaves on $C$ over $\wJ\times C$, via $\pr_{12}$. For $(L,x)\in \wJ\times C$, its fibre is $\cF_{(L,x)}\cong L(x)$. By \autoref{cor:class}, this means that the fibres of $\cF$ are exactly the stable sheaves of type $m$. Hence, we get a surjective classifying morphism 
\[
 \phi\colon \wJ\times C\to \cM\quad,\quad (L,x)\mapsto L(x)\,.
\]

\begin{lemma}\label{lem:dphi}
 For every $(L,x)\in\wJ\times C$, the kernel of the differential 
 \[
  d\phi_{(L,x)}\colon T_{\wJ\times C}(L,x)\to T_\cM(L(x))
 \]
is one-dimensional. If $x\in C_{\sing}$, the kernel of $d\phi_{(L,x)}$ consists exactly of the vectors which are tangent to the $G$-orbit of $(L,x)$ (with $G$ acting trivially on the second factor). 
\end{lemma}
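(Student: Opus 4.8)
The map $\phi$ classifies the flat family $\cF$, so $d\phi_{(L,x)}$ is the Kodaira--Spencer map $\kappa\colon T_{\wJ\times C}(L,x)\to\Ext^1_C(L(x),L(x))$ of $\cF$ at $(L,x)$. As the base is a product, $T_{\wJ\times C}(L,x)=T_\wJ(L)\oplus T_{C,x}$ and $\kappa=\kappa_\wJ+\kappa_C$ splits into the Kodaira--Spencer maps along the two factors. Since $\cF=\pr_{13}^*\cP\otimes\pr_{23}^*\reg_{C\times C}(\Delta)$, the first summand
\[
\kappa_\wJ\colon\ T_\wJ(L)=\Ext^1_C(L,L)=\Ho^1(\reg_C)\lra \Ext^1_C(L(x),L(x))
\]
is $-\otimes\reg_C(x)$ applied to the Kodaira--Spencer isomorphism $T_\wJ(L)\cong\Ext^1_C(L,L)$ of the Picard scheme; concretely it is $\Ho^1$ of the structural inclusion $\reg_C\hookrightarrow\sEnd(L(x))$. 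The second summand $\kappa_C\colon T_{C,x}\to\Ext^1_C(L(x),L(x))$ is $L\otimes(-)$ applied to the Kodaira--Spencer map of the family $\reg_{C\times C}(\Delta)$ over its first factor. I treat the smooth and the singular case separately.

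If $x$ is a smooth point, then $\reg_C(x)$ is a line bundle by \autoref{lem:Cxsing}, so $-\otimes\reg_C(x)$ is an autoequivalence and $\kappa_\wJ$ is an isomorphism. Hence for each $w\in T_{C,x}$ there is a unique $v$ with $\kappa_\wJ(v)=-\kappa_C(w)$, the projection $\ker\kappa\to T_{C,x}$ is bijective, and $\ker d\phi_{(L,x)}$ is one-dimensional. This kernel is a ``diagonal'' direction mixing both factors and is not the tangent to a $G$-orbit, in agreement with the statement.

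Now let $x\in C_\sing$. By \autoref{lem:End1}, \autoref{lem:End2} and \autoref{rem:End} we have $\sEnd(L(x))\cong\reg_C(x)$, so $\kappa_\wJ$ is $\Ho^1$ of the inclusion $\reg_C\hookrightarrow\reg_C(x)$ and lands in the subspace $\Ho^1(\reg_C(x))=\Ho^1(\sEnd(L(x)))$ of the local-to-global filtration of $\Ext^1_C(L(x),L(x))$. The cohomology sequence of $0\to\reg_C\to\reg_C(x)\to\reg_x\to 0$, together with $\ho^0(\reg_C)=\ho^0(\reg_C(x))=1$ (see \eqref{eq:h0equal}), shows that $\ker\kappa_\wJ$ is exactly one-dimensional. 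On the other hand, for $\lambda\in G$ we have $L(x)\cong(L\otimes\lambda)(x)$ by \autoref{lem:nu} and \autoref{lem:PicCx}, so $\phi(-,x)$ is $G$-invariant and $\kappa_\wJ$ annihilates the one-dimensional tangent space of the $G$-orbit. Therefore $\ker\kappa_\wJ$ equals this tangent space, and it remains to rule out extra kernel from the $T_{C,x}$-directions. Let $q\colon\Ext^1_C(L(x),L(x))\to\Ho^0(\sExt^1(L(x),L(x)))$ be the projection onto the local term. Since $\im\kappa_\wJ\subseteq\Ho^1(\sEnd(L(x)))=\ker q$, any $(v,w)\in\ker\kappa$ satisfies $q(\kappa_C(w))=0$, so it suffices to prove that $\bar\kappa_C:=q\circ\kappa_C$ is injective; then $w=0$, $v\in\ker\kappa_\wJ$, and $\ker d\phi_{(L,x)}$ is the tangent space to the $G$-orbit.

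The injectivity of $\bar\kappa_C$ is a local question at $x$. Using $\reg_C(x)=\cI_x^\vee$ and $\sHom(-,\reg_C)$-duality (an anti-equivalence on reflexive sheaves over the Gorenstein curve $C$), $\bar\kappa_C$ is identified with the local part of the Kodaira--Spencer map of the universal ideal family $\cI_\Delta$, which is the local connecting homomorphism
\[
\delta\colon\ T_{C,x}=\Hom_{\reg_{C,x}}(\cI_x,\reg_x)\lra \sExt^1(\cI_x,\cI_x)_x
\]
obtained by applying $\Hom(\cI_x,-)$ to $0\to\cI_x\to\reg_C\to\reg_x\to 0$. By exactness $\delta$ is injective once the preceding map $\Hom(\cI_x,\reg_C)\to\Hom(\cI_x,\reg_x)$ vanishes, and it does: the isomorphism $\sEnd(\cI_x)\xrightarrow{\ \sim\ }\reg_C(x)=\Hom(\cI_x,\reg_C)$ of \autoref{lem:End1} and \autoref{lem:End2} exhibits every homomorphism $\cI_x\to\reg_C$ as having image inside $\cI_x\subseteq\fm$, hence as composing to zero in $\reg_x=\reg_C/\fm$. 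I expect the main obstacle to be precisely this middle identification: matching the local part of the Kodaira--Spencer map of $\reg_{C\times C}(\Delta)$ with $\delta$, which requires the compatibility of $\sHom(-,\reg_C)$-duality with Kodaira--Spencer classes (using flatness of both $\cI_\Delta$ and $\reg_{C\times C}(\Delta)$ over $C$, cf.\ \autoref{lem:flat}) and the description of $T_x\Hilb^1(C)=T_{C,x}$ as $\Hom(\cI_x,\reg_x)$ with comparison map $\delta$ to $\Ext^1(\cI_x,\cI_x)$. Once this is in place, the vanishing of $\Hom(\cI_x,\reg_C)\to\Hom(\cI_x,\reg_x)$ is the short local computation above, and the proof is complete.
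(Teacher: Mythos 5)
Your strategy is genuinely different from the paper's: you linearise the problem through the Kodaira--Spencer formalism and the local-to-global sequence for $\Ext^1\bigl(L(x),L(x)\bigr)$, whereas the paper argues with $\Spec\IC[\eps]/(\eps^2)$-valued points and Hartshorne's generalised divisors, using \cite[Rem.\ 2.9]{Har--genbil} to force $v_2=0$. Much of your singular-case skeleton is sound: the computation of $\ker\kappa_{\wJ}$ from the cohomology sequence of $0\to\reg_C\to\reg_C(x)\to\reg_x\to 0$ together with $\ho^0(\reg_C(x))=1$, its identification with the tangent line of the $G$-orbit via \autoref{lem:nu} and \autoref{lem:PicCx}, the reduction of the whole lemma to the injectivity of $\bar\kappa_C=q\circ\kappa_C$, and the injectivity of the local connecting map $\delta$ (which, pleasingly, rests on the same local input as the paper's argument, namely parts (i) of \autoref{lem:End1} and \autoref{lem:End2}). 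The smooth case is also correct and essentially matches the paper's.

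The genuine gap is the one you flag yourself: the identification of $\bar\kappa_C$ with $\delta$. This is not bookkeeping; it is the step that carries the entire singular case, and without it nothing rules out kernel vectors with a nonzero $T_{C}(x)$-component. To close it you need two compatibilities, neither of which is argued in your text: (a) that the composite of the Kodaira--Spencer map of the Hilbert-scheme family $\cI_\Delta$ with the projection $\Ext^1(\cI_x,\cI_x)\to\Ho^0\bigl(\sExt^1(\cI_x,\cI_x)\bigr)$ equals $\delta$ (standard functoriality of local-to-global Ext sequences, but it should be stated); and (b) that $\sHom(-,\reg)$-duality interchanges the family $\cI_\Delta$ with $\reg_{C\times C}(\Delta)$ \emph{fibrewise over the parameter}, i.e.\ that $\bigl(\cI_\Delta^\vee\bigr)|_{\eta\times C}$ is the relative dual of $\cI_\Delta|_{\eta\times C}$, and that the induced isomorphism $\Ext^1(\cI_x,\cI_x)\cong\Ext^1\bigl(\reg_C(x),\reg_C(x)\bigr)$ respects the local-to-global filtration. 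Point (b) is where real work sits: it holds because $\cI_\Delta$ and its dual are flat over the first factor with pure, hence maximal Cohen--Macaulay, fibres on the Gorenstein surface $C\times C$, so $R\sHom(-,\reg)$ is concentrated in degree $0$ and commutes with the base change $\eta\times C\to C\times C$; and because duality is exact on pure sheaves (higher $\sExt$'s into $\reg$ vanish), so it sends extensions to extensions and trivial deformations to trivial deformations. So your route is viable and the expected identification is true, but as written the proof is incomplete at exactly the point where the paper instead invokes the rigidity of generalised divisors ($\eta\times x$ is linearly equivalent only to itself, because $\sEnd(\cI_x)\to\reg_C(x)$ is an isomorphism) to conclude directly that the point does not move.
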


\begin{proof}
Note that we have a direct sum decomposition
\begin{equation}\label{eq:dirsum}
T_{\wJ\times C}(L,x)= T_{\wJ}(L)\oplus T_C(x)\,. 
\end{equation}
 Let us start with the case that $x$ is a smooth point of $C$. This means that $x\in C_o$ for some $o\in O$, and $x$ is not an intersection point with another component. Let $\wJ_o$ denote the component of $\Pic(C)$  of line bundles $M$ with $\chi(M_{\mid C_o})=b_o+1$ and $\chi(M_{\mid C_v})=b_v$ for all $v\in V\setminus \{o\}$. These are exactly the stable line bundles with $M_o$ special; see \autoref{prop:classOx}. Hence, $\wJ_o\subset \cM$ is an open subscheme. The restriction $\phi_x\colon \wJ\cong \wJ\times \{x\}\to \cM$ factorises over this open subscheme as the isomorphism
 \[
 \wJ\to \wJ_o\quad,\quad L\mapsto L(x)\,.  
 \]
Hence, the restriction of $d\phi_{(L,x)}$ to the first summand of \eqref{eq:dirsum} is an isomorphism. This means that the kernel of $d\phi_{(L,x)}$ is isomorphic to the second summand $T_C(x)$, which is one-dimensional for the smooth point $x$.

Let now $x\in C_{\sing}$. Let $v=(v_1,v_2)\in T_{\wJ\times C}(L,x)= T_{\wJ}(L)\oplus T_C(x)$. This corresponds to a morphism
\[
 v=(v_1,v_2)\colon \eta\to \wJ\times C,
\]
where $\eta:=\Spec\IC[\eps]/(\eps^2)$.
We assume that $d\phi(v)=0$ which means that 
\begin{equation}\label{eq:trivdef}
 (v\times \id_C)^*\cF\cong (q\times \id_C)^*(L(x)),
\end{equation}
where $q\colon \eta\to \Spec \IC$. From this, we want to deduce that $v\in \ker(d(\pi\times \id_C))$ (with $v$ regarded as a tangent vector). In other words, that $(\pi\times \id_C)\circ v$ (with $v$ regarded as a morphism) equals the constant morphism
\[
\eta\xrightarrow{q}\Spec \IC\xrightarrow{(\pi(L),x)} J\times C \,.
\]
Let $x\in U\subset C$ be an open affine neighbourhood such that 
\[
(v_1\times \id)^*\cP_{\mid \eta\times U}\cong \reg_{\eta\times U}\quad,\quad L_{\mid U}\cong \reg_U\,.
\]
Then $(v\times \id_C)^*\cF_{\mid \eta\times U}\cong (v_2\times \id_U)^*\reg_{U\times U}(\Delta)\cong \reg_{\eta\times U}(\Gamma_{v_2})$ and \[(q\times \id_C)^*L(x)_{\mid \eta \times U}\cong (q\times \id_U)^*\reg(x)\cong \reg_{\eta\times U}(\eta\times x)\,.\]
Hence, by \eqref{eq:trivdef}, we have $\reg_{\eta\times U}(\Gamma_{v_2})\cong \reg_{\eta\times U}(\eta\times x)$. This means that the generalised divisors $\Gamma_{v_2}$ and $\eta\times x$ on $\eta\times U$ are linearly equivalent; see \cite[Prop.\ 2.8(c)]{Har--gen}. By parts (i) of \autoref{lem:End1} and \autoref{lem:End2}, we have that the natural map $\sEnd(\reg_U(-x))\to \reg_U(x)$ is an isomorphism. Hence, the natural map $\sEnd(\reg_{\eta\times U}(-\eta\times x))\to \reg_{\eta\times U}(\eta\times x)$ is an isomorphism too. This implies that $\End\bigl(\reg_{\eta\times U}(\eta\times x)\bigr)$ acts transitively on $\Gamma \bigl(\reg_{\eta\times U}(\eta\times x)\bigr)$. This, in turn, says that the  generalised divisor $\eta \times x$ on $\eta\times U$ is linearly equivalent only to itself; see \cite[Rem.\ 2.9]{Har--genbil}. Hence, we must have $\eta\times x=\Gamma_{v_2}$ as subschemes of $\eta\times U$. This means that $v_2$ equals the constant map 
\[
 v_2\colon \eta\xrightarrow q\Spec\IC\xrightarrow x C\,,
\]
or, in terms of tangent vectors, $v_2=0$. 
In particular, \eqref{eq:trivdef} becomes
\[
(v_1\times \id_C)^*\cP(\eta \times x)\cong (v\times \id_C)^*\cF\cong (q\times \id_C)^*(L(x))\cong ((q\times \id_C)^*L)(\eta\times x)\,.
\]
A straight-forward variant of \autoref{lem:nu} shows that this is equivalent to 
\[
(v_1\times \nu^x)^*\cP\cong (\id_\eta\times \nu^{x})^*(v_1\times \id_C)^*\cP\cong (\id_\eta\times \nu^{x})^*(q\times \id_C)^*L\cong (q\times \nu^x)^*L 
\]
which means that $(v_1\times \nu^x)^*\cP$ is a trivial deformation of $\nu^{x*}L$. Hence, $v_1\in \ker(d\pi')$ where 
\[
 \pi'\colon \wJ\to \Pic_b(C^x)\quad,\quad N\mapsto \nu^{x*}N
\]
and $\Pic_b(C^x)\subset \Pic(C^x)$ denotes the connected component parametrising line bundles whose restriction to every component $C_v$ of $C^x$ is $b_v$ (recall that $C^x$ and $C$ have the same irreducible components).
By \autoref{lem:PicCx}, the morphisms $\pi$ and $\pi'$ agree up to an isomorphism of their target varieties. Hence, also $v_1\in \ker(d\pi)$.
\end{proof}

\subsection{Moduli of singular stable sheaves}

We consider the restriction 
\[
 \phi_{\sing}:=\phi_{\mid \wJ\times (C_\sing)_{\red}}\colon \wJ\times (C_\sing)_{\red}\to \cM 
\]
of the classifying map $\phi$ from the last subsection, and the free action of $G$ on $\wJ\times (C_\sing)_{\red}$, given by the tensor product of line bundles on the first factor, and by the trivial action on the second factor. By \autoref{prop:unique}, $\phi_{\sing}$ is invariant under this action. We denote the factorisation over the quotient $(\pi\times \id)\colon \wJ\times (C_\sing)_{\red}\to J\times (C_\sing)_{\red}$ by $\psi\colon J\times (C_\sing)_{\red}\to \cM$.

\begin{prop}\label{prop:Csingred}
The morphism $\psi\colon J\times (C_\sing)_{\red}\to \cM$ is a closed embedding. 
\end{prop}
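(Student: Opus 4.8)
The plan is to show that $\psi$ is a \emph{proper monomorphism}; for a morphism of finite-type $\IC$-schemes this is exactly a closed immersion. Concretely I would verify three things: that $\psi$ is proper, that it is injective on $\IC$-points, and that it is injective on Zariski tangent spaces at every $\IC$-point. The first two follow formally from the construction of $\psi$ together with \autoref{prop:unique}, while the tangent-space injectivity is where \autoref{lem:dphi} supplies the real content.

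For the reduction to these three properties I would first recall that injectivity on tangent spaces at all closed points is equivalent to $\psi$ being unramified: the fibre of $\Omega_\psi$ at a closed point is dual to $\ker d\psi$ there, and a coherent sheaf vanishes iff it vanishes at every closed point. Granting in addition that $\psi$ is injective on $\IC$-points, the diagonal $\Delta_\psi$ is an open immersion (by unramifiedness) whose image contains every closed point of the fibre product $(J\times(C_\sing)_{\red})\times_{\cM}(J\times(C_\sing)_{\red})$ (by injectivity on points). Since that fibre product is of finite type over $\IC$, an open subscheme containing all of its closed points is the whole scheme, so $\Delta_\psi$ is an isomorphism and $\psi$ is a monomorphism; a proper monomorphism is a closed immersion.

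Next I would settle properness and injectivity on points. The source $J\times(C_\sing)_{\red}$ is projective over $\IC$, since $J=\prod_{o\in O}\Pic_{b_o}(C_o)$ is a product of torsors under abelian varieties and $(C_\sing)_{\red}$ is a closed subscheme of the projective curve $C$; as $\cM$ is separated (being a projective Simpson moduli space), $\psi$ is then proper. For injectivity on points, a $\IC$-point of the source is represented by a pair $(L,x)$ with $L\in\wJ$ and $x\in C_\sing$, with $\psi(L,x)=L(x)$, and two such represent the same point iff $x=x'$ and $L_{\mid C_o}\cong L'_{\mid C_o}$ for all $o\in O$. If $L(x)\cong L'(x')$ with $x,x'\in C_\sing$, then \autoref{prop:unique} forces $x=x'$ and $L_{\mid C_v}\cong L'_{\mid C_v}$ for all $v\in V$; restricting to $v=o\in O$ shows the two source points coincide in $J$ (the conditions at $i\in I$ are automatic, since $L_{\mid C_i}\cong\reg_{C_i}$ for every $L\in\wJ$).

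The main step, and the only one with genuine content, is tangent-space injectivity, which I would reduce to \autoref{lem:dphi}. By construction $\phi_{\sing}=\psi\circ(\pi\times\id)$, and since $\pi\colon\wJ\to J$ is the quotient by the free $G$-action, $\pi\times\id$ is again a $G$-torsor; hence $d(\pi\times\id)_{(L,x)}$ is surjective with kernel the tangent line $\operatorname{Lie}(G)$ to the $G$-orbit, which lies in $T_{\wJ}(L)\oplus 0$ because $G$ acts trivially on the second factor. For $x\in(C_\sing)_{\red}$, the closed immersion $(C_\sing)_{\red}\hookrightarrow C$ realises $T_{(C_\sing)_{\red}}(x)$ as a subspace of $T_C(x)$, so $T_{\wJ\times(C_\sing)_{\red}}(L,x)$ is a subspace of $T_{\wJ\times C}(L,x)$ still containing $\operatorname{Lie}(G)$. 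By \autoref{lem:dphi}, $\ker d\phi_{(L,x)}=\operatorname{Lie}(G)$ for $x\in C_\sing$; intersecting with this subspace gives $\ker d(\phi_{\sing})_{(L,x)}=\operatorname{Lie}(G)=\ker d(\pi\times\id)_{(L,x)}$, and since $d(\pi\times\id)$ is surjective this forces $d\psi$ to be injective at $(\pi(L),x)$. I expect this appeal to \autoref{lem:dphi} to be the crux of the argument; the torsor bookkeeping and the general closed-immersion criterion are then routine.
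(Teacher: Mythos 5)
Your proof is correct and follows essentially the same route as the paper: the paper's (two-line) proof likewise deduces the statement from injectivity on points via \autoref{prop:unique} and injectivity of the differential via \autoref{lem:dphi}, leaving the formal criterion (proper $+$ monomorphism $\Rightarrow$ closed immersion) and the descent of the tangent-space computation through the $G$-torsor $\pi\times\id$ implicit. Your write-up simply supplies those routine verifications explicitly.
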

\begin{proof}
By \autoref{prop:unique}, $\psi$ is injective, and by \autoref{lem:dphi} its differential in every point is injective.
\end{proof}

As the image of $\psi$ consist exactly of the singular stable sheaves of type $m$, we now have a description of the reduced moduli space of singular sheaves of type $m$. We also see that taking the reduction of the moduli space really makes a difference in the cases $\Gamma\in \{\tilde D_n,\tilde E_6, \tilde E_7,\tilde E_8\}$. If $C_i$ is a non-reduced component of $C$, then, for any $L\in \wJ$ and $x\in C_i$, the tangent space
\[
 T_{\wJ\times C}(L,x)=T_{\wJ}(L)\oplus T_C(x)
\]
is of dimension $\dim \wJ +2=\dim J+3$. Hence, by \autoref{lem:dphi}, the dimension of $T_{\cM}(L(x))$ is at least $\dim J+2=\dim(J\times C_i)+1$, showing that $J\times C_i\subset J\times (C_\sing)_{\red}$ is a multiple component of the moduli space.
We think that the following more precise result should hold.

\begin{conjecture}\label{Conj:multi}
Let $\Gamma\in \{\tilde D_n,\tilde E_6, \tilde E_7,\tilde E_8\}$, which gives $C_\sing=\sum_{i\in I}m_iC_i$. Then there is a closed embedding $J\times C_{\sing}\hookrightarrow \cM$ whose image is the moduli space of singular stable sheaves of type $m$. 
\end{conjecture}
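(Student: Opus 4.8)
The plan is to upgrade the reduced statement of \autoref{prop:Csingred} to a scheme-theoretic one by exhibiting $J\times C_\sing$, with its natural non-reduced structure $C_\sing=\sum_{i\in I}m_iC_i$, as a closed subscheme of $\cM$ and identifying it with the singular locus. First I would build the map directly from the universal family. Since $C_\sing=\sum_{i\in I}m_iC_i\le C$ as divisors, $\reg_{C_\sing}$ is a quotient of $\reg_C$, so $C_\sing\hookrightarrow C$ is a closed subscheme. By \autoref{lem:flat} the sheaf $\reg_{C\times C}(\Delta)$ is flat over the second factor $C$, and flatness is preserved under the base change $C_\sing\hookrightarrow C$; hence the restriction of $\cF$ to $\wJ\times C_\sing\times C$ is a flat family of sheaves on $C$ over $\wJ\times C_\sing$, whose closed fibres are the stable sheaves $L(x)$ with $x\in C_\sing$ (\autoref{cor:class}). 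This produces a classifying morphism $\wJ\times C_\sing\to\cM$, which by \autoref{prop:unique} is invariant under the free $G$-action on the first factor and therefore descends to a morphism \[ \Psi\colon J\times C_\sing\to\cM. \] On reduced schemes $\Psi$ restricts to the closed immersion $\psi$ of \autoref{prop:Csingred}, and its image is precisely the set of singular stable sheaves.

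Next I would prove that $\Psi$ is a closed immersion, which by properness of $J\times C_\sing$ and separatedness of $\cM$ reduces to showing that $\Psi$ is a monomorphism, i.e.\ injective on $T$-points for every test scheme $T$. On $\IC$-points this is \autoref{prop:unique}, and on first-order neighbourhoods $T=\Spec\IC[\eps]/\eps^2$ it is the tangent-space injectivity extracted from \autoref{lem:dphi} after dividing out the $G$-direction. What remains is injectivity over all Artinian thickenings, i.e.\ a higher-order uniqueness-of-presentation statement: if two maps $T\to J\times C_\sing$ induce isomorphic families $L(x)$ over $T$, they coincide. I would establish this by the very method of \autoref{lem:dphi}: work locally near a singular point $x$, use $\sEnd\bigl(\reg_C(x)\bigr)\cong\reg_C(x)$ (\autoref{lem:End1}, \autoref{lem:End2}) so that the generalised divisor defining $L(x)$ is linearly equivalent only to itself, forcing the $C_\sing$-component of the two maps to agree, and then pin down the $J$-component via the family version of $\Pic(C^x)\cong\prod_v\Pic(C_v)$.

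The heart of the matter, and the step I expect to be the genuine obstacle, is to show that $\Psi$ is surjective onto the singular locus \emph{scheme-theoretically}, equivalently that $\cM$ carries no more nilpotent structure along the components $Y_i$ than $J\times m_iC_i$ does; since $\Psi$ already exhibits $J\times C_\sing$ as a closed subscheme, this is the multiplicity upper bound $\mathsf{mult}(Y_i)\le m_i$ left open after \autoref{thm: easy moduli}, and the matching lower bound is automatic. The natural tool is the deformation--obstruction theory of $\cM$ at a singular sheaf $F=L(t)$, $t\in C_i$: compute $\Ext^1(F,F)=T_\cM(F)$ together with the Kuranishi obstruction map $\Ext^1(F,F)\to\Ext^2(F,F)$, and show that its formal zero locus is $\widehat{J}\times\widehat{m_iC_i}$. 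Concretely one expects $\Ext^1(F,F)$ to have dimension exactly $\dim J+2$, with a basis consisting of the $\dim J$ deformations coming from $J$, one unobstructed direction moving $t$ along $C_i$, and one transverse direction along which the obstruction vanishes to order exactly $m_i$, reproducing the relation $v^{m_i}=0$ defining $m_iC_i$ in the local model $A=\IC[u,v]/(v^{m_i})$ of \autoref{lem:End1}. The non-reducedness of $C^x$ recorded in \autoref{lem:End1}(ii) and \autoref{lem:End2}(iii) should be exactly what feeds this transverse nilpotency.

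Two features make the last step delicate. At a point $x$ where two non-reduced components $m_iC_i$ and $m_jC_j$ meet, the local model of \autoref{lem:End2} replaces $v^{m_i}$ by the monomial relation $u^{m_i}v^{m_j}$, and one must check that the obstruction map cuts out precisely $C_\sing$ there, neither a larger nor a smaller thickening; controlling the mixed terms of the Kuranishi map at these crossings is the crux of the computation. As a consistency check and a possible alternative route in the K3 case, when $\cM$ is realised as a fibre of a Beauville--Mukai Lagrangian fibration $\cM_\chi(X)\to|C|$ with smooth total space (\autoref{subsect:family}), the scheme-theoretic fibre multiplicities should agree with the Kodaira multiplicities of the corresponding degenerate elliptic fibre, which are exactly the labels $m_i$; transporting this classical computation, or a semistable-reduction analogue, would yield the conjectured multiplicities in that setting and guide the general obstruction calculation.
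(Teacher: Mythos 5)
First, a point of status: the statement you were asked to prove is \autoref{Conj:multi}, which the paper does \emph{not} prove --- it is left open, with the authors explicitly saying that the technical difficulties are avoided by passing to reduced schemes as in \autoref{prop:Csingred}. So there is no paper proof to compare against, and your proposal has to stand on its own. It does not: it is a sensible strategy that correctly isolates the two tasks (construct the closed embedding of the non-reduced scheme $J\times C_\sing$, then prove the multiplicity upper bound along the components $Y_i$), but neither task is actually completed, and the first already contains a gap at the point where the paper's own argument stops working.

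The gap in the construction of $\Psi$ is the descent step. You claim that the classifying morphism $\wJ\times C_\sing\to\cM$ is $G$-invariant ``by \autoref{prop:unique}'' and therefore descends to $J\times C_\sing$. But \autoref{prop:unique} is a statement about isomorphism classes of sheaves at closed points, so it only gives that the two morphisms $f\circ\sigma$ and $f\circ\pr$ from $G\times\wJ\times C_\sing$ to $\cM$ agree on the underlying reduced scheme. Since $C_\sing$ is non-reduced, this does not imply they agree as morphisms of schemes --- this is exactly the phenomenon that makes \autoref{lem:dphi} nontrivial: a family which is fibrewise isomorphic to a constant family can still define a nonzero tangent vector of $\cM$. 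What is needed is a \emph{family} version of \autoref{prop:unique} (an isomorphism of the two pulled-back families over the non-reduced base, up to a twist by a line bundle from the base), and producing it is precisely the sort of technical difficulty the authors allude to. Your later sketch of ``higher-order uniqueness'' via the generalised-divisor and $\sEnd$ arguments is the right kind of tool, but it is only sketched, and you deploy it for the monomorphism property --- where, incidentally, it is not needed: over $\IC$, for a finite-type morphism, injectivity on closed points (\autoref{prop:unique}) plus injectivity on tangent spaces (the $G$-quotient of \autoref{lem:dphi}) already gives unramified plus universally injective, hence a monomorphism, and a proper monomorphism is a closed immersion. It is the descent, not the monomorphism, that requires the new family-uniqueness statement.

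The second and decisive gap is the step you yourself call the heart of the matter. Everything you say about it is phrased as an expectation: that $\Ext^1_C(F,F)$ has dimension exactly $\dim J+2$ (the paper only proves the lower bound, in the discussion preceding \autoref{Conj:multi}), that the Kuranishi map vanishes to order exactly $m_i$ in the transverse direction, and that the mixed terms at points where two multiple components meet cut out exactly $C_\sing$. None of this is computed, and it is precisely this computation that constitutes the content of the conjecture beyond \autoref{thm: easy moduli}; without it you have only the automatic lower bound on multiplicities that would follow from the embedding. The fallback via Hwang--Oguiso characteristic cycles is the same consistency check the paper itself offers as ``evidence'' for \autoref{Conj:multi}: it applies only in the Beauville--Mukai setting (K3 surface, primitive Mukai vector, $C$ generic in a codimension-one stratum of $|C|$), it yields at best cycle-theoretic multiplicities rather than the asserted isomorphism of schemes $J\times C_\sing\cong$ (singular locus), and the conjecture as stated concerns an arbitrary extended ADE curve in a smooth projective surface. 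So the proposal should be regarded as a plausible research plan, not a proof.
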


It feels like the proof should be pretty straight-forward considering all the information we already collected. However, there are some technical difficulties which can be avoided by working with reduced schemes as in \autoref{prop:Csingred}, and the authors did not have the drive to work through these difficulties. 

The moduli of stable line bundles of type $m$ is already described by \autoref{prop:classOx} together with  
\autoref{cor:Pic}. It is a disjoint union $\coprod_{o\in O} U_o$ where $U_o$ is the moduli space of line bundles $F$ with $F_o$ special. Every $U_o$ is isomorphic to $\wJ$, hence a $G$-torsor over $J$. 

The question that remains is how exactly the moduli of stable line bundles and the (reduced) moduli of stable singular sheaves are attached to each other to form $\cM_{\red}$. The key observation to answer this question is that, given $o\in O$, the stable sheaves $F$ with either $F_o$ special (in which case $F$ is a line bundle) and the stable sheaves with $T_x$ special for some intersection point of $C_o$ with another component of $C$ (in which case $F$ is singular in $x$) have something in common: They can be written as an extension 
\[
 0\to K\to F\to M\to 0,
\]
where $K\in \Pic(C_o)$ with $\chi(K)=b_o-1$ and $M\in \Pic(C^o)=\Pic(C-C_o)$ with $\chi(M_{\mid C_v})=b_v$ for all $v\in V\setminus\{o\}$. This will allow us to realise the locus in $\cM$ of stable sheaves with either $F_o$ or $T_x$ special as a projectivisation of a relative extension bundle over $J$. We carry out the details in the following subsections.

\subsection{Universal relative extension bundles}\label{subsect:universalext} 
Let $f\colon X\to S$ be a flat morphism and $\cF, \cG$ coherent sheaves on $X$.
For $i\ge 0$, the \emph{relative extension sheaf} is defined by 
\[
\sExt_f^i(\cF,\cG):=\mathcal H^i\bigl(Rf_*R\sHom(\cF, \cG)\bigr)\,. 
\]
In the following, we need to assume that $\sExt_f^0(\cF,\cG)=0$ and that $s\mapsto\ext_{X_s}^1(\cF_s,\cG_s)$ is a constant function on $S$, which implies that $\sExt_f^1(\cF, \cG)$ is locally free. Furthermore, we assume that $S$ is reduced. We consider the $\IP$-bundle $\alpha\colon Y=\IP(\sExt_f^1(\cF, \cG)^\vee)\to S$ and the cartesian diagram
\[
\begin{tikzcd}
X_Y \arrow[r, "\alpha_X"] \arrow[d, "f_Y"']
& X \arrow[d, "f"] \\
Y \arrow[r, "\alpha"]
&  S\,.
\end{tikzcd}
\]

\begin{theorem}[\cite{Lange--universal}]\label{thm:Lange}
On $X_Y$, there is a short exact sequence
\begin{align}\label{eq:univext}
0\to \alpha_X^*\cG\otimes f_Y^*\reg_\alpha(1)\to \cE\to \alpha_X^*\cF\to 0 
\end{align}
which has the following universal property: 
Let $g\colon T\to S$ be an $S$-scheme and consider the cartesian diagram
\[
\begin{tikzcd}
X_T \arrow[r, "g_X"] \arrow[d, "f_T"']
& X \arrow[d, "f"] \\
T \arrow[r, "g"]
&  S\,.
\end{tikzcd}
\]
Then, for every $M\in \Pic(T)$, and every short exact sequence
\begin{equation}\label{eq:Sext}
 0\to g_X^*\cG\otimes f_T^*M\to \cD\to g_X^*\cF\to 0
\end{equation}
which is non-splitting over every $t\in T$, there is a unique classifying $S$-morphism $\beta\colon T\to Y$ such that the short exact sequences $\beta^*$\eqref{eq:univext} and \eqref{eq:Sext} are isomorphic up to the action of $\Ho^0(T,\reg_T^*)$ on short exact sequences given by multiplication of the second map of the sequences. In particular, $\beta^*\cE\cong \cD$.
\end{theorem}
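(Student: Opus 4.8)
The plan is to identify $Y$ with a fine moduli space of non-split extensions of $\cF$ by line-bundle twists of $\cG$, with \eqref{eq:univext} as the tautological family; the statement then reduces to the universal property of the projective bundle $\alpha$, once the relevant $\Ext^1$-groups have been identified with global sections of a twist of $\sExt^1_f(\cF,\cG)$.

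The workhorse is a base-change statement. Because $\sExt^0_f(\cF,\cG)=0$ and $s\mapsto\ext^1_{X_s}(\cF_s,\cG_s)$ is constant on $S$, a bounded complex of finite locally free $\reg_S$-modules computing $Rf_*R\sHom(\cF,\cG)$ together with cohomology-and-base-change shows that $\sExt^1_f(\cF,\cG)$ is locally free and that its formation, as well as the vanishing of $\sExt^0_f$, commutes with \emph{arbitrary} base change $g\colon T\to S$. Combined with the projection formula for the pulled-back twist and with the local-to-global spectral sequence $\Ho^p(T,\sExt^q_{f_T})\Rightarrow\Ext^{p+q}_{X_T}$, whose low-degree terms collapse thanks to $\sExt^0_{f_T}=0$, this yields for every $M\in\Pic(T)$ a natural isomorphism
\[
\Ext^1_{X_T}\bigl(g_X^*\cF,\;g_X^*\cG\otimes f_T^*M\bigr)\;\cong\;\Ho^0\bigl(T,\;g^*\sExt^1_f(\cF,\cG)\otimes M\bigr)\;=\;\Hom_T\bigl(g^*(\sExt^1_f(\cF,\cG)^\vee),\,M\bigr).
\]

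Applying this with $T=Y$, $g=\alpha$ and $M=\reg_\alpha(1)$, the tautological quotient $\alpha^*\bigl(\sExt^1_f(\cF,\cG)^\vee\bigr)\epi\reg_\alpha(1)$ on $Y=\IP(\sExt^1_f(\cF,\cG)^\vee)$ is itself an element of the right-hand group, and I would \emph{define} \eqref{eq:univext} to be the extension corresponding to it. For the universal property, given a family \eqref{eq:Sext} that is non-split over every $t\in T$, its class lies in the left-hand group, hence corresponds to a morphism $g^*(\sExt^1_f(\cF,\cG)^\vee)\to M$; fibrewise non-splitting is exactly the statement that this map is surjective onto the line bundle $M$, i.e.\ a line-bundle quotient. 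Grothendieck's universal property of the projective bundle $\alpha$ then produces a \emph{unique} $S$-morphism $\beta\colon T\to Y$ with $\beta^*\reg_\alpha(1)\cong M$ pulling the tautological quotient back to $g^*(\sExt^1_f(\cF,\cG)^\vee)\epi M$. Under the displayed identification the class of $\beta^*$\eqref{eq:univext} equals that of \eqref{eq:Sext}; since over each point an extension is determined up to isomorphism by its class, the two short exact sequences agree, the residual freedom being the rescaling of the chosen isomorphism $\beta^*\reg_\alpha(1)\cong M$, which is pinned down up to $\Ho^0(T,\reg_T^*)$ and accounts precisely for the asserted action on the second map. In particular $\beta^*\cE\cong\cD$.

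The formal part — the spectral sequence, the projection formula, and the projective-bundle universal property — is routine; the one genuinely delicate point, and the step I would spend most care on, is the base-change control of $\sExt^1_f(\cF,\cG)$ along the \emph{arbitrary} (possibly non-flat) test scheme $g\colon T\to S$, which forces one to work with the representing Ext-complex rather than with naive flat base change. A secondary subtlety is the bookkeeping of the $\Ho^0(T,\reg_T^*)$-scaling: one must check that two extensions whose classes are nowhere-vanishing scalar multiples of one another become isomorphic after rescaling one structure map, which is immediate once the class has been identified with a section of a line-bundle twist of $\sExt^1_f(\cF,\cG)$.
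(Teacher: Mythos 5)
The paper contains no proof of this theorem at all: it is imported wholesale from Lange's \emph{Universal families of extensions}, so the only thing to compare your attempt against is that cited source. Your reconstruction is essentially Lange's own argument — universal base-change control of $\sExt^1_f(\cF,\cG)$ via a representing complex of locally free $\reg_S$-modules, the resulting identification $\Ext^1_{X_T}\bigl(g_X^*\cF,\,g_X^*\cG\otimes f_T^*M\bigr)\cong\Hom_T\bigl(g^*(\sExt^1_f(\cF,\cG)^\vee),M\bigr)$, the dictionary between fibrewise non-split extensions and line-bundle quotients, and Grothendieck's universal property of the projective bundle, with the $\Ho^0(T,\reg_T^*)$-scaling accounting for the choice of isomorphism $\beta^*\reg_\alpha(1)\cong M$ — and it is correct, with the sole caveat that the representing-complex step tacitly requires $f$ projective and $\cF,\cG$ flat over $S$ (and a reduced base for the Grauert-type use of constancy of $\ext^1$), hypotheses that the paper's informal statement also suppresses but which are satisfied in its application.
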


\subsection{Closure of the moduli of stable line bundles}

For $o\in O$, write $\zeta_o:=C^o\cap C_o$ for the scheme-theoretic intersection. By \eqref{eq:intnumber}, this is a length 2 scheme, consisting of two reduced points if $\Gamma=\tilde A_n$, and of one fat point if $\Gamma\in \{\tilde D_n,\tilde E_6, \tilde E_7,\tilde E_8\}$. We also write $\Delta_o\subset C_o\times C_o\subset C_o\times C$ for the diagonal of $C_o$. 

\begin{lemma}
For every $o\in O$, there is a short exact sequence
 \begin{equation}\label{eq:Deltaoses}
  0\to \reg_{C_o\times C_o}\bigl(-(C_o\times \zeta_o)+\Delta_o\bigr)\to \reg_{C_o\times C}(\Delta_o)\to \reg_{C_o\times C^o}\to 0\,.
 \end{equation}
\end{lemma}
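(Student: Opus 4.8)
The plan is to imitate the proof of \autoref{lem:sesCv}, with the Gorenstein curve $C$ there replaced by the surface $C_o\times C$ and the point $x$ by the diagonal divisor $\Delta_o$. Throughout write $A:=C_o\times C_o$ and $B:=C_o\times C^o$, so that, as divisors in the smooth threefold $C_o\times X$, one has $C_o\times C=A+B$ with $A$ smooth (as $C_o$ is) and $A\cap B=C_o\times\zeta_o$. Since $\Delta_o\subset A\subset C_o\times C$, the structure sequence of $A$ (the second sequence of \autoref{lem: decomposition sequence}, applied in $C_o\times X$; its proof is insensitive to the dimension of the ambient variety) identifies $\cI_{A\hookrightarrow C_o\times C}\cong\reg_B(-A)$, and this ideal is contained in $\cI_{\Delta_o}$.

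First I would build the Snake-Lemma diagram comparing the structure sequence of $\Delta_o$ in $C_o\times C$ with that of $\Delta_o$ in $A$, using the restriction $\reg_{C_o\times C}\to\reg_A$ as the middle vertical map:
\[
\begin{tikzcd}
0 \rar & \cI_{\Delta_o} \dar \rar & \reg_{C_o\times C} \dar{\mathrm{res}} \rar & \reg_{\Delta_o} \dar[equal] \rar & 0\\
0 \rar & \reg_A(-\Delta_o) \rar & \reg_A \rar & \reg_{\Delta_o} \rar & 0.
\end{tikzcd}
\]
The middle vertical map is surjective with kernel $\cI_{A\hookrightarrow C_o\times C}=\reg_B(-A)$, the right vertical map is an isomorphism, and the bottom-left term is $\cI_{\Delta_o\hookrightarrow A}=\reg_A(-\Delta_o)$ since $\Delta_o$ is Cartier on the smooth surface $A$. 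Hence the Snake Lemma yields the short exact sequence
\[
0\to\reg_B(-A)\to\cI_{\Delta_o}\to\reg_A(-\Delta_o)\to 0.
\]

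Then I would apply $\sHom_{\reg_{C_o\times C}}(\_,\reg_{C_o\times C})$ to this sequence, exactly as the cited lemma dualises its left column. By definition $\sHom(\cI_{\Delta_o},\reg_{C_o\times C})=\reg_{C_o\times C}(\Delta_o)$. Dualising the second decomposition sequence $0\to\reg_B(-A)\to\reg_{C_o\times C}\to\reg_A\to 0$ reproduces the first one (this is the duality asserted in \autoref{lem: decomposition sequence}), which simultaneously gives $\sHom(\reg_B(-A),\reg_{C_o\times C})\cong\reg_B=\reg_{C_o\times C^o}$ and the vanishing $\sExt^1(\reg_A,\reg_{C_o\times C})=0$. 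Since $\reg_A(-\Delta_o)$ is a line bundle on $A$, twisting these identities by $\reg_A(\Delta_o)$ gives $\sHom(\reg_A(-\Delta_o),\reg_{C_o\times C})\cong\reg_A(-B+\Delta_o)$ together with $\sExt^1(\reg_A(-\Delta_o),\reg_{C_o\times C})=0$; finally $\reg_A(-B)\cong\reg_{C_o\times C_o}(-(C_o\times\zeta_o))$ because $B|_A=A\cap B=C_o\times\zeta_o$. The $\sExt^1$-vanishing keeps the dualised sequence short exact, and the identifications turn it into the claimed
\[
0\to\reg_{C_o\times C_o}\bigl(-(C_o\times\zeta_o)+\Delta_o\bigr)\to\reg_{C_o\times C}(\Delta_o)\to\reg_{C_o\times C^o}\to 0.
\]

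The principal subtlety is that $\reg_{C_o\times C}(\Delta_o)$ is \emph{not} locally free: it degenerates precisely along the points $(x,x)$ with $x\in\zeta_o$, where $\Delta_o$ meets the singular locus $C_o\times C_{\sing}$ of $C_o\times C$, so one cannot obtain the sequence by naively tensoring the decomposition sequence with $\reg_{C_o\times C}(\Delta_o)$. The care therefore goes into the dualisation step: verifying that $\sHom_{\reg_{C_o\times C}}(\_,\reg_{C_o\times C})$ remains exact on the Snake-Lemma sequence (the $\sExt^1$-vanishing above) and correctly identifying the dual of the torsion module $\reg_A(-\Delta_o)$ via the adjunction $\sHom_{\reg_{C_o\times C}}(\reg_A\otimes_{\reg_A}L,\reg_{C_o\times C})\cong L^{-1}\otimes_{\reg_A}\sHom_{\reg_{C_o\times C}}(\reg_A,\reg_{C_o\times C})$ for a line bundle $L$ on $A$. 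Both reduce to the duality of the two decomposition sequences, which is the only structural input beyond the Snake Lemma.
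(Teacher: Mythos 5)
Your proof is correct and takes essentially the same route as the paper's: the paper also produces the exact sequence $0\to\reg_{C_o\times C^o}(-C_o\times \zeta_o)\to\cI_{\Delta_o}\to\reg_{C_o\times C_o}(-\Delta_o)\to 0$ from a Snake-Lemma diagram built out of the structure sequences of $\Delta_o$ (in $C_o\times C$ and in $C_o\times C_o$) and the decomposition sequence, and then dualises it, citing \autoref{lem: decomposition sequence}. The only difference is presentational: you make explicit the $\sExt^1$-vanishing that keeps the dualised sequence exact and the identification of the dual terms, which the paper leaves implicit in that citation.
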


Note that the left term of the sequence is a line bundle on $C_o\times C_o$ as $\Delta_0$ is a Cartier divisor in $C_o\times C_o$. But the middle term is not a line bundle on $C_o\times C$ as $\Delta_o$ is not a Cartier divisor in $C_o\times C$.

\begin{proof}
This is just a relative version of \autoref{lem:sesCv} with a very similar proof: we have a chain of inclusions  
 \[\cI_{C_o\times C_o\hookrightarrow C_o\times C}=\reg_{C_o\times C^o}\bigl(-C_o\times \zeta_o\bigr)\subset \reg_{C_o\times C}(-\Delta_o)\subset\reg_{C_o\times C}\,.\]
Snake lemma gives a commutative diagram with exact columns and rows
\[
\begin{tikzcd}
 & 0\dar & 0\dar & \\
& \reg_{C_o\times C^o}\bigl(-C_o\times \zeta_o\bigr)\dar \rar[equal] & \reg_{C_o\times C^o}\bigl(-C_o\times \zeta_o\bigr)\dar\\
0 \rar& \reg_{C_o\times C}(-\Delta_o)\dar\rar & \reg_{C_o\times C} \dar\rar & \reg_{\Delta_o} \dar[equal]\rar &0\\
0 \rar& \reg_{C_o\times C_o}(-\Delta_o) \dar\rar & \reg_{C_o\times C_o} \dar \rar & \reg_{\Delta_o} \rar  &0\\
& 0 & 0 &
\end{tikzcd}.
\]
 Taking the dual of the left column gives the desired short exact sequence; see \autoref{lem: decomposition sequence}.
\end{proof}

Let $\cF=\pr_{13}^*\cP\bigl(\wJ\times \Delta\bigr)$ be the sheaf on $\wJ\times C\times C$ considered in \autoref{subsect:over} to give the classifying morphism $\phi\colon \wJ\times C\to \cM$. We write 
$\cF_o:=\cF_{\mid \wJ\times C_o\times C}$. 

\begin{cor}
There is a short exact sequence
\begin{equation}\label{eq:Fo}
  0\to \pr_{13}^*\cP_{\mid \wJ\times C_o}\bigl(-(C_o\times \zeta_o)+\Delta_o\bigr)\to \cF_o\to \pr_{13}^*\cP_{\mid \wJ\times C^0}\to 0\,.
 \end{equation}
\end{cor}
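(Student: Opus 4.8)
The plan is to read \eqref{eq:Fo} off from the preceding lemma by a pullback-and-twist. I would start from the short exact sequence \eqref{eq:Deltaoses} on $C_o\times C$, pull it back along the projection $\pr_{23}\colon\wJ\times C_o\times C\to C_o\times C$, and then tensor with the line bundle $\pr_{13}^*\cP$. Both operations are exact: $\pr_{23}$ is flat, so $\pr_{23}^*$ is exact, and tensoring with a line bundle is exact; hence the result is again a short exact sequence on $\wJ\times C_o\times C$. Matching the two outer terms with \eqref{eq:Fo} is then bookkeeping with supports: the left term $\reg_{C_o\times C_o}(-(C_o\times\zeta_o)+\Delta_o)$ is supported on $C_o\times C_o$, so after the twist it becomes $\pr_{13}^*\cP_{\mid\wJ\times C_o}(-(C_o\times\zeta_o)+\Delta_o)$, and the right term $\reg_{C_o\times C^o}$ becomes $\pr_{13}^*\cP_{\mid\wJ\times C^o}$, using that twisting a structure sheaf by $\pr_{13}^*\cP$ records precisely the restriction of $\cP$ to the relevant part of the third factor.

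The only point that is not formal is the identification of the middle term. Unwinding the definition $\cF=\pr_{13}^*\cP\otimes\pr_{23}^*\reg_{C\times C}(\Delta)$ gives
\[
\cF_o=\cF_{\mid\wJ\times C_o\times C}=\pr_{13}^*\cP\otimes\pr_{23}^*\bigl(\reg_{C\times C}(\Delta)_{\mid C_o\times C}\bigr),
\]
where the first factor of the inner $C\times C$ is restricted to $C_o$. So the middle term $\pr_{13}^*\cP\otimes\pr_{23}^*\reg_{C_o\times C}(\Delta_o)$ of the pulled-back sequence equals $\cF_o$ exactly when
\[
\reg_{C\times C}(\Delta)_{\mid C_o\times C}\cong\reg_{C_o\times C}(\Delta_o).
\]
I would establish this base-change identity via flatness. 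From $0\to\cI_\Delta\to\reg_{C\times C}\to\reg_\Delta\to 0$ and the flatness of $\reg_\Delta\cong\reg_C$ and $\reg_{C\times C}$ over the first $C$-factor, $\cI_\Delta$ is flat over $C$; restricting this sequence to $C_o\times C$ stays exact and yields $(\cI_\Delta)_{\mid C_o\times C}\cong\cI_{\Delta_o\hookrightarrow C_o\times C}$. Likewise, the structure sequence $0\to\reg_{C\times C}\to\reg_{C\times C}(\Delta)\to\cL_\Delta\to 0$ from the proof of \autoref{lem:flat} has all three terms flat over $C$ (the middle one by \autoref{lem:flat}), so its restriction stays exact and reads $0\to\reg_{C_o\times C}\to\reg_{C\times C}(\Delta)_{\mid C_o\times C}\to\cM\to 0$ with $\cM$ a line bundle on $\Delta_o\cong C_o$. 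This extension is non-split (the middle sheaf is not locally free along the non-Cartier locus of $\Delta_o$), and, by the relative analog of the uniqueness statement in \autoref{lem:Oxseq}(ii), the non-split extension of this line bundle by $\reg_{C_o\times C}$ is unique up to isomorphism, hence coincides with $\reg_{C_o\times C}(\Delta_o)$.

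I expect this base-change identity to be the one genuine obstacle, since $\cI\mapsto\cI^\vee$ does not commute with restriction in general; it has to be forced by flatness of the family together with the relative uniqueness of the non-split extension, rather than read off directly. Granting it, the corollary follows immediately from the pullback-and-twist of \eqref{eq:Deltaoses}.
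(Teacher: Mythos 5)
Your route is the paper's own: the paper proves \eqref{eq:Fo} exactly by applying $\otimes\,\pr_{13}^*\cP$ to $\pr_{23}^*$\eqref{eq:Deltaoses}, and you are right that this silently uses the base-change identity $\reg_{C\times C}(\Delta)_{\mid C_o\times C}\cong\reg_{C_o\times C}(\Delta_o)$ --- the middle term of \eqref{eq:Deltaoses} is by construction $(\cI_{\Delta_o\hookrightarrow C_o\times C})^\vee$, whereas $\cF_o$ is built from the restriction of $(\cI_{\Delta\hookrightarrow C\times C})^\vee$, and dualisation does not commute with restriction. Your two flatness steps are correct: $(\cI_\Delta)_{\mid C_o\times C}\cong\cI_{\Delta_o}$, and the restricted sequence $0\to\reg_{C_o\times C}\to\reg_{C\times C}(\Delta)_{\mid C_o\times C}\to\cM\to 0$ stays exact with $\cM$ a line bundle on $\Delta_o$.

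The gap is your last step. ``The non-split extension of this line bundle by $\reg_{C_o\times C}$ is unique up to isomorphism'' is not a relative analogue of \autoref{lem:Oxseq}(ii): non-splitness pins down the middle term only when the relevant extension group is one-dimensional, and the proof of \autoref{lem:Oxseq}(ii) obtains $\ext^1(\reg_x,\reg_C)=1$ from Serre duality on a Gorenstein \emph{curve}. On the two-dimensional scheme $C_o\times C$, Serre duality gives instead $\Ext^1(\cM,\reg_{C_o\times C})\cong\Ho^1\bigl(\Delta_o,\cM\otimes\omega_{C_o\times C\mid\Delta_o}\bigr)^\vee$, whose dimension depends on which line bundle on $\Delta_o\cong C_o$ the sheaf $\cM$ actually is --- and you never identify it. Compounding this, you never check that $\reg_{C_o\times C}(\Delta_o)$ is an extension of the \emph{same} line bundle $\cM$ by $\reg_{C_o\times C}$; a priori its quotient by $\reg_{C_o\times C}$ is some other line bundle on $\Delta_o$, in which case no uniqueness principle could identify the two sheaves. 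The correct way to make ``relative analogue'' precise is fibrewise, and your set-up already supplies everything needed: both sheaves are flat over $C_o$ (yours by \autoref{lem:flat} and base change, $\reg_{C_o\times C}(\Delta_o)$ by running the argument of \autoref{lem:flat} for $\Delta_o\subset C_o\times C$); every fibre of either sheaf over $t\in C_o$ is isomorphic to $\reg_C(t)$ by \autoref{lem:Oxseq}(ii) applied on the curve $C$, where it is valid; and your identification $(\cI_\Delta)_{\mid C_o\times C}\cong\cI_{\Delta_o}$ induces a natural map $\sHom(\cI_\Delta,\reg_{C\times C})_{\mid C_o\times C}\to\sHom(\cI_{\Delta_o},\reg_{C_o\times C})$ which on each fibre is a map $\reg_C(t)\to\reg_C(t)$ that is an isomorphism away from $t$, hence injective by purity, hence an isomorphism since the Euler characteristics agree. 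A morphism of $C_o$-flat sheaves that is an isomorphism on every fibre is an isomorphism, which closes the gap.
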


\begin{proof}
We apply tensor product with the line bundle $\pr_{13}^*\cP$ to $\pr_{23}^*$\eqref{eq:Deltaoses}.  
\end{proof}

\begin{lemma}\label{lem:ext1=2}
For any two line bundles $K\in \Pic(C_o)$ and $M\in \Pic(C^o)$, we have
\[
\hom_{\reg_C}(M,K)=0\quad,\quad \ext^1_{\reg_C}(M,K)=2\,.
\]
\end{lemma}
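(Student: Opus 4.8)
The plan is to reduce both computations to a local calculation at the finitely many points of the length-$2$ scheme $\zeta_o=C^o\cap C_o$, exploiting that $M$ and $K$ have one-dimensional supports meeting only in $\zeta_o$. The vanishing $\hom_{\reg_C}(M,K)=0$ is the easy half and I would do it globally: any $\reg_C$-linear map $\varphi\colon M\to K$ has $\varphi_y=0$ at every $y\notin\zeta_o$, since there either $M_y=0$ (if $y\notin C^o$) or $K_y=0$ (if $y\notin C_o$). Hence $\im\varphi$ is a zero-dimensional subsheaf of $K$. But $K$ is a line bundle on the smooth reduced curve $C_o$, so it is purely one-dimensional and admits no nonzero zero-dimensional subsheaf; therefore $\varphi=0$.

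For the $\Ext^1$, I would first observe that, for the same support reason, all the sheaves $\sExt^i_{\reg_C}(M,K)$ are supported on the finite scheme $\zeta_o$. The local-to-global spectral sequence then collapses and gives $\Ext^i_{\reg_C}(M,K)\cong\Ho^0\bigl(C,\sExt^i_{\reg_C}(M,K)\bigr)=\bigoplus_{x\in\zeta_o}\sExt^i_{\reg_C}(M,K)_x$, so it suffices to compute the stalks. Near a point $x\in\zeta_o$ the curve $C_o$ meets exactly one other component $C_w$ transversally, of multiplicity $m_w$, and I may take $\reg_{C,x}=R:=\IC[u,v]/(uv^{m_w})$ with $C_o=\{u=0\}$ and $C^o=m_wC_w=\{v^{m_w}=0\}$ locally. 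Since line bundles are locally trivial, locally $M\cong R/(v^{m_w})$ and $K\cong R/(u)$, so the task is to compute $\Ext^i_R\bigl(R/(v^{m_w}),R/(u)\bigr)$. Using that the annihilator of $v^{m_w}$ in $R$ is $(u)$ and that of $u$ is $(v^{m_w})$, the module $R/(v^{m_w})$ has the $2$-periodic free resolution
\[
\cdots\xrightarrow{u}R\xrightarrow{v^{m_w}}R\xrightarrow{u}R\xrightarrow{v^{m_w}}R\to R/(v^{m_w})\to 0.
\]
Applying $\Hom_R(-,R/(u))$, in which $u$ acts as zero and $v$ is a nonzerodivisor, yields the complex $R/(u)\xrightarrow{v^{m_w}}R/(u)\xrightarrow{0}R/(u)\xrightarrow{v^{m_w}}\cdots$, whose cohomology is $0$ in degree $0$ (reconfirming $\hom=0$) and $R/(u,v^{m_w})\cong\reg_{\zeta_o,x}$ in degree $1$.

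Summing over $x\in\zeta_o$ gives $\ext^1_{\reg_C}(M,K)=\sum_{x\in\zeta_o}\dim_\IC\reg_{\zeta_o,x}=\deg\zeta_o=C^o.C_o=2$ by \eqref{eq:intnumber}; note this is uniform, covering both the $\tilde A_n$ case (two reduced points, each with $m_w=1$) and the $\tilde D_n,\tilde E_n$ cases (a single fat point with $m_w=2$). The main obstacle is the local $\Ext$ computation: one must set up the correct local model $\IC[u,v]/(uv^{m_w})$, produce the periodic resolution, and verify that the outcome is insensitive to the choice of line bundles $K$ and $M$ (which it is, precisely because $\Ext^i=\Ho^0(\sExt^i)$ depends only on the local structure, where every line bundle is trivial). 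Once the clean identification $\sExt^1_{\reg_C}(M,K)_x\cong\reg_{\zeta_o,x}$ is established, the global statement $\ext^1=2$ is immediate from the intersection number.
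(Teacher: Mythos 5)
Your proof is correct, and it shares the paper's overall skeleton --- $\hom_{\reg_C}(M,K)=0$ by support and purity, identification of the sheaf $\sExt^1_{\reg_C}(M,K)$ with $\reg_{\zeta_o}$, then the local-to-global spectral sequence together with $\ho^0(\reg_{\zeta_o})=C^o.C_o=2$ from \eqref{eq:intnumber} --- but the middle step is carried out by a genuinely different computation. The paper stays global: after trivialising both line bundles (justified by the same support argument you give), it applies $\sHom(\_,\reg_{C_o})$ to the decomposition sequence $0\to\reg_{C_o}(-\zeta_o)\to\reg_C\to\reg_{C^o}\to0$ of \autoref{lem: decomposition sequence} and reads off the exact sequence $0\to\reg_{C_o}\to\reg_{C_o}(\zeta_o)\to\sExt^1(\reg_{C^o},\reg_{C_o})\to0$, hence $\sExt^1\cong\reg_{\zeta_o}$, with no coordinates and no case distinction. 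You instead compute stalks at the points of $\zeta_o$ via the $2$-periodic (matrix-factorisation) free resolution of $R/(v^{m_w})$ over the local hypersurface ring $R=\IC[u,v]/(uv^{m_w})$; your annihilator identities, the exactness of that resolution, and the identification of the degree-one cohomology with $R/(u,v^{m_w})\cong\reg_{\zeta_o,x}$ are all correct, and the two local configurations (two nodes for $\tilde A_n$; one length-two point with $m_w=2$ for $\tilde D_n,\tilde E_n$) are handled uniformly by the intersection number. The paper's route buys brevity and coordinate-freeness; yours buys self-containedness (no appeal to \autoref{lem: decomposition sequence} or to the generalised-divisor formalism) and extra information, e.g.\ the periodicity immediately gives $\ext^i_{\reg_C}(M,K)=2$ for all odd $i$ and $0$ for all even $i>0$. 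The only (harmless) imprecision is writing $\reg_{C,x}=\IC[u,v]/(uv^{m_w})$ rather than its localisation at $(u,v)$; since the Ext modules in question have finite length supported at the origin, this changes nothing.
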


\begin{proof}
 As the intersection of the supports is the zero-dimensional scheme $\zeta_o$, we may assume that both line bundles are trivial: $K\cong \reg_{C_o}$, $M\cong \reg_{C^o}$. For support reasons, we have $\sHom(\reg_{C^o},\reg_{C_o})=0$.  Considering the exact sequence
 \[
  0\to \reg_{C_o}(-\zeta_o)\to \reg_C\to \reg_{C^o}\to 0
 \]
of \autoref{lem: decomposition sequence} and applying $\sHom(\_,\reg_{C_o})$ gives
\[
 0\to \reg_{C_o}\to \reg_{C_o}(\zeta_o)\to \sExt^1(\reg_{C^o},\reg_{C_o})\to 0\,.
\]
Hence, $\sExt^1(\reg_{C^o},\reg_{C_o})\cong \reg_{\zeta_o}$. The local-to-global-Ext spectral sequence now gives the assertion as $\ho^0(\reg_{\zeta_o})=2$. 
\end{proof}

\begin{lemma}\label{lem:extensionunique}
Let $K\in \Pic(C_o)$ and $M\in \Pic(C^o)$, and consider two extensions 
\begin{align}
0\to K\to F_1\to M\to 0\,,\label{eq:ext_1}  \\
0\to K\to F_2\to M\to 0 \label{eq:ext_2}
\end{align}
of coherent sheaves on $C$. Then, $F_1\cong F_2$ if and only if \eqref{eq:ext_1} and \eqref{eq:ext_2} agree up to the  
action of $\IC^*=\Ho^0(\reg_C^*)$ on extensions.
\end{lemma}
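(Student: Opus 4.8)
The plan is to establish the nontrivial ("only if") implication by showing that \emph{any} abstract isomorphism $g\colon F_1\to F_2$ is automatically compatible with the extension data, and then to read off the relation between the two extension classes from the functoriality of $\Ext^1$. The reverse implication is immediate: acting by $c\in\IC^*$ only rescales the projection $F\to M$ and leaves the middle term $F$ unchanged, so two extensions that agree up to the action have (literally) isomorphic middle terms.

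For the forward implication, suppose $g\colon F_1\to F_2$ is an isomorphism. First I would observe that the composite $K\hookrightarrow F_1\xrightarrow{g}F_2\twoheadrightarrow M$ is a morphism $K\to M$ and hence vanishes: indeed $\Hom_{\reg_C}(K,M)=0$ for the same support reason that gives $\Hom_{\reg_C}(M,K)=0$ in \autoref{lem:ext1=2}, since $K$ is pure of dimension one while any map $K\to M$ would have $0$-dimensional image sitting inside the pure sheaf $M$. Therefore $g$ carries the subsheaf $K\subset F_1$ into $K\subset F_2$ and descends to a morphism of short exact sequences with vertical maps $g|_K\in\End(K)$ and $\bar g\in\End(M)$. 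Because $C_o$ is integral and $C^o$ is connected (it is numerically $1$-connected by \autoref{lem:1con}, whence $\ho^0(\reg_{C^o})=1$, exactly as in the proof of \autoref{lem:PicCx}), we have $\End(K)=\End(M)=\IC$; thus $g|_K=\lambda\,\id_K$ and $\bar g=\mu\,\id_M$ for scalars $\lambda,\mu$. Since $g$ is injective, $g|_K$ is injective, forcing $\lambda\neq 0$; since $g$ is surjective, $\bar g$ is surjective, forcing $\mu\neq 0$. Hence $\lambda,\mu\in\IC^*$.

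Finally I would invoke the functoriality of $\Ext^1$: the existence of the commutative ladder with left map $\lambda\,\id_K$ and right map $\mu\,\id_M$ means precisely that the pushout $(\lambda\,\id_K)_*e_1$ equals the pullback $(\mu\,\id_M)^*e_2$, i.e.\ $\lambda e_1=\mu e_2$ in $\Ext^1_{\reg_C}(M,K)$. Therefore $e_2=(\lambda/\mu)\,e_1$ is a nonzero scalar multiple of $e_1$ (and if $e_1=0$ then $e_2=0$ as well), which is exactly the assertion that \eqref{eq:ext_1} and \eqref{eq:ext_2} agree up to the $\IC^*$-action. The main obstacle is the automatic compatibility of $g$ with the filtration; once the vanishing $\Hom_{\reg_C}(K,M)=0$ is in place this is routine, and everything that follows is the standard homological algebra of extension classes combined with the endomorphism computation $\End(K)=\End(M)=\IC$.
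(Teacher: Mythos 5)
Your proof is correct, and it obtains the key intermediate step --- that an abstract isomorphism $g\colon F_1\to F_2$ automatically induces a morphism of the two short exact sequences --- by a genuinely different mechanism than the paper. The paper gets this compatibility from \autoref{lem:quotientsarerestrictions}: the quotient $M$ is canonically identified with $\pure(F_{i\mid C^o})$, a construction functorial in $F_i$, so any isomorphism of middle terms induces the ladder for free. You instead prove the vanishing $\Hom_{\reg_C}(K,M)=0$ by the support-plus-purity argument (correctly noting that this is the mirror image of, and not literally contained in, the computation $\sHom(\reg_{C^o},\reg_{C_o})=0$ from \autoref{lem:ext1=2}), and conclude that $g$ carries $K\subset F_1$ into $K=\ker(F_2\to M)$. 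From that point on the two arguments coincide: simplicity of $K$ (a line bundle on the integral curve $C_o$) and of $M$ (since $\ho^0(\reg_{C^o})=1$ by \autoref{lem:1con} and \cite{CFHR99}) forces the outer vertical maps to be scalars $\lambda,\mu$, nonzero because $g$ is an isomorphism, whence $\lambda e_1=\mu e_2$ in $\Ext^1_{\reg_C}(M,K)$ and the classes lie in the same $\IC^*$-orbit. Your route is the more elementary, self-contained one: it needs neither \autoref{lem:quotientsarerestrictions} nor the fact that the $F_i$ are generically line bundles, and it works verbatim for any pair of extensions in which $\Hom$ from the sub-object to the quotient vanishes. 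The paper's route recycles a structural lemma it has already proven and keeps the identification of $M$ canonical. You are also more explicit in the endgame, spelling out the $\Ext^1$-functoriality $(\lambda\,\id_K)_*e_1=(\mu\,\id_M)^*e_2$ and the nonvanishing of $\lambda,\mu$, steps the paper compresses into ``it follows that the extensions agree up to a non-zero scalar.''
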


\begin{proof}
By definition, the $\IC^*$-action only changes the second map of the extensions, but not the middle term. This already gives the \emph{if} part of the statement.

Let now $\phi\colon F_1\to F_2$ be an isomorphism. We have $\pure(F_{1\mid C^o})\cong M\cong \pure(F_{2\mid C^o})$ by \autoref{lem:quotientsarerestrictions}. Hence, we get an isomorphism of short exact sequences 
\[
\begin{tikzcd}
0\rar& K \arrow{r}\dar{\cong} & F_1\dar{\phi}\rar & M  \dar{\pure(\phi_{\mid C^o})} \rar &0 \\
0\rar& K \arrow{r}& F_2\rar & M  \rar &0\,.  
\end{tikzcd} 
\]
Since $K$ and $M$ are simple (for the latter, this follows by \autoref{lem:1con}), it follows that \eqref{eq:ext_1} and \eqref{eq:ext_2} agree up to a non-zero scalar.
\end{proof}

We write $\Pic_b(C^o)$ for the connected component of $\Pic(C^o)$ consisting of line bundles $M$ on $C^o$ with $\chi(M_{\mid C_v})=b_v$ for all $v\in V\setminus \{o\}$. 

 \begin{lemma}\label{lem:PicCo}
 The morphism 
 \[\pi_o\colon \Pic_b(C^o)\to \prod_{v\in V\setminus\{o\}} \Pic_{b_v}(C_v)\cong \prod_{j\in O\setminus \{o\}} \Pic_{b_j}(C_j)\,, \] given by restriction of line bundles to the irreducible components, is an isomorphism.
\end{lemma}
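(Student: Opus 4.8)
The plan is to deduce this from \autoref{cor:Piciso} applied to the curve $C^o$, in exact parallel with the proof of \autoref{lem:PicCx}. First I would record that $C^o = C - C_o$ is a connected (deleting a label-$1$ vertex from an extended ADE graph leaves a connected graph), projective, Gorenstein curve, being an effective divisor in the smooth surface $X$; its reduced irreducible components are the smooth curves $C_v$ for $v\in V\setminus\{o\}$, each equal to its own normalisation. By \autoref{cor:Piciso} it then suffices to verify the numerical identity $\ho^1(\reg_{C^o}) = \sum_{v\in V\setminus\{o\}} \ho^1(\reg_{C_v}) = \sum_{j\in O\setminus\{o\}} g(C_j)$, the last equality holding because each $C_i\cong\IP^1$ contributes nothing for $i\in I$.

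To compute $\ho^1(\reg_{C^o})$ I would use the restriction sequence
\[0 \to \reg_{C_o}(-C^o) \to \reg_C \to \reg_{C^o} \to 0\]
from \autoref{lem: decomposition sequence} (taking $A = C_o$ and $B = C^o$). By \eqref{eq:intnumber} the kernel $\reg_{C_o}(-C^o)$ is a line bundle of degree $-2$ on the integral curve $C_o$, so $\ho^0(\reg_{C_o}(-C^o)) = 0$ and, by Riemann--Roch, $\ho^1(\reg_{C_o}(-C^o)) = 1 + g(C_o)$. Since both $C$ and $C^o$ are numerically $1$-connected Gorenstein curves (for $C^o$ this is \autoref{lem:1con}), we have $\ho^0(\reg_C) = 1 = \ho^0(\reg_{C^o})$ by \cite[Thm.~3.3]{CFHR99}, and the restriction map between them sends the constant section $1$ to $1$, hence is an isomorphism. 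Feeding this, together with $\ho^1(\reg_C) = 1 + \sum_{j\in O} g(C_j)$ from \autoref{lem:gC}, into the long exact cohomology sequence, the connecting map out of $\ho^0(\reg_{C^o})$ vanishes and I obtain $\ho^1(\reg_{C^o}) = \ho^1(\reg_C) - \ho^1(\reg_{C_o}(-C^o)) = \sum_{j\in O\setminus\{o\}} g(C_j)$, as needed. \autoref{cor:Piciso} then gives that $\Pic^0(C^o) \to \prod_{v\neq o} \Pic^0(C_v)$ is an isomorphism.

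It remains to upgrade this isomorphism of identity components to the stated isomorphism of the translated components $\Pic_b(C^o) \cong \prod_{v\neq o}\Pic_{b_v}(C_v)$. As $\Pic_b(C^o)$ is a torsor under $\Pic^0(C^o)$ and $\pi_o$ is equivariant over the isomorphism just established, injectivity is automatic, and the only remaining point is that $\Pic_b(C^o)$ is nonempty and lands in the intended component; this follows by restricting any line bundle $L\in\wJ$ (which is nonempty) from $C$ to $C^o$, since $(L_{\mid C^o})_{\mid C_v} = L_{\mid C_v}$ has Euler characteristic $b_v$ for every $v\neq o$. I expect the genuinely delicate step to be none of these formal manipulations but rather the careful bookkeeping in the cohomology computation — in particular verifying that the $\ho^0$-restriction map is an isomorphism so that the connecting homomorphism drops out, which is precisely what forces the dimension count to balance exactly.
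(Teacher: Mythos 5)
Your proof is correct and takes essentially the same route as the paper: both reduce via \autoref{cor:Piciso} to the genus identity $\ho^1(\reg_{C^o})=\sum_{j\in O\setminus\{o\}}g(C_j)$, and both prove it from the decomposition sequence $0\to\reg_{C_o}(-C^o)\to\reg_C\to\reg_{C^o}\to 0$, the intersection number \eqref{eq:intnumber}, the equalities $\ho^0=1$ coming from numerical $1$-connectedness (\autoref{lem:1con}) and \cite[Thm.~3.3]{CFHR99}, and \autoref{lem:gC}. The only difference is organisational: the paper exploits additivity of the Euler characteristic on the sequence, which sidesteps the long-exact-sequence bookkeeping and the vanishing of the connecting map -- precisely the step you single out as delicate -- while your explicit handling of the torsor upgrade from $\Pic^0$ to the components $\Pic_b$ spells out a point the paper leaves implicit.
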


\begin{proof}
This is similar to the proof of \autoref{lem:PicCx}. By \autoref{cor:Piciso}, it is enough to prove that $g(C^o)=\sum_{j\in O\setminus \{o\}}g(C_j)$. 
By \eqref{eq:intnumber}, we have 
\begin{equation}\label{eq:chiCo}
 \chi(\reg_{C^o})=\chi(\reg_C)-\chi(\reg_{C_o})+2\,.
\end{equation}
As we have seen before, the fact that $C^o$ is a numerically $1$-connected Gorenstein curve implies  $\ho^0(\reg_{C^o})=1=\ho^0(\reg_C)=\ho^0(\reg_{C_o})$; see \autoref{lem:1con} and \cite[Thm.~3.3]{CFHR99}.
Combining this with \eqref{eq:chiCo} and \autoref{lem:gC} gives
\[
 g(C^o)=g(C)-g(C_o)-1=1+\sum_{j\in O}g(C_j) -g(C_o) -1=\sum_{j\in O\setminus \{o\}}g(C_j)\,.\qedhere
\]
\end{proof}

We set 
\[
J_o:=\Pic_{b_o-1}(C_o)\times \Pic_{b}(C^o)\,.
\]
Note that, by \autoref{lem:PicCo}, $J_o$ is canonically isomorphic to $\Pic_{b_o-1}(C_o)\times \prod_{j\in O\setminus \{o\}} \Pic_{b_j}(C_j)$, hence non-canonically isomorphic to $J=\prod_{j\in O} \Pic_{b_j}(C_j)$. We write the projections as 
\[
 p_o\colon J_o\to \Pic_{b_o-1}(C_o) \quad,\quad q_o\colon J_o\to \Pic_{b}(C^o)
\]
and denote universal families of $\Pic_{b_o-1}(C_o)$ and $\Pic_{b}(C^o)$ by $\cP_o$ and $\cP^o$, respectively. We consider the two sheaves
\[
\cQ_o:=i_*(p_o\times \id_{C_o})^*\cP_o\quad , \quad \cQ^o:=j_*(q_o\times \id_{C^o})^*\cP^o 
\]
on $J_o\times C$ where $i\colon J_o\times C_o\hookrightarrow J_o\times C$ and $j\colon J_o\times C^o\hookrightarrow J_o\times C$ are the embeddings.

For any point $t\in J_o$, the fibre 
$(\cQ_o)_t$ is a line bundle on $C_o$ and $(\cQ^o)_t$ is a line bundle on $C^o$.  
Hence, \autoref{lem:ext1=2} shows that $\ext^1_{C}((\cQ^o)_t, (\cQ_o)_t)=2$ is a constant function in $t\in J_o$, and $\sExt^0_{f_o}(\cQ^o, \cQ_o)=0$, where $f_o\colon J_o\times C\to J_o$ denotes the projection to the first factor. Hence, the assumptions of \autoref{subsect:universalext} are fulfilled, which gives that 
\[
 \alpha_o\colon Y_o:=\IP\bigl(\sExt_{f_o}^1(\cQ^o, \cQ_o)^\vee\bigr)\to J_o
\]
is a $\IP^1$-bundle satisfying the universal property described in \autoref{thm:Lange}. 

\begin{prop}\label{prop:Yo}
 There is a commutative diagram 
\begin{equation}\label{eq:beta}
 \begin{tikzcd}
&  & \cM\\
\wJ\times C_o \arrow{rr}{\beta_o} \arrow{rru}{\phi_o} \arrow{rrd}{g_o} & {} & Y_o \dar{\alpha_o} \uar{\iota_o} \\
&   & J_o 
\end{tikzcd} 
\end{equation}
where $\phi_o=\phi_{\mid\wJ\times C_o}$, $g_o(L,x)=\bigl(L_{\mid C_o}(-\zeta_o+x),L_{\mid C^o}\bigr)$, $\beta_o$ is surjective, and $\iota_o$ is a closed embedding. 
\end{prop}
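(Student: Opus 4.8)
The plan is to produce $\beta_o$ from the universal property of \autoref{thm:Lange} and to produce $\iota_o$ as the classifying morphism of the universal extension $\cE_o$, once its fibres are known to be stable. First I would bring the sequence \eqref{eq:Fo} into the shape demanded by \autoref{thm:Lange}. Restricting \eqref{eq:Fo} to a fibre over $(L,x)\in\wJ\times C_o$ yields the non-split extension $0\to L_{\mid C_o}(-\zeta_o+x)\to L(x)\to L_{\mid C^o}\to 0$, so the outer terms of \eqref{eq:Fo} are families of line bundles whose fibres are $p_o(g_o(L,x))=L_{\mid C_o}(-\zeta_o+x)$ and $q_o(g_o(L,x))=L_{\mid C^o}$. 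By the see-saw principle they agree with the pullbacks $g_{o,X}^*\cQ_o$ and $g_{o,X}^*\cQ^o$ up to line bundles pulled back from the base $\wJ\times C_o$, so after tensoring the whole sequence with a suitable such line bundle I obtain an extension $0\to g_{o,X}^*\cQ_o\otimes f_T^*M\to \cD\to g_{o,X}^*\cQ^o\to 0$ with $\cD\cong\cF_o\otimes f_T^*N$. It is fibrewise non-split, since its fibres are the sheaves $L(x)$, which are stable by \autoref{cor:class} and hence indecomposable, whereas a split extension would decompose as a direct sum. \autoref{thm:Lange} then produces the classifying $J_o$-morphism $\beta_o$ with $\alpha_o\circ\beta_o=g_o$ (the lower triangle) and $\beta_o^*\cE_o\cong\cD$.

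The heart of the argument, and the step I expect to be hardest, is to show that \emph{every} fibre of $\cE_o$ is a stable sheaf of type $m$. A fibre over a point of $Y_o$ above $(K,M)\in J_o$ is a non-split extension $0\to K\to F\to M\to 0$ with $K\in\Pic_{b_o-1}(C_o)$ and $M\in\Pic_b(C^o)$. Such an $F$ is pure (a zero-dimensional subsheaf would inject into the pure sheaf $M$) and of type $m$ (the multiplicities add up, since $m_o=1$ and $M$ has multiplicity zero along $C_o$), so by \autoref{prop:chi>0class} it suffices to check that exactly one of the components $F_v$, $T_y$ is special. Away from $\zeta_o$ the sheaf $F$ coincides with $M$ on $C^o$ and with $K$ on $C_o$, and since $M_{\mid C_i}\cong\reg_{C_i}$ for $i\in I$ one checks that all components not meeting $\zeta_o$ are ordinary. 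The local computation at $\zeta_o$ is the real obstacle: writing $\Ext^1(M,K)\cong\Ho^0(\reg_{\zeta_o})$ as in \autoref{lem:ext1=2}, one finds that $F$ is locally free at a point $p\in\zeta_o$ precisely when the extension class is non-zero there, and in that case $F_o=\pure(F_{\mid C_o})$ grows by one in Euler characteristic. Hence $\chi(F_o)=\chi(K)+\#\{p\in\zeta_o:\text{class}\neq 0\}$, so $F_o$ is special (of Euler characteristic $b_o+1$) exactly when the class is non-vanishing on all of $\zeta_o$, in which case all $T_y$ are ordinary; and if the class vanishes at one point $p\in\zeta_o$, then $F$ fails to be locally free at $p$, so $T_p$ is special while $F_o$ is ordinary. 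As the class is non-zero it vanishes on at most a length-one subscheme of $\zeta_o$, so precisely one special component occurs and $F$ is stable. Since the outer terms of \eqref{eq:univext} are flat over $Y_o$, the sheaf $\cE_o$ is a flat family of stable sheaves and defines $\iota_o\colon Y_o\to\cM$.

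The two triangles and surjectivity then follow formally. The isomorphism $\beta_o^*\cE_o\cong\cD$, whose fibres are the $L(x)=\phi_o(L,x)$, gives $\iota_o\circ\beta_o=\phi_o$ (the upper triangle). For surjectivity, each fibre of $\cE_o$ is a stable sheaf with $F_o$ or some $T_p$ $(p\in\zeta_o)$ special, hence of the form $L(x)$ with $x\in C_o$ by \autoref{prop:classOx}; therefore $\iota_o(Y_o)=\phi_o(\wJ\times C_o)=\iota_o(\beta_o(\wJ\times C_o))$, and once $\iota_o$ is injective this forces $\beta_o$ to be surjective.

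Finally I would show that $\iota_o$ is a closed embedding. It is injective on points: from $F$ one recovers $M=\pure(F_{\mid C^o})$ and then $K=\ker(F\to M)$, hence the point $(K,M)\in J_o$, and \autoref{lem:extensionunique} recovers the extension class up to scalar, i.e.\ the point of the fibre $\IP(\Ext^1(M,K))$. It is proper, because $Y_o$ is projective over $\IC$ while $\cM$ is separated. For unramifiedness I would split $Y_o$ into the open line-bundle locus and the (finitely many) sections parametrising singular sheaves: on the former $\iota_o$ lands in the open subscheme $\wJ_o\subset\cM$ of line bundles with $F_o$ special (compare \autoref{prop:classOx} and \autoref{lem:dphi}), where the differential is injective, and on the latter it factors through the closed embedding $\psi$ of \autoref{prop:Csingred}, which is unramified; since these loci cover $Y_o$, the morphism $\iota_o$ is unramified. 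A proper, injective, unramified morphism is a closed immersion, completing the proof.
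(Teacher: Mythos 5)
Your overall architecture reverses the paper's logical order in a workable way, so most of your proposal is a genuine alternative route rather than a paraphrase. The paper constructs $\beta_o$ exactly as you do (via \autoref{thm:Lange}), but then proves surjectivity of $\beta_o$ \emph{first}, by a geometric argument: the fibres of $g_o$ are irreducible, the fibres of $\alpha_o$ are $\IP^1$'s, and no fibre of $g_o$ is contracted by $\beta_o$, because one $g_o$-fibre contains both a point $(L,x)$ with $x\in\zeta_o$, whose sheaf $L(x)$ is singular, and a point $(L',y)$ with $y\notin\zeta_o$, whose sheaf is a line bundle. Stability of \emph{all} fibres of the universal extension is then a consequence of surjectivity (every such fibre occurs as a fibre of $\cF_o$), so the paper never classifies which extensions $0\to K\to F\to M\to 0$ are stable. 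You instead prove that classification directly, by the local analysis of the extension class in $\Ho^0(\reg_{\zeta_o})$, and then deduce surjectivity of $\beta_o$ from it together with injectivity of $\iota_o$; this is logically sound and even yields a sharper by-product (an explicit matching of points of each $\IP^1$-fibre with sheaves, which the paper only extracts later, in \autoref{subsect:family}). Two caveats on that local analysis: in the non-reduced case, where $\zeta_o$ is one fat point of length $2$, your counting formula $\chi(F_o)=\chi(K)+\#\{p\in\zeta_o:\text{class}\neq 0\}$ is not literally correct --- a unit class gives $\chi(F_o)=\chi(K)+2$, and a class in $\fm_{\zeta_o}\setminus\{0\}$ requires an actual verification (a computation as in \autoref{lem:End2}) that $F$ is locally isomorphic to $\reg_C(x)$, so that $T_x$ is special; your stated conclusions are nevertheless correct. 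Also, the components meeting $\zeta_o$ other than $C_o$ are not covered by your remark about components ``not meeting $\zeta_o$''; one needs the (easy) observation that $F_j=M_j$ for all $j\neq o$ because $K$ is supported on $C_o$.

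The genuine gap is in your closed-embedding argument, specifically unramifiedness of $\iota_o$ at points of the sections $S_x^o$ parametrising singular sheaves. Knowing that the restriction of $\iota_o$ to the closed subscheme $S_x^o$ factors through the closed embedding $\psi$ of \autoref{prop:Csingred} only controls $d\iota_o$ on the subspace $T_{S_x^o}\subset T_{Y_o}$; it says nothing about the remaining tangent direction along the $\IP^1$-fibre, so it does not give injectivity of $d\iota_o$ there --- unramifiedness cannot be checked on a cover by an open set and a closed subscheme. The paper closes exactly this point using \autoref{lem:dphi} together with surjectivity of $\beta_o$: at a preimage $(L,x)$ of such a point, $d\phi_o=d\iota_o\circ d\beta_o$ has one-dimensional kernel (the $G$-orbit direction, which is tangent to $\wJ\times C_o$), hence rank $\dim J+1=\dim Y_o$; this forces $d\beta_o$ to be surjective on tangent spaces and $d\iota_o$ to be injective on all of $T_{Y_o}$. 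Since by this stage you have already established surjectivity of $\beta_o$, you can run this dimension count verbatim; it also repairs, uniformly, the asserted but unproven injectivity of the differential on the line-bundle locus. With that replacement (and properness, which you correctly supply), your proof is complete.
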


\begin{proof}
 The map $\beta_o$ is the classifying morphism for the extension \eqref{eq:Fo}. Let us explain this in more detail. The sheaf $\pr_{13}^*\cP_{\mid \wJ\times C_o}\bigl(-(C_o\times \zeta_o)+\Delta_o\bigr)$ is a family of line bundles on $C_o$ of Euler characteristic $b_o-1$, parametrised by $\wJ\times C_o$. Let $u\colon \wJ\times C_o\to \Pic_{b_o-1}(C_o)$ be the classifying morphism. The sheaf $\pr_{13}^*\cP_{\mid \wJ\times C^0}$ is a family of line bundles on $C^o$ whose restrictions to $C_j$ for $j\in O\setminus\{o\}$ are of Euler characteristic $b_j$, parametrised by $\wJ\times C_o$. Let $v\colon \wJ\times C_o\to \Pic_{b}(C^o)$ be the classifying morphism. Let $g_o=(u,v)\colon \wJ\times C_o\to J_o$. On closed points, it is given by $g_o(L,x)=\bigl(L_{\mid C_o}(-\zeta_o+x),L_{\mid C^o}\bigr)$, as asserted.
Furthermore, \eqref{eq:Fo} is isomorphic to 
\[
  0\to (g_o\times \id_C)^*\cQ_o\to \cF_o\to (g_o\times \id_C)^*\cQ^o\to 0\,.
\]
Hence, \autoref{thm:Lange} gives a classifying morphism $\beta_o$ making the lower part of diagram \eqref{eq:beta} commute. 

The fibres of $\alpha_o$ are irreducible projective curves (namely, isomorphic to $\IP^1$), and the fibres of $g_o$ are irreducible (namely, isomorphic to $G\times C_o$). Hence, for the surjectivity of $\beta_o$, it suffices to show that no fibre of $g_o$ is contracted by $\beta_o$ to a point. To see this, let $(K,M)\in J_o$, let $x\in \zeta_o$ be an intersection point of $C_o$ with another component of $C$, and let $L\in \wJ$ be any line bundle with $L_{\mid C_o}(-\zeta_o+x)\cong K$ and $L_{\mid C^o}\cong M$, hence $g_o(L,x)=(K,M)$. A second point in the same fibre of $g_o$ is $\bigl(L', y\bigr)$ for some $y\in C_o\setminus \zeta_o$ and some line bundle $L'\in \wJ$ with $L'_{\mid C_o}(-\zeta_o+y)\cong K$ and $L_{\mid C^o}\cong M$. But $\cF_{(L,x)}\cong L(x)\not\cong L'(y)\cong \cF_{(L',x)}$ as the right side is a line bundle, while the left side is not. Hence, also $\beta_o(L,x)\neq \beta_o(L',y)$; see the \emph{if} part of \autoref{lem:extensionunique}.   

The surjectivity of $\beta_o$ implies that every fibre of the middle term of the universal extension on $Y_o$ occurs as a fibre of $\cF_o$. In particular, every fibre of the middle term of the universal extension is a stable sheaf of type $m$. Hence, we get the morphism $\iota_o\colon Y_o\to \cM$ as the classifying morphism for this middle term.

The injectivity of $\iota_o$ is the \emph{only if} part of \autoref{lem:extensionunique}. By \autoref{lem:dphi}, the differential of $\iota_o$ is injective in every point. Hence, $\iota_o$ is a closed embedding.
\end{proof}

\subsection{Description of the moduli space}

In this subsection, we collect our results. \autoref{thm: easy moduli} stated in the introduction follows immediately from a more precise
result that we now formulate. 
\begin{theorem}\label{thm: full moduli}
In the above situation the following hold:
 \begin{enumerate}
  \item The locus in $\cM_\red$ parametrising singular stable sheaves (i.e.\ sheaves which are not line bundles) is isomorphic to $J\times (C_{\sing})_{\mathsf{red}}$, with the isomorphism given by $L(x)\mapsto\bigl((L_{\mid_{C_o}})_{o\in O}, x\bigr)$.  
  \item The locus in $\cM_\red$ parametrising stable line bundles consist of $|O|$ connected components $U_o$, each of which is a $G$-torsor over $J$ where $G= \IG_m$ if $\Gamma=\tilde A_n$, and $G=\IG_a$ if $\Gamma\in \{\tilde D_n, \tilde E_6,\tilde E_7,\tilde E_8\}$.
  \item Let $Y_o$ denote the closure of $U_o$ in $\cM_\red$. Then, $Y_o$ is a $\IP^1$-bundle over $J$ for every $o\in O$. The complement of $U_o$ in $Y_o$ is a disjoint union of sections of this $\IP^1$-bundle, one section $S^o_x$ for every intersection point $x$ of $C_o$ with another component of $C$. The sections $S^o_x$ parametrise stable sheaves which are not line bundles in $x$.
  \item Let $\Gamma=\tilde A_n$. Then, for an intersection point $x=C_o\cap C_{o'}$, the components $Y_o$ and $Y_{o'}$ of $\cM_{red}$ intersect transversally in the sections $S^o_x$ and $S^{o'}_x$.
  \item Let $\Gamma\in \{\tilde D_n, \tilde E_6,\tilde E_7,\tilde E_8\}$. Then, for every $o\in O$, there is a single intersection point $x$ with another component $C_i$ which is of multiplicity $2$. Then, the component $Y_o$, and the component $Y_i:=J\times C_i$ (parametrizing stable sheaves which are singular in some point of $C_i$) intersect transversally in $S_x^o=J\times \{x\}$.
 \end{enumerate}
\end{theorem}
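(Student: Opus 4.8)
The plan is to prove the five parts in the order \textit{(i)--(v)}, exploiting the classification \autoref{prop:classOx}, the uniqueness \autoref{prop:unique}, and the parametrisations built in the preceding subsections. Part \textit{(i)} is essentially \autoref{prop:Csingred}: the morphism $\psi\colon J\times(C_{\sing})_{\red}\to\cM$ is a closed embedding, and by \autoref{prop:classOx} together with \autoref{prop:unique} its image is exactly the locus of singular stable sheaves, with $L(x)$ corresponding to $\bigl((L_{\mid C_o})_{o\in O},x\bigr)$. For \textit{(ii)}, \autoref{prop:classOx} identifies the stable line bundles as $\coprod_{o\in O}U_o$, where $U_o$ is the locus with $F_o$ special; each $U_o$ is the Picard component $\wJ_o$ of line bundles with $\chi(\cdot_{\mid C_o})=b_o+1$, and by \autoref{cor:Pic} restriction makes it a $G$-torsor over $J$. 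The one remaining task is to name $G$. I would use the unit sequence $1\to(1+\cN)\to\reg_C^*\to\reg_{C_{\red}}^*\to1$ with $\cN\subset\reg_C$ the nilradical: $\Ho^1(1+\cN)$ is unipotent (it is filtered by the additive sheaves $\cN^k/\cN^{k+1}$ on $C_{\red}$), whereas the reduced contribution $\ker\bigl(\Pic^0(C_{\red})\to\prod_v\Pic^0(C_v)\bigr)$ is the torus $\IG_m^{b_1(\Gamma)}$, $b_1(\Gamma)$ the first Betti number of the dual graph. In type $\tilde A_n$ the graph is a cycle ($b_1=1$) and $C$ is reduced, so $G=\IG_m$; in types $\tilde D_n,\tilde E_n$ the graph is a tree ($b_1=0$), so the reduced part vanishes and, since $\dim G=1$ by \autoref{cor:Pic}, we get $G=\IG_a$.

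For \textit{(iii)} I take $Y_o$ to be the $\IP^1$-bundle $\IP\bigl(\sExt^1_{f_o}(\cQ^o,\cQ_o)^\vee\bigr)$ over $J_o\cong J$ from \autoref{prop:Yo}, with its closed embedding $\iota_o\colon Y_o\to\cM$. Over $(K,M)\in J_o$ the fibre is $\IP(\Ext^1(M,K)^\vee)\cong\IP^1$ by \autoref{lem:ext1=2}, each point giving a non-split extension $0\to K\to F\to M\to0$. Using the identification $\Ext^1(M,K)\cong\Ho^0(\reg_{\zeta_o})$ from the proof of \autoref{lem:ext1=2}, I read off local freeness: $F$ is a line bundle at a point of $\zeta_o$ iff the extension class is a unit of $\reg_{\zeta_o}$ there. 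In type $\tilde A_n$, $\zeta_o$ consists of two reduced points, so the unit locus is the complement of the two coordinate points of $\IP^1$, yielding $U_o$ open dense and two sections; in types $\tilde D_n,\tilde E_n$, $\reg_{\zeta_o}\cong\IC[t]/(t^2)$ and the unit locus is the complement of the single point $\{a=0\}$, yielding one section. Hence $Y_o=\overline{U_o}$, its complement is a disjoint union of sections of $\alpha_o$, one per intersection point of $C_o$ (matching \autoref{fig: dynkin}), and by \autoref{prop:classOx} each such section $S^o_x$ parametrises sheaves singular at $x$, so $S^o_x=J\times\{x\}$.

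For \textit{(iv)} and \textit{(v)} I first settle the set-theoretic intersection. Since the $U_o$ are pairwise disjoint (part \textit{(ii)}) and consist of line bundles while the sections consist of singular sheaves, $Y_o\cap Y'$ reduces to an intersection of sections. In case \textit{(iv)}, $\bigsqcup_y S^o_y\cap\bigsqcup_{y'}S^{o'}_{y'}$ is nonempty only when $y=y'$ lies on both curves, i.e.\ at the edge $x=C_o\cap C_{o'}$, giving $S^o_x=S^{o'}_x=J\times\{x\}$ (in type $\tilde A_1$ two components meet in two points, and one runs the argument at each); in case \textit{(v)}, inspecting \autoref{fig: dynkin} the unique neighbour $C_i$ of $C_o$ has multiplicity $2$, and $S^o_x=J\times\{x\}$ with $x\in C_i$ already lies in $Y_i=J\times C_i$, so $Y_o\cap Y_i=S^o_x$. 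Transversality is then a tangent-space computation via \autoref{lem:dphi}. Writing $p=\phi(L,x)=L(x)$ and $T_{\wJ\times C}(L,x)=T_{\wJ}(L)\oplus T_C(x)$, the point $x$ is an intersection point, so $T_C(x)$ is spanned by the two branch directions $\partial,\partial'$ (along $C_o$ and along its neighbour), while $\ker d\phi_{(L,x)}$ is the $G$-orbit direction and lies in $T_{\wJ}(L)$. Thus $d\phi$ is injective on $T_C(x)$ and $d\phi(T_C(x))\cap d\phi(T_{\wJ}(L))=0$. One checks that $d\phi(T_{\wJ}(L))=T_{S^o_x}(p)$, that $T_{Y_o}(p)=T_{S^o_x}(p)\oplus\langle d\phi(\partial)\rangle$ (the extra direction being ``move $x$ along $C_o$'', which lands in $U_o$), and that $T_{Y'}(p)=T_{S^o_x}(p)\oplus\langle d\phi(\partial')\rangle$. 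Since $\partial,\partial'$ are independent and $d\phi$ is injective on $T_C(x)$, we obtain $T_{Y_o}(p)\cap T_{Y'}(p)=T_{S^o_x}(p)$, which is the asserted transversality.

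The main obstacle is exactly this transversality bookkeeping: one must verify that the fibre direction of the $\IP^1$-bundle $Y_o$ coincides with the image of the branch direction $d\phi(\partial)$ and is genuinely transverse to the section, and this is where the precise content of \autoref{lem:dphi}---that $\ker d\phi\subseteq T_{\wJ}$ is disjoint from $T_C$---does all the work, the two distinct branches of the intersection point supplying the two independent normal directions. A secondary subtlety is that $\cM$ is non-reduced along the components $Y_i$ with $i\in I$, so every tangent computation and the transversality statement itself must be read in $\cM_{\red}$; this is legitimate because both $Y_o$ and $Y_i=J\times C_i$ are smooth, being a $\IP^1$-bundle over $J$ and a product with the smooth curve $C_i$, respectively.
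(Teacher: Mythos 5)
Your proposal is correct, and for parts (i), (iv) and (v) it follows essentially the paper's own route: (i) is exactly \autoref{prop:Csingred} combined with \autoref{prop:classOx} and \autoref{prop:unique}, and your transversality bookkeeping in (iv)/(v) --- $T_{Y_o}(p)=T_{S^o_x}(p)\oplus\langle d\phi(\partial)\rangle$, $T_{Y'}(p)=T_{S^o_x}(p)\oplus\langle d\phi(\partial')\rangle$, intersection equal to $T_{S^o_x}(p)$ --- is the same application of \autoref{lem:dphi} that the paper makes via the decomposition $T_{\wJ\times C}(L,x)\cong K\oplus T_J(\pi(L))\oplus T_{C_o}(x)\oplus T_{C_j}(x)$, just spelled out a little more explicitly. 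Where you genuinely diverge is in (ii) and (iii). For (iii), the paper never analyses extension classes: it uses the surjectivity of $\beta_o$ from \autoref{prop:Yo} to write $\iota_o(Y_o)=\phi(\wJ\times C_o)$, reads off the singular locus as $\coprod_x\phi(\wJ\times\{x\})$, and shows each piece is a section because $\bar g^x_o\colon J\times\{x\}\to J_o$ is an isomorphism (\autoref{lem:PicCo}). You instead detect local freeness from the extension class under $\Ext^1(M,K)\cong\Ho^0(\reg_{\zeta_o})$. This unit criterion is true, but it is the one step you assert rather than prove: it needs (a) a local computation (e.g.\ over $A=\IC[u,v]/(u^2v)$, the extension of $A/(u^2)$ by $A/(v)$ with class $c\in\IC[u]/(u^2)$ is free precisely when $c$ is a unit), and (b) a relative statement --- that the non-unit classes form line subbundles of the rank-two bundle $\sExt^1_{f_o}(\cQ^o,\cQ_o)$, namely the kernels of evaluation at the reduced points of $\zeta_o$ --- so that the non-unit locus is genuinely a union of sections and not merely fibrewise finite; both are fillable, so this is a gap in rigor rather than substance. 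What your route buys is that the count of sections (two for $\tilde A_n$, one for $\tilde D_n,\tilde E_n$) is visible directly from $\reg_{\zeta_o}$ being two reduced points versus one fat point. For (ii), the paper identifies $G$ geometrically from (iii): $U_o\cong\wJ$ is a $G$-torsor which equals a $\IP^1$-bundle minus one section (hence $G=\IG_a$) or minus two sections (hence $G=\IG_m$). You instead compute $G$ cohomologically from the unit sequence $1\to 1+\cN\to\reg_C^*\to\reg_{C_{\red}}^*\to 1$ and the first Betti number of the dual graph; this is a valid alternative which pins down $G$ already at the stage of \autoref{cor:Pic}, independently of any moduli geometry, whereas the paper's deduction is shorter once (iii) is established.
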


\begin{proof}
 By \autoref{prop:Csingred}, we have a closed embedding \[\psi\colon J\times (C_\sing)_{\mathsf{red}}\hookrightarrow \cM\quad,\quad\bigl((L_{o})_{o\in O}, x\bigr)\mapsto L(x)\,,\]
 where $L\in \Pic(C)$ is any line bundle with $L_{\mid C_o}=L_o$ (by \autoref{prop:unique}, it does not matter which line bundle $L$ with this property we pick). By \autoref{prop:classOx} its image is exactly the locus of singular stable sheaves. So we proved part (i). 
 
 For part (iii), we identify the $\IP^1$-bundles 
\[Y_o\xrightarrow{\alpha_o} J_o\]
of \autoref{prop:Yo} with their images under the closed embeddings $\iota_o\colon Y_o\hookrightarrow \cM$. 
We can consider $Y_o$ as a $\IP^1$-bundle over $J$ by choosing any isomorphism $J_o\cong J$.

By \autoref{prop:Yo}, we have $Y_o=\phi(\wJ\times C_o)$. Hence, the locus of singular sheaves in $Y_o$ is the union $\coprod_x \phi(\wJ\times \{x\})$ over all intersection points $x$ of $C_o$ with another component. 
Let $x$ be such an intersection point. The restrictions to $\wJ\times \{x\}$ of all three morphisms $\phi_o,\beta_o,g_o$ in the diagram \eqref{eq:beta} are $G$-invariant, hence factor over the quotient $J\times \{x\}$. The morphism $g_{o\mid \wJ\times \{x\}}$ induces the morphism
\[
 \bar g_o^x\colon J\times \{x\}\to J_o\quad,\quad (L_j)_{j\in O}\mapsto \Bigl( L_o, \pi_o^{-1}\bigl((L_j)_{j\in O\setminus \{0\}}\bigr)\Bigr)\,.
\]
This is an isomorphism; see \autoref{lem:PicCo}. Hence, the locus $S_x^o$ of stable sheaves which are singular at $x$ is a section of $Y_o$.

If $\Gamma \in \{\tilde D_n, \tilde E_6,\tilde E_7,\tilde E_8\}$, there is only one intersection point $x$ of $C_o$ with other components of $C$. Hence, $U_o=Y_o\setminus S_x^o$ is a $\IG_a$-bundle over $J$. 

If $\Gamma = \tilde A_n$ there are two intersection points $x,y$ of $C_o$ with other components of $C$. Hence, $U_o=Y_o\setminus (S_x^o\cup S_y^o)$ is a $\IG_m$-bundle over $J$. Thus, we have also proved part (ii). 

Finally, the transversally statement in parts (iv) and (v) follows from \autoref{lem:dphi}. Indeed, let $x=C_o\cap C_j$, be an intersection point. We have $Y_o=\phi(\wJ\times C_o)$. If we are in the $\Gamma=\tilde A_n$ case, where $j\in O$, also $Y_j=\phi(\wJ\times C_j)$. If we are in the $\Gamma \in \{\tilde D_n, \tilde E_6,\tilde E_7,\tilde E_8\}$ case, we still write $Y_j:=\phi(\wJ\times C_j)=\psi(J\times C_j)$ for the component of $\cM_\red$ intersecting $Y_o$. We have a direct sum decomposition 
\[
 T_{\wJ\times C}(L,x)\cong T_{\wJ}(L)\oplus T_C(x)\cong K\oplus T_J(\pi(L))\oplus T_{C_o}(x)\oplus T_{C_j}(x)\,, 
\]
where $K=\ker(d\phi_{(L,x)})$; see \autoref{lem:dphi}. It follows that 
\[
T_{Y_o}(L(x))=d\phi(K\oplus T_J(\pi(L))\oplus  T_{C_o}(x))= d\phi(T_J(\pi)\oplus  T_{C_o}(x))
\]
and
\[
T_{Y_j}(L(x))=d\phi(K\oplus T_J(\pi(L))\oplus  T_{C_j}(x))= d\phi(T_J(\pi)\oplus  T_{C_j}(x))
\]
are non-identical subspaces of $T_{\cM_\red}(L(x))$, which means transversality of $Y_o$ and $Y_j$.
\end{proof}

\section{Further Remarks}\label{sect:furtherremarks}

\subsection{Codimension one strata in linear systems on K3 surfaces}\label{sect: linear systems}

Extended ADE curves occur as generic singular members of linear systems on K3 surfaces as we will now explain.
Throughout this section, let $X$ be a K3 surface.
Let $L$ be a line bundle on $X$ with at least two sections such that the general element in $|L|$ is a smooth connected curve of genus $g$. 
\begin{lemma}\label{lem:K3linsys}
The following properties hold:
\begin{enumerate}
 \item $L^2 = 2g-2 \geq 0$ and $\dim |L| = g$;
 \item $L$ is nef and $|L|$ has no base points;
  \item if $L^2 >0$ then $L$ is big and nef;
\item if $L^2 = 0 $ then $L$ is an elliptic pencil.
  \end{enumerate}
\end{lemma}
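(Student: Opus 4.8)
The plan is to read everything off the two defining K3 conditions $K_X\cong\reg_X$ and $\ho^1(\reg_X)=0$, together with the classical base-point-freeness theory for linear systems on K3 surfaces. First I would establish (i). Fix a smooth connected $C\in|L|$, so $L\cong\reg_X(C)$. Adjunction on the K3 surface gives $\omega_C\cong(K_X+C)|_C\cong\reg_C(C)\cong L|_C$, and hence $L^2=C^2=\deg\omega_C=2g-2$. To compute $\ho^0(L)$ I would take cohomology of the restriction sequence $0\to\reg_X\to L\to\omega_C\to 0$: since $\ho^1(\reg_X)=0$ this yields a short exact sequence $0\to\IC\to\Ho^0(L)\to\Ho^0(\omega_C)\to0$, so $\ho^0(L)=1+g$ and $\dim|L|=g$. (The tail of the same sequence, using $\ho^2(L)=\ho^0(-L)=0$, also gives $\ho^1(L)=0$, which is convenient later.) Finally $g\geq1$, since a smooth connected curve of genus $0$ is a $(-2)$-curve, which is rigid and so contradicts $\dim|L|\geq1$; hence $L^2=2g-2\geq0$.

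Next I would prove (ii). For nefness I note that $|L|$ moves ($\dim|L|\geq1$) with irreducible general member, so for any irreducible curve $D$ I can pick an irreducible $C'\in|L|$ with $C'\neq D$, giving $L\cdot D=C'\cdot D\geq0$ (and $L\cdot D=L^2\geq0$ in the remaining case $D\in|L|$); thus $L$ is nef. For base-point-freeness the key point is that smoothness and connectedness of the general member make it irreducible, so $|L|$ has no fixed component. Base-point-freeness then follows from the classical results of Saint-Donat and Mayer on K3 surfaces (see \cite{BHPV}): a nef class whose linear system has no fixed part is base-point free. The only exceptional configurations producing a base point are the hyperelliptic ones $L\equiv aE+\Gamma$ with $E^2=0$, $E\cdot\Gamma=1$ and $\Gamma$ a $(-2)$-curve; in all of these the curve $\Gamma$ is a fixed component of $|L|$ (the moving part being $|aE|$), so they are excluded by the irreducibility of the general member.

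Parts (iii) and (iv) are then immediate. On a surface a nef divisor is big precisely when its self-intersection is positive, so $L^2>0$ together with (ii) gives (iii). If $L^2=0$ then $g=1$ and $\dim|L|=1$, and by (ii) the base-point-free pencil $|L|$ defines a morphism $X\to\IP^1$ whose general fibre is the smooth connected genus-$1$ curve $C$; this is by definition an elliptic pencil. The main obstacle is the base-point-freeness in (ii): the rest is Riemann--Roch and elementary intersection theory, whereas base-point-freeness genuinely relies on the Saint-Donat classification, with the smoothness hypothesis entering only to rule out the exceptional fixed $(-2)$-curve.
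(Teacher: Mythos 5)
Your proposal is correct, but it is worth noting that the paper does not actually prove this lemma at all: its ``proof'' is a one-line citation to Huybrechts' textbook on K3 surfaces (Cor.\ 2.1.5, Lem.\ 2.2.1, and Rem.\ 8.2.13 of that book). What you have written is essentially the standard argument that sits behind those citations, and every step checks out: adjunction with $K_X\cong\reg_X$ and the restriction sequence $0\to\reg_X\to L\to\omega_C\to 0$ (using $\ho^1(\reg_X)=0$) give $L^2=2g-2$ and $\dim|L|=g$, and the exclusion of $g=0$ is forced by $\dim|L|\geq 1$; nefness follows from the moving irreducible general member; bigness from nefness plus $L^2>0$ via asymptotic Riemann--Roch; and the elliptic pencil statement from base-point-freeness when $g=1$. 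The one genuinely non-elementary input is base-point-freeness, and you correctly isolate it and attribute it to Saint-Donat's theorem (no fixed part implies base-point free on a K3), deducing the no-fixed-part hypothesis from irreducibility of the general member --- this is exactly the content the paper outsources to Huybrechts. Two cosmetic remarks: the exceptional configurations $L\sim aE+\Gamma$ you describe are usually called monogonal rather than hyperelliptic (hyperelliptic refers to a different degeneration in Saint-Donat's classification), and your separate treatment of the case $D\in|L|$ in the nefness argument is not needed, since a general member $C'\neq D$ still exists because the system moves; neither point affects correctness. In short, your proof supplies the details the paper deliberately omits, at the cost of invoking one classical theorem, which is unavoidable here.
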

\begin{proof}
These are well-known results, all of which can be found in the textbook \cite{HuybrechtsK3}. More precisely, see Cor. 2.1.5, Lem.\ 2.2.1, and Rem.\ 8.2.13 of \emph{loc.\ cit.} 
\end{proof}

\begin{prop}\label{prop:ADEdivisor}
Let $C\hookrightarrow X$ be an extended ADE curve in $|L|$. Then, in the linear system $|C|=|L|$, the locus of curves which are still extended ADE curves of the same type (i.e.\ the same intersection graph $\Gamma$ and the same genera of the reduced components) as $C$ is of codimension $1$. 
\end{prop}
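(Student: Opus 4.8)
The plan is to parametrise the extended ADE curves of the prescribed type by the product of the linear systems of the reduced components, keeping the multiple part rigid, and then to read off the dimension of this family from the intersection form of $X$.

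Write $C=\sum_{o\in O}C_o+E$ with $E:=\sum_{i\in I}m_iC_i$. Each $C_i$ with $i\in I$ is a smooth rational $(-2)$-curve, hence the unique effective divisor in its class (any effective $D\equiv C_i$ satisfies $D\cdot C_i=-2<0$, so $C_i$ is a component of $D$ and $D=C_i$); thus $E$ is rigid. In a connected family of extended ADE curves of the type of $C$ the numerical class of each component is locally constant, so along the component of the locus containing $C$ the multiple part stays equal to $E$, while each reduced component varies in its linear system $|C_o|$. This motivates the morphism
\[
\Phi\colon \prod_{o\in O}|C_o|\lra |L|\,,\qquad (C_o')_{o\in O}\longmapsto \sum_{o\in O}C_o'+E\,.
\]
First I would record that $\Phi$ is injective: a divisor in its image has the $C_o'$ as its multiplicity-one components and the $C_i$ as its multiplicity-$m_i$ components, so the decomposition, and hence each $C_o'$, is recovered from $\Phi\bigl((C_o')_o\bigr)$.

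Next I would identify $\mathrm{im}(\Phi)$ with the closure of the locus in question. On the one hand, by the rigidity of $E$ and the local constancy of classes, every same-type curve in the connected component of $C$ is of the form $\sum_o C_o'+E$ with $C_o'\in|C_o|$, hence lies in $\mathrm{im}(\Phi)$. On the other hand, for $o\in O$ with $C_o^2\ge 0$ the curve $C_o$ is an irreducible curve of non-negative self-intersection, so $\reg_X(C_o)$ is nef and $|C_o|$ is base-point free with general member smooth and irreducible of genus $g(C_o)$ (see \autoref{lem:K3linsys}); the finitely many reduced $(-2)$-components, if any, are rigid. Since the intersection numbers $C_v\cdot C_w$ are fixed, for a general point of $\prod_o|C_o|$ the chosen reduced components are smooth, meet $E$ and one another transversally, avoid triple points, and meet in exactly $C_v\cdot C_w$ points; thus the number of intersection points of $C_v$ and $C_w$ equals the number of edges between them in $\Gamma$. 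Hence the general member of $\mathrm{im}(\Phi)$ is again an extended ADE curve of the same type, so the locus is a dense constructible subset of the irreducible variety $\mathrm{im}(\Phi)$.

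It remains to compute dimensions. For $o\in O$ the general member of $|C_o|$ is a smooth irreducible curve of genus $g(C_o)$, whence $\dim|C_o|=g(C_o)$ by \autoref{lem:K3linsys} (for a reduced $(-2)$-component this reads $\dim|C_o|=0=g(C_o)$). Let $S$ be minus the extended Cartan matrix of $\Gamma$ as in \autoref{rem:Cartanmatrix}, so that $m=(m_v)_{v\in V}$ satisfies $m^tSm=0$, with $S_{vw}=C_v\cdot C_w$ for $v\neq w$ and $S_{vv}=-2$. Since $C_i^2=-2=S_{ii}$ for $i\in I$, while $m_o=1$ and $C_o^2=2g(C_o)-2$ for $o\in O$, we obtain
\[
C^2=\sum_{v,w\in V}m_vm_w\,(C_v\cdot C_w)=m^tSm+\sum_{o\in O}\bigl((2g(C_o)-2)-(-2)\bigr)=2\sum_{o\in O}g(C_o)\,.
\]
By \autoref{lem:K3linsys}, $\dim|L|=g=\tfrac12 L^2+1=\tfrac12 C^2+1$, so $\sum_{o\in O}g(C_o)=g-1$, and therefore
\[
\dim\,\mathrm{im}(\Phi)=\dim\prod_{o\in O}|C_o|=\sum_{o\in O}g(C_o)=g-1=\dim|L|-1\,,
\]
the first equality because $\Phi$ is injective. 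As the general member of $|L|$ is smooth and irreducible, the locus is a proper subset of $|L|$, so its codimension is exactly $1$.

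I expect the main obstacle to be the genericity (Bertini-type) verification that a general member of $\mathrm{im}(\Phi)$ genuinely realises the intersection graph $\Gamma$ — that the reduced components are smooth, meet transversally in exactly the predicted points, avoid triple points and interact with the rigid part $E$ compatibly with $\Gamma$ — together with the bookkeeping needed to rule out a higher-dimensional family of same-type curves arising from a different numerical assignment of the components (each such assignment gives an injective image of a product of linear systems of the same dimension $g-1$ by the identical computation, so the full locus still has dimension $g-1$). The dimension count itself is the short computation with the intersection form above.
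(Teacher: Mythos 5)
Your proof is correct, and its skeleton coincides with the paper's: the paper's proof considers precisely your morphism (there called $f\colon \prod_{o\in O}|C_o|\to |C|$), asserts that it is generically injective and that a general member of its image is again an extended ADE curve of the same type, and concludes by a dimension count. Where you genuinely differ is in how the key identity $\sum_{o\in O}g(C_o)=\dim|C|-1$ is established: the paper deduces it from its cohomological genus formula \autoref{lem:gC}, namely $\ho^1(\reg_C)=1+\sum_{o\in O}g(C_o)$ (proved by a cohomology computation on the non-reduced curve $C$), combined with \autoref{lem:K3linsys}(i), whereas you obtain it purely lattice-theoretically from $C^2=m^tSm+2\sum_{o\in O}g(C_o)=2\sum_{o\in O}g(C_o)$, using the null-vector property of \autoref{rem:Cartanmatrix} together with adjunction and $\dim|L|=\tfrac12 L^2+1$. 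Both computations are valid; yours is more elementary and self-contained (no sheaf cohomology on the non-reduced curve), while the paper's is a one-liner given \autoref{lem:gC}, a lemma it needs anyway for its description of $\Pic^0(C)$. You also spell out points the paper only asserts: the rigidity of the multiple part $E$, the Bertini-type check that a general member of the image really realises the graph $\Gamma$, and the observation that every other numerical assignment of component classes contributes a family of the same dimension $g-1$, so the whole locus, not just the piece through $C$, has codimension $1$. One caveat: your claim that $\Phi$ is injective is an overclaim --- if two reduced components are linearly equivalent (this can occur, e.g.\ for an $\tilde A_1$ curve whose two components both lie in one genus-$2$ linear system), permuting the corresponding entries gives the same divisor, and coincident choices $C_o'=C_{o''}'$ also break the ``multiplicity-one'' recovery argument. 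However, all fibres of $\Phi$ are finite, so $\dim \mathrm{im}(\Phi)=\sum_{o\in O}g(C_o)$ still holds and your conclusion is unaffected (the paper's ``generically injective'' carries the same implicit caveat).
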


\begin{proof}
We consider the natural morphism 
\[
 f\colon \prod_{o\in O}|C_o|\to |C|,
\]
whose image consists of all curves which arise from $C$ by moving the reduced components $C_o$ of $C$ in their linear systems. This map is generically injective and a general member of its image is still an extended ADE curve of the same type as $C$. By \autoref{lem:gC} together with \autoref{lem:K3linsys}(i), the dimension of the domain of $f$ is $\dim|C|-1$, which proves the claim. 
\end{proof}

In the following, we will see that, under a positivity assumption, a kind of converse of \autoref{prop:ADEdivisor} holds. Namely, reducible members of the linear system $|L|$ which occur in codimension 1 must be extended ADE curves, with only one irrational component.

The linear system  $|L|$ is naturally stratified into equisingular strata \cite{DS17, Wahl74}. By our assumptions the general element is a smooth curve, so the discriminant $\Delta\subset |L|$ parametrising singular curves in the linear system is a proper closed subset. We want to investigate, what kind of curves occur at the general point of a component of the discriminant $V\subset |L|$ of codimension one. 

If $L^2 = 0$, then $|L|$ is an elliptic pencil and in codimension one we have the possible singular fibres classified by Kodaira. Thus in the following we assume that $L^2 = 2g-2>0$. 

The following was explained to us by Andreas Knutsen:
\begin{prop}\label{prop: codim 1 irreducible strata}
 Assume that the integral curve  $C$ is general in a codimension one stratum $V\subset|L|$. Then $C$ has a unique node.
\end{prop}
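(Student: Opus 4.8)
The plan is to read off the singularities of $C$ from a dimension count organised around the $\delta$-invariant. By \autoref{lem:K3linsys}(i) we have $\dim|L|=g$, so a codimension one component of the discriminant has $\dim V=g-1$; since $C$ is singular, its total $\delta$-invariant $\delta=\sum_{p}\delta_p$ is at least $1$, and as $p_a$ is constant in $|L|$ the geometric genus of $C$ equals $\tilde g=g-\delta$. The one positivity input I would record at the outset is that the morphism $\phi_L\colon X\to\IP^{g}$ attached to the base-point-free system $|L|$ is generically finite onto a surface (as $L$ is big and nef with $L^2=2g-2>0$), hence in characteristic $0$ its differential is injective at a general point; equivalently $H^0(L)\to\reg_X/\fm_p^{2}$ is surjective for $p$ in a dense open $U\subset X$, so imposing a double point at such a $p$ is exactly three independent conditions.

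First I would show that $C$ has a single node or cusp. The locus of integral curves in $|L|$ of geometric genus $\tilde g$ has dimension at most $\tilde g$; on a K3 this is the expected (reduced) dimension of the Severi variety and the bound is standard, available because $\omega_X\cong\reg_X$. Since $V$ is contained in this locus, $g-1=\dim V\le\tilde g=g-\delta$, which forces $\delta=1$. As every singular point contributes $\delta_p\ge1$, the curve $C$ then has exactly one singular point $p$, and $\delta_p=1$ forces $p$ to be either a node ($A_1$) or an ordinary cusp ($A_2$), these being the only curve singularities of $\delta$-invariant one.

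It then remains to exclude the cusp, which I would do by a dimension count on the incidence variety $\Sigma^{\mathrm{cusp}}=\{(p,C)\mid C\text{ has a cusp at }p\}\subset X\times|L|$. Over a point $p\in U$ a cusp imposes vanishing to order two ($3$ conditions) together with the vanishing of the discriminant of the $2$-jet (one further condition), so the fibre has dimension $g-4$ and the part of $\Sigma^{\mathrm{cusp}}$ lying over $U$ has dimension $(g-4)+2=g-2$; its image in $|L|$ therefore has dimension at most $g-2<\dim V$. Hence the general member of $V$ cannot be cuspidal, and combined with the previous step its unique singularity is a node. Conceptually the same conclusion follows from the versal deformation of a cusp, whose $\delta$-constant stratum is one-dimensional with generic member a node, so that cuspidal curves are non-generic limits of nodal ones inside the $\delta=1$ locus.

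The step I expect to be the main obstacle is guaranteeing that the unique singular point lies in the good locus $U$, i.e.\ ruling out that for the general member of $V$ the singularity is forced onto the proper closed set $X\setminus U$ where $\phi_L$ ramifies and where a double point may impose fewer than three conditions, so that the naive fibre-dimension bounds degrade. This is precisely where the hypothesis that $C$ be \emph{integral} enters: a singularity systematically located on the ramification locus of $\phi_L$ would reflect the fibre structure of $\phi_L$ (for instance the hyperelliptic involution or a contracted $(-2)$-curve) and would force the general member of $V$ to be non-reduced or reducible, contrary to integrality. I would therefore argue that the singular points of integral members of $V$ are confined to $U$, after which the two dimension estimates above carry the expected codimensions and complete the proof.
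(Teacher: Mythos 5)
Your first step is sound and is in fact the same as the paper's: the bound ``every component of the locus of integral curves of geometric genus $\tilde g$ in $|L|$ has dimension at most $\tilde g$'' is precisely what the paper invokes from \cite[Prop.~4.5]{DS17}, and it correctly forces $\delta=1$, hence a unique singular point which is a node or a cusp. Where you diverge is the exclusion of the cusp: the paper cites \cite[Prop.~4.6]{DS17}, which says that the \emph{general} member of an equigeneric family on a K3 is immersed (i.e.\ its normalisation is an immersion), and an immersed singularity with $\delta=1$ is a node; this argument is insensitive to where on $X$ the singular point sits. Your replacement --- an incidence-variety count for cusps --- is exactly where the gap lies, and you partly identified it yourself, but the repair you propose does not work.

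Concretely: (a) your reduction to the good locus $U$ rests on the claim that an integral curve, general in a codimension-one stratum, must have its singular point in $U$ (where $\mathrm{d}\phi_L$ is injective), because a singularity systematically sitting on the ramification locus ``would force the general member of $V$ to be non-reduced or reducible''. This is false, and the paper's own \autoref{exam: double cover} gives the counterexample: for the double plane $\pi\colon X\to \IP^2$ branched over a smooth sextic $B$ and $L=\pi^*\reg_{\IP^2}(1)$ (so $g=2$, $L^2=2>0$, $\phi_L$ is $2\colon\!1$), the codimension-one strata are the pullbacks of tangent lines to $B$; these are \emph{integral} curves whose unique node lies on the ramification curve, i.e.\ entirely outside $U$. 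In this case your fibre-dimension estimates are vacuous ($g-3<0$), so your count simply does not see the curves that actually make up $V$, and in particular it cannot rule out a cusp occurring there. The instinct is correct only for contracted $(-2)$-curves (an integral $C\in|L|$ cannot meet them, since $L\cdot E=0$), but not for the ramification divisor of a hyperelliptic $\phi_L$. (b) Even for $p\in U$, your assertion that the vanishing of the discriminant of the $2$-jet is ``one further condition'' needs that the quadratic parts of sections singular at $p$ are not all squares of linear forms; membership in $U$ only controls $1$-jets, so this requires a separate $2$-jet positivity statement that you have not supplied. (c) The versal-deformation fallback has the same defect: to conclude that cuspidal curves are non-generic in the $\delta=1$ locus of $V$ you must know that $|L|$ induces a deformation of the cusp dominating (transversally) its $\delta$-constant stratum, which is again an unproved positivity/versality assertion --- indeed it is essentially the content of the immersion theorem the paper cites. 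So the statement is true, but your proof as written fails in the hyperelliptic case and is incomplete even in the non-hyperelliptic one.
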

\begin{proof}
 Let $\tilde C \to C$ be the normalisation. By \cite[Prop.~4.5]{DS17}, the family of curves of geometric genus $g(\tilde C)$ is of dimension $g(\tilde C) = \dim V = \dim|L| -1$ and by \cite[Prop.~4.6]{DS17} the generality of $C$ implies that $C$ has ordinary singularities, i.e., $\tilde C \to X$ is an immersion. 
 Then $C$ can only have one node, as $p_a(C) = g(\tilde C) +1$. 
\end{proof}

We were also made aware by Justin Sawon that \autoref{prop: codim 1 irreducible strata} was already proven in \cite[Lemma 2.4]{Sawon--LagrangianJac}. As the proof above differs a lot from the one in \emph{loc.\ cit}., we decided to keep it in the text. 

Let us consider an example to illustrate how reducible curves can arise in codimension one strata of $|L|$. This example was the original motivation for the present work. 
\begin{example}\label{exam: double cover}

 Let $B\subset \IP^2$ be a plane sextic, possibly with ADE singularities. Consider the double cover $\bar X$ branched over $B$ and its minimal resolution
 \[
  \begin{tikzcd}
   X \rar{\eta} \arrow{dr} & \bar X \dar{\pi}\\& \IP^2
  \end{tikzcd}
 \]
so that $X$ is a smooth K3 surface and $\bar X$ possibly has ADE singularities. Clearly, $X = \bar X$ if and only if $B$ is smooth.

Let $\bar L = \pi^*\ko_{\IP^2}(1)$ and $L = \eta^*\bar L$. By the projection formula we have $|dL| = \eta^*\pi^*|\ko_{\IP^2}(d)|$ for $d = 1,2$.

The pullback of a line $M\in |\reg_{\IP^1}(1)|$ will be reducible in $\bar X$ if and only if $B|_M$ is divisible by $2$ as a divisor on $M$, i.e.\ if $M$ is a triple tangent of $B$. Such lines lead to singularities in the dual curve $B^\vee$ and in particular there are only finitely many. 
  Therefore, all but finitely many curves in $|\bar L|$ are irreducible.
  
  If the branch locus is smooth, then all strata of codimension one in $|L|$ parametrise curves with one node as in  \autoref{prop: codim 1 irreducible strata}. These correspond to tangent lines to $B$ and are thus parametrised by an open subset of the dual curve $B^\vee$. 
 
 If $p$ is a singular point of the branch curve $B$, then the pencil of lines through $p$ pulls back to a pencil of curves on $\bar X$ passing through the ADE surface singularity. On $X$ we get a pencil of curves containing the exceptional divisor of the singularity as a fixed part. The general such curve is an extended ADE curve where the only non-rational component is the strict transform of the pullback of the line, which is a smooth elliptic curve.

 So in this example the discriminant consists of the dual curve $B^\vee$ together with one line for each singular point of $B$.

 Now consider the linear system $|2L|$. Here we find three different strata in codimension one:
 \begin{enumerate}
  \item Pullbacks of conics that are tangent to $B$ in a point give irreducible curves with a node. 
 \item Pullbacks of general conics passing through a singular point of $B$ give curves containing an ADE configuration, where the only non-rational curve is the strict transform of the pullback of the conic, which  is a smooth curve of genus 4. 
 \item Pullbacks of the sum of two general lines give an $\tilde A_1$-curve where both components are smooth of genus two.
 \end{enumerate}

 Note that the last type of component occurs even when $B$ is smooth. 
\end{example}

We take away the impression that components as in (i) will always exist and components of type (ii) will exist if $L$ is big and nef but not ample. The extra component like in (iii) however needs particular reasons to exist. 

\begin{prop}\label{prop: codim 1 strata very positive}
 Let $\bar X$ be a possibly singular K3 surface with ADE singularities, and let $\eta \colon X \to \bar X$ be the minimal resolution. Let $\bar L$ be a line bundle on $\bar X$ such that
 \begin{enumerate}
  \item $\bar L$ is very ample;
  \item $\bar L^2 \geq 12$;
  \item for any curve $D\subset \bar X$ we have $\bar L.D>8$. 
 \end{enumerate}
Then the general curve in a codimension one stratum of $|L| = |\eta^*\bar L|$ is either an integral curve with one node or the pullback of a general curve in $|\bar L|$ passing through one ADE singularity of $\bar X$. The latter are extended ADE-curves where all but one components are $-2$ curves. 
 \end{prop}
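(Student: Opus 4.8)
The plan is to push everything down to the linear system $|\bar L|$ on $\bar X$. Since ADE singularities are rational, $\eta_*\reg_X=\reg_{\bar X}$ and $R^1\eta_*\reg_X=0$, so pullback identifies $|L|=|\eta^*\bar L|$ with $|\bar L|$, and every member of $|L|$ is $\eta^*\bar C$ for a unique $\bar C\in|\bar L|$. Writing $\eta^*\bar C=\tilde C+(\text{exceptional part})$ with $\tilde C$ the strict transform, the exceptional part is nonzero exactly when $\bar C$ meets $\Sing\bar X$, and away from such points $\eta$ is an isomorphism. Hence $\eta^*\bar C$ fails to be smooth and integral precisely when either $\bar C$ is singular at a smooth point of $\bar X$, or $\bar C$ passes through a point of $\Sing\bar X$. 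This dichotomy organises the proof, and the task is to show that in codimension one only the ``mildest'' representative of each alternative occurs.

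First I treat the case where $\bar C$ avoids $\Sing\bar X$, so that $\eta^*\bar C\cong\bar C$. Since $L=\eta^*\bar L$ is big and nef with $L^2=\bar L^2\ge 12>0$ and its general member is a smooth connected curve, \autoref{prop: codim 1 irreducible strata} applies: a member general in a codimension-one stratum that is integral has a single node, which is case (i). It remains to rule out that a codimension-one stratum consists of members reducible already on $\bar X$, say $\bar C=\bar C_1+\bar C_2$. A standard dimension count on a K3 surface bounds the codimension of the reducible locus of a fixed numerical splitting type below by $\bar C_1.\bar C_2-1$, so it suffices to prove $\bar C_1.\bar C_2\ge 3$. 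Put $a=\bar L.\bar C_1$ and $b=\bar L.\bar C_2$, so $a+b=\bar L^2$ and, by hypothesis (iii), $a,b>8$; the Hodge index theorem gives $\bar C_i^2\le (\bar L.\bar C_i)^2/\bar L^2$, whence $2\,\bar C_1.\bar C_2=\bar L^2-\bar C_1^2-\bar C_2^2\ge 2ab/\bar L^2$. As $a,b>8$ forces $\bar L^2=a+b>16$, a short estimate yields $\bar C_1.\bar C_2\ge 3$; in particular reducible members never occur in codimension one. (Equivalently, when $\bar L^2\le 16$ hypothesis (iii) already shows that no member of $|\bar L|$ is reducible at all.)

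Next I treat the case where $\bar C$ meets $\Sing\bar X$. For a fixed $p\in\Sing\bar X$ the condition $p\in\bar C$ is one linear condition, proper because $\bar L$ is very ample, so $\{\bar C:p\in\bar C\}$ has codimension one in $|\bar L|$. By the previous paragraph and a dimension count, a general such $\bar C$ is integral, passes through no other singular point, and is smooth away from $p$. To identify $\eta^*\bar C=\tilde C+Z$ I use that the ADE singularities are exactly the rational double points: a general member of $|\bar L|$ through $p$ has at $p$ the corresponding ADE plane-curve singularity, $\tilde C$ is smooth and meets the exceptional $(-2)$-curves $E_j$ transversally, and $Z$ is the fundamental cycle. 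The multiplicities of $Z=\sum_j m_jE_j$ are forced by $\eta^*\bar C.E_j=\bar C.\eta_*E_j=0$, which is precisely the null-vector relation \eqref{eq:ADElabels} at each exceptional vertex once $\tilde C$ is assigned multiplicity $1$. The remaining relation at the vertex $\tilde C$ reads $\tilde C.Z=2$; since $\tilde C.Z=-Z^2$, this is equivalent to $Z^2=-2$, which holds exactly because $p$ is a rational double point. Thus the dual graph of $\eta^*\bar C$, with $\tilde C$ as affine node, is the affine Dynkin diagram of the singularity type and the labels are its marks, so $\eta^*\bar C$ is an extended ADE curve in the sense of \autoref{def: ADEcurve}. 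Finally $\tilde C^2=(\eta^*\bar C).\tilde C-Z.\tilde C=\bar L^2-2>0$, so $\tilde C$ is the unique component that is not a $(-2)$-curve, and it is non-rational; this is case (ii).

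Completeness is then immediate from the dichotomy: every codimension-one stratum of $|L|$ has a general member avoiding $\Sing\bar X$ (forcing case (i)) or meeting it in a single point (forcing case (ii)). I expect the main obstacle to be the local analysis at the rational double point in the third step, namely verifying that the total transform is genuinely an extended ADE curve, with dual graph the affine Dynkin diagram and labels the null vector $m$; this rests on the classical description of the generic hyperplane section of a rational double point and on the identity $Z^2=-2$. The Hodge-index estimate ruling out reducible members is the other place where the positivity hypotheses (ii) and (iii) are genuinely used.
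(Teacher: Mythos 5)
Your proposal takes a genuinely different route from the paper at the one step where the positivity hypotheses actually matter, namely ruling out non-integral curves in codimension one. The paper handles curves singular at a smooth point $x$ by citing \cite[Propositions~3.1 and 3.7]{Knutsen} --- this is exactly where hypotheses (ii) and (iii) enter --- to conclude that $|\mathfrak m_x^2\bar L|$ has $x$ as its only base point, and then applies Bertini to get irreducibility and a unique singular point before invoking \autoref{prop: codim 1 irreducible strata}; curves through a singular point $\bar x$ are treated by Bertini applied to $|\mathfrak m_{\bar x}\bar L|$ (using very ampleness), and the extended-ADE shape of the pullback is asserted as classical, just as you do. You replace the Knutsen input by an elementary dimension count plus the Hodge index theorem: non-integral members sweep out a locus of codimension $\bar C_1.\bar C_2-1$, and hypothesis (iii) forces $\bar C_1.\bar C_2\ge ab/(a+b)>4$. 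This is attractive: it is self-contained, it makes hypothesis (ii) superfluous, and even (iii) is used with room to spare; your explicit numerical derivation of the extended ADE configuration (orthogonality to the exceptional curves plus $Z^2=-2$) is also more informative than the paper's ``it is clear''.

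Two of your steps do have gaps, though. First, your dimension count is only formulated for a splitting into two reduced irreducible components. A general non-integral member is $\sum_i n_i\bar C_i$, and when some $\bar C_i$ is rational with $\bar C_i^2=-2$ and $n_i\ge 2$ the two-term bound does not apply as stated; one needs the full count (the stratum has codimension $p_a(\bar C)-\sum_i p_a(\bar C_i)$, using $h^1=0$ for irreducible curves on a K3) combined with Hodge index applied to each component and the constraint $\bar L=\sum_i n_i\bar C_i$. This does go through under (iii), but it is a computation, not a remark; moreover, components passing through $\Sing\bar X$ are Weil but not Cartier divisors, so the count there must be performed with strict transforms on $X$. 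Second, and more seriously, in the case of curves through $p\in\Sing\bar X$ the claim that a ``dimension count'' gives smoothness away from $p$ is not right as stated: such a count would require that ``passing through $p$'' and ``being singular at a smooth point $y$'' impose independent conditions for every $y$, i.e.\ surjectivity of $H^0(\bar L)\to H^0\bigl(\bar L\otimes\ko_{Z_y+p}\bigr)$ --- which is precisely the Knutsen-type statement your approach was designed to avoid. The correct cheap argument is Bertini: by very ampleness $|\mathfrak m_p\bar L|$ has no base point other than $p$, so its general member is irreducible and smooth outside $p$; this is what the paper does, and your proof should do the same. Finally, a harmless inaccuracy: the generic section through an ADE point of type $T$ does not have the plane-curve singularity of type $T$ at $p$ (for $A_n$ with $n\ge 2$ it has a node); what your argument actually uses --- the numerical characterisation of the exceptional part as the fundamental cycle --- is correct and unaffected.
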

  We  believe the positivity conditions could be sharpened  by adapting the techniques of \cite{DS17} to reducible curves and of \cite{Knutsen} to singular K3 surfaces.

  \begin{remark}
Note that, if $L$ itself is ample, we cannot have an extended ADE curve with a $(-2)$-component as a member of $|L|$, as the restriction of $L$ to the $(-2)$-component would be trivial. Hence, for $L$ ample and sufficiently positive, \autoref{prop: codim 1 strata very positive} says that singular members of $|L|$ in codimension 1 are integral with one node.  
\end{remark}
 
\begin{proof}
 Let $x\in \bar X$ be a smooth point and let $Z_x$ be the subscheme defined by $\ko_{\bar X}/\mathfrak m_x^2$. 
 By \cite[Propositions 3.1 and 3.7]{Knutsen} the map $H^0( X, L) \to H^0(X, L\tensor \ko_{Z_x+y})$ is surjective for any $y\in X\setminus \{x\}$. In other words, the linear system 
 $|\mathfrak m_x^2 \bar L|\subset |\bar L|$ has $x$ as its only base point. By Bertini, the general curve in $|\bar L|$ which is singular at $x$ is irreducible and has a unique singular point. By \autoref{prop: codim 1 irreducible strata} the singularity is a node.
 
 Now consider curves passing through a singular point $\bar x\in \bar X$. Then, since $|\bar L|$ separates points, the linear system $|\mathfrak m_{\bar x}\bar L|\subset |\bar L|$ has no other base points. Thus, by Bertini, the general such curve is smooth outside $\bar x$ and irreducible. It is clear, that the pullback of  a general hyperplane passing through $\bar x$ is an extended ADE curve on $X$.
 \end{proof}

\begin{example}\label{exam: double cover revisited}
 Let $X$ be as in Example \ref{exam: double cover} and $L = \eta^* \pi ^*\ko_{\IP^2}(d)$. Then the numerical conditions of \autoref{prop: codim 1 strata very positive} are satisfied for $d\geq 5$ by the projection formula.
 
 For $d=1$ the numerical conditions are not satisfied, but the conclusion of \autoref{prop: codim 1 strata very positive} still holds for the linear system $|L|$, while for $d=2$ this is not the case. 
 
 We did not consider the remaining cases $d =3,4 $ in detail.
 
\end{example}

\begin{example}\label{ex:doublecov}
Let $\phi\colon \bar X \to Q\subset \IP^3$ be a double cover of the quadric cone in $\IP^3$ branched over a general quartic section. Since no branching occurs at the vertex of the cone, $\bar X$ contains two $A_1$-singularities both mapping to the vertex. 

Let $L = \phi^*\ko_{\IP^3}(1)|_Q$. By the projection formula the elements of $|L|$ are the pullbacks of hyperplane section of $Q$. The general element is a smooth curve of genus three. 
We now describe the general members of the subsystem of codimension one  coming from hyperplanes containing the vertex. 

If $H$ is a general  hyperplane through the vertex of the cone, the intersection $H\cap Q$ consists of two lines passing through the vertex. Therefore, the pullback $\bar X\supset \phi^*H = C_1+ C_2$ where $C_i$ is a smooth elliptic curve passing through both singular points. 
Pulling back further to the minimal resolution $f\colon X \to \bar X$, which is a K3 surface, we have
\[ f^*\phi^*H = E_1 + C_1 + E_2 + C_2,\]
an $\tilde A_4$ configuration consisting of two elliptic and two $-2$ curves. 

Note that this extended ADE-configuration does not conform to the simple pattern found in 
\autoref{prop: codim 1 strata very positive} (more than one component is non-rational), which does not apply because $L$ is not sufficiently positive. 
\end{example}

\autoref{exam: double cover revisited} shows that  the positivity conditions are not necessary for the conclusion of \autoref{prop: codim 1 strata very positive} to hold. 

But even if the conclusion of \autoref{prop: codim 1 strata very positive} does not hold for a linear system $|L|$ on a K3 surfaces, in the examples we looked at it is still true that the general curve in a codimension one stratum is always either irreducible with one node as in \autoref{prop: codim 1 irreducible strata} or an extended ADE curve, with possibly more than one non-rational component.
 It is therefore legitimate to speculate that this is always the case.

To apply our description of  moduli spaces of sheaves to different  curves in a linear system $|L|$ we should pick a global polarisation on the K3 surface $X$. 
\begin{prop}\label{prop: global polarisation possible}
 Assume that on a K3 surface $X$ we have a linear system $|L|$ satisfying the conclusion of \autoref{prop: codim 1 strata very positive}.
 
 Then there exists  a polarisation $H$ on $X$ such that, for any reducible curve $C$ which is general in codimension one in $|L|$, the restricted polarisation satisfies \autoref{ass:H}.
 \end{prop}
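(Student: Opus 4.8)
The plan is to globalise the contraction argument behind \autoref{prop:Hone}. By the conclusion of \autoref{prop: codim 1 strata very positive}, which we are assuming, every reducible $C$ that is general in a codimension one stratum of $|L|$ is the total transform $\eta^{*}\bar C_{0}$ of a general curve $\bar C_{0}\in|\bar L|$ through a single ADE singularity $\bar x$ of $\bar X$; its unique non-rational component is the strict transform $C_{o_{0}}$ of $\bar C_{0}$, and all other components are exceptional $(-2)$-curves of $\eta$ over $\bar x$. Whereas the proof of \autoref{prop:Hone} contracts only the $(-2)$-curves of one such $C$, I would instead work with the full minimal resolution $\eta\colon X\to\bar X$ and define the candidate polarisation as a single perturbation $H:=\eta^{*}\bar H-\eps\,\Phi$ of a pulled-back ample class, independent of $C$.

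First I would fix an ample $\bar H$ on $\bar X$ (one may take $\bar H=\bar L$) and produce an effective exceptional $\IQ$-divisor $\Phi$ with $\Phi\cdot C_{i}<0$ for every exceptional curve $C_{i}$. Such a $\Phi$ exists because the intersection form on each connected exceptional configuration is the negative of a Cartan matrix, so the system $\Phi\cdot C_{i}=-1$ has a solution with strictly positive coefficients (the entries of $K^{-1}\mathbf 1$). Then $-\Phi$ is $\eta$-ample, and by the standard ampleness criterion for resolutions $H=\eta^{*}\bar H-\eps\Phi$ is ample for all sufficiently small $\eps>0$; as in \autoref{prop:Hone}, working with a $\IQ$-divisor is harmless, since \autoref{ass:H} is stable under rescaling $H$.

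Next I would verify \eqref{eq:Hassumption} for a fixed $\chi>0$. For an exceptional component $C_{v}$ with $v\neq o_{0}$ one has $\eta^{*}\bar H\cdot C_{v}=0$, so $e_{v}=-\eps\,\Phi\cdot C_{v}$ is positive and of order $\eps$; for the non-rational component the projection formula gives $\eta^{*}\bar H\cdot C_{o_{0}}=\bar H\cdot\bar L$, while $\Phi\cdot C_{o_{0}}=-\Phi\cdot E$ with $E=C-C_{o_{0}}$ (using $C=\eta^{*}\bar C_{0}$ and $\Phi\cdot\eta^{*}\bar C_{0}=0$, as $\Phi$ is exceptional), so $e_{o_{0}}=\bar H\cdot\bar L+\eps\,\Phi\cdot E$ is of order $1$. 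Hence $e=e_{o_{0}}+O(\eps)$ and $e_{o_{0}}/e\to1$ as $\eps\to0$, which forces $b_{o_{0}}=\chi$ and $b_{v}=1$ for all $v\neq o_{0}$ once $\eps$ is small. Summing over $O$, whose non-$o_{0}$ members are precisely exceptional $(-2)$-curves and hence also satisfy $b_{o}=1$, yields $\sum_{o\in O}b_{o}=\chi+|O|-1$, which is exactly \autoref{ass:H}.

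The point that goes beyond \autoref{prop:Hone}, and the step I expect to be the main obstacle, is the \emph{uniformity} of the choice of $\eps$ over all reducible curves in codimension one at once. The decisive observation is that the intersection numbers entering the computation depend only on the singularity $\bar x$ and not on the moving curve $\bar C_{0}$ through it: indeed $e_{v}=-\eps\,\Phi\cdot C_{v}$ only sees the fixed exceptional curves, and $e_{o_{0}}=\bar H\cdot\bar L+\eps\,\Phi\cdot E$ is constant in the family because the identity $\Phi\cdot C_{o_{0}}=-\Phi\cdot E$ removes the dependence on the varying strict transform (and $\bar H\cdot\bar C_{0}=\bar H\cdot\bar L$ is fixed). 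Since $\bar X$ has only finitely many singular points, each of fixed ADE type, there is a single positive threshold $\eps_{0}$ below which all the required strict inequalities hold simultaneously, and any $0<\eps<\eps_{0}$ produces the desired global polarisation $H$.
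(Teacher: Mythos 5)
Your proposal is correct and takes essentially the same approach as the paper: there, one sets $H=\eta^*\bar H-\sum_{p\in\Sing{\bar X}}\epsilon_p Z_p$ with $Z_p$ an effective cycle over each singular point satisfying $Z_p.E<0$ for every curve $E$ contracted to $p$, and then invokes the numerical argument of \autoref{prop:Hone} together with the finiteness of $\Sing{\bar X}$ for the uniform choice of the perturbation. Your explicit construction of $\Phi$ by solving $\Phi\cdot C_i=-1$ via the inverse Cartan matrix, and your spelled-out verification of \eqref{eq:Hassumption} and of uniformity, simply make precise what the paper leaves to the reader.
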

\begin{proof}
 We consider the contraction $\eta \colon X \to \bar X$ from \autoref{prop: codim 1 strata very positive} and for every singular point $p\in \bar X$ a fundamental cycle $Z_p$, such that $Z_p$ is supported on the preimage of $p$ and $-Z_p.E >0$ for any curve contracted to $p$. 
 
 Now choose any ample line bundle $\bar H$ on $\bar X$ and consider the $\IQ$-divisor
 \[ H = \eta^* \bar H - \sum_{p\in \Sing{\bar X}} \epsilon_p Z_p\,.\]
If we choose all $\epsilon _p$ sufficiently small, we achieve \autoref{ass:H} for all curves of the form $\eta^*D$ where $D\in |\bar L|$ is a general curve passing through some $p\in \Sing{\bar X}$ by the same argument as in \autoref{prop:Hone}.
\end{proof}

 \begin{example}\label{exam: no global polarisation}
  We consider the double cover of the plane as in  \autoref{exam: double cover} for a very general sextic branch curve $B$. Then $X$ is smooth and has Picard rank one by a result of Cox \cite{cox89}, so up to multiples the only polarisation is given by $L = \pi^* \ko(1)$.
  
  The only reducible singular curves occurring in codimension one in $|2L|$ are the pullbacks of two lines forming an $\tilde A_1$ configuration. \autoref{ass:H} becomes
     \[ \left\lceil \frac{1}{2} \chi\right\rceil + \left\lceil\frac{1}{2} \chi\right\rceil = \chi +1 \]
 which is satisfied if and only if $\chi$ is odd.
 In the case that $\chi$ is even, \autoref{thm: easy classification} does not hold anymore. Indeed, we then have properly semi-stable line bundles, namely those whose restriction to one component has Euler characteristic $\frac \chi2$ while the restriction to the other component has Euler characteristic $\frac \chi2 +2$.
 \end{example}

\subsection{Beauville--Mukai systems}\label{subsect:family}

We now want to let our moduli spaces vary along the linear system $|C|=\IP\bigl(\Ho^0(\reg_X(C)) \bigr)$. More precisely, let $C\hookrightarrow X$ be an extended ADE curve inside some smooth projective surface $X$, and $H$ a polarisation on $X$. We write $\cN:=\cM^H_{0,[C], \chi}(X)$ for the moduli space of semi-stable sheaves on $X$ of rank $0$, first Chern class $[C]$, and Euler characteristic $\chi$. As every $[F]\in \cM^H_{0,[C],\chi}$ represents a purely one-dimensional sheaf, we have a locally free resolution
\[
 0\to E_1\xrightarrow s E_0\to F\to 0\,.
\]
From this, we get an element $Z(\det(s))\in |C|$ not depending on the chosen resolution. This gives a well-defined morphism $\cN\to |C|$, and our moduli space $\cM:=\cM^H_{m,\chi}(C)$ of stable sheaves of type $m$ and Euler characteristic $\chi$ is the fibre $\cN_C$ over $C\in |C|$. Hence, if $H_{\mid C}$ satisfies \autoref{ass:H}, we have a description of this fibre of $\cN\to |C|$.

Our main motivation for this work was to study these fibres when $X$ is a K3 surface. In this case, if the Mukai vector $(0,[C],\chi)$ is primitive, then $\cN$ is an irreducible holomorphic symplectic variety, and $\cN\to |C|$ is a Lagrangian fibration, called a \textit{Beauville--Mukai integrable system}.

From now on, until the end of the next subsection, let $X$ be a K3 surface.
By \autoref{prop:ADEdivisor} together with \autoref{prop: global polarisation possible}, our main result \autoref{thm: full moduli} describes certain generic singular fibres of $\cN\to |C|$, i.e.\ singular fibres which occur in codimension 1.  
By \autoref{prop: codim 1 strata very positive}, under sufficient positivity assumptions on $L=\reg_X(C)$, all generic singular fibres are of this form, or compactified Jacobians of curves with one node. In the latter case, a description of the compactified Jacobian is well-known; see e.g.\ \cite[Ex.\ 1]{Sawon--disclocus}. Namely, let $C$ be a curve with one node, and $\tilde C$ its normalisation. Then the moduli space $\bar J_\chi(C)$ of rank one torsion-free sheaves on $C$ of Euler characteristic $\chi$ is a $\IP^1$-bundle over the Jacobian of $\tilde C$ with two sections of this bundle glued via a translation. In particular, if $\tilde C$ is rational, then $\bar J_\chi(C)$ is again a rational nodal curve.

There are strong results on the structure of generic singular fibres of arbitrary holomorphic Lagrangian fibrations due to Hwang and Oguiso \cite{Hwang-Oguiso--charfol}, \cite{Hwang-Oguiso--multiple}; compare also \cite{Matsu--Lagrangian}. In the next subsection, we explain how our description of $\cM$ fits into these general results.

\subsection{Characteristic cycles}

Let $M$ be a holomorphic symplectic variety of dimension $2n$, and $f\colon M\to \IP^n$ a holomorphic Lagrangian fibration. Then, by \cite[Prop.\ 3.1]{Hwang-Oguiso--charfol}, the discriminant locus 
\[
 \Delta=\bigl\{b\in \IP^n\mid M_b \text{ is singular}\bigr\}
\]
is a hypersurface in $\IP^n$. Furthermore, \cite{Hwang-Oguiso--charfol}, \cite{Hwang-Oguiso--multiple} give strong results on the geometry of the general fibres over the components of $\Delta$. We only recall (parts of) the results for \emph{reducible} generic singular fibres, as this is the case relevant to us. Let $b\in \Delta$ be a general point in some component such that $M_b$ is reducible. Let $\{Y_w\}_{w\in W}$ be the family of the irreducible components of $(M_b)_\red$. Then the $Y_w$ are smooth of Albanese dimension $n-1$, and the Albanese map $\alpha_w\colon Y_w\to \Alb(Y_w)$ is a $\IP^1$-bundle.

Now, one can define \emph{reduced characteristic cycles} on $(M_b)_{\red}$. Namely, let $\sim$ denote the equivalence relation on closed points of $(M_b)_{\red}$ generated by the relation $P\sim Q$ for two points on the same $\IP^1$-fibre of the same component $\alpha_w\colon Y_w\to \Alb(Y_w)$ of $M_b$. A \emph{reduced characteristic cycle} is then an equivalence class under this equivalence relation.

Intuitively, we form a reduced characteristic cycle by starting at any point $P$ of $M_b$, say with $P\in Y_w$. Then we can walk along the $\IP^1$-fibre of $Y_w$ until we reach the intersection locus with some other $\IP^1$-bundle component of $M_b$. Then we can walk along the $\IP^1$-fibre of the other bundle containing the intersection point, and so on. The configuration of smooth rational curves that we obtain is the reduced characteristic cycle of $(M_b)_{\red}$.
One then defines the (not necessarily reduced) \emph{characteristic cycle} of $M_b$ by counting every $\IP^1$-fibre of the reduced characteristic cycle with the multiplicity of the component $Y_w$ it lies in.
It is then proven in \cite{Hwang-Oguiso--charfol}, \cite{Hwang-Oguiso--multiple} that all characteristic cycles of $M_b$ are isomorphic, and of one of the following types:
\begin{enumerate}
 \item a reducible fibre of an elliptic fibration, i.e.\ of type III, type IV, or an extended ADE curve where all components are rational,
 \item of type $A_{\infty}$ which means an infinite chain of smooth rational curves. 
\end{enumerate}
Let us see how the characteristic cycles look like in the case of our moduli spaces $\cM$ of sheaves of type $m$ on extended ADE curves. In the case that $\Gamma\in \{\tilde D_n,\tilde E_6,\tilde E_7,\tilde E_8\}$, it is quite obvious that \autoref{thm: easy moduli} implies that the \emph{reduced} characteristic cycles of $\cM_{\red}$ are of the same type $\Gamma$ (without multiplicities). Then the results of \cite{Hwang-Oguiso--charfol}, \cite{Hwang-Oguiso--multiple} say that also the non-reduced characteristic cycles must be of the same type $\Gamma$, but now with multiplicities given by the labels of $\Gamma$. This gives further evidence for \autoref{Conj:multi}. See \cite{Hellmann} for the computation of multiplicities of the components of moduli space of sheaves on a curve in a somewhat related special case. 

In the case $\Gamma=\tilde A_n$, we have $\cM=\cM_{\red}$. However, there is another subtlety making things complicated. Namely, if we walk around the $\tilde A_n$-configuration along the fibres to form the characteristic cycle as described above, in general, after walking one round, we will end up in another fibre than the one in which we started. This means that it can happen that the characteristic cycle is not of type $\tilde A_n$ but of type $\tilde A_{kn}$ for some $k\in \IN$, or even $k=\infty$. This phenomenon is well-known in other examples of Lagrangian fibrations; see e.g.\ the pictures in the introduction of \cite{Sawon--sing}. In Lemma 8 and the remark following its proof in \emph{loc.\ cit.}\ the characteristic cycles for moduli spaces of rank 1 sheaves over curves with one node are described. 

In the following, we describe the characteristic cycles in the case of an extended ADE curve of type $\Gamma=\tilde A_n$.
We make an identification $V=\{0,1,\dots, n\}$ in such a way that $C_{v-1}$ and $C_{v}$ as well as $C_n$ and $C_0$ intersect. We write the intersection points as $x_v:=C_{v-1}\cap C_v$ and $x_0:=x_{n+1}:=C_{n}\cap C_0$.

\vspace*{1ex}
 
\begin{center}
\includegraphics[scale=0.5]{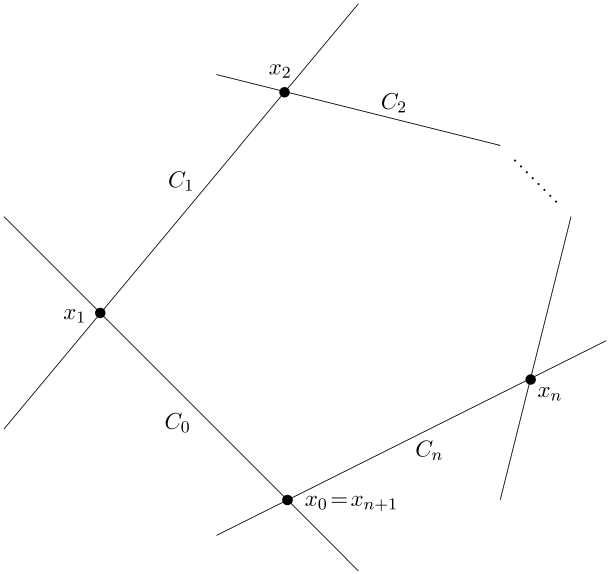}
\end{center}

\vspace*{2ex}

Let $\bigl(L_u\in \Pic_{b_u}(C_u)\bigr)_{u\in V}$ be a collection of line bundles. We set
\begin{equation}\label{eq:L'}
 L_u':=L_u(x_u-x_{u+1})\quad \text{for $u\in V$}\,.
\end{equation}
Furthermore, for $v=0,\dots,n+1$, let $F^v$ be the unique (see \autoref{prop:unique}) stable sheaf on $C$ which is special in $x_v$ and satisfies
\[
 (F^v)_j=\pure(F^v_{\mid C_j})\cong \begin{cases}
                                    L_j' \quad&\text{for $j=0,\dots, v-1$},\\
                                    L_j \quad&\text{for $j=v,\dots, n$}.
                                    \end{cases}
\]
For $v=1, \dots, n$, we can write $F^v$ as an extension in two different ways:
\begin{align}
 0\to L_{v-1}'(-x_{v-1})\to F^v\to \cL_{v-1}(L_0',\dots,L_{v-2}',L_v\dots, L_n)\to 0\,\label{eq:Fvext1}\\
 0\to L_{v}(-x_{v+1})\to F^v\to \cL_{v}(L_0',\dots,L_{v-1}', L_{v+1}\dots, L_n)\to 0\,,\label{eq:Fvext2}
\end{align}
where $\cL_{v}(L_0',\dots,L_{v-1}', L_{v+1}\dots, L_n)$ denotes the unique line bundle on $C^v=C-C_v$ whose restriction to $C_u$ is $L_u'$ for $u=0,\dots,v-1$ and $L_u$ for $u=v+1,\dots,n$. In other words, $\cL_v(\_)$ is the inverse of the isomorphism $\pi_v$ of \autoref{lem:PicCo}. For $v=0$, only \eqref{eq:Fvext2} is valid. For $v=n+1$, only \eqref{eq:Fvext1} is valid.

By \eqref{eq:L'}, the outer terms of the short exact sequence \eqref{eq:Fvext2} for $v=u$ and of the short exact sequence \eqref{eq:Fvext1} for $v=u+1$ agree. This means that $[F^v]$ and $[F^{v+1}]$ are two points of the same $\IP^1$-fibre of $Y_v\to J_v$. More precisely, they are the intersection points of this one fibre of $Y_v$ with the components $Y_{v-1}$ and $Y_{v+1}$, respectively. Hence, we see that, when starting in $[F^0]\in Y_{n}\cap Y_{0}$ and walking the first lap of the characteristic cycle containing this point, we pass through $[F^1]$, $[F^2]$, \dots , $[F^n]$ until we reach $[F^{n+1}]$, which is again a point in $Y_{n}\cap Y_{0}$. However, $[F^{n+1}]\neq [F^0]$. More precisely, considering them both as line bundles on $C^{x_0}$, where line bundles are determined by their restrictions to irreducible components by \autoref{lem:PicCx}, we see that $[F^{n+1}]$ corresponds to $(L'_0,\dots, L'_n)\in J:=\prod_{v=0}^n\Pic_{b_v}(C_v)$ while $[F^0]$ corresponds to
$(L_0,\dots, L_n)\in J$. Hence, we get the following
\begin{prop}\label{prop:Ancycle}
Let $C$ be an extended ADE curve with intersection graph $\Gamma=\tilde A_n$. Then, the characteristic cycles of $\cM$ are configurations of smooth rational curves of type $\tilde A_{kn}$ where $k$ is the (possibly infinite) order of the translation automorphism
\[
 J\xrightarrow\cong J\quad\quad (L_0,\dots, L_n)\mapsto (L'_0,\dots, L'_n)= \bigl(L_0(x_0-x_1),\dots, L_n(x_n-x_{n+1})\bigr)\,.
\]
\end{prop}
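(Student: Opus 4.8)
The plan is to organise the computation preceding the statement into an honest description of the equivalence classes that define the reduced characteristic cycles, and then to read off when such a class closes up. By \autoref{thm: full moduli} the fibre $\cM=\cM_\red$ is the cyclic union of the $\IP^1$-bundles $Y_0,\dots,Y_n$ over $J$: the components $Y_{v-1}$ and $Y_v$ meet transversally along a section lying over the node $x_v$, and a single fibre of $Y_v$ meets $Y_{v-1}$ and $Y_{v+1}$ in exactly one point each (its two intersections with the sections $S^v_{x_v}$ and $S^v_{x_{v+1}}$). First I would observe that every node lies on precisely two of the components $Y_{v-1},Y_v$, hence on precisely one fibre of each, and that these two fibres cross transversally there; dually, each fibre carries exactly two nodes. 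Consequently the relation \emph{lying on a common $\IP^1$-fibre of a common component} glues fibres end-to-end without branching, so a reduced characteristic cycle is exactly a maximal such chain, and it suffices to walk it starting from any node. By \autoref{prop:classOx} the nodes are precisely the singular stable sheaves special at some $x_v$.

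Next I would make the walk explicit through the sheaves $F^0,\dots,F^{n+1}$ constructed above. Fix a node $[F^0]\in Y_n\cap Y_0$, a sheaf special at $x_0$ with component restrictions $(L_0,\dots,L_n)$. For each $v=0,\dots,n$ the two presentations \eqref{eq:Fvext1} and \eqref{eq:Fvext2}, combined with the twist identity \eqref{eq:L'}, exhibit $F^v$ and $F^{v+1}$ as two extensions of the \emph{same} line bundle $\cL_v(\dots)$ on $C^v$ by the \emph{same} line bundle $L_v(-x_{v+1})$ on $C_v$; by \autoref{thm:Lange} and \autoref{prop:Yo} this common pair indexes a point of $J_v$ whose fibre in $Y_v$ is the corresponding $\IP^1$ of extensions, and $[F^v]$, $[F^{v+1}]$ are its two marked points, lying on the distinct sections $S^v_{x_v}$ and $S^v_{x_{v+1}}$ (so they are distinct, which one also sees from \autoref{lem:extensionunique} since the two extension classes are non-proportional). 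Thus the walk passes $[F^0]\to[F^1]\to\cdots\to[F^{n+1}]$, traversing one fibre from each of $Y_0,\dots,Y_n$ and returning to a node $[F^{n+1}]\in Y_n\cap Y_0$: one full lap around the cycle of components.

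The crux is to identify the resulting return map on $J$. Both $[F^0]$ and $[F^{n+1}]$ are singular sheaves special at $x_0$, so by \autoref{prop:classOx} and \autoref{lem:PicCx} they are determined by their restrictions to the irreducible components; by construction these are $(L_0,\dots,L_n)$ and $(L_0',\dots,L_n')=\tau(L_0,\dots,L_n)$, where $\tau$ is translation of $J$ by the fixed class $\bigl(\reg_{C_v}(x_v-x_{v+1})\bigr)_{v}\in\prod_v\Pic_0(C_v)$. Hence one lap realises exactly $\tau$, and $[F^{n+1}]=[F^0]$ if and only if $\tau$ fixes the starting point. Iterating the construction with the translated data in place of $(L_0,\dots,L_n)$, after $j$ laps the walk reaches the node with $J$-coordinate $\tau^{j}(L_0,\dots,L_n)$; therefore the chain closes into a single cycle precisely when $\tau^{j}=\id$, that is first at $j=k:=\ord(\tau)$, and it never closes—yielding the infinite chain $A_\infty$ of \cite{Hwang-Oguiso--charfol}—when $\ord(\tau)=\infty$. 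Concatenating the $k$ laps then exhibits the reduced characteristic cycle as the asserted cyclic $\tilde A$-configuration of \autoref{prop:Ancycle}.

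The main obstacle is the bookkeeping concentrated in the second and third paragraphs: one must verify that the presentation \eqref{eq:Fvext2} of $F^v$ can be matched to the presentation \eqref{eq:Fvext1} of $F^{v+1}$ at \emph{every} index $v$ (so that the chain is genuinely unbroken), that propagating the successive twists $x_v-x_{v+1}$ once around the whole loop yields exactly $\tau$ and not some other element of $\prod_v\Pic_0(C_v)$, and that counting the fibres produced per lap gives the stated index of the $\tilde A$-type; here \eqref{eq:L'}, \autoref{lem:ext1=2}, \autoref{lem:PicCo} and \autoref{lem:PicCx} all enter. A secondary point is to record that for a general point of the discriminant the fibre $\cM$ has $J$, the components $Y_v$, and the sections $S^v_{x_v}$ in the position used above, so that \autoref{lem:dphi} and the transversality of \autoref{thm: full moduli} force the non-branching of the first paragraph; this legitimises starting the walk at an arbitrary node and guarantees that the equivalence class is the single cyclic chain just described.
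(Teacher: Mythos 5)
Your proposal is correct and takes essentially the same route as the paper, whose proof of \autoref{prop:Ancycle} is precisely the discussion preceding it: the sheaves $F^0,\dots,F^{n+1}$, the matching of the outer terms of \eqref{eq:Fvext2} for $v=u$ with those of \eqref{eq:Fvext1} for $v=u+1$ via \eqref{eq:L'}, and the identification of the one-lap return map with the translation $(L_v)_v\mapsto\bigl(L_v(x_v-x_{v+1})\bigr)_v$ using \autoref{lem:PicCx}; your non-branching and lap-iteration remarks only make explicit what the paper leaves implicit. The one caveat --- shared with the paper's own argument rather than introduced by you --- is the final count that you assert but do not perform: one lap traverses $n+1$ fibres (one in each of $Y_0,\dots,Y_n$), so $k$ laps close up a cycle of $k(n+1)$ smooth rational curves, i.e.\ a configuration of type $\tilde A_{k(n+1)-1}$, which agrees with the stated $\tilde A_{kn}$ only for $k=1$.
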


The case $k=1$ only occurs if all $C_u$ are rational, i.e.\ if $C$ is a singular elliptic fibre.
This means that, in all other cases, the characteristic cycle can never close after just one lap. 
Indeed, for $g(C_u)\ge 1$, the line bundle $\reg_{C_u}(x_u-x_{u+1})$ is never trivial.

All the cases $k\ge 2$ actually occur, including the case $k=\infty$, as shown by the following example. 
 Let $ L\subset \IP^2$ be a line in the plane and fix four distinct points $p_1, \dots, p_4\in L$.
 Let $f\colon E\to L$ be the double cover branched over these four points. Choosing, say, the preimage of $p_1$ as the origin, $E$ is an elliptic curve, with multiplication by $-1$, which we denote by $\ominus$,  interchanging the points of the fibres of $f$. 
 
 Let now $s\in E$ be a point that is not $2$-torsion, which is equivalent to $f(s)\notin\{p_1,p_2,p_3,p_4\}$.  
 Counting parameters, we can choose a sextic $B\subset \IP^2$ which has a node at $f(s)$, is smooth away from $f(s)$, and intersects $L$ transversally in $p_1, \dots, p_4$. 
Now, as in \autoref{ex:doublecov}, let $X\to \IP^2$ be the minimal resolution of the double cover branched over $B$. Then the preimage $C$ of $L$ is an extended ADE curve with intersection graph $\Gamma=\tilde A_1$. One component is the exceptional divisor, hence rational, and the other component is $E$. The two intersection points of the components are $s$ and $\ominus s$. 
Hence, fixing some $\chi>0$ and some polarisation $H$ on $C$ satisfying \autoref{ass:H}, by \autoref{prop:Ancycle}, the characteristic cycle on the moduli space $\cM$ is of type $\tilde A_{kn}$ where $k$ is the order of the line bundle $\reg_E(s-\ominus s)$. Now, if $s$ is not torsion, then  
$\reg_E(s-\ominus s)$ is not torsion. If $s$ is $m$-torsion for $m$ odd, then $\reg_E(s-\ominus s)$ is $m$-torsion. Finally, if $s$ is $m$-torsion for $m$ even, then $\reg_E(s-\ominus s)$ is $\frac m2$-torsion. Hence, for any $k\ge 2$, we get characteristic cycles of type $\tilde A_{kn}$ by this construction.

\subsection{Relation to work of L\'{o}pez-Mart\'{\i}n, and other polarisations}\label{subsect:LM}

In \cite{LM--Simpson}, L\'{o}pez-Mart\'{\i}n classifies stable sheaves of type $m$ on $\tilde A_n$-configurations of smooth rational curves (i.e.\ extended ADE curves with $\Gamma=\tilde A_n$ and $C_o\cong \IP^1$ for every $o\in O$), and also on the other two types of reducible but reduced singular fibres of elliptic fibrations, namely types III and IV in Kodaira's list. Remarkably, this is done for every polarisation, not only those satisfying \autoref{ass:H}.

We believe that the results of \cite{LM--Simpson} generalise in a straight--forward way to arbitrary ADE curves of type $\Gamma=\tilde A_n$ with almost unchanged proofs. Furthermore, we believe that the classifications for types III and IV can be generalised to configurations of curves whose components intersect as in types III and IV, but may be of higher genera.

Anyway, even if we restrict ourselves to the case that all components of our extended ADE curve of type $\tilde A_n$ are rational, the results of \cite{LM--Simpson} show that \autoref{ass:H} is strictly necessary for our classification result \autoref{thm: easy classification} to hold. It is easy to see that we have properly semi-stable sheaves as soon as some $\frac{e_o}e \chi$ is an integer for some $o\in O$, i.e.\ if $b_0= \frac{e_o}e \chi$. But even if we choose our $H$ in a way that avoids properly semi-stable sheaves, the class of stable sheaves changes if $\sum_{o\in O}b_o$ has another value than the one required by \autoref{ass:H}.

For example, let $\Gamma=\tilde A_4$, $\chi=2$, and choose $H$ such that $e_o=1$ for all $o=0,1,2,3,4$. Then, $b_o=1$ for all $o\in O$, hence $\sum_{o\in O}b_o=5=\chi+|O|-2$. In this case, the stable line bundles $L_o$ on $C$ are those with
$\chi(L_o)=b_o$ for three of the components, and $\chi(L_o)=b_o+1$ for two of the components, and these two components are not allowed to intersect; compare \cite[Prop.\ 5.13]{LM--Simpson}. However, though the class of stable sheaves changes, one can still show in this case that the moduli space of stable sheaves of type $m$ is again a $\tilde A_4$-configuration of smooth rational curves.

\subsection{Negative $\chi$}

All our results are formulated for $\chi>0$. However, one can easily deduce analogous results for $\chi<0$ from them by duality. Indeed,  $F$ is a stable sheaf with Euler characteristic $\chi$ if and only if $F^D:=\sHom(F,\omega_C)$ is a stable sheaf of Euler characteristic $-\chi$. Hence, duality $(\_)^D$ gives an isomorphism between the moduli space of sheaves of type $m$ and Euler characteristic $\chi$ and the moduli space of sheaves of type $m$ and Euler characteristic $-\chi$.

One can also deduce a characterisation of the stable sheaves of type $m$ and negative Euler characteristic $-\chi$ from \autoref{thm: easy classification} using duality. Namely, under \autoref{ass:H} the stable sheaves are exactly those of the form $F\cong L(-x):=L\otimes \cI_{x\hookrightarrow C}$ for some $x\in C$ and some line bundle $L\in \Pic(C)$ with $\chi(L_{C_v})=-b_v+2$.

Note, however, that for $\chi=0$ properly semi-stable sheaves occur, and the description of the moduli space changes. For example, for $\Gamma=\tilde A_n$, the moduli space becomes just one $\IP^1$-bundle over $J$ which is glued with itself along two sections; compare \cite[Thm.\ 4.1(2)]{LM--relative} (again, the statement in \emph{loc.\ cit.}\ is for singular elliptic fibres, but can be extended to our more general set-up of extended ADE curves).

\bibliographystyle{alpha}
\addcontentsline{toc}{chapter}{References}
\bibliography{references}

\end{document}